\newtheorem{theorem}{Theorem}
\newtheorem{theorem*}{Theorem}
\newtheorem{definition}[theorem]{Definition}
\newtheorem{definition*}{Definition}
\newtheorem{lemma}[theorem]{Lemma}
\newtheorem{lemma*}{Lemma}
\newtheorem{corollary}[theorem]{Corollary}
\newtheorem{corollary*}{Corollary}
\newtheorem{proposition}[theorem]{Proposition}
\newtheorem{proposition*}{Proposition}
\theoremstyle{remark}
\newtheorem{remark}[theorem]{Remark}
\newtheorem{remark*}[theorem]{Remark}
\numberwithin{theorem}{section} 
\numberwithin{equation}{section} 
\newcommand\eqnum{\stepcounter{equation}\tag{\theequation}}
\newcommand{\mc}[1]{\mathcal{#1}} 
\newcommand{\mf}[1]{\mathfrak{#1}} 
\newcommand{\mbb}[1]{\mathbb{#1}} 
\newcommand{\mb}[1]{\mathbf{#1}} 
\newcommand{\mrm}[1]{\mathrm{#1}} 
\newcommand{\ms}[1]{\mathscr{#1}} 
\newcommand{\N}{\mathbb{N}} 
\newcommand{\Z}{\mathbb{Z}} 
\newcommand{\R}{\mathbb{R}} 
\newcommand{\C}{\mathbb{C}} 
\newcommand{\linspan}{\mathrm{span}} 
\newcommand{\Id}{\mathop{\kern0pt \mrm{Id}} \mathopen{}} 
\newcommand{\diam}{\mathop{\kern0pt \mrm{diam}} \mathopen{}} 
\newcommand{\spt}{\mathop{\kern0pt \mrm{spt}} \mathopen{}} 
\newcommand{\opclo}{\mathop{\kern0pt \mrm{clo}} \mathopen{}} 
\newcommand{\opint}{\mathop{\kern0pt \mrm{int}} \mathopen{}} 
 \newcommand{\eqd}{\coloneqq} 
\newcommand{\dd}{\mathop{\kern0pt \mathrm{d}} \mathopen{}} 
\newcommand{\st}{\colon} 
\newcommand{\Fourier}{\mathop{\kern0pt \mc{F}} \mathopen{}} 
\newcommand{\FT}[1]{\widehat{#1}} 
\newcommand{\E}{\mathop{\kern0pt \mathbb{E}} \mathopen{}} 
\newcommand{\1}{\mathbbm{1}} 
\newcommand{\mcl}{\mathclap}
\newcommand{\mrl}{\mathrlap}
\newcommand{\nquad}{\hspace{-1em}} 
\newcommand{\nqquad}{\hspace{-2em}} 
\newcommand{\LHS}[1]{\ensuremath{\mathop{\kern0pt \mathrm{LHS}} \mathopen{}#1}}
\newcommand{\RHS}[1]{\ensuremath{\mathop{\kern0pt \mathrm{RHS}} \mathopen{}#1}}
\renewcommand{\bar}[1]{\overline{#1}} 
\renewcommand{\tilde}[1]{\widetilde{#1}} 
\renewcommand{\phi}{\varphi} 
\renewcommand{\epsilon}{\varepsilon} 
\newcommand{\Lap}{\mathop{\kern0pt \ms{L}}\mathopen{}} 
\newcommand{\xs}{\mf{s}} 
\newcommand{\LP}{\mathop{\kern0pt \mathrm{P}}\hspace{-0.10em}\mathopen{}} 
\newcommand{\QP}{\mathop{\kern0pt \mathrm{Q}}\mathopen{}} 
\newcommand{\UP}{\mathop{\kern0pt \mathrm{U}}\hspace{-0.10em}\mathopen{}} 
\newcommand{\rf}{\mf{f}} 
\newcommand{\rz}{\mf{z}} 
\newcommand{\TT}{\mbb{T}} 
\begin{document}
\title{Higher order expansion for the probabilistic local well-posedness
  theory for a cubic nonlinear Schrödinger equation}
\renewcommand{\shorttitle}{Local well-posedness for cubic NLS}

\subjclass{35Q41, 37L50} 
\keywords{Schrödinger's equation, almost-sure local well-posedness, random initial data, multilinear expansion} 

\author{Jean-Baptiste Casteras}
\address{CMAFcIO, Faculdade de Ciências da Universidade de Lisboa,
  Edificio C6, Piso 1, Campo Grande 1749-016 Lisboa, Portugal}
\email{jeanbaptiste.casteras@gmail.com}
\thanks{J.-B.C. supported by FCT — Fundação para a Ciência e a
  Tecnologia, under the project: UIDB/04561/2020}
\providecommand{\todoJB}[1]{\todo[color=blue!40]{#1}} 

\author{Juraj Földes}
\address{Dept.\ of Mathematics, University of Virginia, Kerchof Hall,
  Charlottesville, VA 22904-4137}
\email{foldes@virginia.edu}
\thanks{J. F. was partially supported by the grant NSF-DMS-1816408}
\providecommand{\todoJF}[1]{\todo[color=green!40]{#1}} 

\author{Gennady Uraltsev}
\address{Dept.\ of Mathematical Sciences, University of Arkansas,
  Fayetteville, AA 72701}
\email{gennady.uraltsev@gmail.com}
\providecommand{\todoGU}[1]{\todo[color=red!40]{#1}} 

\begin{abstract}
In this paper, we study the probabilistic local well-posedness of the
cubic Schrödinger equation (cubic NLS):
\[
(i\partial_{t}  + \Delta) u = \pm |u|^{2} u \text{ on } [0,T) \times \R^{d},
\]
with initial data being a Wiener randomization at unit scale of a
given function $f$.  We prove that a solution exists almost-surely
locally in time provided \(f\in H^{S}_{x}(\R^{d})\) with
\(S>\max\big(\frac{d-3}{4},\frac{d-4}{2}\big)\) for \(d\geq 3\). In particular, we
establish that the local well-posedness holds for any \(S>0\) when
\(d=3\). We also show that, under appropriate smallness conditions for
the initial data, the solutions are global in time and scatter.

The solutions are constructed as a sum of an explicit multilinear
expansion of the flow in terms of the random initial data and of an
additional smoother remainder term with deterministically subcritical
regularity. This construction allows us to introduce a
new and refined notion of graded scattering.

We develop the framework of directional space-time norms to control
the (probabilistic) multilinear expansion and the (deterministic)
remainder term and to obtain improved bilinear
probabilistic-deterministic Strichartz estimates.
\end{abstract}

{\maketitle}

\section{Introduction}\label{sec:introduction}

In this paper we study the almost-sure local well-posedness of the
cubic nonlinear Schrödinger equation:
\[\eqnum\label{eq:NLS-deterministic}
\begin{dcases}
(i\partial_t + \Delta) u = \pm |u|^2 u \qquad \text{on} \quad [0,T)\times \R^d ,
\\
u(0)=f \in H_x^S (\R^d) , 
\end{dcases}
\]
in one temporal and \(d \geq 3\) spatial dimensions.  We assume that the
initial datum \(f\) belongs to the Sobolev space $H^S_x(\R^d)$ with
\(S \geq 0\) (see \eqref{eq:Sobolev-spaces} below for the definition) and the
solution $u$ is a function in
\(C^{0}\big([0,T);H^{S}_{x}(\R^{d})\big)\) for some \(T>0\) that solves
\eqref{eq:NLS-deterministic} in a mild sense (see the discussion
above \eqref{eq:duhamel}). The equation is invariant
under the transformation
\[\eqnum\label{eq:scaling}
u_\lambda (t,x)=\lambda u(\lambda^2 t ,\lambda x),\quad \lambda>0,
\]
which also conserves the homogeneous Sobolev
norm $\dot{H}^{\xs_{c}}_x(\R^d)$, $\xs_{c}:= \frac{d-2}{2}$, of the initial condition \(u_\lambda(0,x)=\lambda f(\lambda x)\):
\[
\|u_\lambda (0)\|_{\dot{H}^{\xs_{c}}_{x}(\R^{d})}:=\Big(\int_{\R^N} \big|\Delta^{\xs_{c}/2}(\lambda f(\lambda x)) \big|^2 dx\Big)^{1/2} = \|f\|_{\dot{H}^{\xs_{c}}_{x}(\R^{d})}.
\]
This suggests that the regularity exponent \(\xs_{c}\) is critical for
the well-posedness of the equation.  Indeed, it is known that when
\(S\geq\xs_{c}\eqd \frac{d-2}{2}\) a solution to \eqref{eq:NLS-deterministic}
exists at least locally in time, that is,
\eqref{eq:NLS-deterministic} is locally well-posed, (see for instance
\cite{cazenaveSemilinearSchrodingerEquations2003,cazenaveCauchyProblemCritical1990,collianderGlobalWellposednessScattering2008,ryckmanGlobalWellposednessScattering2007,pausaderGlobalWellposednessEnergy2007,pausaderMasscriticalFourthorderSchrodinger2010}).
On the other hand, Christ, Colliander, and Tao in \cite{christIllposednessNonlinearSchrodinger2003}
established that \eqref{eq:NLS-deterministic} is locally ill-posed if
$S < \xs_c$. More precisely, there are solutions with an arbitrary small
initial datum in $H^S_x(\R^d)$ which are unbounded on arbitrarily small
time intervals.

In the present paper, we show that the ill-posedness is
probabilistically exceptional when \(S> S_{\mrm{min}}\) with
\(S_{\mrm{min}} < \xs_{c}\), given by
\[ \eqnum\label{eq:intro-Smin}
\begin{aligned}
S_{\mrm{min}}:= \begin{cases}
0 & \text{if } d=3,
\\
\frac{1}{4} & \text{if } d=4,
\\
\frac{d-4}{2} & \text{if } d\geq 5,
\end{cases}
\end{aligned}
\qquad
\begin{aligned}
\xs_{c}\eqd \frac{d-2}{2}.
\end{aligned}
\]
In other words, if we randomly choose the initial condition
\(f\in H^{S}_{x}(\R^{d})\), then with probability one, problem
\eqref{eq:NLS-deterministic} is locally well-posed.

The deterministic nonlinear Schrödinger equation
\eqref{eq:NLS-deterministic} has been the subject of extensive study
since the late 1970s due to its physical relevance as a model for
dispersive wave-like systems, especially in their asymptotic regimes,
for example many interacting Bosonic particle systems (Bose-Einstein
condensates), non-linear optics, and small water waves (see
e.g. \cite{erdosDerivationNonlinearSchrodinger2001,craigNonlinearModulationGravity1992}). The
equation \eqref{eq:NLS-deterministic} is also a model problem for a large
class of ``dispersive'' PDEs, and it is a standard example of an
infinite dimensional Hamiltonian system
\cite{mendelsonRigorousDerivationHamiltonian2020}.

Bourgain's seminal work
\cite{bourgainPeriodicNonlinearSchrodinger1994} attracted interest to
probabilistic aspects of \eqref{eq:NLS-deterministic}, and revealed
new connections to other fields. Specifically, there is the desire to
study local and global dynamics of \eqref{eq:NLS-deterministic} with
initial data given by a probability distribution derived from
constructive quantum field theory, such as the \(\Phi^{4}_{3}\) model. This
naturally leads to study \eqref{eq:NLS-deterministic} with
low-regularity, random initial conditions. Significant results were
obtained for \eqref{eq:NLS-deterministic} on a torus $\TT^d$, by Deng,
Nahmod, Yue, who proved in
\cite{dengInvariantGibbsMeasure2021,dengRandomTensorsPropagation2022}
well-posedness and, for \(\TT^{3}\), the invariance of the associated
Gibbs measures. Analogous results for the cubic wave equation on
$\TT^3$ were obtained by the same authors together with Bringmann in
\cite{bringmannInvariantGibbsMeasures2022}, providing a wave equation
analogue to Hairer's (later with Matetski) celebrated result on
regularity structures
\cite{hairerTheoryRegularityStructures2014,hairerDiscretisationsRoughStochastic2018}
for parabolic equations. On \(\R^{d}\), the understanding of randomized
local and global dynamics of \eqref{eq:NLS-deterministic} is more
limited, with important partial progress obtained in
\cite{benyiProbabilisticCauchyTheory2015,pocovnicuLptheoryAlmostSure2018,burqRandomDataCauchy2008,spitzAlmostSureLocal2021,shenAlmostSureScattering2021,shenAlmostSureWellposedness2021,campsScatteringCubicSchrodinger2023,dodsonAlmostSureLocal2019}.
In one-dimension ($d = 1$) Burq, Thomann, and Tzvetkov in
\cite{burqLongTimeDynamics2013} proved the existence and invariance of
the Gibbs measure via probabilistic methods for a Nonlinear
Schrödinger equation with an additional coercive potential.

Our present work improves the local well-posedness theory of all the
above works on $\R^d$, $d \geq 3$. For a more in-depth overview of the
history and motivation, we direct the reader to \Cref{sec:history}.

In this manuscript and widely accepted in literature, a solution of
\eqref{eq:NLS-deterministic} is a function
\(u \in C^{0}\big([0,T);H^{S}_{x}(\R^{d})\big)\) that is a fixed point of the
Duhamel iteration map \(u\mapsto\mbb{I}_{f}(|u|^{2}u)\), where
\[\eqnum\label{eq:duhamel}
\mbb{I}_{f}(h)\eqd e^{ i t \Delta} f(x) \mp i\int_{0}^{t} e^{i (t - s) \Delta} h(s,x) \dd s\,,
\]
with the sign in front of the integral being opposite to the sign of
the right-hand side of \eqref{eq:NLS-deterministic}.  Next, let us
state our first main result.

\begin{theorem}[Local
well-posedness]\label{thm:random-local-wellposedness} Fix $f\in H_{x}^{S} (\R^{d})$ with $S_{\mrm{min}}<S<\xs_{c}$ (defined in
\eqref{eq:intro-Smin}). Then, with probability \(1\) the equation
\[\eqnum\label{eq:NLS-randomized}
\begin{dcases}
(i\partial_{t} + \Delta) u = \pm |u|^{2} u & \text{on } [0,T) \times \R^{d} ,
\\
u(0)=\rf.
\end{dcases}    
\]
admits a solution \(u\in C^{0}\big([0,T),H^{S}_{x}(\R^{d})\big)\), for a random
time \(T>0\) satisfying for appropriate \(C,c>0\) the bound
\[\eqnum\label{eq:random-time-probability-distribution}
\mbb{P}(T>t)\geq 1- C \exp\Big(-\frac{t^{-c}}{C \|f\|_{H^{S}_{x}(\R^{d})}^{2}}\Big)
\]
for any $t \in (0, 1)$.

The random initial datum \(\rf \in H^S_x(\R^d)\) is the unit-scale Wiener
randomization of the function $f \in H^S_x(\R^d)$ given by
\eqref{eq:initial-data-randomization} below (see
\Cref{sec:randomization} for details).
\end{theorem}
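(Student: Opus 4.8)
The plan is to construct the solution via the now-standard partial-power-series (Bourgain–Bényi–Oh–Pocovnicu) scheme, writing $u = \sum_{k} u_{k} + w$, where the $u_{k}$ are explicit multilinear terms in the random data $\rf$ and $w$ is a smoother nonlinear remainder solved for by a fixed-point argument. First I would record the linear evolution $u_{1} \eqd e^{it\Delta}\rf$ and define higher Picard iterates $u_{k}$ as the totally-explicit $(2k-1)$-linear Duhamel polynomials in $u_{1}$, truncating the expansion at an order $N = N(d,S)$ large enough that the leftover interactions gain enough derivatives to live at a deterministically subcritical regularity $\sigma$ with $S < \sigma < \xs_{c}$. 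Substituting $u = \sum_{k=1}^{N} u_{k} + w$ into the Duhamel formula \eqref{eq:duhamel} produces a fixed-point equation $w = \Phi(w)$ whose source terms are finitely many multilinear expressions, each involving at least one factor of $w$ or at least $N$ factors drawn from the $u_{k}$'s.

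The core of the argument is then the two families of estimates promised in the abstract. On the probabilistic side, I would prove that, off an exceptional set of probability at most $C\exp(-t^{-c}/(C\|f\|_{H^{S}_{x}}^{2}))$, each $u_{k}$ (and the relevant multilinear expressions built from them) lies in the appropriate directional space-time norm, with a bound that is polynomial in $\|f\|_{H^{S}_{x}}$ and carries a positive power of the time length $T$; the key analytic inputs are the unit-scale Wiener randomization gaining the usual $L^{p}_{x}$-integrability improvement, together with the improved bilinear probabilistic–deterministic Strichartz estimates, applied in the directional framework so that each Duhamel integration genuinely upgrades regularity. On the deterministic side, I would show $\Phi$ maps a small ball of $C^{0}([0,T);H^{\sigma}_{x}) \cap X_{\sigma}$ into itself and is a contraction, using multilinear (fractional Leibniz / Strichartz) estimates at the subcritical level $\sigma$ together with the just-established bounds on the random pieces as fixed coefficients. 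Choosing $T$ small (depending on the realization only through the exceptional set above) closes the contraction and yields $w$, hence $u \in C^{0}([0,T);H^{S}_{x})$ since each $u_{k}\in C^{0}_{t}H^{S}_{x}$ and $w \in C^{0}_{t}H^{\sigma}_{x} \subset C^{0}_{t}H^{S}_{x}$. The probability bound \eqref{eq:random-time-probability-distribution} then follows by unwinding the large-deviation estimates for the random multilinear terms, optimizing in $t$.

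The main obstacle is establishing the multilinear probabilistic estimates at the borderline regularity $S_{\mrm{min}}$: naive application of probabilistic Strichartz to each factor loses too much, and one must exploit the bilinear interaction structure — pairing a randomized high-frequency factor against a deterministic (or previously-regularized) factor — which is precisely where the directional space-time norms and the improved bilinear estimates are indispensable. In particular the choice of truncation order $N$, the scale of intermediate regularities interpolated through the iterates $u_{2},\dots,u_{N}$, and the bookkeeping of which factors are "random" versus "smooth" in each of the $O(1)$ source terms of $\Phi$ must be arranged so that every term simultaneously gains a power of $T$ and closes at regularity $\sigma$; verifying the dimension-dependent arithmetic that this is possible exactly when $S > S_{\mrm{min}}$ is the crux. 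The remaining ingredients — continuity in time, the contraction mapping itself, and propagation of the probability bound — are routine once the multilinear estimates are in hand.
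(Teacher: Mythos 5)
Your overall scheme is the same as the paper's: expand $u$ into explicit multilinear terms in $\rf$ plus a remainder, prove probabilistic space-time bounds (with exponential tails) for the multilinear pieces in directional norms, and close a deterministic contraction for the remainder, with the truncation order chosen so the leftover interactions are smooth enough. However, as written there is a genuine gap at the decisive point: you place the remainder $w$ at a ``deterministically subcritical regularity $\sigma$ with $S<\sigma<\xs_{c}$''. A regularity below $\xs_{c}=\frac{d-2}{2}$ is \emph{supercritical} for the deterministic cubic NLS, and the fixed-point argument for $w$ cannot close there: the source term cubic in $w$ alone contains no random factors, so it must be estimated purely deterministically, and at regularity below $\xs_{c}$ no such trilinear estimate with a gain is available (this is exactly the Christ--Colliander--Tao ill-posedness regime, and in the paper's framework the estimate \eqref{eq:XXX} gives output regularity $\xs+2(\xs-\xs_{c})$, which is a gain only when $\xs>\xs_{c}$). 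The paper closes the contraction for $u^{\#}_{M}$ in $X^{\xs}$ with $\xs$ strictly \emph{above} $\xs_{c}$, and this is not a cosmetic choice: the requirement that the multilinear gain push the remainder above critical, i.e.\ $\mu(M+1,S)>\xs_{c}$ with $\mu(k,S)=\min(kS,2S+\tfrac12,S+1)$, is precisely what generates the threshold $S_{\mrm{min}}$ via $\min(2S_{\mrm{min}}+\tfrac12,S_{\mrm{min}}+1)=\xs_{c}$.

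Concretely, with your stated target $\sigma<\xs_{c}$ the ``dimension-dependent arithmetic'' you defer to would not produce $S_{\mrm{min}}$ at all (any $\sigma$ slightly above $S$ would do, and the theorem would falsely follow for all $S>0$ in every dimension), so the crux of the proof is not merely unverified but set up incorrectly. To repair the proposal you must (i) take the remainder regularity $\xs\in(\xs_{c},\xs_{c}+\tfrac12)$, (ii) show the explicit terms $\rz_{k}$ gain regularity up to $\mu(k,S)$, which saturates at $\min(2S+\tfrac12,S+1)$ (this saturation, not unlimited gain per Duhamel iteration, is what forces $S>S_{\mrm{min}}$ rather than $S>0$ for $d\geq4$), and (iii) estimate the mixed source terms via bilinear bounds pairing a $Y$-controlled random factor with an $X$-controlled remainder factor, as in \Cref{lem:bilinear-2-2-bound} and \Cref{prop:trilinear-estimate}. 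The remaining ingredients you list (tail bounds, smallness in $T$ from the Duhamel estimate, continuity in time, and unwinding the probability of the existence time) do match the paper's route.
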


\Cref{thm:random-local-wellposedness} improves on known results for
probabilistic well-posedness on \(\R^{d}\) for all dimensions
$d \geq 3$.  In the case of physical relevance $d = 3$, we obtain
the optimal result except, possibly, for the endpoint
$S_{\mrm{min}} = 0$. Indeed, if \(f\) belongs to $H^{S}_{x}(\R^{d})$ with
$S < 0$, then a solution $u$ to \eqref{eq:NLS-deterministic} would
reside in $C^{0}\big([0;T);H^{S}_{x}(\R^{d})\big)$. Consequently,
$u(t)$ for any $t\in[0,T)$ would, a priori, be merely a distribution,
and the nonlinearity $|u(t)|^2u(t)$ would not be defined. A
probabilistic scaling argument (see \cite[Section
1.2]{dengRandomTensorsPropagation2022}) suggests that the natural
regularity endpoint for probabilistic local well-posedness results is
\(\xs_{\mrm{pr}}=-\sfrac{1}{2}\) in all dimensions. However, the above
discussion shows that for \(S<0\) the equation
\eqref{eq:NLS-deterministic} needs to be appropriately renormalized,
and, unlike on compact spatial domains, this process on \(\R^{d}\) is not
well understood.

If $S>S_{\mrm{min}}$ as in \Cref{thm:random-local-wellposedness}, we
also obtain global well-posedness and scattering if \(\rf\) is
sufficiently small, as detailed in \Cref{thm:random-scattering}
below.  We say a solution $u$ scatters in the space $H_{x}^{S}(\R^d)$
if, asymptotically, $u$ behaves like the solution to the linear
Schrödinger equation, that is, there exists a function
$\mf{w}:\R^d \to \R$ such that
\[\eqnum\label{eq:basic-scattering-defn}
\lim_{t \to \infty}\|u(t) - e^{it\Delta} \mf{w}\|_{H^{S}_x(\R^d)} = 0 \,.
\]
Global well-posedness and scattering for large initial data requires different techniques and assumptions that we do not address in this manuscript.

\begin{theorem}[Global well-posedness and scattering for small initial data] \label{thm:random-scattering} Under the assumptions of
\Cref{thm:random-local-wellposedness},  there exists a set
\(\Omega_{\mrm{glob}}\) with
\[\eqnum\label{eq:probabilistic-global-probability}
\mbb{P}(\Omega_{\mrm{glob}})>1 - C\exp\Bigg(- \frac{1}{c\|f\|_{H^{S+2\epsilon}_{x} (\R^{d})}^{2}}\Bigg)
\]
with $C, c > 0$ depending only on $S$ and $\xs_c$, such that
\(T=+\infty\) on $\Omega_{\mrm{glob}}$, where $T$ is the existence time of the
solution from \Cref{thm:random-local-wellposedness}. In addition, such
a solution \(u\in C^{0}\big([0,+\infty),H^{S}_{x}(\R^{d})\big)\), scatters in
\(H^{S}_{x}(\R^{d})\), that is, almost surely on $\Omega_{\mrm{glob}}$ there
exists \(\mf{w}\in H^{S}_{x}(\R^{d})\) such that
\eqref{eq:basic-scattering-defn} holds. Furthermore, scattering
\eqref{eq:basic-scattering-defn} holds in higher regularity
norms upon subtracting from the solution $u$ an explicit time dependent
function, as detailed in \Cref{cor:graded-scatter}. 
\end{theorem}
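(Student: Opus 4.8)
The plan is to run the same contraction-mapping argument that proves \Cref{thm:random-local-wellposedness}, but on the \emph{infinite} time interval $[0,+\infty)$ and with global-in-time space-time norms, exploiting smallness of the randomized data to close the fixed point without shrinking the time horizon. Concretely, I would write the solution as $u = \sum_{k} u^{(k)} + w$, where $\sum_k u^{(k)}$ is the explicit multilinear (Picard) expansion of the flow in terms of the free evolution $e^{it\Delta}\rf$ of the random data, and $w$ is the smoother remainder solving the difference equation with deterministically subcritical regularity $H^{S+2\epsilon}_x$. The key analytic inputs are the global-in-time versions of the directional / bilinear probabilistic Strichartz estimates developed in the paper: on the event that the relevant Gaussian chaoses are small — which, since each chaos of order $n$ enjoys a bound like $\exp(-(\lambda/\|f\|_{H^S})^{2/n})$ after Borel–Cantelli-type tail estimates, happens with probability at least the right-hand side of \eqref{eq:probabilistic-global-probability} — every term $u^{(k)}$ in the expansion has finite global space-time norm bounded by $C^k \|f\|_{H^S_x}^k$ (or $\|f\|_{H^{S+2\epsilon}_x}^k$ for the terms feeding the remainder), so the series converges and the remainder equation is a contraction on a small ball in the global norm. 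Summability forces smallness of $\|f\|_{H^{S+2\epsilon}_x}$, which is exactly why the probability in \eqref{eq:probabilistic-global-probability} is phrased in terms of that slightly stronger norm.

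The steps, in order: (i) fix the small threshold $\delta>0$ for the global space-time norm at which the deterministic Duhamel map on the remainder is a contraction (this comes from the deterministic subcritical multilinear estimates already proved for $w$, since $S+2\epsilon$ is subcritical for the relevant norm); (ii) define $\Omega_{\mathrm{glob}}$ as the event on which all the Gaussian-chaos quantities appearing in the global bounds for the $u^{(k)}$ and for the source terms of the $w$-equation are $\le \delta$ — use the hypercontractivity / chaos tail bounds to show $\mbb{P}(\Omega_{\mathrm{glob}})$ obeys \eqref{eq:probabilistic-global-probability}, with the $H^{S+2\epsilon}_x$ norm entering because one needs a summable geometric series $\sum_k (C\|f\|_{H^{S+2\epsilon}_x})^k$; (iii) on $\Omega_{\mathrm{glob}}$, run the fixed point for $w$ on $[0,+\infty)$ in the global norm to get $w \in C^0([0,+\infty);H^{S+2\epsilon}_x)\subset C^0([0,+\infty);H^S_x)$, whence $u=\sum_k u^{(k)}+w$ is the global solution and $T=+\infty$; (iv) by uniqueness in the local theory this $u$ coincides with the solution of \Cref{thm:random-local-wellposedness}; (v) extract scattering: each $u^{(k)}$ scatters because its global Strichartz norm is finite, so $e^{-it\Delta}u^{(k)}(t)$ is Cauchy in $H^S_x$ as $t\to\infty$, and likewise finiteness of the global norm of $w$ gives convergence of $e^{-it\Delta}w(t)$; summing, $e^{-it\Delta}u(t)\to\mf{g}$ in $H^S_x$ for some $\mf{g}\in H^S_x$, which is \eqref{eq:basic-scatterring-defn}.

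The main obstacle I expect is step (v) together with the uniformity in (ii)–(iii): one must have genuinely \emph{global-in-time} control of the multilinear expansion and of the remainder — the bilinear probabilistic–deterministic Strichartz estimates and the directional space-time norms have to be set up so that the implicit constants do not degenerate as $T\to\infty$, and the series $\sum_k u^{(k)}$ must converge in a norm strong enough to both imply $u\in C^0_t H^S_x$ and to yield scattering of the sum (not just of each summand). In particular, controlling the tail of the multilinear series uniformly in time, so that the scattering limit $\mf{g}=\lim_{t\to\infty} e^{-it\Delta}u(t)$ exists and lies in $H^S_x$, is the delicate point; the smallness of $\|f\|_{H^{S+2\epsilon}_x}$ baked into $\Omega_{\mathrm{glob}}$ is what makes this tail summable and hence makes the argument go through. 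The rest — the deterministic contraction for $w$ and the chaos tail estimates for $\mbb{P}(\Omega_{\mathrm{glob}})$ — is a routine adaptation of the ingredients already assembled for \Cref{thm:random-local-wellposedness}.
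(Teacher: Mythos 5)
Your overall strategy (multilinear expansion plus smoother remainder, an event defined by smallness of the random quantities, a global-in-time contraction, scattering read off from the global bounds) is the paper's strategy, but two points as written would break the argument. The first is the regularity you assign to the remainder: you propose to close the fixed point for $w$ in $C^{0}\big([0,\infty);H^{S+2\epsilon}_{x}\big)$ and call this ``deterministically subcritical''. Since $S<\xs_{c}=\frac{d-2}{2}$, the regularity $S+2\epsilon$ is \emph{below} scaling, no deterministic trilinear estimate closes there (this is exactly the ill-posedness regime of Christ--Colliander--Tao), and the contraction cannot be run in that space. The remainder has to live at a regularity $\xs>\xs_{c}$, and this is precisely why the expansion order $M$ is chosen so that $\mu(M+1,S)>\xs_{c}$: only then does the forcing term $[z,z,z]_{>M}$ land in $X^{*,\xs+\epsilon}(\R)$ and only then does the small-data global contraction for $u^{\#}_{M}$ on a ball of $X^{\xs}(\R)$ (the ``Global existence'' and ``Scattering of global solutions'' clauses of \Cref{thm:intro-deterministic-controlled-well-posedness}, via \Cref{lem:iteration-map-bound}) close on $[0,\infty)$.

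The second issue is the infinite Picard series plus remainder, which is both redundant and quantitatively problematic. The paper uses a \emph{finite} expansion $u=\sum_{k\leq M}\rz_{k}+u^{\#}_{M}$ with $M$ fixed by $S$ and $\xs_{c}$, so $\Omega_{\mathrm{glob}}$ is the intersection of finitely many events $\{\|\rz_{k}\|_{Y^{\mu(k,S)+\epsilon}(\R)}\leq\delta_{0}\}$, $k\leq M$, and $\{\|\rf\|_{H^{S+\epsilon}_{x}}\leq\delta_{0}\}$, and \eqref{eq:probabilistic-global-probability} follows from finitely many chaos tail bounds (\Cref{thm:random-multilinear-directional-bounds} applied with $S+\epsilon$, which is the only reason $\|f\|_{H^{S+2\epsilon}_{x}}$ appears: it is an $\epsilon$-loss in the probabilistic estimates, not a geometric-series summability requirement). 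Your version needs the deterministic norm $\|f\|_{H^{S+2\epsilon}_{x}}$ to be small so that $\sum_{k}(C\|f\|_{H^{S+2\epsilon}_{x}})^{k}$ converges; but the theorem must produce $\Omega_{\mathrm{glob}}$ for \emph{arbitrary} $f$ (the probability bound merely degrades), and smallness of a deterministic norm cannot be ``baked into'' an event. Moreover, a union bound over infinitely many chaos events at a fixed threshold does not yield the stated probability, because the order-$k$ tail $\exp\big(-c\,\delta^{2/k}/\|f\|^{2}\big)$ does not decay in $k$. With the finite expansion these difficulties disappear, and your scattering step simplifies to the paper's: each $\rz_{k}$ scatters in $H^{\mu(k,S)}_{x}\hookrightarrow H^{S}_{x}$ and $u^{\#}_{M}$ scatters in $H^{\xs}_{x}$ by \eqref{eq:remainder-scattering}, so the finite sum scatters in $H^{S}_{x}$ with no tail to control.
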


Our proof of Theorem \ref{thm:random-local-wellposedness} and
\ref{thm:random-scattering} relies on a multilinear expansion of the
solution \(u\in C^{0}\big([0,T);H^{S}_{x}(\R^{d})\big)\) to
\eqref{eq:NLS-randomized} of the form
\[\eqnum\label{eq:solution-decomposition}
u(t)=\sum_{k\leq M} \rz_{k}(t) + u^{\#}_{M}(t)\quad\text{with the remainder}\quad u^{\#}_{M}(t)\eqd
u(t)-\sum_{k\leq M} \rz_{k}(t),
\]
for a fixed, sufficiently large, \(M\in\N\). Each \(\rz_{k}\) can be
explicitly expressed as a $k$-linear expression of \(\rf\) by using
point-wise products and compositions with the linear
evolution. Although these formulas are explicit, they quickly become
complicated for \(k>1\). Specifically, \(\rz_{k}\) are defined inductively
by setting $\rz_0=0$, $\rz_{1}(t)\eqd e^{it\Delta}\rf$, and
\[\eqnum\label{eq:def:zk-randomized}
\begin{aligned}
\rz_{k+1}(t)\eqd \mp i\sum_{\mcl{\substack{ k_{1}+k_{2}+k_{3}=k+1\\0\leq k_{1},k_{2},k_{3}\leq k }} }
\hspace{2.5em}\int_{0}^{t} e^{i(t-s)\Delta} \rz_{k_{1}} \bar{\rz_{k_{2}}} \rz_{k_{3}}(s) \dd s.
\end{aligned}
\]
Substituting the expansion \eqref{eq:solution-decomposition} into
\eqref{eq:NLS-randomized} gives us an equation for the remainder
\(u^{\#}_{M}(t)\) and we \emph{require} that for an appropriate $M$, we
have \(u^{\#}_{M}\in C^{0}\big([0,T),H^{\xs}_{x}(\R^{d})\big)\) for some
\(\xs>\xs_{c}\). If $M = 1$, the solution has the form
$u = \rz_1 + u^{\#}_{1}$; in the literature this is called Bourgain’s
trick \cite{bourgainPeriodicNonlinearSchrodinger1994} or Da
Prato-Debussche trick
\cite{dapratoTwoDimensionalNavierStokes2002}. The decomposition
$u = \rz_1 + u^{\#}_{1}$ is natural, because the linear Schrödinger
evolution does not have a smoothing effect: there is no gain in
regularity for \(\rz_{1}\) compared to $\rf$. In fact, we show that the
remainder $u^{\#}_{1}$ term and the higher order terms \(\rz_{k}\),
\(k>1\), have better regularity than \(\rf\) and $\rz_1$, especially
when \(S\ll\xs_{c}\), that is, when \(\rf\) is rough.

Requiring that $u$ has the form \eqref{eq:solution-decomposition} with
\(u^{\#}_{M}\in C^{0}\big([0,T),H^{\xs}_{x}(\R^{d})\big)\) restricts the notion
of the solution. Indeed, \Cref{thm:random-local-wellposedness} does
not provide uniqueness of solutions in the class
\(C^{0}\big([0;T);H^{S}_{x}(\R^{d})\big)\), while uniqueness does hold for
solutions of the form \eqref{eq:solution-decomposition}.

The next theorem summarizes the regularity and scattering results for
the multilinear correction terms \(\rz_{k}\), \(k\in\N\).

\begin{theorem}[Regularity and scattering for $\rz_{k}$]\label{thm:random-multilinear-classical-bounds}
Fix \(f \in H^{S}_{x}(\R^{d})\) with \(S>0\), let
\(\rf \in H^S_x(\R^d)\) be the unit-scale Wiener randomization of the
function $f \in H^S_x(\R^d)$ given by
\eqref{eq:initial-data-randomization}, and for each $k \in \N$ define
\[ \eqnum\label{eq:def:multilinear-regularity}
\mu(k, S)\eqd\min \Big( kS, 2S + \frac{1}{2}, S+1\Big).
\]
The following properties hold almost-surely for the multilinear
expansion terms \(\rz_{k}\), defined inductively by the relation
\eqref{eq:def:zk-randomized}:
\begin{description}
\item[Regularity] For any \(\tilde{S}<\mu(k,S)\) it holds that \(\rz_{k}\in C^{0}(\R; H^{\tilde{S}}_{x}(\R^{d}))\) and there exists \(C=C(\tilde{S},k,S)\) such that
\[
\mbb{P}\Big(\|\rz_{k}\|_{C^{0}_{t}(\R;H^{\tilde{S}}_{x}(\R^{d}))}>\lambda\Big)\leq
C \exp\Bigg(- \frac{ \lambda^{\frac{2}{k}}}{C\|f\|_{H^{S}_{x} (\R^{d})}^{2}}\Bigg)\,.
\]
\item[Scattering]  For any \(\tilde{S}<\mu(k,S)\) the term \(\rz_{k}\) scatters in \(H^{\tilde{S}}_{x}(\R^{d})\), that is, there exists random \(\mf{w}_{k}\in\bigcap_{\tilde{S}<\mu(k,S)} H^{\tilde{S}}_{x}(\R^{d})\)
and a deterministic constant \(C=C(k,\tilde{S},S)\) such that
\[\eqnum\label{eq:random-multilinear-scattering}
\lim_{t\to+\infty}\big\|\rz_{k}(t)-e^{it\Delta}\mf{w}_{k}\big\|_{H^{\tilde{S}}_{x}(\R^{d})}=0,
\]
and 
\[
\mbb{P}\Big(\big\|\mf{w}_{k}\big\|_{H^{\tilde{S}}_{x}(\R^{d})}>\lambda\Big)\leq C \exp\Bigg(- \frac{ \lambda^{\frac{2}{k}}}{C\|f\|_{H^{S}_{x} (\R^{d})}^{2}}\Bigg).
\]
\end{description}
\end{theorem}

We believe that if \(\mu(k,S)=k S\), then
\Cref{thm:random-multilinear-classical-bounds} is optimal except for
the endpoint case. For example \(\mu(3,S)\leq 3S\), as has already observed in
\cite{shenAlmostSureWellposedness2021}: if
$\FT{f}(\xi) = |\ell|^{-S}\1_{B_{1/2}(\ell)}(\xi)$ for
\(\ell \in\Z^{d}\) large, then
$\sup_{t \in \R}\|\rz_3(t)\|_{H^{3S}_{x}(\R^d)}<+\infty $ only if
$f \in H^{S}_{x}(\R^d)$.  Concerning the endpoint, it is reasonable to
expect that \(\rz_{k}\in C^{0}\big(\R;H^{\mu(k,S)}_{x}(\R^{d})\big)\) but such
result would not improve our main results. On the other hand, proving
that \(\rz_{k}\in C^{0}\big(\R;H^{\tilde{S}}_{x}(\R^{d})\big)\) for some
\(\tilde{S}>\mu(k,S)\) when \(kS>\min\big(2S+\sfrac{1}{2},S+1\big)\) would lead to
improvements on \(S_{\mrm{min}}\) for \(d\geq4\) in our main theorems.

The minimal initial regularity \(S_{\mrm{min}}\) is chosen such that
\(\min\big(2S_{\mrm{min}}+\sfrac{1}{2},S_{\mrm{min}}+1\big) = \xs_c\), and
therefore since
\(\lim_{k\to+\infty}\mu(k,S)=\min\big(2S+\sfrac{1}{2},S+1\big)\),
\Cref{thm:random-multilinear-classical-bounds} implies that if
\(f\in H^{S}_{x}(\R^{d})\) with \(S>S_{\mrm{min}}\) then
\(\rz_{k}\in C^{0}\big(\R;H^{\xs}_{x}(\R^{d})\big)\) with
\(\xs>\xs_{c}\) for all sufficiently large \(k\in\N\).  The following result
refines \Cref{thm:random-local-wellposedness} by asserting that the
local in time solution \(u\) for \eqref{eq:NLS-randomized} of the form
\eqref{eq:solution-decomposition} with
\(u^{\#}_{M}\in C^{0}\big([0,T),H^{\xs}_{x}(\R^{d})\big)\) for some
\(\xs>\xs_{c}\) is unique, exists for a maximal time interval
\(T_{\mrm{max}}\), and scatters in \(H^{\xs_{c}}_{x}(\R^{d})\) if the initial
data is small enough.

\begin{theorem}[Regularity and scattering of $u^{\#}_{M}$]\label{thm:random-remainder-regularity}~
Fix \(S\) with \(S_{\mrm{min}}<S<\xs_{c}\) and let \(M\in\N\) be such that
\(\mu(M+1,S)>\xs_{c}\). 
Then for any $\xs < \mu(M+1,S)$ the following properties hold almost-surely. 
\begin{itemize}
\item There exists a random existence time \(T\in(0,+\infty]\) such that \eqref{eq:NLS-randomized} admits a solution $u\in C^{0}\big([0,T);H^{S}_{x}(\R^{d})\big)$ of the form \eqref{eq:solution-decomposition} with
\(u^{\#}_{M}\in C^{0}\big([0,T);H^{\xs}_{x}(\R^{d})\big)\).
\item It holds that \(u^{\#}_{M}\in C^{0}([0,T); H^{\xs}_{x}(\R^{d}))\).
\item Any two solutions with \(u^{\#}_{M}\in C^{0}([0,T); H^{\xs}_{x}(\R^{d}))\) coincide on \([0,T)\).
\item The time of existence satisfies \(T=+\infty\) on \(\Omega_{\mrm{glob}}\) as in \Cref{thm:random-scattering}. Moreover, 
\(u^{\#}_{M}\) scatters in \(H^{\xs}_{x}(\R^{d})\), that is, there exists
\(\mf{w}_{M}^{\#}\in\bigcap_{\xs<\mu(M+1,S)} H^{\xs}_{x}(\R^{d})\) such that
\[\eqnum\label{eq:random-remainder-scattering}
\lim_{t\to+\infty}\big\|u^{\#}_{M}-e^{it\Delta}\mf{w}_{M}^{\#}\big\|_{H^{\xs}_{x}(\R^{d})}=0\,,
\]
and
\[
\mbb{P}\Big(\big\|\mf{w}_{M}^{\#}\big\|_{H^{\xs}_{x}(\R^{d})}>\lambda\Big)\leq C \exp\Bigg(- \frac{ \lambda^{\frac{2}{k}}}{C\|f\|_{H^{S}_{x} (\R^{d})}^{2}}\Bigg).
\]
\end{itemize}
\end{theorem}

If we combine \Cref{thm:random-multilinear-classical-bounds} and
\Cref{thm:random-remainder-regularity}, then we obtain, information
about the behavior of $u$ as $t \to \infty$ that generalizes scattering from
\eqref{eq:basic-scattering-defn}. In particular, in the next corollary
we summarize the scattering results obtained above and we deduce scattering
in more regular spaces $H^{\mu(M+1,S)}_x(\R^d)$ by removing explicit
higher order correction terms \(\rz_{k}\), \(k\leq M\) from the solution. The
non-explicit remainder term \(u^{\#}_{M}\) scatters in deterministically
subcritical regularity \(H^{\xs}_{x}(\R^{d})\) with \(\xs>\xs_{c}\).

\begin{corollary}[Graded scattering]\label{cor:graded-scatter}
Under the assumptions of \Cref{thm:random-remainder-regularity}, on
$\Omega_{\mrm{glob}}$ there exists a unique global solution
\(u\in C^{0}([0,+\infty); H^{S}_{d}(\R^{d}))\) to \eqref{eq:NLS-randomized} of
the form
\[
u= \sum_{k\leq M} \rz_{k}(t) +u^{\#}_{M}
\]
and the terms \(\rz_{k}\) and \(u^{\#}_{M}\) scatter, as described by
\eqref{eq:random-multilinear-scattering} and \eqref{eq:random-remainder-scattering},
respectively. It follows that for any for any \(M'\in\N\) there exists
\(\mf{w}_{>M'}^{\#}\in\bigcap_{\tilde{S}<\mu(M'+1,S)} H^{\tilde{S}}_{x}(\R^{d})\) such that
\[
\begin{aligned}[t]
&
\lim_{t \to \infty} \Big\|\big(u-\sum_{k=1}^{M'}\rz_{k}(t)\big) - e^{it\Delta}\mf{w}_{>M'}^{\#}\Big\|_{H^{\tilde{S}}_x(\R^d)} = 0 \qquad \text{for all} \quad \tilde{S}<\mu(M'+1,S)\,.
\end{aligned}
\]
\end{corollary}

\Cref{thm:random-remainder-regularity} summarizes our results in
classical spaces \(C^{0}\big([0,T),H^{S}_{x}(\R^{d})\big)\). The crucial
novelty of our paper is the introduction of new family of directional
space-time norms, defined in \eqref{eq:def:directional-norm}, that
prove to be more efficient at capturing dispersion and regularity
properties of \(\rz_{k}\). Indeed, once the directional space-time
spaces \(X^{\xs}([0,T))\) and \(Y^{S}([0,T))\) are defined (see
\eqref{eq:def:X-norm}, \eqref{eq:def:Y-norm}), Theorems
\ref{thm:random-local-wellposedness} --
\ref{thm:random-remainder-regularity} can be reformulated more
precisely and deterministically.

Specifically, given a deterministic initial condition
\(f \in H^{S}_{x}(\R^{d})\) we define the multilinear data
\(\vec{z}_{M}=(z_{k})_{k\in\{1,\ldots,M\}}\) of order \(M\) associated to
\(f\) inductively, by setting $z_0 = 0$, $z_{1}(t)\eqd e^{it\Delta}f $ and
\[\eqnum\label{eq:def:zk-deterministic}
\begin{aligned}
 z_{k+1}(t)\eqd \mp i\sum_{\mcl{\substack{ k_{1}+k_{2}+k_{3}=k+1\\ 0 \leq k_{1},k_{2},k_{3}\leq k }} }
\hspace{2.2em}\int_{0}^{t} e^{i(t-s)\Delta}(z_{k_{1}} \bar{z_{k_{2}}} z_{k_{3}}) (s) \dd s \,, \quad k \geq 2\,.
\end{aligned}
\]
Notice that if \(f\in H^{S}_{x}(\R^{d})\) with \(S<\xs_{c}\), then
$z_{k}$ for \(k\geq2\) may not be well defined in general. However, we
assume that $z_{k}$ are well defined, as for example, in the case when
\(f\in H^{S}_{x}\) with \(S>\xs_{c}\). Then the multilinear data
\(\vec{z}_{M}\) can be viewed as an enrichment of the initial condition
\(f\), in the spirit of Lyon's theory of rough paths
\cite{lyonsDifferentialEquationsDriven1998,
  lyonsDifferentialEquationsDriven2007} or Hairer's regularity
structures, see \cite{frizCourseRoughPaths2020,
  hairerTheoryRegularityStructures2014}, and references therein.  The
central assertion of the following
\Cref{thm:intro-deterministic-controlled-well-posedness} then becomes
the continuity of the map $(f, \vec{z}_{M}) \mapsto u$ in appropriate
low-regularity spaces. We emphasize that $\vec{z}_{M}$ depends on $f$. In
addition, we provide of lower bound on the existence time of $u$ in
terms of suitable norms of $f$ and $\vec{z}_{M}$.

Below \Cref{thm:intro-deterministic-controlled-well-posedness}, we
introduce Wiener randomization of the initial data and we show that
almost surely the functions $z_k$ are well defined and bounded in the
required spaces.

The next two theorems contain precise formulation of the described
heuristics, and they are the main results of our paper from which
Theorems \ref{thm:random-local-wellposedness},
\ref{thm:random-scattering},
\ref{thm:random-multilinear-classical-bounds}, and
\ref{thm:random-remainder-regularity} are deduced. For the formulation
of the statements we used the notation $\lesssim$ defined in
\Cref{sec:notation} and the norms of the $Y^S$ spaces defined in
\eqref{eq:def:Y-norm}.

\begin{theorem}[Deterministic well-posedness for rough data]\label{thm:intro-deterministic-controlled-well-posedness}
Fix \(S>S_{\mrm{min}}\), \(M\in\N\) such that \(\mu(M+1,S)>\xs_c\), and then  
\(\xs_{c}<\xs<\min\big(\mu(M+1,S),\xs_{c}+\sfrac{1}{2}\big)\). Fix any
\(f \in H^{S+\epsilon}_{x}(\R^{d})\) and any \(T_{0}\in[0,+\infty]\) for which the
associated multilinear data
\(\vec{z}_{M}=(z_{k})_{k\in{1,\ldots,M}}\), defined a-priori by
\eqref{eq:def:zk-deterministic}, satisfy
\begin{equation}\label{eq:asoaz}
\|z_{k}\|_{Y^{\mu(k,S)+\epsilon}([0,T_{0}))}<\infty\qquad \text{ for all }  k\in\{1,\ldots,M\}\,.   
\end{equation}
For any $T>0$ and any function \(u\) with the domain
\([0,T)\times\R^{d}\) we define \(u^{\#}_{M}\) as
\[\eqnum\label{eq:solution-decomposition--deterministic}
u=z_{\leq M}+u^{\#}_{M},\quad z_{\leq M}\eqd\sum_{k\leq M}z_{k}.
\]
Then for any \(0<\epsilon\lesssim_{M,\xs,S}1\) and any
\(0<\epsilon_{0}\lesssim_{\epsilon,M,\xs,S}1\), appearing in the definition of the norms
\(X^{\xs}\) and \(Y^{S}\) (see \eqref{eq:def:X-norm} and
\eqref{eq:def:Y-norm}), the following assertions hold.

\begin{description}

\item[Local existence of solutions] There exists \(T\in(0,T_{0}]\) and a solution \(u\) to \eqref{eq:NLS-deterministic} with
\[
\|u^{\#}_{M}\|_{X^{\xs}([0,T))}+\|u^{\#}_{M}\|_{C^{0}\big([0,T);H^{\xs}_{x}(\R^{d})\big)}<\infty.
\]
The time of existence $T$ admits the lower bound
\[\eqnum\label{eq:time-of-existence}
T\geq\min\Big(T_{0},\frac{1}{C}\big\|1+\big(\max_{k\leq M}\|z_{k}\|_{Y^{\mu(k,S)+\epsilon}(\R)}\big)^{3}\big\|^{-\sfrac{3}{c}}\Big)\,,
\]
for some constants \(C=C(\epsilon,\epsilon_{0},M,\xs,S)>1\) and \(c=c(\epsilon,\epsilon_{0},M,\xs,S)>0\).

\item[Time-continuity of solutions] Any solutions \(u\) to \eqref{eq:NLS-deterministic}, defined on the time interval \([0,T)\), with \(\|u^{\#}_{M}\|_{X^{\xs}([0,T))}<\infty\) satisfies
\[\eqnum\label{eq:v-classical-space}
\|u^{\#}_{M}\|_{C^{0}\big([0,T);H^{\xs}_{x}(\R^{d})\big)}\lesssim\|u^{\#}_{M}\|_{X^{\xs}([0,T))}.
\]

\item[Uniqueness of solutions] Any two solutions \(u_1, u_2\) to \eqref{eq:NLS-deterministic}, defined on the time interval \([0,T)\), with \(\|u^{\#}_{j,M}\|_{C^{0}\big([0,T);H^{\xs}_{x}(\R^{d})\big)}<\infty\), $j = 1, 2$ coincide.

\item[Blow-up criterion]
Let \(T_{\mrm{max}}\) be the least upper bound of \(T\leq T_{0}\) for which a local
solution as above exists on \([0,T)\). Then either \(T_{\mrm{max}}=T_{0}\) or
\[\eqnum \label{eq:blowup}
\begin{aligned} 
& \|u^{\#}_{M}\|_{X^{\xs}([0,T_{\max}))}=\lim_{t\to T_{\mrm{max}}}\|u^{\#}_{M}(t)\|_{H^{\xs}_{x}(\R^{d}))}=+\infty.
\end{aligned}
\]

\item[Global existence of solutions]
There exists \(0<\delta_{0}\lesssim_{\epsilon_{0}, \epsilon, M,\xs, S} \!1\) such that if
\[\eqnum\label{eq:multilinear-datum-smallness}
\begin{aligned}
\|f\|_{H^{S+\epsilon}_{x}(\R^{d})}<\delta_{0} \quad \text{and} \quad
 \|z_{k}\|_{Y^{\mu(k,S)+\epsilon}(\R)}<\delta_{0}\qquad \text{for all } k\in\{1,\ldots,M\}\,,
\end{aligned}
\]
then \(T_{\mrm{max}}=+\infty\).

\item[Time-continuity and scattering of multilinear data]
It holds that 
\begin{equation}\label{eq:sobolev-isometry-of-linear-evolution}
\|z_1\|_{C^{0}\big([0,T_{0});H^{S}_{x}(\R^{d})\big)} \lesssim \|f\|_{H^S_x(\R^d)}    
\end{equation}
and for any $k \geq 2$, 
\[\eqnum\label{eq:time-continuity-sca}
\|z_{k}\|_{C^{0}\big([0,T_{0});H^{\mu(k,S)}_{x}(\R^{d})\big)}\lesssim\max_{k'\in\{1,\ldots,k-1\}}\|z_{k}\|_{Y^{\mu(k',S)+\epsilon}([0,T_{0}))}.
\]
If \(T_{0}=+\infty\) then \(z_{k}\), \(k\leq M\), scatter in
\(H^{\mu(k,S)}_{x}(\R^{d})\), that is, there exist
\(w_{k}\in H^{\mu(k,S)}_{x}(\R^{d})\) such that
\[\eqnum\label{eq:zk-scattering}
\lim_{t\to+\infty}\big\| z_{k}-e^{it\Delta}w_{k}\big\|_{H^{\mu(k,S)}_{x}(\R^{d})}=0.
\]

\item[Scattering of global solutions] Under the assumptions
\eqref{eq:multilinear-datum-smallness} for global existence, 
\(u^{\#}_{M}\) scatters in \(H^{\xs}_{x}(\R^{d})\), that is, there exists
\(w^{\#}_{M}\in H^{\xs}_{x}(\R^{d})\) such that
\[\eqnum\label{eq:remainder-scattering}
\lim_{t\to+\infty}\big\|u^{\#}_{M}-e^{it\Delta}w_{M}^{\#}\big\|_{H^{\xs}_{x}(\R^{d})}=0.
\]
\end{description}

Furthermore, the mapping from the enriched initial condition to the
solution is locally Lipschitz-continuous in the following sense. For
fixed parameters as above, define the distance on the initial data as
\[\eqnum\label{eq:initial-data-distance}
d(f,g)\eqd\|f-g\|_{H^{S+\epsilon}_{x}(\R^{d})}+\max_{k\in{1,\ldots,M}}\|\vec{z}_{M}(f)_{k}-\vec{z}_{M}(g)_{k}\|_{Y^{\mu(k,S)}([0,T_{0}))}\,,
\]
with \(\vec{z}_{M}(f)\) and \(\vec{z}_{M}(g)\) being the multilinear data
associated to \(f\) and \(g\) according to
\eqref{eq:def:zk-deterministic}.  Then for any \(R>0\) and corresponding time of existence 
\[
T=
\begin{dcases}
\min\Big(T_{0},\frac{1}{C}\big(1+R^{3}\big)^{-\sfrac{3}{c}}\Big) &\text{if } R\geq\delta_{0}\,,
\\
 T=T_{0} &\text{if }R<\delta_{0}\,,
\end{dcases}
\]
the map from \((f,\vec{z}_{M})\) with \(d(0,f)\leq R\) to the remainder term
\(u^{\#}_{M}\in X^{\xs}([0,T))\cap C^{0}([0,T);H^{\xs}_{x}(\R^{d}))\) 
is Lipschitz-continuous.
\end{theorem}

In \Cref{thm:intro-deterministic-controlled-well-posedness} the
initial datum \(f\in H^{S+\epsilon}_{x}(\R^{d})\) is deterministic, but there are
additional conditions on $\vec{z}_M$. If $f$ is randomized, then the
following theorem provides a refinement of
\Cref{thm:random-multilinear-classical-bounds}, showing that 
almost-surely estimates on
\(\|\rz_{k}\|_{Y^{\mu(k,S)+\epsilon}(\R)}\), \(k\in\N\), hold, as required by 
\Cref{thm:intro-deterministic-controlled-well-posedness}.

\begin{theorem}[Probabilistic estimates on multilinear expansions]\label{thm:random-multilinear-directional-bounds} 
Fix any \(M\in \N\), \(S>0\), any \(0<\epsilon\lesssim_{M,S} 1\), and any
\(0<\epsilon_{0}\lesssim_{\epsilon,M,S} 1\), appearing in the definition of the norm
\(Y^{S}\) (see \eqref{eq:def:Y-norm}). The following assertions
hold.

For any $f \in H^{S+\epsilon}_x(\R^d)$ let \(\rf\) be its  unit-scale Wiener
randomization (see \Cref{sec:randomization}) and let
\( \rz_{k}\), \(k\in\N\), be the multilinear expansions defined by
\eqref{eq:def:zk-randomized}.

Then there exists \(C=C(\epsilon_{0},\epsilon,M,S)>0\) such that
\[\eqnum\label{eq:rzn-Ys-probability}
\mbb{P}\Big(\|\rz_{k}\|_{Y^{\mu(k,S)}(\R)}>\lambda\Big)\leq
C \exp\Bigg(- \frac{ \lambda^{\frac{2}{k}}}{C\|f\|_{H^{S+\epsilon}_{x} (\R^{d})}^{2}}\Bigg)\,,
\]
for any \(k\leq M\) with $\mu(k, S)$ defined in \eqref{eq:def:multilinear-regularity}.
\end{theorem}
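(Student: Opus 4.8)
The plan is to deduce the tail bound \eqref{eq:rzn-Ys-probability} from polynomial moment estimates, and to obtain those moment estimates by combining the Wiener chaos structure of \(\rz_k\) with the deterministic multilinear bounds in the directional spaces. Since \(\rz_k\) is \(k\)-homogeneous in \(\rf\), hence in \(f\), by the recursion \eqref{eq:def:zk-randomized} (the randomisation \(f\mapsto\rf\) being linear), we may normalise \(\|f\|_{H^{S+\epsilon}_x(\R^d)}=1\); the claim then reduces to the moment bound
\[
\big\|\,\|\rz_k\|_{Y^{\mu(k,S)}(\R)}\,\big\|_{L^p(\Omega)}\le (C_k\, p)^{k/2}\qquad\text{for all }p\ge p_0,
\]
with \(C_k,p_0\) depending on \(k,S,\epsilon,\epsilon_0\). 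Indeed, Chebyshev's inequality gives \(\mbb{P}\big(\|\rz_k\|_{Y^{\mu(k,S)}(\R)}>\lambda\big)\le\lambda^{-p}(C_k p)^{kp/2}\), and choosing \(p\) proportional to \(\lambda^{2/k}\) (valid once \(\lambda\) exceeds a threshold; for smaller \(\lambda\) the asserted bound is trivial after enlarging \(C\)) produces \(\exp(-c\lambda^{2/k})\); undoing the normalisation turns this into \(\exp\big(-\lambda^{2/k}/(C\|f\|^2_{H^{S+\epsilon}_x(\R^d)})\big)\) because \(k\)-homogeneity contributes the factor \(\|f\|^{k}\), which is raised to the power \(2/k\).

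For the moment bound I would unfold the recursion \eqref{eq:def:zk-randomized} together with the expansion \(\rf=\sum_{n\in\Z^d}g_n\, P_{Q_n}f\) of the unit-scale Wiener randomisation into a finite sum of tree terms
\[
\rz_k=\sum_{\tau}\ \sum_{n_1,\dots,n_k\in\Z^d}\Big(\prod_{i=1}^{k} g^{\pm}_{n_i}\Big)\,\Psi_\tau\big[(P_{Q_{n_i}}f)_{i}\big](t,x),
\]
over at most \(C^k\) ternary trees \(\tau\) with \(k\) leaves, where each \(\Psi_\tau\) is an explicit deterministic \(k\)-linear expression built from \(e^{it\Delta}\), Duhamel integrals, pointwise products and conjugations. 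For fixed \((t,x)\) and each frequency localisation the \(\tau\)-summand is, up to diagonal coincidences among the \(n_i\), a homogeneous chaos of order \(k\) in the \(g_n\); the moment bounds for multilinear Gaussian (or sub-Gaussian) chaos --- hypercontractivity in the Gaussian case --- bound its \(L^p(\Omega)\) norm by \((p-1)^{k/2}\) times its \(L^2(\Omega)\) norm. Since \(Y^{\mu(k,S)}(\R)\) is built from mixed Lebesgue, Littlewood--Paley and directional norms with finite exponents, Minkowski's inequality, valid for \(p\) exceeding all those exponents, lets us pull \(L^p(\Omega)\) inside the \(Y\)-norm, reducing matters to controlling \(\sum_\tau\big\|\big(\mbb{E}|\,\cdot\,|^2\big)^{1/2}\big\|_{Y^{\mu(k,S)}(\R)}\); orthogonality of distinct multi-indices identifies the second moment with the square function \(\big(\sum_{n_1,\dots,n_k}|\Psi_\tau[(P_{Q_{n_i}}f)_i]|^2\big)^{1/2}\), while the finitely many diagonal coincidences lower the chaos order and are absorbed into lower-order terms handled by the same scheme at smaller \(k\).

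The core of the argument --- and the step I expect to be the main obstacle --- is then the \emph{deterministic} square-function estimate
\[
\Big\|\Big(\sum_{n_1,\dots,n_k}\big|\Psi_\tau\big[(P_{Q_{n_i}}f)_i\big]\big|^2\Big)^{1/2}\Big\|_{Y^{\mu(k,S)}(\R)}\lesssim 1,
\]
uniform in the tree \(\tau\). I would prove it by induction on the depth of \(\tau\), following the trilinear structure of \eqref{eq:def:zk-randomized}: at each internal node one applies the deterministic Duhamel/trilinear estimate in the \(X^\xs\), \(Y^S\) spaces already established in the paper, but in the \emph{square-function form} that preserves the frequency-cube orthogonality, together with the improved bilinear probabilistic--deterministic Strichartz estimates to exploit the transversality of the two incoming frequency supports. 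Propagating the regularity index through the recursion produces exactly the three competing gains whose minimum is \(\mu(k,S)\) as in \eqref{eq:def:multilinear-regularity}: the naive count \(kS\) when no dispersive smoothing is invoked, the cap \(2S+\tfrac12\) from one bilinear gain together with the \(\tfrac12\)-derivative smoothing of the Duhamel operator, and the cap \(S+1\) from the full one-derivative smoothing. The delicate points are that the square-function version of the trilinear estimate must genuinely hold in the directional norms --- this is precisely where the directional space-time framework of the paper does the essential work --- that the \(\epsilon\)- and \(\epsilon_0\)-losses be chosen uniformly in \(k\le M\) so that the frequency-cube summation and the global-in-time integrability close, and that the combinatorial growth (the \(C^k\) bound on the number of trees and the bookkeeping of diagonal contributions) not destroy the \(p^{k/2}\) dependence required for the sharp tail.
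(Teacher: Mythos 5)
Your overall skeleton — Chebyshev plus optimization in $p$, reduction to moment bounds, tree expansion of $\rz_k$, pointwise Wiener-chaos hypercontractivity, and Minkowski to pull $L^p(\Omega)$ inside the $Y$-norm for $p$ exceeding all integrability exponents — is exactly the paper's route (\Cref{lem:Chebyshev-probab}, \Cref{lem:gaussian-hypercontractivity}, \Cref{prop:tree-bound-probabilistic}). The divergence, and the genuine gap, is at the second-moment step. You reduce everything to a deterministic square-function estimate $\big\|\big(\sum_{n_1,\dots,n_k}|\Psi_\tau[(\QP_{n_i}f)_i]|^2\big)^{1/2}\big\|_{Y^{\mu(k,S)}}\lesssim 1$, which you yourself flag as the main obstacle, and you propose to prove it by a new induction requiring ``square-function forms'' of the trilinear and Duhamel estimates in the directional norms. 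No such vector-valued ($\ell^2$-in-the-cubes) versions of \eqref{eq:YYY} or of \Cref{prop:main-linear-estimate} are established anywhere, and proving them (e.g.\ an $\ell^2$-valued Christ--Kiselev argument and $\ell^2$-valued bilinear directional Strichartz bounds) is a substantial undertaking that your sketch does not supply; likewise your disposal of the diagonal coincidences as ``lower-order chaos handled at smaller $k$'' is asserted rather than argued. As written, the core estimate of your proof is unproved.

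Moreover, the vector-valued detour is unnecessary, which is how the paper closes the argument. After applying \Cref{lem:gaussian-hypercontractivity} pointwise in $(t,x)$, the second moment is $\sum_{\mb{k},\mb{l}}\E(g_{\mb{k}}\bar{g_{\mb{l}}})\,R^{\mb{k}}_\tau[f]\overline{R^{\mb{l}}_\tau[f]}$, and the plain triangle inequality in the half-exponent norm $Y_N^{\sfrac{1}{2}}$ together with H\"older reduces everything to the \emph{scalar}, fixed-cube deterministic estimate $\|R_\tau[\QP_{k_1}f,\dots,\QP_{k_{|\tau|}}f]\|_{Y^{\mu(|\tau|,S)}(\R)}\lesssim\prod_j\|\QP_{k_j}f\|_{H^{S+\epsilon}}$ with a constant uniform in the cube positions (\Cref{lem:R-estimate}, proved by induction on the tree using \eqref{eq:non-homogeneous-bound-unit-scale-2} and \eqref{eq:trilinear-Y-mu-bound}, the point being that unit-scale Fourier support is preserved up to a factor $|\tau|$ along the recursion). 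The summation over cubes then closes not by any orthogonality in the $Y$-norm but by the probabilistic pairing constraint — $\E(g_{\mb{k}}\bar{g_{\mb{l}}})$ vanishes unless every index appears an even number of times and is otherwise $O(1)$ — combined with $\sum_{k}\|\QP_k f\|^2_{H^{S+\epsilon}}\lesssim\|f\|^2_{H^{S+\epsilon}}$, which gives $\sum_{\mb{k}}\prod_j\|\QP_{k_j}f\|^2_{H^{S+\epsilon}}\leq\|f\|^{2|\tau|}_{H^{S+\epsilon}}$. In short: the square-function bound you want is true, but it follows from the triangle inequality plus the uniform fixed-cube tree estimate; replacing that simple mechanism by an unproved vector-valued multilinear theory leaves your argument incomplete at its central step.
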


\subsection{The unit-scale Wiener randomization}\label{sec:randomization}

Next, we describe the unit-scale Wiener randomization \(\rf\) of the
initial condition \(f\in H^{S}_{x}(\R^{d})\). First, we fix a sequence
$(g_k)_{k\in \Z^d}$ of i.i.d. complex valued random variables on a
probability space $(\Omega , \mathcal{A}, \mathbb{P})$ and we assume that all their moments
are bounded, that is, \(\E\big[ |g_{k}|^{p}\big]<\infty\) for all
\(p\in\N\). For example, these assumptions are satisfied when
\(g_{k}\) are independent, standard (unit variance and \(0\) mean) complex
Gaussian random variables; the reader may assume this to be the case.

Let $\psi \in C_c^\infty \big(\R^d\big)$ be an even, non-negative cut-off function
supported in the unit ball of $\R^d$ centered at $0$ and such that, for
all $\xi \in \R^d$,
 \[\eqnum\label{eq:def:unit-scale-bump-condition}
 \sum_{k\in \Z^d} \psi (\xi -k)=1.
 \]
For $k\in \Z^d$, we define the operators \(\QP_{k}\) by setting
\[\eqnum\label{eq:def:unit-scale-projection}
(Q_k f)(x)= \mathcal{F}^{-1}\big(\psi (\xi -k) \FT{f} (\xi)\big)(x), \quad\text{for }x\in \R^d \,,
\]
where $\mathcal{F}(f)$ stands for the Fourier transform of
$f\in H^{S}_{x}(\R^{d}),\;S\geq 0$. We then set
\[\eqnum\label{eq:initial-data-randomization}
\rf\eqd f^{\omega}  \eqd \sum_{k\in \Z^{d}} g_{k} (\omega) \QP_{k} f.
\]
The random function \(\rf\) (random variable valued in functions on
\(\R^{d}\)) is then used as an initial condition in
\eqref{eq:NLS-randomized}.

The randomization \(f\mapsto \rf \) does not change (and in particular does
not improve) the differentiability properties of \(f\), in the sense
that
\[
\|f\|_{H^{S}_{x} (\R^{d})}^{2}\approx\E\|\rf\|_{H^{S}_{x} (\R^{d})}^{2},\qquad \text{for all } S\in \R,
\]
as it follows from a direct application of Plancherel's formula and
the independence.  Thus if
$f \in H^{S}_{x} (\R^{d}) \setminus H^{S + \epsilon}_{x} (\R^{d})$ for some
$\epsilon > 0$, then
$\rf \in H^{S}_{x} (\R^{d}) \setminus H^{S + \epsilon}_{x} (\R^{d})$ almost-surely (see
\cite{burqRandomDataCauchy2008a}).  However, the integrability
properties of $\rf $ are improved compared to $f$ (see
\Cref{prop:tree-bound-probabilistic}).

We interpret the infinite sum in \eqref{eq:initial-data-randomization}
as the limit in \(\|\cdot\|_{L^{p}_{\Omega}H^{S}_{x}}\) for any
\(p<\infty\) of finite partial sums. Equivalently, we define
\eqref{eq:initial-data-randomization} for functions
\(f\in L^{2}(\R^{d})\) with compact Fourier support and then extend the
definition by continuity.  It would be interesting to understand
whether alternative approximation procedures may yield different
distributions of multilinear terms \(\rz_{k}\), defined by
\eqref{eq:def:zk-randomized}.

\subsection{Motivation and history}\label{sec:history}
Next, we present an overview of the history and motivation for the
problem \eqref{eq:NLS-deterministic} and its probabilistic
counterpart \eqref{eq:NLS-randomized}, leading to our
result. The deterministic nonlinear Schrödinger equation
\eqref{eq:NLS-deterministic} possesses several remarkable
mathematical features. First, the equation
\eqref{eq:NLS-deterministic} is ``dispersive'', meaning that
solution's components supported on disjoint parts of the frequency
spectrum propagate with different velocities. Second, both the linear Schrödinger equation \((i \partial_{t}+\Delta)u=0\) and the nonlinear
Schrödinger equation
\eqref{eq:NLS-deterministic} are infinite
dimensional Hamiltonian systems. The linear equation has a Hamiltonian
$\mrm{H}_{0}[u]=\int_{\R^{d}}\frac{1}{2}|\nabla u(x)|^{2}\dd x$ while the nonlinear one is
associated with the Hamiltonian
$\mrm{H}[u]=\int_{\R^{d}}\frac{1}{2}|\nabla u(x)|^{2}\pm\frac{1}{4}|u(x)|^{4}\dd x$. For a
comprehensive discussion of the Hamiltonian structure of Schrödinger equations, we refer the reader to
\cite{mendelsonRigorousDerivationHamiltonian2020}.

Equation \eqref{eq:NLS-deterministic} is known to be locally
well-posed for \(S\geq\xs_c \), where we recall that \(\xs_{c}\eqd \frac{d-2}{2}\) is known as the
critical scaling regularity (see for instance
\cite{cazenaveSemilinearSchrodingerEquations2003,cazenaveCauchyProblemCritical1990,collianderGlobalWellposednessScattering2008,ryckmanGlobalWellposednessScattering2007,pausaderGlobalWellposednessEnergy2007,pausaderMasscriticalFourthorderSchrodinger2010}). Conversely,
ill-posedness for $S<\xs_c$ has been established by Christ, Colliander,
and Tao \cite{christIllposednessNonlinearSchrodinger2003}.  The local well-posedness
of \eqref{eq:NLS-deterministic} is obtained by finding a fixed point of
the Duhamel iteration map \eqref{eq:duhamel}, which in turn follows from the Banach fixed point theorem in appropriate norms that  capture the dispersive
nature of \eqref{eq:NLS-deterministic}. An example of such norms are
the Strichartz space-time norms (see \Cref{sec:linear-preliminaries}),
and the associated bounds are called Strichartz estimates (see
\Cref{lem:strichartz} below and originally in \cite{strichartzRestrictionsFourierTransforms1977}).

An interest in the probabilistic aspects of physical equations, which were
originally formulated deterministically, can be traced back at least
to the work of Poincaré \cite[Chapter 1 Section
IV]{poincareScienceMethod2007} who observed that regardless of the precise knowledge of the laws of physics, the
initial conditions can only be known approximately. This inherent
uncertainty profoundly influences the observed behavior of a physical
system, especially for systems that are naturally described as
statistical limits. The nonlinear Schrödinger equation, which is the
focus of this paper, is one such equation that can be derived (see
\cite{frohlichGibbsMeasuresNonlinear2017, mendelsonRigorousDerivationHamiltonian2020}, and references
therein) as the asymptotic limit of certain physical systems, such as
a large number of interacting Bosonic particles, or as the equation
for water waves in the small amplitude regime.  Therefore, it is
natural to investigate properties of solutions with initial conditions
governed by probability distributions. In fact, our result, stated in
\Cref{thm:random-local-wellposedness}, suggests that the carefully
constructed blow-up solutions of \cite{christIllposednessNonlinearSchrodinger2003} with initial data in
\(H^{S}_{x}(\R^{d})\), \(S<\xs_{c}\), are statistically irrelevant, and the
physical systems described by the nonlinear Schrödinger equation are
well-behaved even in low regularity regimes.

\subsubsection*{NLS on the torus}
The effect of randomization of the initial condition on improving
local (or global) well-posedness was pioneered by Bourgain in the
context of the cubic NLS on the torus \(\mbb{T}^{d}\). He established
almost-sure local well-posedness of \eqref{eq:NLS-deterministic} on
\(\mbb{T}^1\) (\cite{bourgainPeriodicNonlinearSchrodinger1994}) and on
\(\mbb{T}^{2}\) (\cite{bourgainInvariantMeasuresDefocusing1996}).  The
introduction of randomized initial conditions was related to
constructing a Gibbs measure: a probability measure on
\(H^{S}_{x}(\R^{d})\), for appropriate $S$, which is invariant under the
flow of the equation. Bourgain used almost-sure local well-posedness
and the invariance of the Gibbs measure to obtain almost-sure global
well-posedness of the cubic NLS. The Gibbs measure is supported on
spaces \(H^{S}_{x}(\TT^{d})\) with regularity
\(S<S_{\mrm{Gibbs}}\eqd1-\frac{d}{2}\), and therefore it is essential to
understand the equation with very rough, random initial conditions.
Recently, Deng, Nahmod and Yue in
\cite{dengInvariantGibbsMeasures2019} extended the result
\cite{bourgainInvariantMeasuresDefocusing1996} for \(\mbb{T}^{2}\) to
arbitrary odd power nonlinearities.  In
\cite{dengRandomTensorsPropagation2022}, the same authors introduced
the theory of random tensors, which allowed for the proof of the
almost-sure local well-posedness for \(d\geq3\) up to a natural
probabilistic regularity threshold \(\xs_{\mrm{pr}}=-\frac{1}{2}\) for the
cubic NLS. The problem of almost-sure global existence in low
regularity essentially remains open in \(d\geq3\) with particular interest
in the dimension \(d=3\), where the probabilistic scaling
\(\xs_{\mrm{pr}}=-\frac{1}{2}\) coincides with the regularity threshold
\(S_{\mrm{Gibbs}}\eqd1-\frac{d}{2}\) of the Gibbs measure.  Later the problem
was solved in \cite{bringmannInvariantGibbsMeasures2022} for the wave
equation with \(d=3\).

Bourgain's techniques from
\cite{bourgainPeriodicNonlinearSchrodinger1994,
  bourgainInvariantMeasuresDefocusing1996} were later used to study
other equations on compact domains, such as the cubic wave equation
\cite{burqInvariantMeasureThree2010}, or the Hartree NLS equation
\cite{bourgainInvariantMeasuresGrossPiatevskii1997} (see also
\cite{lebowitzStatisticalMechanicsNonlinear1988,syAlmostSureGlobal2021,dengInvariantGibbsMeasure2021,burqRandomDataCauchy2008}
for other results in this direction). For a more detailed survey of
the known results for the nonlinear Schrödinger equation on compact
domains, we direct the interested reader to
\cite{nahmodNonlinearSchrodingerEquation2015}. We refer to
\cite{kenigWorkJeanBourgain2020} for an overview of Bourgain's seminal
contributions to the study of dispersive PDEs.

\subsubsection*{NLS on Euclidean space}

If the domain is $\R^d$, as in the present manuscript, then the
dynamics of Schrödinger equation differs from the dynamics on compact
domains. In particular, on $\R^d$, different frequency components of a
solution interact weakly after long enough time as they disperse in
space, which lead to local smoothing estimates (see
\Cref{lem:dir-local-smoothing} below; also
\cite{constantinLocalSmoothingProperties1988a} and \cite[Theorem
4.3]{linaresIntroductionNonlinearDispersive2015}). On a torus the
solutions are spatially confined and local smoothing is not available.

Furthermore, on $\R^d$ there is no countable basis of eigenfunctions
for the Laplacian, and therefore a canonical randomization of initial
conditions in \(H^{S}_{x}(\R^{d})\) is less well-understood. Instead, it
is common to take the unit-scale Wiener randomization of a fixed
function \(f\in H^{S}_{x}(\R^{d})\) (see
\eqref{eq:initial-data-randomization} and the related discussion for
precise definitions), since it closely mimics the randomization on a
torus.  We remark that other randomizations have also been considered:
for instance see
\cite{burqRandomDataCauchy2008,spitzAlmostSureLocal2021,shenAlmostSureScattering2021}.

The question of almost-sure local well-posedness studied in the
current manuscript is of primary importance and has a rich
history. The first local well-posedness result on \(\R^{d}\) with
Wiener-randomized initial data (as in
\eqref{eq:initial-data-randomization}) was obtained by Bényi, Oh, and
Pocovnicu \cite{benyiProbabilisticCauchyTheory2015}, where the authors
established a result in the spirit of
\Cref{thm:random-local-wellposedness} for $d\geq 3$ with
$S>\frac{d-1}{d+1} \frac{d-2}{2}$. In \cite{shenAlmostSureWellposedness2021} Shen, Soffer, and
Wu improved the result for $d=3$ covering the range
$S\geq \frac{1}{6}$. The works \cite{benyiProbabilisticCauchyTheory2015,
  shenAlmostSureWellposedness2021} rely on a fixed point argument for the map
\eqref{eq:duhamel} with solutions controlled in Bourgain spaces
$X^{s,b}$ or their variation-norm variants $V^p$ and \(U^{p}\),
introduced by Koch, Tataru, and collaborators \cite{hadacWellposednessScatteringKPII2009,
  herrGlobalWellposednessEnergycritical2011, kochDispersiveEquationsNonlinear2014}. Dodson, Lührmann, and Mendelson \cite{dodsonAlmostSureLocal2019}
used direction spaces introduced by Ionescu and Kenig
\cite{ionescuLowregularitySchrodingerMaps2006,ionescuLowregularitySchrodingerMaps2007} to extended the local well-posedness result
in \cite{benyiProbabilisticCauchyTheory2015} for \(d=4\) to $S>\frac{1}{3}$.

The papers \cite{campsScatteringCubicSchrodinger2023, shenAlmostSureWellposedness2021, dodsonAlmostSureLocal2019}, cited
above, also obtained global well-posedness results in a smaller range
of regularities or under additional geometric assumptions on the
initial data (for example for radial initial conditions).  We do not address
the global existence of large solutions, leaving such extensions of
the current framework for future work.  We also mention that Pocovnicu
and Wang \cite{pocovnicuLptheoryAlmostSure2018} obtained almost-sure
local well-posedness of \eqref{eq:initial-data-randomization} for an
extended range of regularities but in \(L^{p}\)-based spaces with
\(p\neq2\). However, their approach does not allow for a refinement in the
spirit of \Cref{thm:random-multilinear-classical-bounds} and
\Cref{thm:random-remainder-regularity} as their remainder term has the
same regularity as the initial data.

The functional framework of \cite{dodsonAlmostSureLocal2019} served as
inspiration for \cite{casterasAlmostSureLocal2022}, where the present
authors extended the result of
\cite{benyiProbabilisticCauchyTheory2015} in arbitrary dimensions to
the case of the Laplacian in \eqref{eq:NLS-randomized} replaced by a
more general operator \(\mc{L}\). For example in
\cite{casterasAlmostSureLocal2022} one considers operators of the form
$\mc{L} = (-\Delta)^{\sigma/2} + \mc{L}^{\#}$ with $ \mc{L}^{\#}$ being of lower order
operator.  In \cite{casterasAlmostSureLocal2022} we obtained an
almost-sure local existence for solutions to \eqref{eq:NLS-randomized}
with the form
\[
u(t) = \rz_1 + u^{\#}_{1}
\]
with $u^{\#}_{1} \in C\big([0,T); H_x^{\frac{d-\sigma}{2}} (\R^d)\big)$, provided that
$S>\tilde{S}_{\min}(\sigma,d)$ for explicitly given $\tilde{S}_{\min}(\sigma,d)$.
Specifically, for the Laplacian $\mc{L} = \Delta$ 
\[
\tilde{S}_{\min}(2,d) = (d - 2)\times
\begin{cases}
 \frac{(d - 3)}{2(d - 1)} & d\geq 4 \,, \\
\frac{1}{6} & \ d=3 \,,
 \end{cases}
\]
which was the most general result for the second order NLS, except for
the endpoint case in $d = 3$ proved in \cite{shenAlmostSureWellposedness2021}. Our
theory of multilinear expansions given by
\Cref{thm:random-multilinear-classical-bounds} suggests (and proves
for \(\mc{L}=\Delta\)) that, as long as one looks for solutions of the form
$u = e^{it\mc{L}} \rf + u^{\#}_{1}$ with
$u^{\#}_{1} \in C\big([0,T); H_x^{\frac{d-\sigma}{2}} (\R^d)\big)$, the results in
\cite{casterasAlmostSureLocal2022} for $d \in\{3, 4\}$ are optimal, with exception of the endpoint regularities. Indeed, $u^{\#}_{1}$ cannot be smoother than \(\rz_{3}\) which, as
already observed in \cite{shenAlmostSureWellposedness2021}, does not belong to
\(C\big([0,T); H_x^{\frac{d-\sigma}{2}} (\R^d)\big)\) for
$f \in H^{\frac{d - \sigma}{6}}(\R^d)$ given by
$\FT{f}(\xi) = |k|^{-\frac{d - \sigma}{6}}\1_{B_{1}(k)} (\xi)$ with large \(k\in\Z^{d}\).

In this paper, we lower the required regularity on the initial data by
including higher order expansion to the solution. The idea of going
beyond the first order expansion given by the Da Prato - Debussche -
Bourgain trick has multiple precedents in literature.  Christ in
\cite{christChapterPowerSeries2009} gave the meaning to rough
solutions to the 1D cubic Schrödinger equation by expressing the
solution as an infinite multilinear series.  Developing a correct
functional framework to deal with expansions of arbitrary order is, for
example, the base for Lyon's theory of rough paths
\cite{lyonsDifferentialEquationsDriven2007}, for paracontrolled
distributions (see
\cite{gubinelliParacontrolledDistributionsSingular2015}), for regularity
structures (see \cite{hairerTheoryRegularityStructures2014}).  In our
context, multilinear expansions have been adopted by Bényi, Oh, and
Pocovnicu \cite{benyiHigherOrderExpansions2019} for equation
\eqref{eq:NLS-randomized} in dimension $d=3$. In their work, the
multilinear correction terms \(\rz_{k}\) are controlled in the Bourgain
$X^{s, b}$ spaces (introduced in
\cite{bourgainPeriodicNonlinearSchrodinger1994}). The methods in
\cite{benyiHigherOrderExpansions2019} provide regularity estimates on
\(\rz_{k}\) that are substantially worse compared to
\Cref{thm:random-multilinear-classical-bounds}, necessitating
expansions of arbitrarily high order to prove almost-sure local
well-posedness in \(H^{S}_{x}(\R^{3})\) with \(S>\frac{1}{6}\). In our
approach, we use directional norms to prove almost-sure local
well-posedness in \(H^{S}_{x}(\R^{3})\) with \(S>0\) if we use arbitrarily
high order expansions.  However our results for \(d\geq5\) and $d = 4$ use
respectively one and two terms in the expansion.  This suggests that
an improvement of regularity estimates for \(\rz_{k}\) with \(k\geq3\) in
\(d\geq4\) would lead to improvements in
\Cref{thm:random-local-wellposedness}. A different, and more
elaborated form, of high-order expansions was given in
\cite{dengRandomTensorsPropagation2022}, however such approach seems
to be limited to the torus.

\subsection{Outline of the paper}\label{sec:outline}

The solution to equation \eqref{eq:NLS-randomized}, postulated
by \Cref{thm:intro-deterministic-controlled-well-posedness}, is a fixed point of the
iteration map \(u\mapsto\mbb{I}_{\rf}(|u|^{3}u)\), defined in
\eqref{eq:duhamel}, which can be found by Picard iterations 
starting from an initial guess, for example
\(u_{0}(t,x)=0\). It is immediate to check that
\[
\begin{aligned}
&\mbb{I}_{\rf}(0)= z_{1}=e^{it\Delta} f,\qquad\mbb{I}_{f}\big(|z_{1}|^{2}z_{1}\big)=z_{1}+z_{3}.
\end{aligned}
\]
For the next step, one has
\[
\mbb{I}_{f}\big(|z_{1}+z_{3}|^{2}(z_{1}+z_{3})\big)\neq z_{1}+ z_{3} + z_{5} \,.
\]
However, for any
\(n\in\N\) the \(n\)-th iteration of the mapping
\(u\mapsto\mbb{I}_{\rf}(|u|^{3}u)\) starting with \(u=0\) is a linear combination
of \(k\)-linear, \(k\leq3^{n-1}\), operators \(R_{\tau}\) applied to the initial data
\(f\). Each operator \(R_{\tau}\) is characterized by a ternary tree \(\tau\), 
which are introduced in the first part of \Cref{sec:multilinear-probabilistic-estimates}. We remark that the order of multilinearity $k$ of \(R_{\tau}\) is the number of leaves of the ternary tree \(\tau\).
Adding together all terms of a fixed multilinear order \(k\), that is,  all 
\(R_{\tau}\) with \(\tau\) having $k$ leaves, we
obtain \(\rz_{k}\) described inductively in \eqref{eq:def:zk-randomized}. 

The main novelty of the present paper is the introduction of a family
of new directional space-time norms \eqref{eq:def:directional-norm} to
control the multilinear correction terms \(z_{k}\) and the remainder
term \(u^{\#}_{M}\).   A special case of our directional spaces were already used in 
\cite{casterasAlmostSureLocal2022} and there, they were inspired by \cite{dodsonAlmostSureLocal2019} that were, in turn, motivated by \cite{ionescuLowregularitySchrodingerMaps2006,ionescuLowregularitySchrodingerMaps2007}. We stress that our spaces are more general and to authors' knowledge were not yet used in the literature. Then, we define an appropriately weighted
combination of the directional space-time norms and classical
Strichartz norms on each Littlewood-Paley projection (see
\eqref{eq:LP-projection}) to define two families of norms \(X^{s}\) and \(Y^{s}\) indexed by
a regularity parameter \(s\in\R\) (see \eqref{eq:def:X-norm} and
\eqref{eq:def:Y-norm}). The spaces \(Y^{s}\) are well-suited to control
the explicit terms \(R_{\tau}[f,\ldots,f]\) and \(z_{k}\), while the norms
\(X^{s}\) are used to control the remainder term \(u^{\#}_{M}\) arising
from the decomposition \eqref{eq:solution-decomposition}. To relate our results to a more classical setting, we establish that the boundedness in \(X^{s}\) or
\(Y^{s}\) implies boundedness in \(C^{0}\big([0,T];H^{s}_{x}(\R^{d})\big)\) (see
\Cref{prop:linear-nonhomogeneous-continuity-bound}).

To prove \Cref{thm:intro-deterministic-controlled-well-posedness} we look for a solution
of the form \eqref{eq:solution-decomposition} for \(M\) such that
\(\mu(M+1,k)>\xs_{c}\). The function \(u\) is a solution, that is,  a fixed point
of \eqref{eq:duhamel} if and only if \(u^{\#}_{M}\) is a fixed point of
the map (see \eqref{eq:duhamel} for the definition of $\mbb{I}_{f}$)
\[\eqnum\label{eq:remainder-iteration-map}
\begin{aligned}[t]
\mc{J}_{z,M}(u^{\#}_{M})
=&\mbb{I}_{0}\big(\Phi_{z_{\leq  M}}[u^{\#}_{M}]+[z,z,z]_{>M}\big)
\\
& \eqd \mp i\int_{0}^{t}e^{i(t-s)\Delta}\big(\Phi_{z_{\leq
    M}}[u^{\#}_{M}]+[z,z,z]_{>M}\big)\dd s,
\end{aligned}
\]
where 
\[\eqnum\label{eq:def:z-sum}
z_{\leq M}\eqd\sum_{k\leq M} z_{k}\,,
\qquad
[z,z,z]_{> M}\eqd\sum_{\mcl{\qquad \substack{ k_{1}+k_{2}+k_{3}> M\\ k_{j}\leq M}}} z_{k_{1}}\bar{z_{k_{2}}} z_{k_{3}}\,,
\]
and
\[\eqnum\label{eq:remainder-cubic-nonlinearity}
\begin{aligned}
\Phi_{z_{\leq M}}[u^{\#}_{M}] & \eqd\big|z_{\leq M}+u^{\#}_{M}\big|^{2}\big(z_{\leq M}+u^{\#}_{M}\big)-\big|z_{\leq M}\big|^{2}z_{\leq M}.
\end{aligned}
\]
We show that \(u^{\#}_{M}\mapsto\mc{J}_{z,M}(u^{\#}_{M})\) is a contraction on
sufficiently small bounded sets of \(X^{\xs}\big([0,T)\big)\). The smallness
can be guaranteed if one chooses \(T>0\) to be small enough.

We prove this in two steps. First, assuming that
\(u^{\#}_{M}\in X^{\xs}\big([0,T)\big)\), we estimate the non-linearity
\[
h\eqd\big(\Phi_{z_{\leq M}}[u^{\#}_{M}]+[z,z,z]_{>M}\big)
\]
in the space \(X^{*,\xs+\epsilon}\big([0,T)\big)\), which is formally
dual to \(X^{\xs}\big([0,T)\big)\) (see \eqref{eq:def:X-norm-dual}). We show that 
the map 
\[
u^{\#}_{M}\mapsto\big(\Phi_{z_{\leq M}}[u^{\#}_{M}]+[z,z,z]_{>M}\big)
\]
from \(X^{\xs}([0,T))\) to \(X^{*,\xs+\epsilon}([0,T))\) has small Lipshitz
constant if \(T>0\) is chosen small enough. A minor modification shows
that shows that this is the case also if \(T=+\infty\) and the initial
condition and \(u^{\#}_{M}\) have small norms.  The required bound on
\[
\Big\|\big(\Phi_{z_{\leq M}}[u^{\#}_{M}]+[z,z,z]_{>M}\big)\Big\|_{X^{*,\xs+\epsilon}([0,T))}
\]
is obtained through bilinear Strichartz estimates in
\Cref{lem:bilinear-2-2-bound}.

Second we show that for any $h\in X^{*,\xs+\epsilon}([0,T))$ it holds that 
\[\eqnum\label{eq:intro-duhamel-bound}
\Big\|\mp i\int_{0}^{t}e^{i(t-s)\Delta}h(s,\cdot)\dd s\Big\|_{X^{\xs}\big([0,T)\big)}\lesssim \|h\|_{X^{*,\xs+\epsilon}\big([0,T)\big)} \,.
\]
The estimate \eqref{eq:intro-duhamel-bound} follows from duality and a
Christ-Kiselev type argument
\cite{christMaximalFunctionsAssociated2001} once we establish the
linear theory for the flow \(f\mapsto e^{it\Delta}f\) in the directional spaces
\(X^{s}\big([0,T)\big)\).

This implies the global well-posedness result contained in
\Cref{thm:random-scattering}. 

The role of bilinear Strichartz estimates in the study of the
nonlinear Schrödinger equation is widely recognized. Our directional
spaces crucially allow us to deduce
\(L^{2}_{t,x}\times L^{2}_{t,x}\mapsto L^{2}_{t,x}\) Strichartz estimates with a
gain of up to \(\sfrac{1}{2}\) derivative as long as one of the
functions is randomized, or more generally, bounded in the \(Y^{S'}\)
norm.  The proof of bilinear Strichartz estimates
\Cref{lem:bilinear-2-2-bound} for functions localized on fixed
Littlewood Paley annuli is surprisingly simple.  One of the major
technical steps of the proof consists of combining these localized
estimates by summing over all dyadic frequency bands.  We refer to
\Cref{prop:trilinear-estimate} for these details. We believe the study
of such directional spaces has independent interest for multilinear
Fourier extension estimates in Harmonic analysis. We recently learned
of a directional norms, similar to ours, were independently studied by
Beltran and Vega in \cite{beltranBilinearIdentitiesInvolving2020}.

As already mentioned above, the previous works
\cite{benyiProbabilisticCauchyTheory2015,benyiHigherOrderExpansions2019,shenAlmostSureWellposedness2021}
used variants of Bourgain spaces, however they seem to
provide suboptimal regularity estimates in higher dimensions.  In \cite{casterasAlmostSureLocal2022} we already estimated
\(\Phi_{\rz_{1}}[u^{\#}_{M}]+|z_{1}|^{2}z_{1}\) in a
dual space analogous to \(X^{*,s}([0,T))\) and, although it did not improve the
result of \cite{dodsonAlmostSureLocal2019}, it streamlined the proof and allowed for extensions to other dimensions and more general differential operators.
 In the current manuscript, we substantially extend the
spaces \(X^{s}\) and \(Y^{s}\) from \cite{casterasAlmostSureLocal2022} by introducing a larger
range of integrability exponents, allowing for
\(\mf{c}\notin\{2,\infty\}\) in \eqref{eq:def:directional-norm} and thus in
\Cref{lem:dir-maximal} and in \Cref{lem:dir-local-smoothing}. This is
essential for obtaining sharp regularity estimates on \(\rz_{k}\).

Our second central result and the second technical challenge is
proving the regularity estimates on the multilinear correction terms
\(\rz_{k}\) postulated in
\Cref{thm:random-multilinear-classical-bounds}.  We want to estimate
\(\rz_{k}\) in \(Y^{\mu(k,S)}\) norms, which are natural for solution of
\eqref{eq:NLS-deterministic} with the initial condition
\(f\in H^{S}(\R^{d})\) having compact Fourier support with unit radius.
Heuristically, the functions \(\rz_{k}\) cannot have more than
$\mu(k,S)$ degrees of regularity, since for initial data of the form
\(\FT{f}(\xi)=\1_{B_{1}(k)}(\xi)\) for some \(k\in\Z^{d}\), the Wiener randomization
does not provide any regularity improvement (see
\Cref{sec:randomization}).  However, the proof that stochastic initial
conditions behave like ones with unit-radius Fourier support requires
efficient bookkeeping provided by ternary trees, as described in
\Cref{sec:multilinear-probabilistic-estimates}. We then inductively
use deterministic bounds together with a multi-parameter stochastic
chaos estimate \Cref{lem:hypercontractivity} to prove the regularity
estimates of \Cref{thm:random-multilinear-directional-bounds}.

Finally, the scattering results of
\Cref{thm:intro-deterministic-controlled-well-posedness} follow
directly from the fact that a solution \(u\) is fixed point of
\eqref{eq:duhamel} and from a dual estimate to the linear flow
\(f\mapsto e^{it\Delta}f\). However, as detailed in \Cref{cor:graded-scatter}, our
results also provide a new perspective on scattering. Indeed, our
methods show that the classical scattering holds, that is, the
nonlinear dynamics approaches the linear ones, in $H^S_x(\R^d)$, a very
coarse norm. However,
\Cref{thm:intro-deterministic-controlled-well-posedness} and
\Cref{cor:graded-scatter} contain more precise information as they
provide an explicit expansion of the scattering data in terms of the
initial conditions, with a more regular remainder term.

\subsubsection*{Organization of the paper}

In \Cref{sec:linear-preliminaries}, we introduce directional space-time norms and review
the Littlewood-Paley decomposition and Bernstein's inequality. We also
recall classical Strichartz estimates for the flow of the linear
Schrödinger evolution \(f\mapsto e^{it\Delta}f\). We conclude  the section by proving two
crucial estimates for the linear Schrödinger evolution
\(f\mapsto e^{it\Delta}f\) in terms of directional spaces: the directional maximal
function estimate \Cref{lem:dir-maximal} and the directional smoothing
estimate \Cref{lem:dir-local-smoothing}. 

In \Cref{sec:nonhomogeneous-estimates} we define the spaces
$X^\xs$, $Y^S$, and \(X^{*,\xs}\) and prove estimates on the solution of
the linear non-homogeneous Schrödinger equation in these
spaces. Equivalently, such bounds provide estimates on the Duhamel
iteration map \eqref{eq:duhamel}. In
Propositions \ref{prop:linear-nonhomogeneous-continuity-bound} and \ref{prop:adjoint-bound}, we obtain dual
bounds for linear flow, and mapping properties of the Duhamel iteration map \eqref{eq:duhamel} into classical spaces
\(C^{0}\big([0,T),H^{S}_{x}(\R^{d})\big)\). These estimates are respectively  needed to prove scattering, and the uniqueness of solutions, relating them to more classical continuous-in-time \(L^{2}\)-in-space -based notions. 

\Cref{sec:multilinear-estimates} is devoted to the proof of bilinear
and multilinear estimates. We establish boundedness of pointwise
products of space-time functions in terms of the norms $X^\xs$, $Y^S$,
and \(X^{*,\xs}\).

In \Cref{sec:deteministic-fixed-point} contains the proof of the deterministic \Cref{thm:intro-deterministic-controlled-well-posedness} using a fixed point argument. 

In \Cref{sec:multilinear-probabilistic-estimates} is dedicated to
probabilistic estimates and to the proof of
\Cref{thm:random-multilinear-classical-bounds}. The section begins by
defining ternary trees, an efficient bookkeeping tool to track the
dependence of the multilinear correction terms \(\rz_{k}\) on the
initial data. In this section we also state the multilinear chaos
estimate that is the starting point of our probabilistic
analysis. While this result is known, especially in the case of Gaussian random variables, we provide a self-contained elementary proof in \Cref{sec:multilinear-chaos-appendix}.

Finally, in \Cref{sec:conclusion}, we combine the deterministic
results with the probabilistic estimates to prove
\Cref{thm:random-local-wellposedness} and
\Cref{thm:random-multilinear-classical-bounds}.

\subsection{Notation}\label{sec:notation}

\begin{itemize}
\item For two expressions $\LHS{}$ and $\RHS{}$ we write
$\LHS{} \lesssim\! \RHS{}$ if there exists a constant $C > 0$ depending only
on the parameters of the problem such that $\LHS{} \leq C \RHS{}$. For
example, $C$ can always depend on the dimension \(d\). If we want to
emphasize, that \(C\) may depend on a parameter \(\epsilon\) we write
$\LHS{} \lesssim_{\epsilon} \RHS{}$. Often \(\LHS\) and
\(\RHS{}\) are norms: then the implicit constant cannot depend on any
functions appearing in the bound. 

\item We write $\LHS{} \approx\! \RHS{}$, if $\LHS{} \lesssim\! \RHS{}$ and $\RHS{} \lesssim\! \LHS{}$.

\item If \(\RHS\) is non-negative and infinite, then \(\LHS{}\lesssim\!\RHS{}\) is true by default.

\item The Fourier transform of the function $f$ is denoted by 
\[
\mathcal{F} (f) (\xi) = \FT{f} (\xi) \eqd \int_{\R^{d}} f (x) e^{- 2 \pi i \xi x} \dd x. 
\]
The dimension $d$ is deduced from context. The Fourier inversion
formula holds:
\[
f (x) = \int_{\R^{d}} \FT{f} (\xi) e^{2 \pi i \xi x} \dd x .
\]
\item When dealing with a function of multiple variables
\(\R^{d_{1}}\times\R^{d_{2}}\ni (x,y)\mapsto f(x,y)\) we emphasize the variable in
which we are taking the Fourier transform by using the notation
\[
\Fourier_{y}(f)(x,\eta)=\int_{\R^{d_{2}}}f(x,y)e^{-2\pi i \eta y }\dd y.
\]
For functions \(f: I\times\R^{d}\to\C\) that depend on a ``time'' and ``space''
variables the Fourier transform is taken only in the space variable:
\[
\FT{f}(t,\xi)\eqd\int_{\R^{d}}f(t,x)e^{-2\pi i\xi x}\dd x.
\]

\item The symbol $O (\epsilon)$ stands for any function
$[0, 1) \rightarrow \R$ such that
\(|O (\epsilon) | \lesssim \epsilon \) for all $\epsilon \in (0, 1]$ with a constant uniform in
$\epsilon$.  The specific function intended by $O (\epsilon)$ can change from line
to line.

\item The ball of radius $r$ and center $x$ is denoted by $B_{r} (x)$;
if $x = 0$ we simply write $B_{r}$. The dimension of the ball is to be
understood from context.

\item For $p \geq 1$, $p'$ stands for the dual of $p$, that is, $\frac{1}{p'} + \frac{1}{p} = 1$.

\item The notation $\langle x \rangle \eqd (1 + |x|^{2})^{\frac{1}{2}}$ is the Japanese bracket.

\item For $\sigma \in\R$, we denote
$\langle \Delta \rangle^{\sigma / 2}$ the operator with the Fourier multiplier
$(1 + |2 \pi \xi |^{4})^{\sfrac{\sigma}{4}}$, that is,
$\mathcal{F} (\langle \Delta \rangle^{\sigma / 2} f) (\xi) = \langle 2 \pi |\xi|^{2} \rangle^{\sigma / 2} \FT{f} (\xi)$. Then,
$H^{\sigma}_{x} (\R^{d})$ denotes the Sobolev space endowed with the norm
\[\eqnum\label{eq:Sobolev-spaces}
\|u\|_{H^{\sigma} (\R^{d})}^{2} = \| \langle \Delta \rangle^{\sigma / 2} u (x)\|_{L^{2} (\R^{d})} .
\]

\item We denote by $\1_{A}$ the characteristic function of a set $A$,
that is, $\1_{A} (x) = 1$ if $x \in A$ and $\1_{A} (x) = 0$ otherwise. In
addition, if, for example, $x > y$ then we write $\1_{x > y}$ to
indicate the function that is equal to $1$ when $x > y$ and vanishes
otherwise. The variable of the function is to be deduced from context.

\item We denote by $\spt(f) \eqd \opclo\left( \{x \in \R^{d} : |f (x) | \neq 0\}
\right)$ the support of the function $f$, where $\opclo (A)$ denotes the closure of the set $A$.

\item We denote by $\diam (A) \eqd \sup_{x, y
  \in A} |x - y|$ the diameter of a set $A \subset \R^{d}$.

\item Given two sets \(A,B\subset\R^{d}\) the sum  and difference sets \(A\pm B\) are given by
\[
A \pm B := \{a + b: a \in A, b \in B\}\,.
\]

\end{itemize}

\section{The linear evolution}\label{sec:linear-preliminaries}

The solution of the linear Schrödinger equation 
\[\eqnum\label{eq:linear-schroedinger}
\begin{cases} 
(i\partial_{t} + \Delta) u = 0 & \text{ on } [0,T)\times \R^{d} , 
\\ 
u(0,x)=f(x)
\end{cases}
\]
with \(f\in L^{2}(\R^{d})\) is given by the linear Schrödinger evolution
group \(u(t,x)\eqd e^{it\Delta}f(x)\), which can be expressed using the Fourier
inversion formula as follows:
\[\eqnum\label{eq:schroedinger-propagation-group}
e^{it\Delta} f(x) \eqd \int_{\R^{d}}e^{2\pi i\xi x-4 \pi^{2} it |\xi|^{2} }\FT{f}(\xi) \dd\xi \,.
\]

In this section we introduce function spaces to bound solutions to
\eqref{eq:linear-schroedinger}. It is straightforward to see that the
map \(f\mapsto(t\mapsto e^{it\Delta}f)\) is continuous from
\(H^{\sigma}(\R^{d})\) to
\(C^{0}(\R;H^{\sigma}_{x}(\R^{d}))\cap C^{1}(\R;H^{\sigma-2}_{x}(\R^{d}))\) (see e.g. the
proof of \Cref{prop:linear-nonhomogeneous-continuity-bound}). However,
the space-time norms in this section are better suited for controlling
the dispersive nature of the linear evolution.

We begin with a review of Strichartz estimates for solutions to
\eqref{eq:linear-schroedinger}. To capture the regularity properties
of our solutions we use a Littlewood-Paley decomposition (dyadic
frequency annuli decomposition). We review the definitions and recall
basic results like Bernstein's inequality. Finally, we introduce
directional space-time norms, which are the main functional framework
for our paper and we prove the estimates for the linear evolution
\eqref{eq:linear-schroedinger} in terms of these norms.

A space-time norm is any norm that involves integrals in both the
temporal (\(t\)) and spatial (\(x\)) variables. We refer to the space-time norms
\[
\|u\|_{L_{t}^{p} L_{x}^{q} (I)}\eqd\Big(\int_{I}\|u(t,\cdot)\|_{L^{q}(\R^{d})}^{p}\dd t\Big)^{\frac{1}{p}}
\]
as Strichartz norms. We say that a pair of exponents 
$(p,q)$ is Strichartz-admissible if
\[\eqnum\label{eq:def:admissible}
\frac{2}{p} + \frac{d}{q} = \frac{d}{2}, \qquad q \in \Big[ 2, \frac{2 d}{d  - 2} \Big),\; p \in \big(2, \infty\big],
\]
and we call admissible those Strichartz norms that have such pairs of
integration exponents. For space-time norms, we omit \(\R^{d}\) from the
notation and emphasize only the time interval \(I \subseteq \R\).

\begin{lemma}[{\cite{strichartzRestrictionsFourierTransforms1977,ginibreSmoothingPropertiesRetarded1992,keelEndpointStrichartzEstimates1998}}] \label{lem:strichartz}
If $(p, q)$ be a Strichartz-admissible pair (in the sense of
\eqref{eq:def:admissible}), then for any \(f\in L^{2}(\R^{d})\) we have
that
\[\eqnum\label{eq:strichartz}
\| e^{ it \Delta} f \|_{L_{t}^{p} L_{x}^{q} (\R)} \lesssim \|f\|_{L^{2}(\R^{d})} \,,
\]
where the implicit constant does not depend on \(I\) or $f$.
\end{lemma}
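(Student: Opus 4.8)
The statement is the classical non-endpoint Strichartz estimate, and the admissibility range in \eqref{eq:def:admissible} deliberately excludes both endpoints ($p=2$, equivalently $q=\frac{2d}{d-2}$), so the Keel--Tao bilinear interpolation machinery is not needed; the plan is to use the standard $TT^{*}$ argument together with the one-dimensional Hardy--Littlewood--Sobolev inequality. First I would record the two basic bounds on the propagator $U(t)\eqd e^{it\Delta}$. From \eqref{eq:schroedinger-propagation-group} and Plancherel, the Fourier multiplier $e^{-4\pi^{2}it|\xi|^{2}}$ has modulus $1$, so $U(t)$ is unitary on $L^{2}_{x}$: $\|U(t)f\|_{L^{2}_{x}}=\|f\|_{L^{2}_{x}}$. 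Writing $U(t)f$ as convolution with the rescaled Gaussian kernel $K_{t}(x)=c_{d}\,t^{-d/2}e^{i|x|^{2}/(4t)}$, whose modulus satisfies $|K_{t}(x)|\lesssim|t|^{-d/2}$, gives the dispersive estimate $\|U(t)f\|_{L^{\infty}_{x}}\lesssim|t|^{-d/2}\|f\|_{L^{1}_{x}}$ for $t\neq0$. Riesz--Thorin interpolation between these two bounds yields, for every $q\in[2,\infty]$,
\[
\|U(t)f\|_{L^{q}_{x}}\lesssim|t|^{-d\left(\frac12-\frac1q\right)}\|f\|_{L^{q'}_{x}},\qquad t\neq0.
\]

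Next I would run the $TT^{*}$ argument. Let $T\colon L^{2}_{x}\to L^{p}_{t}L^{q}_{x}(\R)$ be given by $Tf=U(t)f$. By the abstract $TT^{*}$ lemma it suffices to bound $TT^{*}F(t)=\int_{\R}U(t-s)F(s)\,\dd s$ as an operator from $L^{p'}_{t}L^{q'}_{x}(\R)$ to $L^{p}_{t}L^{q}_{x}(\R)$, since $\|T\|_{L^{2}_{x}\to L^{p}_{t}L^{q}_{x}}^{2}=\|TT^{*}\|_{L^{p'}_{t}L^{q'}_{x}\to L^{p}_{t}L^{q}_{x}}$. Minkowski's integral inequality together with the interpolated dispersive bound above gives
\[
\|TT^{*}F(t)\|_{L^{q}_{x}}\lesssim\int_{\R}|t-s|^{-d\left(\frac12-\frac1q\right)}\|F(s)\|_{L^{q'}_{x}}\,\dd s.
\]
Setting $\gamma\eqd d\big(\frac12-\frac1q\big)$, the admissibility identity $\frac2p+\frac dq=\frac d2$ is precisely $\gamma=\frac2p$, and the range $q\in[2,\frac{2d}{d-2})$ forces $\gamma\in[0,1)$; moreover $\frac1p=\frac1{p'}-(1-\gamma)$. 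When $0<\gamma<1$, i.e. $q>2$ and $p<\infty$, the one-dimensional Hardy--Littlewood--Sobolev inequality applied to the scalar function $s\mapsto\|F(s)\|_{L^{q'}_{x}}$ gives exactly $\big\|\int_{\R}|t-s|^{-\gamma}\|F(s)\|_{L^{q'}_{x}}\,\dd s\big\|_{L^{p}_{t}}\lesssim\big\|\,\|F(\cdot)\|_{L^{q'}_{x}}\,\big\|_{L^{p'}_{t}}$, which is the desired bound on $TT^{*}$. The only remaining case $\gamma=0$, i.e. $q=2$ and $p=\infty$, is immediate from unitarity: $\|U(t)f\|_{L^{\infty}_{t}L^{2}_{x}}=\|f\|_{L^{2}_{x}}$. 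Since the bound is independent of the time interval, restricting to any $I\subseteq\R$ only decreases the left-hand side.

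There is essentially no serious obstacle here, precisely because the endpoints are excluded; the only points requiring care are routine. One must justify the $TT^{*}$ duality and the Fubini/Minkowski exchanges by first proving the inequality for $f$ Schwartz (so that all the integrals converge absolutely) and then passing to general $f\in L^{2}_{x}$ by density, and one must check that the Lebesgue exponents in the Hardy--Littlewood--Sobolev step satisfy $1<q'<q<\infty$ and $0<\gamma<1$ — but all of these are exactly encoded in the admissibility condition \eqref{eq:def:admissible}. Had the endpoint $(p,q)=(2,\frac{2d}{d-2})$ been included, the Hardy--Littlewood--Sobolev step would fail at $\gamma=1$ and the genuinely hard Keel--Tao bilinear interpolation argument would be needed; it is not required for the range stated here.
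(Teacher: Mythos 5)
The paper does not prove this lemma at all — it simply cites Strichartz, Ginibre–Velo, and Keel–Tao — and your argument is the standard non-endpoint proof (unitarity plus the dispersive bound, Riesz–Thorin, the $TT^{*}$ reduction, and one-dimensional Hardy–Littlewood–Sobolev), which is correct for the range \eqref{eq:def:admissible} since $p>2$ and $q<\frac{2d}{d-2}$ strictly, with the case $(p,q)=(\infty,2)$ handled by unitarity. One trivial nit: the exponent condition to verify in the Hardy–Littlewood–Sobolev step concerns the time exponents, $1<p'<p<\infty$ with $\gamma=\frac{2}{p}\in(0,1)$, not $1<q'<q<\infty$ as you wrote; this holds automatically in the stated range, so the proof stands.
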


Littlewood-Paley projections $\LP_{N}$, $N \in 2^{\N}$ isolate the
behavior of a function at frequencies of order \(N\). They provide us
with control over the differentiability properties of the solution to
\eqref{eq:linear-schroedinger} at different scales. Note that the
Littlewood-Paley projections $\LP_{N}$ below are different from the unit
scale projection $\QP_{n}$, introduced in
\eqref{eq:def:unit-scale-projection}.

To define \(\LP_{N}\) we fix a real valued cutoff function
$\phi \in C_{c}^{\infty} (B_{1})$ with $\phi (\xi) = 1$ on \(B_{1- 2^{- 100}}\) and set
\[\eqnum\label{eq:def:LP-bump}
\phi_{N} (\xi) \eqd \begin{dcases}
\phi (\xi) & \text{if } N = 1,
\\
\phi \Big( \frac{\xi}{N} \Big) - \phi \Big( \frac{\xi}{N / 2} \Big) & \text{if } N \in 2^{\N\setminus\{0\}},
\end{dcases}
\]
so that for each $\xi \in \R^d$ it holds that
\(\sum_{N \in 2^{\N}} \phi_{N} (\xi) = 1\).  For $N > 1$ we have
\[\eqnum\label{eq:LP-bump-geometry}
\spt \phi_{N} \subset \overline{B_{N}\setminus B_{(1-2^{-100})N/2}},\qquad \phi_{N}(\xi)=1 \text{ on } B_{(1-2^{-100})N}\setminus B_{N/2}.
\]
The Littlewood-Paley (approximate) projections are then obtained by setting
\[\eqnum\label{eq:LP-projection}
\FT{\LP_{N} f} (\xi) \eqd \phi_{N} (\xi) \FT{f} (\xi) 
\qquad \text{or, equivalently,} \qquad 
\LP_{N}f=f*\FT{\phi_{N}} \,.
\]
Consequently, \(f=\sum_{N\in2^{\N}}\LP_{N}f\) with convergence in
\(H^{\sigma}_{x}(\R^{d})\) for \(f\in H^{\sigma}_{x}(\R^{d})\), as can be seen on the
Fourier side using Lebesgue dominated convergence. In this manuscript
we also introduce modified Littlewood-Paley projections
\(\tilde{\LP}_{N}\), \(N\in2^{\N}\), defined as
\[\eqnum\label{eq:def:LP-modified}
\tilde{\LP}_{N}=\sum_{\mcl{\substack{N'\in2^{\N}\\
\sfrac{1}{8}< \sfrac{N'}{N} <8}}}\LP_{N'},
\]
so that, conveniently,
\(\tilde{\LP}_{N}\LP_{N}=\LP_{N}\tilde{\LP}_{N}=\LP_{N}\).  One can also express
\(\tilde{\LP}_{N}f\) as a convolution operator:
\[
\tilde{\LP}_{N}f=f*\FT{\chi_{N}} \quad \text{with} \quad \chi_N (\xi) \eqd
\sum_{\mcl{\substack{N'\in2^{\N}\\\sfrac{1}{8}< \sfrac{N'}{N} <8}}}\phi_{N'} (| \xi |).
\]
Note that \(\|\FT{\phi_{N}}\|_{L^{1}}\lesssim1\) and
\(\|\FT{\chi_{N}}\|_{L^{1}}\lesssim1\) with an implicit constant independent of
\(N\), so the Littlewood-Paley projections \(\LP_{N}\) and
\(\tilde{\LP}_{N}\) behave like averaging operators at the scale \(N^{-1}\).

\begin{remark}
The evolution group $e^{it \Delta}$ commutes with the projections
$\LP_{N}$, $N\in2^{\N}$, and $\QP_{n}$, $n\in \Z^{d}$ (and, more
generally, with Fourier multiplier operators). Thus,
\eqref{eq:strichartz} also holds with $f$ replaced by $\LP_{N} f$ or
by $\QP_{n} f$ on both sides of the inequality.
\end{remark}

Bernstein's inequality allows one to control the Lebesgue norms with
higher exponents by ones with lower exponents as long as the support
in frequency of the estimated function has bounded diameter.
\begin{lemma}[Bernstein's inequality]\label{lem:bernstein}
For any $r_{1},r_{2}\in[1,\infty]$ with \(r_{2}\geq r_{1}\) it holds that
\[
\|f\|_{L^{r_{2}}_{x} (\R^{d})}
\lesssim
\diam\big(\spt(\FT{f})\big)^{ \frac{d}{r_{1}} - \frac{d}{r_{2}} }
\|f\|_{L^{r_{1}}_{x} (\R^{d})} .
\]

In particular, for any $k \in \Z^{d}$ we obtain
\[\eqnum\label{eq:unit-scale-bernstein}
\big\|\QP_{k} f\big\|_{L^{r_{2}} _{x}(\R^{d})} \lesssim \big\|\QP_{k} f\big\|_{L^{r_{1}}_{x}(\R^{d})},
\]
since $\diam \big( \spt (\FT{\QP_{k} f}) \big) \lesssim 1$, and
\[\eqnum\label{eq:LP-bernstein}
\big\|\LP_{N} f\big\|_{L^{r_{2}}_{x} (\R^{d})} \leq N^{d( \frac{1}{r_{1}} - \frac{1}{r_{2}} )} \big\|\LP_{N} f\big\|_{L^{r_{1}}_{x} (\R^{d})},
\]
since $\diam\big( \spt( \FT{\LP_{N} f})\big) \lesssim N$. The implicit constants are allowed to depend only on the dimension, \(r_{1}\), and \(r_{2}\).
\end{lemma}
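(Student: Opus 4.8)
The plan is to prove Bernstein's inequality first in the general form and then deduce the two special cases by estimating the diameter of the Fourier support.

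For the general statement, I would write $f = f * \check{\eta}$ where $\eta \in C_c^\infty$ is a bump function that equals $1$ on a ball containing $\spt(\FT{f})$ and is supported in a ball of comparable radius; concretely, if $\spt(\FT{f}) \subseteq B_R(\xi_0)$, set $\eta(\xi) = \phi((\xi - \xi_0)/R)$ with $\phi$ as in \eqref{eq:def:LP-bump}, so that $\FT{f} = \eta \FT{f}$ and hence $f = f * \check\eta$. The key computation is then a rescaling: $\|\check\eta\|_{L^r_x(\R^d)} \approx R^{d/r'}$ for every $r \in [1,\infty]$ (modulation by $\xi_0$ does not change the modulus, and the dilation by $R$ produces the factor $R^{d(1 - 1/r)} = R^{d/r'}$). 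Now apply Young's convolution inequality $\|g * h\|_{L^{r_2}_x} \lesssim \|g\|_{L^{r_1}_x}\|h\|_{L^{a}_x}$ with the exponent relation $1 + \frac{1}{r_2} = \frac{1}{r_1} + \frac{1}{a}$; this is solvable with $a \in [1,\infty]$ precisely because $r_2 \geq r_1$. With $g = f$, $h = \check\eta$, this gives
\[
\|f\|_{L^{r_2}_x(\R^d)} \lesssim \|f\|_{L^{r_1}_x(\R^d)} \|\check\eta\|_{L^a_x(\R^d)} \lesssim R^{d/a'} \|f\|_{L^{r_1}_x(\R^d)},
\]
and the exponent relation rearranges to $\frac{1}{a'} = 1 - \frac{1}{a} = \frac{1}{r_1} - \frac{1}{r_2}$, which is exactly the claimed power of $R$; since $R \approx \diam(\spt(\FT{f}))$ up to a dimensional constant, this proves the first inequality.

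The two displayed consequences are then immediate. For \eqref{eq:unit-scale-bernstein}, the function $\QP_k f$ has $\FT{\QP_k f}$ supported in $\spt(\psi(\cdot - k)) \subseteq \overline{B_1(k)}$, so $\diam(\spt(\FT{\QP_k f})) \leq 2$, and the factor $\diam^{d/r_1 - d/r_2} \lesssim 1$. For \eqref{eq:LP-bernstein}, the support of $\FT{\LP_N f}$ lies in $\overline{B_N}$ (for $N = 1$) or in the annulus $\overline{B_N \setminus B_{(1-2^{-100})N/2}}$ (for $N > 1$) by \eqref{eq:LP-bump-geometry}, so in both cases $\diam(\spt(\FT{\LP_N f})) \leq 2N$, giving the factor $N^{d(1/r_1 - 1/r_2)}$. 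One cosmetic point: the statement writes \eqref{eq:LP-bernstein} with an implicit constant $1$ rather than $\lesssim$; since the general bound produces a dimensional constant from Young's inequality and from the relation $\diam \leq 2N$, strictly one should either absorb this into $\lesssim$ or note that by a more careful choice of the Littlewood--Paley cutoffs (or by simply redefining $\phi_N$ on a slightly larger annulus) the constant can be taken to be $1$; I would just remark that the inequality holds with a constant depending only on $d, r_1, r_2$, consistent with the last sentence of the lemma.

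I do not expect a genuine obstacle here: Bernstein's inequality is standard, and the only thing to be careful about is bookkeeping the exponent arithmetic in Young's inequality and verifying that the diameter bounds for $\QP_k$ and $\LP_N$ follow from the definitions \eqref{eq:def:unit-scale-projection}, \eqref{eq:def:LP-bump}, and \eqref{eq:LP-bump-geometry} already recorded. The mildest subtlety is making sure the convolution kernel $\check\eta$ genuinely lies in $L^a_x(\R^d)$ for all $a \in [1,\infty]$ — this holds because $\eta \in C_c^\infty$, so $\check\eta$ is Schwartz — and that Young's inequality is applicable, which it is for the chosen range of exponents.
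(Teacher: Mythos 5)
The paper does not prove this lemma at all: Bernstein's inequality is invoked as a standard fact, so there is no in-paper argument to compare against. Your proof is the standard one (write $f=f*\check\eta$ for a rescaled bump $\eta$ equal to $1$ on the Fourier support, compute $\|\check\eta\|_{L^{a}}\approx R^{d/a'}$ by scaling, and apply Young's inequality with $1+\tfrac{1}{r_2}=\tfrac{1}{r_1}+\tfrac{1}{a}$, which is admissible exactly because $r_2\geq r_1$), and the exponent bookkeeping $\tfrac{1}{a'}=\tfrac{1}{r_1}-\tfrac{1}{r_2}$ is correct, as are the diameter bounds for $\QP_k$ and $\LP_N$ drawn from \eqref{eq:def:unit-scale-projection} and \eqref{eq:LP-bump-geometry}. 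One small repair to your concrete choice: with $\phi$ as in \eqref{eq:def:LP-bump}, the function $\eta(\xi)=\phi((\xi-\xi_0)/R)$ equals $1$ only on $B_{(1-2^{-100})R}(\xi_0)$, so to guarantee $\eta\FT f=\FT f$ when $\spt(\FT f)\subseteq B_R(\xi_0)$ you should dilate by $2R$ (say), which only changes the constant. Your remark about the constant in \eqref{eq:LP-bernstein} is also apt: the argument yields a constant depending on $d,r_1,r_2$, consistent with the lemma's final sentence, and that is all that is ever used in the paper.
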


Finally, we introduce, directional space-time norms. These norms are designed to capture directional behavior of solutions to
\eqref{eq:linear-schroedinger}. For any $l \in \{1, \ldots, d\}$ we decompose
$x \in \R^{d}$ as
\[
x = x_{l} e_{l} + \sum_{i = 1, i \neq l}^{d} x_{i} e_{i} = : x_{l} e_{l} + x'_{l} .
\]
Given a time interval $I \subseteq \R$ and a fixed coordinate direction
$l \in \{1, \ldots, d\}$, the directional space-time norms
$ (\mf{a}, \mf{b}, \mf{c}) \in[1,\infty)^{3}$ are given by
\[\eqnum\label{eq:def:directional-norm}
\|h\|_{L_{e_{l}}^{\left( \mf{a}, \mf{b}, \mf{c} \right)} (I)}
=
\Big( \int_{\R} \Big( \int_{I} \Big( \int_{\R^{d - 1}} |h (t, x_{l} e_{l} + x_{l}') |^{\mf{c}}  \dd {x_{l}'} \Big)^{\frac{\mf{b}}{\mf{c}}} \dd t \Big)^{\frac{\mf{a}}{\mf{b}}}  \dd x_{l} \Big)^{\frac{1}{\mf{a}}} \,,
\]
where \(x_{l}^{'}\in\R^{d-1}\) is identified with a vector of
\(\R^{d}\) with \(0\) for its \(l\)-th component.  If
$\{ \mf{a}, \mf{b}, \mf{c} \} \ni \infty$ we use the standard modifications by the
essential supremum norm. We refer to directional norms
$L_{e_{l}}^{(\mf{a}, \infty, \mf{c} )} (I)$ as ``directional maximal norms''
because of the supremum in \(t\in I\).  When \(\mf{b} = 2\) we refer to the
norms as ``directional smoothing norms'' due to the negative power of
\(N\) appearing below in \eqref{eq:dir-local-smoothing}, which manifests
dampening of high oscillations, or smoothing.

The following lemmata are generalizations of \cite[Lemma 2.4 and Lemma
2.5]{casterasAlmostSureLocal2022} and they establish bounds on solutions to
\eqref{eq:linear-schroedinger} in terms of directional norms from
\eqref{eq:def:directional-norm}. \Cref{lem:dir-maximal} is
 a directional estimate maximal estimate, whereas
\Cref{lem:dir-local-smoothing} is a smoothing
result.

\begin{lemma}\label{lem:dir-maximal} Fix $N \in 2^{\N}$, $l \in \{ 1, \ldots, d \}$, and $\mf{c}\in(\mf{c}_{0},\infty]$ with $\mf{c}_{0}\eqd2\frac{d-1}{d-2}$. For any $f \in L^{2} (\R^{d})$ we have that
\[\eqnum\label{eq:dir-maximal}
\big\|e^{it \Delta} \LP_{N} f\big\|_{L_{e_{l}}^{( 2, \infty, \mf{c})} (\R)}
\lesssim_{\mf{c}}
N^{\frac{1}{2}+\big(\frac{d-2}{2}-\frac{d-1}{\mf{c}}\big)} \|\LP_{N} f\|_{L ^{2} (\R^{d})} \,.
\]
Furthermore, if \(\diam\big(\spt( \FT{f})\big)\leq 2R\), then for any \(\epsilon>0\) we obtain that
\[\eqnum\label{eq:dir-maximal-unit-scale}
\big\|e^{it \Delta} \LP_{N} f\big\|_{L_{e_{l}}^{( 2, \infty, \mf{c})} (\R)}
\lesssim_{R,\epsilon}
N^{\frac{1}{2}+\epsilon} \|\LP_{N} f\|_{L ^{2} (\R^{d})}.
\]
The implicit constants are allowed to depend on \(\mf{c}\), \(R>0\), and
\(\epsilon>0\), but are independent of \(N\) and the function \(f\).
\end{lemma}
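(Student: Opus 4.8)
The plan is to reduce the directional maximal estimate to a fixed-time/fixed-slice one-dimensional Strichartz-type bound in the distinguished direction $e_l$, exploiting the fact that the Schrödinger propagator $e^{it\Delta}$ factors over the coordinate directions. Write $\Delta = \partial_{x_l}^2 + \Delta_{x'_l}$ so that $e^{it\Delta} = e^{it\partial_{x_l}^2} e^{it\Delta_{x'_l}}$, the two factors commuting. First I would freeze the transverse variable: for the innermost norm in \eqref{eq:def:directional-norm}, which is $L^{\mf c}_{x'_l}$, I want to absorb the transverse propagator $e^{it\Delta_{x'_l}}$. The key observation is that for $\mf c > \mf c_0 = 2\frac{d-1}{d-2}$ the pair $(2,\mf c)$ is (super-)admissible for the $(d-1)$-dimensional Schrödinger equation: indeed $\frac{2}{2} + \frac{d-1}{\mf c} = 1 + \frac{d-1}{\mf c}$, and one checks $\mf c_0$ is exactly the exponent for which $\frac{2}{p}+\frac{d-1}{q} = \frac{d-1}{2}$ with $p=2$ would hold at the endpoint — so for $\mf c>\mf c_0$ we are strictly inside the admissible range in dimension $d-1$ (with room to spare), and the Strichartz estimate \Cref{lem:strichartz} applied in $\R^{d-1}$, together with Bernstein \Cref{lem:bernstein} to pay for the frequency localization at scale $N$, controls $\big\| e^{it\Delta_{x'_l}} (\cdot) \big\|_{L^2_t L^{\mf c}_{x'_l}}$ by $N^{(\frac{d-2}{2} - \frac{d-1}{\mf c})} \|\cdot\|_{L^2_{x'_l}}$, uniformly in $x_l$.

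Having dispensed with the transverse directions, what remains is a genuinely one-dimensional statement: for each frequency-localized profile in the $x_l$ variable, estimate $\big\| e^{it\partial_{x_l}^2} g \big\|_{L^\infty_t L^2_{x'_l}}$ in $L^2_{x_l}$ — that is, the $L^{(2,\infty,2)}_{e_l}$ norm of the 1D propagator. This is precisely the $1/2$-derivative gain one expects from the one-dimensional dispersive/local-smoothing phenomenon: $\big\| e^{it\partial_{x}^2} \LP_N g \big\|_{L^\infty_t L^2_x(\R)} \lesssim N^{1/2} \|g\|_{L^2(\R)}$ in one dimension (this is the dual Kenig–Ruiz / maximal-in-time statement on a single frequency annulus; it follows from a $TT^*$ argument and a stationary-phase bound on the kernel $\int e^{2\pi i x\xi - 4\pi^2 i t\xi^2} \phi_N(\xi)^2 \dd\xi$, or by interpolating the trivial $L^2$ bound with a localized smoothing estimate). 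Combining this $N^{1/2}$ factor with the transverse $N^{(\frac{d-2}{2}-\frac{d-1}{\mf c})}$ factor and using Minkowski to exchange the $L^2_{x_l}$ and $L^2_t$ orders where needed yields exactly the exponent $\frac12 + \big(\frac{d-2}{2} - \frac{d-1}{\mf c}\big)$ in \eqref{eq:dir-maximal}. This is essentially the generalization of \cite[Lemma 2.4]{casterasAlmostSureLocal2022} to non-$L^2$ transverse integrability, and the argument there should adapt with $L^2_{x'_l}$ replaced by $L^{\mf c}_{x'_l}$ throughout.

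For the second bound \eqref{eq:dir-maximal-unit-scale}, I would not re-run the dispersive machinery but instead interpolate. When $\spt(\FT f) \subseteq B_{2R}$, Bernstein lets one trade the $L^{\mf c}_{x'_l}$ norm for the $L^2_{x'_l}$ norm at a cost of $O_R(1)$ (the transverse frequency support has diameter $\lesssim R$, independent of $N$), reducing matters to the case $\mf c = 2$, which by \eqref{eq:dir-maximal} gives the exponent $\frac12 + \frac{d-2}{2} - \frac{d-1}{2} = \frac12 - \frac12 = 0$ — not quite what we want, since we need $N^{\frac12+\epsilon}$. The cleaner route: apply \eqref{eq:dir-maximal} directly with $\mf c$ chosen very large, namely so that $\frac{d-2}{2} - \frac{d-1}{\mf c} < \epsilon$, which is possible since the quantity tends to $\frac{d-2}{2}$ — wait, that is the wrong direction. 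Instead one should combine \eqref{eq:dir-maximal} at a suitable $\mf c$ with a Bernstein step in the transverse variable using the $O_R(1)$-diameter support: $\|e^{it\Delta}\LP_N f\|_{L^{(2,\infty,2)}_{e_l}} \lesssim_R \|e^{it\Delta}\LP_N f\|_{L^{(2,\infty,\mf c)}_{e_l}} \lesssim N^{\frac12 + (\frac{d-2}{2} - \frac{d-1}{\mf c})}\|\LP_N f\|_{L^2}$ — and here the target is \eqref{eq:dir-maximal-unit-scale} with general innermost exponent, so we want the exponent $\le \frac12+\epsilon$, i.e. $\frac{d-2}{2} - \frac{d-1}{\mf c} \le \epsilon$, achieved by taking $\mf c$ close enough to $\mf c_0$ from above (since at $\mf c = \mf c_0$ the quantity vanishes) and then paying $O_{R,\epsilon}(1)$ to pass back down to the desired exponent $\le \mf c$. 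The main obstacle I anticipate is keeping the order of the mixed norms straight through the Minkowski-inequality exchanges — specifically verifying that one may always pull the $L^2_{x_l}$ integration to the outside past the $L^\infty_t$ and $L^{\mf c}_{x'_l}$ norms in the right steps — and pinning down the precise admissibility bookkeeping so that $\mf c_0 = 2\frac{d-1}{d-2}$ emerges as the sharp threshold rather than something slightly off; the dispersive input itself is classical.
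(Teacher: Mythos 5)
Your treatment of the unit-scale bound \eqref{eq:dir-maximal-unit-scale} ends up, after some self-correction, at essentially the paper's argument: fiberwise Bernstein in $x'_l$ (cost $O_{R}(1)$, in the legitimate direction, from an auxiliary exponent $\tilde{\mf c}$ slightly above $\mf c_0$ up to $\mf c$) followed by \eqref{eq:dir-maximal} at $\tilde{\mf c}$, which yields $N^{\frac12+\epsilon}$. But that part rests on \eqref{eq:dir-maximal}, and your proposed proof of \eqref{eq:dir-maximal} has a genuine gap, in fact two.

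First, the factorization strategy is structurally unsound. The norm $L^{(2,\infty,\mf c)}_{e_l}$ is $L^2_{x_l}L^\infty_t L^{\mf c}_{x'_l}$: a single time variable sits between the longitudinal and transverse integrations, and $e^{it\Delta}f$ is not a tensor product of a function of $(t,x_l)$ and a function of $(t,x'_l)$ even though $e^{it\partial_{x_l}^2}$ and $e^{it\Delta_{x'_l}}$ commute. So you cannot first take an $L^2_tL^{\mf c}_{x'}$ norm of the transverse evolution ``uniformly in $x_l$'' and then separately run a one-dimensional maximal estimate in $(t,x_l)$; each step consumes the time variable, and no Minkowski exchange reassembles them into $L^2_{x_l}L^\infty_t L^{\mf c}_{x'}$. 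Second, the transverse ingredient itself is false in the relevant range. The endpoint Strichartz pair with $p=2$ in dimension $d-1$ is $\big(2,\tfrac{2(d-1)}{d-3}\big)$, not $(2,\mf c_0)$ with $\mf c_0=\tfrac{2(d-1)}{d-2}$; for $\mf c_0<\mf c<\tfrac{2(d-1)}{d-3}$ the pair $(2,\mf c)$ lies on the non-admissible side, and no estimate $\|e^{it\Delta_{x'}}h\|_{L^2_tL^{\mf c}_{x'}}\lesssim N^{\sigma}\|h\|_{L^2_{x'}}$ can hold for frequency-localized $h$: the dispersive decay $\|e^{it\Delta_{x'}}h\|_{L^{\mf c}_{x'}}\sim |t|^{-(d-1)(\frac12-\frac1{\mf c})}$ fails to be square-integrable at $t\to\infty$, and this divergence is not cured by any power of $N$. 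Since the spaces $X_N,Y_N$ use precisely $\mf c=\tfrac{\mf c_0}{1-\epsilon_0}$ just above $\mf c_0$, this is the heart of the range, not a boundary case. The paper instead proves \eqref{eq:dir-maximal} by a $TT^*$ argument on the operator $T_N f=e^{it\Delta}\tilde{\LP}_Nf$: one bounds the kernel $K_N(t,x_1,\cdot)$ as an operator on transverse slices in $\mathcal L(L^1,L^\infty)$ and $\mathcal L(L^2,L^2)$ via stationary phase, interpolates to $\mathcal L(L^{\mf c'},L^{\mf c})$ with decay $(1+N|x_1|)^{-\frac12-(d-1)(\frac12-\frac1{\mf c})}$, and then applies Young's inequality in $t$ and in $x_1$; the condition $\mf c>\mf c_0$ enters exactly as the integrability threshold in $x_1$, which is where the exponent $\mf c_0$ really comes from. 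Your one-dimensional input (correctly stated as the $L^2_{x}L^\infty_t$ maximal bound with gain $N^{1/2}$, not the $L^\infty_tL^2_x$ bound you wrote, which is trivial by unitarity) is a special case of this mechanism, but it cannot be glued to a transverse Strichartz step in the way you propose.
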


\begin{lemma} \label{lem:dir-local-smoothing} Fix
$N \in 2^{\N}$, $l \in \{ 1, \ldots, d \}$, and $\mf{c}\in[2,\infty]$. Define the directional frequency
cone projections as
\[\eqnum\label{eq:microlocal-projections}
\begin{aligned}
& \FT{\UP_{e_{l}} f} (\xi) \eqd \1_{\mf{U}_{e_{l}}} (\xi) \FT{f} (\xi),\quad \text{where}
\\
& \mf{U}_{e_{l}} \eqd \Big\{ \xi \in \R^{d} \st \frac{| \xi_{l} |}{| \xi |} > \frac{1}{2 \sqrt{d}} \Big\}
\setminus \bigcup_{l' = 1}^{l - 1} \Big\{ \xi \in \R^{d} \st \frac{| \xi_{l'} |}{| \xi |} > \frac{1}{2
  \sqrt{d}} \Big\}\,.
\end{aligned} 
\]
Then, for any $f \in L^{2} (\R^{d})$ we have that 
\[\eqnum\label{eq:dir-local-smoothing}
\| e^{ it \Delta} \LP_{N} \UP_{e_{l}} f \|_{L_{e_{l}}^{(\infty, 2, \mf{c})} (\R)}
\lesssim
N^{-\frac{1}{2}+\big(\frac{d-1}{2}-\frac{d-1}{\mf{c}}\big)}
\|\LP_{N}\UP_{e_{l}} f\|_{L^{2} (\R^{d})} 
\]
and if \(\diam\big(\spt(\FT{f})\big)\leq 2R\), then
\[\eqnum\label{eq:dir-local-smoothing-unit-scale}
\big\|e^{it \Delta} \LP_{N} \UP_{e_{l}} f\big\|_{L_{e_{l}}^{( \infty,2, \mf{c})} (\R)}
\lesssim_{R}
N^{-\frac{1}{2}} \|\LP_{N}\UP_{e_{l}} f\|_{L ^{2} (\R^{d})}\,.
\]
The implicit constants are allowed to depend on \(\mf{c}\) and \(R>0\) but are independent of \(N\) and the function \(f\).
\end{lemma}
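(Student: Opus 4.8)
The plan is to first strip off the transverse integrability exponent $\mf{c}$ with Bernstein's inequality, reducing matters to the endpoint $\mf{c}=2$, and then to recognize the $\mf{c}=2$ bound as a one-dimensional local smoothing estimate for the Schrödinger group in the $e_l$-direction, integrated over the transverse frequencies. The cone projection $\UP_{e_l}$ is there precisely to guarantee that on the Fourier support of $\LP_N\UP_{e_l}f$ one has $|\xi_l|\approx|\xi|\approx N$ (for $N\geq 2$), which is exactly the non-degeneracy needed for the one-dimensional estimate.

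\emph{Reduction to $\mf{c}=2$.} Fix $t$ and $x_l$. Since $\LP_N$ localizes frequency to $\{|\xi|\leq N\}$, the function $x_l'\mapsto(e^{it\Delta}\LP_N\UP_{e_l}f)(t,x_le_l+x_l')$ has its $(d-1)$-dimensional Fourier transform supported in $\{\eta\in\R^{d-1}:|\eta|\leq N\}$, uniformly in $t$ and $x_l$. Applying \Cref{lem:bernstein} in the $d-1$ transverse variables pointwise in $(t,x_l)$, then taking $L^2$ in $t$ and the supremum in $x_l$, produces a factor $N^{(d-1)(\frac12-\frac1{\mf{c}})}$ and reduces \eqref{eq:dir-local-smoothing} to
\[
\big\|e^{it\Delta}\LP_N\UP_{e_l}f\big\|_{L_{e_l}^{(\infty,2,2)}(\R)}\lesssim N^{-1/2}\,\|\LP_N\UP_{e_l}f\|_{L^2(\R^d)}.
\]
For the frequency-compact estimate \eqref{eq:dir-local-smoothing-unit-scale} the same reduction applies, except that the transverse frequency support now has diameter $\lesssim R$, so the Bernstein factor is $\lesssim_R 1$ and no power of $N$ is produced.

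\emph{The $\mf{c}=2$ estimate.} Set $g\eqd\LP_N\UP_{e_l}f$ and write $\xi=\xi_le_l+\eta$ with $\eta\in\R^{d-1}$. For fixed $t,x_l$, Plancherel in the transverse variables identifies $\int_{\R^{d-1}}|(e^{it\Delta}g)(t,x_le_l+x_l')|^2\,dx_l'$ with $\int_{\R^{d-1}}|v(t,x_l,\eta)|^2\,d\eta$, where $v(t,x_l,\eta)\eqd\int_\R e^{2\pi i\xi_lx_l}e^{-4\pi^2it(\xi_l^2+|\eta|^2)}\FT g(\xi_l,\eta)\,d\xi_l$ by \eqref{eq:schroedinger-propagation-group}. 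Since $e^{-4\pi^2it|\eta|^2}$ is unimodular, $\int_\R|v(t,x_l,\eta)|^2\,dt=\int_\R|w(t,x_l,\eta)|^2\,dt$ with $w(t,x_l,\eta)\eqd\int_\R e^{2\pi i\xi_lx_l}e^{-4\pi^2it\xi_l^2}\FT g(\xi_l,\eta)\,d\xi_l$, a purely one-dimensional Schrödinger evolution in the $\xi_l$-variable. On $\spt\FT g$ the cut-offs $\UP_{e_l}$ and $\LP_N$ (for $N\geq 2$) force $|\xi_l|\approx N$; splitting into $\pm\xi_l>0$ and substituting $\mu=\xi_l^2$ (a diffeomorphism on each sign component with $d\xi_l=\frac{d\mu}{2\sqrt\mu}$ and $\sqrt\mu\approx N$) turns $w$, up to a unimodular factor, into a Fourier transform in $\mu\mapsto t$. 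Plancherel in $t$ then gives $\int_\R|w(t,x_l,\eta)|^2\,dt\lesssim\int_\R\frac{|\FT g(\xi_l,\eta)|^2}{|\xi_l|}\,d\xi_l\lesssim N^{-1}\int_\R|\FT g(\xi_l,\eta)|^2\,d\xi_l$, a bound that does not depend on $x_l$, since $x_l$ enters only through the unimodular factor $e^{2\pi i\sqrt\mu x_l}$. Integrating in $\eta$ and using Plancherel once more yields
\[
\sup_{x_l\in\R}\int_\R\int_{\R^{d-1}}\big|(e^{it\Delta}g)(t,x_le_l+x_l')\big|^2\,dx_l'\,dt\lesssim N^{-1}\,\|g\|_{L^2(\R^d)}^2,
\]
which is the required $\mf{c}=2$ estimate; combined with the previous step this proves \eqref{eq:dir-local-smoothing} and \eqref{eq:dir-local-smoothing-unit-scale}.

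\emph{Main difficulty.} The only genuinely delicate point is the one-dimensional local smoothing step: the substitution $\mu=\xi_l^2$ degenerates at $\xi_l=0$ and produces the weight $|\xi_l|^{-1}$, which is not locally integrable there. What makes the argument work — and what delivers the gain $N^{-1/2}$ — is exactly that $\UP_{e_l}$ forces $|\xi|\lesssim|\xi_l|$ and $\LP_N$ forces $N\lesssim|\xi|$ (for $N\geq 2$), so $|\xi_l|\approx N$ throughout the relevant Fourier support. Everything else — uniformity in $x_l$, the harmless unimodular phases, the transverse Bernstein and Plancherel steps, and Fubini (all integrals converge because $g$ has compact Fourier support) — is routine bookkeeping.
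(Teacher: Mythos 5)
Your proposal is correct and takes essentially the same route as the paper's own proof: a Bernstein reduction in the transverse variables to the endpoint $\mf{c}=2$ (with the factor $N^{(d-1)(\frac12-\frac1{\mf{c}})}$, respectively $\lesssim_R 1$ under the compact Fourier support hypothesis), followed by Plancherel in $x_l'$, the sign-split substitution $\theta=\xi_l^2$ with Plancherel in $t$, and the observation that $\LP_N$ together with the cone projection $\UP_{e_l}$ forces $|\xi_l|\approx N$ so that the resulting weight $|\xi_l|^{-1}$ is $\lesssim N^{-1}$. The only difference is notational bookkeeping (you phrase the fiberwise step via the unimodular factor $e^{-4\pi^2 it|\eta|^2}$, the paper carries the full phase), so nothing of substance separates the two arguments.
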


Clearly, by replacing the time interval $\R$ by any $I \subset \R$, the
space-time norms on the left-hand sides of the estimates in
\Cref{lem:strichartz}, \Cref{lem:dir-maximal}, and
\Cref{lem:dir-local-smoothing} decrease, so local-in-time versions of the
bounds above also trivially hold.

\begin{proof}[Proof of \Cref{lem:dir-maximal}]
Without loss of generality, we assume that $l = 1$ so that \(x=(x_{1},x')\in\R\times\R^{d-1}\). For any fixed $(t, x_1) \in \R \times \R$, $\xi' \in \R^{d - 1}$, and any \(g\in C^{\infty}_{c}(\R\times\R^{d})\) denote the Fourier transform in the $x'$ variable as
\[
\Fourier_{x'}g(t, x_{1},\xi')\eqd\int_{\R^{d-1}}g(t,x_{1},x')e^{-2\pi i\xi'x'}\dd x' \,.
\]
First, we claim that  \eqref{eq:dir-maximal-unit-scale} follows from \eqref{eq:dir-maximal} via a fiberwise application of Bernstein's inequality (\Cref{lem:bernstein}). Indeed, if $\diam\big(\spt(\hat{f})\big) \leq 2R$, then there exists $\tilde{\xi} = (\tilde{\xi}_1, \tilde{\xi}') \in \R^d$ such that $\hat{f}(\xi) = 0$ whenever $|\tilde{\xi} - \xi| > 2R$ and in particular, $\hat{f}(\xi_1, \xi') = 0$ whenever $|\tilde{\xi}' - \xi'| > 2R$. Since the evolution group \(e^{it\Delta}\) conserves the Fourier support, then 
\[
\Fourier_{x'} (e^{it\Delta} f)(x_1, \xi')
=
\int_{\R} e^{2\pi i \xi_{1}x_{1} - 4 \pi^{2}i t (\xi_1^{2} +  |\xi'|^{2})}\hat{f}(\xi_1, \xi') \dd \xi_1 = 0
\]
whenever $|\tilde{\xi}' - \xi'| > 2R$, and therefore 
\[\eqnum\label{eq:dmoff}
\diam\Big(\big\{\xi'\in\R^{d-1}\st \Fourier_{x'}(e^{it\Delta}\LP_{N}f)(x_{1},\xi')\neq 0\big\}\Big)\leq 4R.
\]
Then, by Bernstein's inequality (\Cref{lem:bernstein}), for given $\epsilon >0$ and \(\mf{c}\) it holds that 
\[
\|e^{it\Delta}\LP_{N}f(x_{1}, \cdot)\|_{L^{\mf{c}}(\R^{d-1})}
\lesssim_{R}
\|e^{it\Delta}\LP_{N} f(x_{1},\cdot)\|_{L^{\tilde{\mf{c}}}(\R^{d-1})} \,,
\]
where $\tilde{\mf{c}}\in(2,\mf{c})$ is chosen to satisfy \(\frac{d-2}{2}-\frac{d-1}{\tilde{\mf{c}}}<\epsilon\).
The claimed bound  \eqref{eq:dir-maximal-unit-scale} then follows from \eqref{eq:dir-maximal}. 

Next, we focus on the proof of \eqref{eq:dir-maximal}, which is based on a \(TT^{*}\) argument and classical techniques for oscillatory integrals. 
Let us introduce the operator \(T_{N}\) given by 
\[\eqnum\label{eq:def:T}
(T_{N} f) (t,\cdot) \eqd e^{it\Delta} \tilde{P}_{N}f \,,
\]
where $\tilde{P}_{N}$ is as in \eqref{eq:def:LP-modified}. 
Then, \(T_{N}\LP_{N}f=e^{it\Delta}\LP_{N}f\) and \eqref{eq:dir-maximal} follows once we prove the estimate
\[\eqnum\label{eq:stni}
\|T_{N} f\|_{L_{e_{1}}^{( 2, \infty, \mf{c} )} (\R)}
\lesssim
N^{\frac{1}{2} +  (\frac{d-2}{2}-\frac{d-1}{\mf{c}})} \|f\|_{L^{2}(\R^{d})}
\]
and replace $f$ by $P_N f$.  By duality, \eqref{eq:stni} is equivalent to
\[\eqnum\label{eq:dgfp}
\|T^{*}_{N} g\|_{L^{2} (\R^{d})} 
\lesssim N^{\frac{1}{2} + (\frac{d-2}{2}-\frac{d-1}{\mf{c}})}  \|g\|_{L_{e_l}^{( 2, 1, \mf{c}' )}  (\R)} \,,
\]
where $T^{*}_{N}$ is the dual operator to $T_{N}$ with respect to the
$L^{2}(\R \times\R^{d})$ inner product.  We claim that \eqref{eq:dgfp} follows by showing the \(TT^{*}\) bound
\[\eqnum\label{eq:TTstar-goal}
\|T_{N} T_{N}^{*} g\|_{L_{e_l}^{( 2, \infty, \mf{c} )} (\R)}
\lesssim
N^{1 +  2(\frac{d-2}{2}-\frac{d-1}{\mf{c}})} \|g\|_{L_{e_l}^{( 2, 1, \mf{c}' )} (\R)} .
\]
Indeed, if \eqref{eq:TTstar-goal} holds, then
\[
\|T_{N}^{*} g\|_{L^{2}_x (\R^{d})}^{2}
\begin{aligned}[t]
& = |\langle T_{N}^{*} g, T_{N}^{*} g \rangle| =  |\langle T_{N}T_{N}^{*} g,  g \rangle|
\\
& \leq \|T_{N}T_{N}^{*} g\|_{L_{e_l}^{ ( 2, \infty, \mf{c} )}  (\R \times)}  \|g\|_{L_{e_l}^{( 2, 1, \mf{c}' )}  (\R)}  
\\
& \lesssim  N^{1 + 2(\frac{d-2}{2}-\frac{d-1}{\mf{c}})} \|g\|^{2}_{L_{e_l}^{\left( 2, 1, \mf{c}' \right)}  (\R)} 
\end{aligned}
\]
and \eqref{eq:dgfp} follows.

To prove \eqref{eq:TTstar-goal} we may assume that \(g\in C^{\infty}_{c}(\R\times\R^{d})\), as the general result follows by standard approximation arguments. Using the Fourier inversion representation of the Schrödinger evolution group \eqref{eq:schroedinger-propagation-group} and the defining identity
\[
\int_{\R^{d}} (T_{N}^{*}g)(x) \bar{f(x)}\dd x=\int_{\R\times \R^{d}} g(x) \bar{(T_{N}f)(t,x)}\dd t \dd x\,,
\]
we can express \(T_{N}\) and \(T_{N}^{*}\) as
\[\eqnum\label{eq:tnft}
\begin{aligned}
& (T_{N}f)(t,x)=
\int_{\R^{d}} e^{2 \pi i \xi x - 4 \pi^{2} i t  | \xi  |^{2}} \chi_{N} (\xi) \FT{f} (\xi) d \xi.
\\
& (T_{N}^{*}g)(x) \eqd \int_{\R\times\R^{d}} e^{2\pi i \xi x + 4 \pi^{2}it|\xi|^{2}}\FT{g}(t,\xi) \chi_{N} (\xi) \dd t \dd \xi.
\end{aligned}
\]
Combining the two expressions in \eqref{eq:tnft} followed by a further use of the Fourier inversion formula shows that
\[\eqnum\label{eq:TTstar-kernel}
\begin{aligned}
T_{N}T_{N}^{*} g (t, x) = &K_N * g(t,x) = \;\int_{\mrl{\nquad \R  \times \R^{d}}} K_{N}
(t - s, x- y) g (s, y) \dd s \dd y ,
\\ 
& K_{N} (t, x) \eqd  \int_{\R^{d}} e^{2 \pi i \xi x - 4 \pi^{2} i t  | \xi |^{2}} \chi_{N}^{2} (\xi) \dd \xi .
\end{aligned}
\]

For fixed $(t, x_{1}) \in \R \times \R$ and $p \in [1, \infty]$, let $\| K_{N} (t, x_{1},\cdot) \|_{\mathcal{L}(L^{p'}, L^{p})}$ denote the operator norm of the map
\[
h \mapsto \int_{\mcl{\R^{d-1}}} K_{N} (t, x_{1}, x' - y') h (y') \dd y' 
\]
from \(L^{p'}(\R^{d-1})\) to \(L^{p}(\R^{d-1})\). To prove \eqref{eq:TTstar-goal}, it suffices to prove the bounds
\begin{align*}
\eqnum\label{eq:Kn-L1Linfty-bound} 
& \| K_{N} (t, x_{1} , \cdot) \|_{\mathcal{L}(L^{1}, L^{\infty})}\lesssim N^{d}(1+N |x_{1}|)^{-\frac{d}{2}}\,,
\\
\eqnum\label{eq:Kn-L2L2-bound}
& \|K_{N} (t, x_{1} , \cdot) \|_{\mathcal{L}(L^{2}, L^{2} )}\lesssim N(1+N|x_{1}|)^{-\frac{1}{2}}\,.
\end{align*}
with implicit constants independent of any fixed $( t,x_1) \in \R\times\R$. 
Indeed, the Riesz-Thorin interpolation theorem applied to the bounds
\eqref{eq:Kn-L1Linfty-bound} and \eqref{eq:Kn-L2L2-bound} yields, for
any fixed $(t, x_1) \in \R \times \R$ and $\mf{c}\geq2$, that
\[
\|K_{N} (t, x_{1} , \cdot) \|_{\mathcal{L}(L^{\mf{c}'},  L^{\mf{c}})}
\lesssim
N^{\frac{2 + \mf{c}d-2d}{\mf{c}}}  (1+N|x_{1}| )^{-\frac{1}{2} -(d-1)(\frac{1}{2}-\frac{1}{\mf{c}})}.
\]
Therefore, if \(\mf{c}>\mf{c}_{0}\eqd2\frac{d-1}{d-2}\), then
$-\frac{1}{2} -(d-1)(\frac{1}{2}-\frac{1}{\mf{c}}) < -1$, and consequently
\[\eqnum\label{eq:K-goal}
\Big\|\big\| \| K_{N}(t,x_{1},\cdot)\|_{\mathcal{L}(L^{\mf{c'}}, L^{\mf{c})}} \big\|_{L^{\infty}_{t}(\R)}\Big\|_{L^{1}_{x_{1}} ( \R )}  \leq N^{1+2\big(\frac{d-2}{2}-\frac{d-1}{\mf{c}}\big)}.
\]
Hence, Young's convolution inequality in \(t\) implies for any
$x_1 \in \R$ that
\[
\begin{aligned}
\hspace{3em}&\hspace{-3em} \big\| \| K_{N} * g (t, x_{1}, \cdot) \|_{L^{\mf{c}} ( \R^{d-1} )} \big\|_{L^{\infty}_{t}
  (\R)}
\\ 
& \leq \Big\| \int_{\mcl{\R \times \R}} \| K_{N} (t - s, x_{1} - y_{1} , \cdot)
\|_{\mathcal{L}( L^{\mf{c}'}, L^{\mf{c}})} \| g (s, y_{1}, \cdot) \|_{L^{\mf{c}'} ( \R^{d-
    1})} \dd s \dd y_{1} \Big\|_{L^{\infty}_{t} ( \R )}
\\
& \leq \int_{\R} \big\| \| K_{N} (t,
x_{1} - y_{1} , \cdot) \|_{\mathcal{L}(L^{\mf{c}'}, L^{\mf{c}})} \big\|_{L^{\infty}_{t}(\R)}\; \big\|
\| g (t, y_{1}, \cdot) \|_{L^{\mf{c}'} ( \R^{d- 1})}\big\|_{L^{1}_{t}(\R)}\dd y_{1}.
\end{aligned}
\]
Another application of Young's convolution inequality, this time in \(x_{1}\), gives us 
\[
\begin{aligned}
\hspace{10em}&\hspace{-10em}\Big\| \big\| \| K_{N} * g(t,x_{1},\cdot) \|_{L^{\mf{c}} ( \R^{d-1} )} \big\|_{L^{\infty}_{t} (\R)}\Big\|_{L^{2}_{x_{1}} ( \R )}
\\ & \leq \Big\|\big\| \| K_{N}(t,x_{1},\cdot)  \|_{\mathcal{L}(L^{\mf{c}'},  L^{\mf{c}} )} \big\|_{L^{\infty}_{t}(\R)}\Big\|_{L^{1}_{x_{1}} ( \R )} \|g\|_{L_{e_{1}}^{( 2, 1, \mf{c}')} (\R)},
\end{aligned}
\]
and \eqref{eq:TTstar-goal} follows from \eqref{eq:K-goal}.

To show \eqref{eq:Kn-L1Linfty-bound}, we use the rescaling 
$\xi \to N \xi$ and  $\chi_1(\xi) = \chi_N(N \xi)$ in \eqref{eq:TTstar-kernel} to obtain
\[
K_{N} (t, x)
=
N^{d} \int_{\R^{d}} e^{2 \pi i N \xi x - 4\pi^2 iN^{2} t |\xi|^{2}} \chi_1^{2}(\xi) \dd \xi.
\]
The function \(\chi_1\) is a bump function independent of \(N\) for any
\(N\geq4\).  By interchanging the absolute value and integral, we obtain
a trivial bound \(\big|K_{N} (t, x)\big|\lesssim N^{d}\), while using Stein's Lemma
\cite[Section 5.13, p363]{steinHarmonicAnalysisRealvariable1993} we obtain for any
\((t,x_{1})\in \R\times\R\) that
\[
\big|K_{N} (t, x)\big|\lesssim N^{d}\big(N |x|+N^{2}|t|\big)^{-\frac{d}{2}}.
\]
A combination with the trivial bound yields
\[
\big|K_{N} (t, x_{1}, x')\big|
\begin{aligned}[t]
& \lesssim N^{d}(1+N |x_{1}|+N |x'|+N^{2}|t|)^{-\frac{d}{2}}
\\
& \lesssim N^{d}(1+N |x_{1}|)^{-\frac{d}{2}}
\end{aligned}
\]
and \eqref{eq:Kn-L1Linfty-bound} follows from Young's convolution
inequality in $x'$.

To show \eqref{eq:Kn-L2L2-bound}, by Plancherel's identity in the $x'$
variable, we have for any fixed $(t, x_1) \in \R \times \R$ that
\[
\| K_N (t, x_1, \cdot) \|_{\mathcal{L}(L^{2}, L^{2})} 
\begin{aligned}[t]
& = \sup_{\|f\|_{L^{2}(\R^{d-1})} \leq 1 } \|K_N (t, x_1, \cdot) * f\|_{L^{2}(\R^{d-1})} 
\\
& = \sup_{\|\hat{f}\|_{L^{2}(\R^{d-1})} \leq 1 } \|\Fourier_{x'} K_N (t, x_1, \cdot) \FT{f}\|_{L^{2}(\R^{d-1})}
\\
& \leq \|\Fourier_{x'} K_N (t, x_1, \cdot)\|_{L^\infty(\R^{d-1})} \,.
\end{aligned}
\]
Using Fourier inversion and the change of variables
\(\xi_{1} \to N \xi_1\) we obtain for any fixed $(t, x_1) \in \R \times \R$ and $\eta' \in \R^{d-1}$ that
\[
\Fourier_{x'} {K}_N (t, x_1, \eta')  
\begin{aligned}[t]
& = 
\int_{\mcl{\R^{d - 1}}} K_{N} (t, x_1, x') e^{- 2 \pi i \eta' x'} \dd x'
\\
& = \int_{\mcl{\R^{d - 1}}} \int_{\;\R^{d}} e^{2 \pi i \xi (x_{1}, x') -  4\pi^2 i t |\xi|^{2}}\chi^{2}_{N} (\xi)  e^{- 2 \pi i \eta' x'} \dd \xi \dd x'
\\
& = \int_{\R} e^{2 \pi i \xi_{1}x_{1} - 4\pi^2 it  \xi_{1}^{2}-4\pi^2 it  |\eta'|^{2}}\chi^{2}_{N} (\xi_{1},\eta') \dd \xi_{1}
\\
& = N\int_{\R} e^{2 \pi i N \xi_{1}x_{1} - 4\pi^2 it  N^{2}  \xi_{1}^{2}-4\pi^2 it  |\eta'|^{2}}\chi_1^{2}\big(\xi_{1},\eta'/N\big) \dd \xi_{1}.
\end{aligned}
\]
Since \(\xi_{1}\mapsto \chi_1^{2}\big(\xi_{1},\eta'/N \big) \) is \(C^{\infty}_{c}(\R)\) function
bounded by 1, 
 Stein's lemma \cite[Section
5.13, p363]{steinHarmonicAnalysisRealvariable1993} applied in the \(\xi_{1}\) variable implies 
\[
\Big| \int_{\R} e^{2 \pi i N \xi_1 x_1 - it 4\pi^2 N^{2} \xi_1^{2}} \chi_1^{2} (\xi_1, \eta') \dd \xi_1 \Big|
\lesssim
(1 + | N^{2} t | + | N x_1 |)^{- \frac{1}{2}}
\leq 
(1 + | N x_1 |)^{- \frac{1}{2}}
\]
with an implicit constant independent of \(\eta'\). Hence, \eqref{eq:Kn-L2L2-bound} follows, completing the proof of
\eqref{eq:dir-maximal}.
\end{proof}

\begin{proof}[Proof of \Cref{lem:dir-local-smoothing}]
Without loss of generality, we assume that $l = 1$. Bounds \eqref{eq:dir-local-smoothing} and \eqref{eq:dir-local-smoothing-unit-scale} for \(\mf{c} \geq 2\) follow from \eqref{eq:dir-local-smoothing} for \(\mf{c}=2\).
Indeed, for any fixed \((t,x_{1}) \in \R \times \R\) from Bernstein's inequality (\Cref{lem:bernstein} and see the proof of Lemma \ref{lem:dir-maximal} for details) in the \(x'\in\R^{d-1}\) variable we obtain
\[
\|\LP_{N}\UP_{e_{1}}f(x_{1},\cdot)\|_{L^{\mf{c}}(\R^{d-1})}
\lesssim N^{(d-1)(\frac{1}{2}-\frac{1}{\mf{c}})} \|\LP_{N} \UP_{e_{1}}f(x_{1},\cdot)\|_{L^{2}(\R^{d-1})}
\]
and if \(\diam(\spt( \FT{f}))\leq 2R\) we obtain
\[
\|\LP_{N}\UP_{e_{1}}f(x_{1},\cdot)\|_{L^{\mf{c}}(\R^{d-1})}
\lesssim
 R^{(d-1)(\frac{1}{2}-\frac{1}{\mf{c}})} \|\LP_{N} \UP_{e_{1}}f(x_{1},\cdot)\|_{L^{2}(\R^{d-1})} \,.
\]

We prove \eqref{eq:dir-local-smoothing} for \(\mf{c}=2\) by showing that  
\[
\sup_{x_1 \in \R} \| T_{N} \UP_{e_{1}}f (\cdot, x_1, x') \|_{L^{2}_{t} L^{2}_{x'} (\R \times \R^{d - 1} )} \lesssim N^{-\frac{1}{2}} \|\UP_{e_{1}}f\|_{L^{2}_{x}(\R^{d})},
\]
where \(T_{N}\) is defined in \eqref{eq:def:T}. Plancherel's identity in $x'$ and \eqref{eq:tnft} imply for any $x_1 \in \R$ that
\[
\begin{aligned}
\hspace{3em}&\hspace{-3em} \| T_{N}\UP_{e_{1}} f (\cdot, x_1, \cdot) \|^{2}_{L^{2}_{t} L^{2}_{x'} (\R \times \R^{d - 1} )}
\\ 
& = \int_{\mcl{\R\times\R^{d-1}}} \;\, \bigg|\! \int_{\R} e^{2 \pi i \xi_1 x_1 - 4\pi^{2} i  t (|\xi_{1}|^{2}+|\xi'|^{2})} \chi_N (\xi_1, \xi') \1_{\mf{U}_{e_1}} (\xi_1, \xi') \FT{f} (\xi_1, \xi') d\xi_1 \bigg|^{2} \!\!\dd t \dd \xi'.
\end{aligned}
\]
Note that if we prove the fiber-wise bound
\[\eqnum\label{eq:ibfls}
\begin{aligned}
\hspace{10em}&\hspace{-10em} \int_\R \bigg| \int_{\R} e^{ 2 \pi i \xi_1 x_1  -4\pi^{2} i  t |\xi_{1}|^{2}} \chi_N (\xi_{1},\xi') \1_{\mf{U}_{e_1}} (\xi_{1},\xi') \FT{f} (\xi_{1},\xi') \dd \xi_1 \bigg|^{2}\!\! \dd t
\\
& \lesssim N^{-1}\int_{\R} |\chi_N (\xi_{1},\xi') \1_{\mf{U}_{e_1}} (\xi_{1},\xi') \FT{f} (\xi_{1},\xi') |^{2} \dd   \xi_1 
\end{aligned}
\]
for any fixed $x_1 \in \R$ and $\xi' \in \R^{d- 1}$,  then  \eqref{eq:dir-local-smoothing} with $\mf{c} = 2$ follows after integration in $\xi'$ and an application of  Plancherel's identity. Split the integration domain in $\xi_1$ as \(\R=(0, \infty)\cup(-\infty, 0)\) and we only show \eqref{eq:ibfls} for \((0,\infty)\) in place of \(\R\). The case of \((-\infty,0)\) is treated similarly.  A change of variables \(\theta=\xi_{1}^{2}\) and Plancherel's identity in $t$ imply that 
\[ 
\begin{aligned}
\hspace{3em}&\hspace{-3em}\int_{\R} \bigg| \int_{0}^{\infty} e^{i 2 \pi \xi_1 x_1  -4\pi^{2} i  t |\xi_{1}|^{2}} \chi_N (\xi_{1},\xi') \1_{\mf{U}_{e_1}} (\xi_{1},\xi') \FT{f} (\xi_{1},\xi') \dd \xi_{1} \bigg|^{2}\!\! \dd t
\\
& = \int_{\R} \bigg| \int_{ \R} e^{2 \pi i x_1 \sqrt{\theta} - 4 \pi^{2} i  t \theta} \chi_N (\sqrt{\theta}, \xi') \1_{\mf{U}_{e_1}} (\sqrt{\theta}, \xi') \FT{f} (\sqrt{\theta}, \xi') \chi_{\theta \geq 0} \frac{\dd  \theta}{\sqrt{4\theta}} \bigg|^{2}\!\! \dd t
\\
& = \int_{\R} \Big|  e^{2 \pi i x_1 \sqrt{\theta}} \chi_N (\sqrt{\theta}, \xi') \1_{\mf{U}_{e_1}} (\sqrt{\theta}, \xi') \FT{f} (\sqrt{\theta}, \xi') \chi_{\theta \geq 0} \frac{1}{\sqrt{4\theta}} \Big|^{2} \dd \theta
\\
& = \int_{ 0}^{\infty} \Big|  e^{2 \pi i x_1 \sqrt{\theta}} \chi_N (\sqrt{\theta}, \xi') \1_{\mf{U}_{e_1}} (\sqrt{\theta}, \xi') \FT{f} (\sqrt{\theta}, \xi') \Big|^{2} \frac{1}{4\theta} \dd \theta \,.
\end{aligned}
\]
Finally, the inverse change of variables \(\xi_{1}=\sqrt{\theta}\) gives that
\[
\begin{aligned}
\hspace{10em}&\hspace{-10em} \int_{ 0}^{\infty} \Big|  e^{2 \pi i x_1 \sqrt{\theta}} \chi_N (\sqrt{\theta}, \xi') \1_{\mf{U}_{e_1}} (\sqrt{\theta}, \xi') \FT{f} (\sqrt{\theta}, \xi') \Big|^{2} \frac{1}{4\theta} \dd \theta
\\ 
& \lesssim \int_{ 0}^{\infty} \Big|  e^{2 \pi i x_1 \xi_{1}} \chi_N (\xi_{1}, \xi') \1_{\mf{U}_{e_1}} (\xi_{1}, \xi') \FT{f} (\xi_{1}, \xi') \Big|^{2} \frac{1}{\xi_{1}} \dd \xi_{1}
\\
& \lesssim N^{-1} \int_{\R} |\chi_N (\xi_{1}, \xi') \1_{\mf{U}_{e_1}} (\xi_{1}, \xi')  \FT{f} (\xi_1, \xi') |^{2} d \xi_1,
\end{aligned}
\]
where the last inequality holds because $\chi_N (\xi_{1}, \xi') \1_{\mf{U}_{e_1}} (\xi_{1}, \xi') = 0$ unless 
\[
 |(\xi_{1},\xi')|\approx N \qquad \text{and} \qquad  \frac{| \xi_{1} |}{| (\xi_{1},\xi') |} >  (\sqrt{4d})^{-1} \,,
\]
which implies that $\frac{1}{\xi_1} \lesssim N^{-1}$.  The required bound \eqref{eq:ibfls} follows.
\end{proof}

\section{Non-homogeneous estimates}\label{sec:nonhomogeneous-estimates}

In \Cref{sec:linear-preliminaries} we established estimates on
solutions of the linear homogeneous Cauchy problem
\eqref{eq:linear-schroedinger}. In this section we study solutions
to the non-homogeneous Schrödinger equation
\[\eqnum\label{eq:non-homogeneous-schroedinger}
\begin{dcases}
(i \partial_{t} + \Delta) v = h(t,x) & \text{on } \R \times \R^{d},
\\
v (0,x) = v_{0}(x) \,. &
\end{dcases}
\]
We begin by introducing two norms, \(X^{\xs}\) and \(Y^{S}\), that are
obtained as combinations of norms introduced in
\Cref{sec:linear-preliminaries}. Next, we obtain boundedness
properties of the solutions to \eqref{eq:non-homogeneous-schroedinger}
in terms of \(X^{\xs}\) and \(Y^{S}\). The solution $v$ of
\eqref{eq:non-homogeneous-schroedinger} can be expressed using
Duhamel's formula \eqref{eq:duhamel} as
\[
v=\mbb{I}_{v_{0}}(h)\eqd  e^{ i t \Delta} v_{0} - i\int_{0}^{t} e^{i (t - s) \Delta} h(s, x) \dd s\,.
\]
Therefore our results can be seen as properties of
the mapping $(v_{0}, h) \mapsto \mbb{I}_{v_{0}}(h)=v$.

For any time interval \(I\subseteq\R\), \(I\ni0\) we introduce two families of
space-time norms denoted \(X^{\xs}(I)\) and \(Y^{S}(I)\). The norms
\(X^{\xs}(I)\) are well suited to control the solution \(v\) with no
restriction on the Fourier support, whereas \(Y^{S}(I)\) is adapted to
controlling the solution \(v\) when
\(\diam\big(\spt (\FT{v_{0}})\big) \lesssim 1\) and
\(\diam\big(\spt (\FT{h})\big)\lesssim 1\). Later, in
\Cref{sec:multilinear-probabilistic-estimates}, we show that, thanks
to the unit scale randomization of the initial data, the terms in the
explicit multilinear expansion of the solution
\eqref{eq:NLS-deterministic} can also be controlled in the
\(Y^{S}(I)\) norms.

To be more precise, let $\epsilon_{0}\in\big(0,2^{-100}\big)$ be sufficiently small
(to be chosen appropriately below depending on other parameters) and
we allow all subsequent constants to depend implicitly on
\(\epsilon_{0}\). For any interval \(I\subseteq\R\) and for any $\sigma \in \R$ we set
\[\eqnum\label{eq:def:X-norm}
\begin{aligned}[c] &
\|v\|_{X^{\sigma} (I)} \eqd \Big( \sum_{N \in 2^{\mathbb{N}}} N^{2 \sigma} \|\LP_{N} v\|^{2}_{X_{N} (I)} \Big)^{\frac{1}{2}} \,,
\\
&  \begin{aligned}[t]
& \|v\|_{X_{N} (I)}  \eqd \|v\|_{L_{t}^{\frac{2}{\epsilon_0}} L_{x}^{\frac{2}{1 - \epsilon_{0}}} (I)} + \|v\|_{L_{t}^{\frac{2}{1 - \epsilon_{0}}} L_{x}^{\frac{2d}{d-2}\frac{1}{1 - \epsilon_{0}}} (I)}
\\
& \quad + \sum_{l = 1}^{d} 
\Big( N^{-\frac{d-1}{2}} \|v\|_{L_{e_{l}}^{( \frac{2}{1 - \epsilon_{0}}, \frac{2}{\epsilon_{0}}, \frac{2}{\epsilon_{0}})} (I)}
+ N^{-\frac{1}{2}} \|v\|_{L_{e_{l}}^{( \frac{2}{1 - \epsilon_{0}},\frac{2}{\epsilon_{0}}, \frac{\mf{c}_{0}}{1-\epsilon_{0}})} (I)}\Big)
\\
& \quad + \sum_{l = 1}^{d}  \Big( N^{\frac{1}{2}} \| \UP_{e_{l}} v \|_{L_{e_{l}}^{(\frac{2}{\epsilon_{0}}, \frac{2}{1 - \epsilon_{0}}, \frac{2}{1 - \epsilon_{0}} )} (I)} + N^{-\frac{d-2}{2}}\|\UP_{e_{l}} v\|_{L_{e_{l}}^{(\frac{2}{\epsilon_{0}}, \frac{2}{1 - \epsilon_{0}}, \frac{2}{\epsilon_{0}} )} (I)}\Big)\,,
\end{aligned}
\end{aligned}
\]
and 
\[\eqnum\label{eq:def:Y-norm}
\begin{aligned}[c] 
& \|v\|_{Y^{\sigma} (I)} \eqd \Big( \sum_{N \in 2^{\mathbb{N}}} N^{2 \sigma} \|\LP_{N} v\|^{2}_{Y_{N} (I)} \Big)^{\frac{1}{2}},
\\
& \begin{aligned}[t]
& \|v\|_{Y_{N} (I)} \eqd  \|v\|_{L_{t}^{\frac{2}{\epsilon_0}} L_{x}^{\frac{2}{1 - \epsilon_{0}}} (I)} + \|v\|_{L_{t}^{\frac{2}{1 - \epsilon_{0}}} L_{x}^{\frac{2d}{d-2}\frac{1}{1 - \epsilon_{0}}} (I)}
\\
& \quad+ \sum_{l = 1}^{d}  \Big( N^{-\frac{1}{2}} \|v\|_{L_{e_{l}}^{( \frac{2}{1 - \epsilon_{0}}, \frac{2}{\epsilon_{0}}, \frac{2}{\epsilon_{0}} )} (I)} + N^{-\frac{1}{2}} \|v\|_{L_{e_{l}}^{( \frac{2}{1 - \epsilon_{0}},\frac{2}{\epsilon_{0}}, \frac{\mf{c}_{0}}{1-\epsilon_{0}} )} (I)}\Big)
\\
&\quad+ \sum_{l = 1}^{d} \Big( N^{\frac{1}{2}} \| \UP_{e_{l}} v \|_{L_{e_{l}}^{(\frac{2}{\epsilon_{0}}, \frac{2}{1 - \epsilon_{0}}, \frac{2}{1 - \epsilon_{0}})} (I)} + N^{\frac{1}{2}}\|\UP_{e_{l}} v\|_{L_{e_{l}}^{(\frac{2}{\epsilon_{0}}, \frac{2}{1 - \epsilon_{0}}, \frac{2}{\epsilon_{0}})} (I)}\Big).
\end{aligned}
\end{aligned}
\]
Recall that $\mf{c}_{0}\eqd2\frac{d-1}{d-2}$ is the lower bound for the spatial
integrability exponent that appears in \Cref{lem:dir-maximal}.

The norms $X_N$ and $Y_N$ consist of a combination of different
space-time norms. In particular, the norms
$L_{t}^{\frac{2}{\epsilon_{0}}} L_{x}^{\frac{2}{1 - \epsilon_{0}}}$ and
$L_{t}^{\frac{2}{1 - \epsilon_{0}}} L_{x}^{\frac{2d}{d-2}\frac{1}{1 - \epsilon_{0}}} $ are close
to the admissible Strichartz norms \(L_{t}^{\infty} L_{x}^{2}\) and
\(L_{t}^{2} L_{x}^{\frac{2d}{d-2}}\) respectively (cfr.
\Cref{lem:strichartz}). The other components appearing in the
definitions of the norms \(X_{N}\) and \(Y_{N}\) are directional
space-time norms, close to the directional maximal norms of
\Cref{lem:dir-maximal} and to the directional smoothing norms of
\Cref{lem:dir-local-smoothing}. Note that the norms appearing in the
definitions of \(X_{N}\) and \(Y_{N}\) are the same, and they differ by
the \(N\)-dependent scaling factors that appears next to each
summand. These scaling factors are determined by bounds
\eqref{eq:dir-maximal}, \eqref{eq:dir-local-smoothing} or by bounds
\eqref{eq:dir-maximal-unit-scale} and
\eqref{eq:dir-local-smoothing-unit-scale} respectively.

To control the non-homogeneous term $h$ we use the norms
\(X^{*,\sigma}(I) \), related through duality to the space
$X^{\sigma} (I)$. Specifically, we define
\[\eqnum\label{eq:def:X-norm-dual}
\begin{aligned}[c]
& \|h\|_{X^{*,\sigma} (I)} \eqd \big( \sum_{N \in 2^{\mathbb{N}}} N^{2 \sigma} \|\LP_{N} h\|^{2}_{X_{N}^{*}(I)} \big)^{\frac{1}{2}},
\\
& \|h\|_{X_{N}^{*} (I)} \eqd \sup \Big\{ \Big| \int_{I \times \R^{d}} h_{*} (t,x ) h (t,x )  \dd t \dd x \Big| \st \|h_{*} \|_{X_{N} (I)} \leq 1 \Big\}
\end{aligned}
\]

We record the following simple facts about all the norms introduced above.
\begin{itemize}
\item The norms \(X^{\sigma}\), \(X^{*,\sigma}\), and \(Y^{\sigma}\) are non-decreasing with respect to \(\sigma\).
\item Since $N \geq1$, for any \(v\colon I\times\R^{d}\to\C\) we have
\[\eqnum\label{eq:X-Y-comparison}
\|v\|_{X_{N}(I)}\leq\|v\|_{Y_{N}(I)},\quad\text{and}\quad \|v\|_{X^{\sigma}(I)}\leq\|v\|_{Y^{\sigma}(I)}\quad\forall \sigma\in\R.
\]
\item For any \(N,N'\in2^{\N}\) with \(N'\approx N \) it holds that
\[\eqnum\label{eq:X-N-Nprime-comparison}
\|v\|_{X_{N}(I)}\approx \|v\|_{X_{N'}(I)}, \qquad\|v\|_{Y_{N}(I)}\approx \|v\|_{Y_{N'}(I)} \,,
\]
and in particular, the definition \eqref{eq:def:LP-modified} implies that
\[\eqnum\label{eq:X-N-comparability}
\big\|\tilde{\LP}_{N}v\big\|_{X_{N}(I)}\approx \sum_{\mcl{\substack{N'\in2^{\N}\\ \sfrac{1}{8}< \sfrac{N'}{N} <8}}} \|\LP_{N'}v\|_{X_{N'}(I)} \,.
\]
\item Since \(\LP_{N}\) are convolution operators with
\(L^{1}\)-bounded kernels, Young's convolution inequality yields
that
\[\eqnum\label{eq:X-N-PN-boundedness}
\big\|\LP_{N}v\big\|_{X_{N}(I)}\lesssim\big\|v\big\|_{X_{N}(I)}\,.
\]

\item For any $\sigma \in \R$, the space \( C^{\infty}_{c}(I\times\R)\) is dense in the spaces \(X^{\sigma}(I)\) and \(Y^{\sigma}(I)\) since all exponents appearing in the definition of the corresponding norms are finite.

\item The space $X_{N} ^{*}(I)$ has been introduced as the dual space
of $X_{N} (I)$; it can be shown that
\[\eqnum\label{eq:X-duality}
\|h\|_{X^{*,\sigma}(I)}\approx\sup\Big\{\Big| \int_{I\times\R^{d}} h^{*}(t,x)h(t,x)\dd t \dd x\Big| \st \big\|h^{*}\big\|_{X^{-\sigma}(I)}\leq 1
\Big\}\,,
\]
where the supremum is taken over
\(h^{*}\in C^{\infty}_{c}(I\times\R^{d})\). However, we never use this fact in this
paper.
\end{itemize}

Next, we formulate the main result of this section.

\begin{proposition}\label{prop:main-linear-estimate}
Fix $\sigma \in \R$, $0 < \epsilon\lesssim 1$, and $0 < \epsilon_0 \lesssim_{\epsilon} 1$.  Then there exists a constant
\(c=c(\sigma,\epsilon,\epsilon_{0})>0\) such that for any \(I\subseteq\R\), \(I\ni0\),
it holds that 
\[\eqnum\label{eq:non-homogeneous-bound-2}
\big\|\mbb{I}_{v_{0}}(h)\big\|_{X^{\sigma} (I)} \lesssim_{\epsilon} \big\langle|I|^{-1}\big\rangle^{-c} \Big( \|v_{0} \|_{H^{\sigma+\epsilon} _{x}(\R^{d})} +\|h\|_{X^{*,\sigma+\epsilon} (I)} \Big)\,.
\]
If \(\diam\big(\spt(\FT{v_{0}})\big) \leq2R \) and \(\diam\big(\spt(\FT{h})\big)\leq2R\), then 
\[\eqnum\label{eq:non-homogeneous-bound-unit-scale-2}
\big\|\mbb{I}_{v_{0}}(h)\big\|_{Y^{\sigma} (I)}
\lesssim_{R,\epsilon}
\big\langle|I|^{-1}\big\rangle^{-c}  \Big(\|v_{0} \|_{H^{\sigma+\epsilon}_{x} (\R^{d})} +\|h\|_{X^{*,\sigma+\epsilon} (I)} \Big)\,.
\]
More specifically, for any $N\in 2^{\mathbb{N}}$, we have 
\[\eqnum\label{eq:non-homogeneous-bound}
\big\|\LP_{N} \mbb{I}_{v_{0}}(h)\big\|_{X_{N} (I)}
\lesssim_{\epsilon} N^{\epsilon}  \big\langle|I|^{-1}\big\rangle^{-c} \big( \| \LP_{N} v_{0} \|_{L^{2}_{x}(\R^{d})}+ \|\LP_{N} h\|_{X_{N} ^{*}(I)}  \big) 
\]
and, if \(\diam\big(\spt(\FT{v_{0}})\big) \leq2R\), and \(\diam\big(\spt(\FT{h})\big)\leq2R\), then
\[\eqnum\label{eq:non-homogeneous-bound-unit-scale}
\big\|\LP_{N} \mbb{I}_{v_{0}}(h)\big\|_{Y_{N} (I)}
\lesssim_{R,\epsilon} N^{\epsilon}  \big\langle|I|^{-1}\big\rangle^{-c}  \big( \| \LP_{N} v_{0} \|_{L^{2} _{x}(\R^{d})}+ \|\LP_{N} h\|_{X_{N}^{*} (I)}  \big)\,.
\]
All implicit constants are allowed to depend on \(\epsilon\) and \(\epsilon_{0}\), but are independent of \(\sigma\), $I$, \(N\), and \(v_{0}\) and \(h\).
\end{proposition}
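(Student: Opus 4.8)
The plan is to reduce everything to the single-frequency bounds \eqref{eq:non-homogeneous-bound} and \eqref{eq:non-homogeneous-bound-unit-scale}. Since $\LP_{N}$ commutes with $e^{it\Delta}$ and with the Duhamel integral, one has $\LP_{N}\mbb{I}_{v_{0}}(h)=\mbb{I}_{\LP_{N}v_{0}}(\LP_{N}h)$, so squaring \eqref{eq:non-homogeneous-bound}, multiplying by $N^{2\sigma}$, summing over $N\in2^{\N}$ and using $(a+b)^{2}\lesssim a^{2}+b^{2}$ recovers \eqref{eq:non-homogeneous-bound-2}, and likewise \eqref{eq:non-homogeneous-bound-unit-scale} gives \eqref{eq:non-homogeneous-bound-unit-scale-2}. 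It thus suffices, for a fixed $N$, to bound the homogeneous piece $e^{it\Delta}\LP_{N}v_{0}$ and the Duhamel piece $\mp i\int_{0}^{t}e^{i(t-s)\Delta}\LP_{N}h(s)\dd s$ in $X_{N}(I)$ (resp.\ $Y_{N}(I)$).

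For the homogeneous piece the goal is
\[\num\label{eq:sketch-hom}
\big\|e^{it\Delta}\LP_{N}v_{0}\big\|_{X_{N}(I)}\lesssim N^{O(\epsilon_{0})}\,\big\langle|I|^{-1}\big\rangle^{-c}\,\|\LP_{N}v_{0}\|_{L^{2}(\R^{d})},
\]
proved by estimating each summand of \eqref{eq:def:X-norm} separately. The two near-admissible Strichartz summands follow from \Cref{lem:strichartz} on $\R$ after moving the spatial exponent with Bernstein's inequality (\Cref{lem:bernstein}); when $|I|<1$ one instead uses $\|e^{it\Delta}\LP_{N}v_{0}(t)\|_{L^{2}}=\|\LP_{N}v_{0}\|_{L^{2}}$, Bernstein, and Hölder in time to gain a small positive power of $|I|$. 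The directional summands are obtained by interpolating the endpoint estimates of \Cref{lem:dir-maximal} (exponents $(2,\infty,\mf{c})$) and \Cref{lem:dir-local-smoothing} (exponents $(\infty,2,\mf{c})$) against the Strichartz bound $L^{2}_{t}L^{2d/(d-2)}_{x}$ and the trivial bound $L^{\infty}_{t}L^{2}_{x}$ — rewritten in the directional format $L^{(\mf{a},\mf{b},\mf{c})}_{e_{l}}$ via Minkowski's integral inequality — and then correcting leftover exponents with Bernstein's inequality; when $|I|<1$, Hölder in the time variable (the one carrying the finite exponent $\mf{b}$) again gains a power of $|I|$. The $N$-weights in \eqref{eq:def:X-norm} are exactly those making each summand $\lesssim N^{O(\epsilon_{0})}\|\LP_{N}v_{0}\|_{L^{2}}$; taking $\epsilon_{0}=\epsilon_{0}(\epsilon)$ small turns $N^{O(\epsilon_{0})}$ into $N^{\epsilon}$ and fixes $c=c(\epsilon_{0})>0$. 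If $\diam(\spt(\FT{v_{0}}))\le2R$, the same scheme with \Cref{lem:dir-maximal} and \Cref{lem:dir-local-smoothing} replaced by the Fourier-support versions \eqref{eq:dir-maximal-unit-scale} and \eqref{eq:dir-local-smoothing-unit-scale} gives the $Y_{N}(I)$ analogue of \eqref{eq:sketch-hom}; the $N$-weights in \eqref{eq:def:Y-norm} match these improved powers and several Bernstein steps become $R$-dependent constants.

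For the Duhamel piece, write $\int_{0}^{t}e^{i(t-s)\Delta}\LP_{N}h(s)\dd s=e^{it\Delta}\int_{\{s\in I:\,s<t\}}e^{-is\Delta}\LP_{N}h(s)\dd s$. A Christ–Kiselev type argument (as in \cite{christMaximalFunctionsAssociated2001}) reduces the desired bound to the one for the untruncated operator $h\mapsto e^{it\Delta}\int_{I}e^{-is\Delta}\LP_{N}h(s)\dd s$; this is legitimate since every time-integration exponent appearing in $X_{N}(I)$ is at least $\tfrac{2}{1-\epsilon_{0}}>2$ while every time-integration exponent of the predual is at most $\tfrac{2}{1+\epsilon_{0}}<2$, giving a uniform exponent gap. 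For the untruncated operator, set $w_{0}\eqd\int_{I}e^{-is\Delta}\LP_{N}h(s)\dd s\in L^{2}(\R^{d})$; by duality and \eqref{eq:sketch-hom} applied to the test functions $e^{is\Delta}\LP_{N}g$,
\[
\|w_{0}\|_{L^{2}(\R^{d})}=\sup_{\|g\|_{L^{2}}\le1}\Big|\int_{I\times\R^{d}}\LP_{N}h(s,x)\,\overline{e^{is\Delta}\LP_{N}g(x)}\,\dd s\,\dd x\Big|\lesssim N^{O(\epsilon_{0})}\big\langle|I|^{-1}\big\rangle^{-c}\,\|\LP_{N}h\|_{X_{N}^{*}(I)},
\]
and a second use of \eqref{eq:sketch-hom} bounds $\|e^{it\Delta}w_{0}\|_{X_{N}(I)}\lesssim N^{O(\epsilon_{0})}\langle|I|^{-1}\rangle^{-c}\|w_{0}\|_{L^{2}}$, proving \eqref{eq:non-homogeneous-bound}. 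For \eqref{eq:non-homogeneous-bound-unit-scale} one observes that $w_{0}$ inherits $\diam(\spt(\FT{w_{0}}))\le2R$ from $h$, so the $Y_{N}$ homogeneous estimate applies to $e^{it\Delta}w_{0}$, while the pairing is still tested against $e^{is\Delta}\LP_{N}g$ and $\|e^{is\Delta}\LP_{N}g\|_{X_{N}(I)}\le\|e^{is\Delta}\LP_{N}g\|_{Y_{N}(I)}$ by \eqref{eq:X-Y-comparison}.

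The hard part will be the term-by-term verification of \eqref{eq:sketch-hom}: one must land on the exact $\epsilon_{0}$-perturbed directional exponents of \eqref{eq:def:X-norm} and \eqref{eq:def:Y-norm} by interpolating off the endpoint estimates of \Cref{lem:dir-maximal} and \Cref{lem:dir-local-smoothing} while simultaneously keeping every loss at the level $N^{O(\epsilon_{0})}$, extracting the uniform decay $\langle|I|^{-1}\rangle^{-c}$ from the finite time exponents when $|I|<1$ (and, in the complementary regime $N|I|\gtrsim1$, absorbing the reciprocal loss $|I|^{-O(\epsilon_{0})}\lesssim N^{O(\epsilon_{0})}$), and correctly commuting mixed space–time–directional norms via Minkowski's inequality so that the isotropic Strichartz estimates can be inserted into the directional framework. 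Checking that the Christ–Kiselev reduction genuinely applies to these mixed directional norms is the other point that requires care.
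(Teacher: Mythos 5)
Your proposal follows the paper's proof essentially step for step: the paper likewise proves the frequency-localized homogeneous bound $\|e^{it\Delta}\LP_N g\|_{X_N(I)}\lesssim N^{O(\epsilon_0)}\|\LP_N g\|_{L^2}$ by combining Bernstein/Strichartz with interpolation of the directional maximal and smoothing lemmas (and their unit-scale variants for $Y_N$), extracts $\langle|I|^{-1}\rangle^{-c}$ by interpolating against a trivial small-time estimate obtained from H\"older in time, treats the untruncated Duhamel operator by the same duality/$TT^{*}$ argument, restores the time truncation by a Christ--Kiselev dyadic partition, and finally square-sums over $N$. Concerning the step you flag as needing care: the superadditivity underlying Christ--Kiselev for the mixed directional norms requires the splitting exponent $\tfrac{2}{1-\epsilon_0}$ to be at most $\min\{\mf{a},\mf{b}\}$, so the outer $x_l$ exponent matters as well as the time exponent; since every directional norm in \eqref{eq:def:X-norm} and \eqref{eq:def:Y-norm} has both exponents at least $\tfrac{2}{1-\epsilon_0}$, this holds and your reduction goes through exactly as in the paper.
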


\begin{remark}
Notice that the function $|I| \mapsto \langle|I|^{-1}\big\rangle^{-c}$ is increasing and
satisfies
\[
\begin{aligned}[t]
\langle|I|^{-1}\big\rangle^{-c} \lesssim_{c} |I|^{c},\qquad
\lim_{|I| \to 0} \langle|I|^{-1}\big\rangle^{-c} = 0, \qquad 
\lim_{|I| \to \infty} \langle|I|^{-1}\big\rangle^{-c} = 1. 
\end{aligned}
\]
\end{remark}

Next, we state \Cref{prop:linear-nonhomogeneous-continuity-bound} and
\Cref{prop:adjoint-bound}. We then provide the proofs of these
statements before focusing on the proof of
\Cref{prop:main-linear-estimate}.

\Cref{prop:linear-nonhomogeneous-continuity-bound} is an 
analogue of \Cref{prop:main-linear-estimate} except with
\(\mbb{I}_{v_{0}}(h)\) controlled by Sobolev norms. This statement is used
to bound solutions to \eqref{eq:NLS-deterministic} in more
classical spaces. \Cref{prop:adjoint-bound} is the adjoint bound to
\Cref{prop:main-linear-estimate} with \(h=0\). This estimate is central
to prove scattering results.

\begin{proposition}\label{prop:linear-nonhomogeneous-continuity-bound}
Under the assumptions of \Cref{prop:main-linear-estimate} it holds that 
\[\eqnum\label{eq:continuity-bound}
\big\|\mbb{I}_{v_{0}}(h)\big\|_{C^{0}(I, H^{\sigma}_{x}(\R^{d}))}\lesssim_{\epsilon} \|v_{0}\|_{H^{\sigma}_{x}(\R^{d})}+ \|h\|_{X^{*,\sigma+\epsilon}(I)}.
\]
\end{proposition}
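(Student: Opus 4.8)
The plan is to write $\mbb{I}_{v_{0}}(h)=e^{it\Delta}v_{0}+w$ with $w(t)\eqd\mp i\int_{0}^{t}e^{i(t-s)\Delta}h(s,\cdot)\,\dd s$ and to treat the two terms separately. For the linear evolution nothing beyond classical facts is needed: $e^{it\Delta}$ is a unitary group on $H^{\sigma}_{x}(\R^{d})$ for every $t$, so $\sup_{t\in I}\|e^{it\Delta}v_{0}\|_{H^{\sigma}_{x}(\R^{d})}=\|v_{0}\|_{H^{\sigma}_{x}(\R^{d})}$, and $t\mapsto e^{it\Delta}v_{0}$ is continuous into $H^{\sigma}_{x}(\R^{d})$ by dominated convergence on the Fourier side (the integrand $\langle\xi\rangle^{2\sigma}\big|e^{-4\pi^{2}it|\xi|^{2}}-e^{-4\pi^{2}it_{0}|\xi|^{2}}\big|^{2}|\FT{v_{0}}(\xi)|^{2}$ tends to $0$ pointwise and is dominated by $4\langle\xi\rangle^{2\sigma}|\FT{v_{0}}(\xi)|^{2}\in L^{1}$). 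It then remains to show $w\in C^{0}(I;H^{\sigma}_{x}(\R^{d}))$ with $\sup_{t\in I}\|w(t)\|_{H^{\sigma}_{x}(\R^{d})}\lesssim_{\epsilon}\|h\|_{X^{*,\sigma+\epsilon}(I)}$.

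For the bound I would fix $t\in I$, so that $[0,t]\subseteq I$ (or $[t,0]\subseteq I$, symmetrically), and estimate each Littlewood--Paley block, since $\|w(t)\|^{2}_{H^{\sigma}_{x}}\approx\sum_{N}N^{2\sigma}\|\LP_{N}w(t)\|^{2}_{L^{2}_{x}}$ and it suffices to prove $\|\LP_{N}w(t)\|_{L^{2}_{x}}\lesssim_{\epsilon}N^{\epsilon}\|\LP_{N}h\|_{X_{N}^{*}(I)}$, which then sums to $\|h\|^{2}_{X^{*,\sigma+\epsilon}(I)}$. The block estimate is a duality argument. Writing $\langle f,g\rangle\eqd\int_{\R^{d}}f\,\overline{g}\,\dd x$, for $\|\phi\|_{L^{2}_{x}}\le1$, and using that $\LP_{N}$, $\tilde{\LP}_{N}$ are self-adjoint, $e^{i\tau\Delta}$ is unitary with adjoint $e^{-i\tau\Delta}$, all of which commute, and that $\LP_{N}\tilde{\LP}_{N}=\LP_{N}$,
\[
\langle\LP_{N}w(t),\phi\rangle=\mp i\int_{0}^{t}\big\langle\LP_{N}h(s),\,e^{i(s-t)\Delta}\tilde{\LP}_{N}\phi\big\rangle\,\dd s=\int_{I\times\R^{d}}\LP_{N}h(s,x)\,h_{*}(s,x)\,\dd s\,\dd x,
\]
with $h_{*}(s,x)\eqd\mp i\,\1_{[0,t]}(s)\,\overline{\big(e^{i(s-t)\Delta}\tilde{\LP}_{N}\phi\big)(x)}$. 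Now $e^{i(s-t)\Delta}\tilde{\LP}_{N}\phi=e^{is\Delta}g$ with $g\eqd\tilde{\LP}_{N}e^{-it\Delta}\phi$, so that $\|g\|_{L^{2}_{x}}\lesssim1$ and $\FT{g}$ is supported at frequencies $\approx N$; since the $X_{N}$-norm is invariant under complex conjugation (the multipliers $\phi_{N}$ and $\1_{\mf{U}_{e_{l}}}$ being real and even) and decreases when the time interval is shrunk, $\|h_{*}\|_{X_{N}(I)}=\|\1_{[0,t]}e^{is\Delta}g\|_{X_{N}(I)}\le\|e^{is\Delta}g\|_{X_{N}(\R)}$, and hence by definition of $X_{N}^{*}(I)$, $|\langle\LP_{N}w(t),\phi\rangle|\le\|\LP_{N}h\|_{X_{N}^{*}(I)}\,\|e^{is\Delta}g\|_{X_{N}(\R)}$. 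Finally I would invoke the homogeneous estimate $\|e^{is\Delta}g\|_{X_{N}(\R)}\lesssim_{\epsilon}N^{\epsilon}\|g\|_{L^{2}_{x}(\R^{d})}$ for $g$ frequency-localized at scale $N$: this is \emph{not} \Cref{prop:main-linear-estimate} (whose proof comes later), but only its $h=0$ special case, obtained by bounding each of the finitely many norms in \eqref{eq:def:X-norm} through \Cref{lem:strichartz}, \Cref{lem:dir-maximal}, \Cref{lem:dir-local-smoothing}, \Cref{lem:bernstein} and interpolation, losing only $N^{O(\epsilon_{0})}\le N^{\epsilon}$ because the exponents there are $\epsilon_{0}$-perturbations of the endpoint ones. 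Taking $\sup_{\phi}$ gives the claimed block estimate.

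For continuity I would approximate $w$ by $w^{(M)}\eqd\mbb{I}_{0}(\LP_{\le M}h)$, where $\LP_{\le M}\eqd\sum_{N\le M}\LP_{N}$. Since $w-w^{(M)}=\mbb{I}_{0}\big((\mathrm{Id}-\LP_{\le M})h\big)$, the bound just proved yields $\sup_{t\in I}\|w(t)-w^{(M)}(t)\|_{H^{\sigma}_{x}}\lesssim_{\epsilon}\|(\mathrm{Id}-\LP_{\le M})h\|_{X^{*,\sigma+\epsilon}(I)}$, which is the tail of the convergent series defining $\|h\|^{2}_{X^{*,\sigma+\epsilon}(I)}$ and so tends to $0$ as $M\to\infty$; thus $w$ is a locally uniform limit of the $w^{(M)}$. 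Each $w^{(M)}$ is continuous into $H^{\sigma}_{x}(\R^{d})$: on a bounded subinterval $I\cap[-T,T]$ the frequency localization of $\LP_{\le M}h$ together with Bernstein and Hölder in time (the $X_{N}^{*}$-norm dominating a mixed-Lebesgue norm with finite exponents) gives $\LP_{\le M}h\in L^{1}\big(I\cap[-T,T];H^{\sigma}_{x}(\R^{d})\big)$, and for such a datum $t\mapsto\mp i\int_{0}^{t}e^{i(t-s)\Delta}(\LP_{\le M}h)(s)\,\dd s$ is continuous into $H^{\sigma}_{x}(\R^{d})$ by the classical argument: split $w^{(M)}(t)-w^{(M)}(t_{0})$ into a short-time integral, bounded by $\int_{t_{0}}^{t}\|\LP_{\le M}h(s)\|_{H^{\sigma}_{x}}\,\dd s\to0$, and $\int_{0}^{t_{0}}\big(e^{i(t-t_{0})\Delta}-\mathrm{Id}\big)e^{i(t_{0}-s)\Delta}(\LP_{\le M}h)(s)\,\dd s$, which tends to $0$ by strong continuity of the group and dominated convergence. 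A locally uniform limit of $C^{0}(I;H^{\sigma}_{x}(\R^{d}))$-maps is again in $C^{0}(I;H^{\sigma}_{x}(\R^{d}))$, closing the argument.

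I expect the main obstacle to be the continuity statement rather than the quantitative bound: because $X^{*,\sigma+\epsilon}(I)$ is a dual-type norm it is not clear that $C^{\infty}_{c}(I\times\R^{d})$ is dense in it, which is why the approximation is routed through Littlewood--Paley truncations, for which $L^{1}_{t}H^{\sigma}_{x}$-integrability on bounded time intervals is available and the classical Duhamel continuity argument applies, with the quantitative bound applied to $(\mathrm{Id}-\LP_{\le M})h$ supplying the convergence. The only other point requiring care is that the homogeneous bound $\|e^{is\Delta}g\|_{X_{N}(\R)}\lesssim N^{\epsilon}\|g\|_{L^{2}_{x}}$ be genuinely derived from \Cref{sec:linear-preliminaries} and not from \Cref{prop:main-linear-estimate}.
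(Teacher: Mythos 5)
Your argument is correct, and the quantitative estimate follows essentially the same duality scheme as the paper, only organized blockwise: you pair $\LP_{N}\mbb{I}_{0}(h)(t)$ with $L^{2}_{x}$ test functions, recognize the dual element as a time-truncated free evolution, and invoke the homogeneous bound $\|e^{it\Delta}\LP_{N}g\|_{X_{N}}\lesssim N^{\epsilon}\|\LP_{N}g\|_{L^{2}}$, i.e.\ \eqref{eq:L2-to-X-bound} from Step~1 of \Cref{lem:linear-evolution-disjoint-times}, whereas the paper dualizes the full $H^{\sigma}_{x}$ norm against $v_{*}\in H^{-\sigma}_{x}$, splits into Littlewood--Paley blocks with Cauchy--Schwarz, and cites \eqref{eq:non-homogeneous-bound-2} of \Cref{prop:main-linear-estimate} with $h=0$ and $\sigma$ replaced by $-\sigma-\epsilon$; these are the same computation up to bookkeeping, and your explicit treatment of the cutoff $\1_{[0,t]}$ and of complex conjugation (using that $\phi_{N}$ and $\1_{\mf{U}_{e_{l}}}$ are symmetric under $\xi\mapsto-\xi$) is a welcome precision. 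Where you genuinely depart from the paper is the continuity statement: the paper disposes of it by asserting continuity for smooth compactly supported $v_{0}$ and $h$ and "approximation", which implicitly needs a density statement in the dual-type space $X^{*,\sigma+\epsilon}(I)$ that the paper only records for $X^{\sigma}$ and $Y^{\sigma}$; your route through the frequency truncations $\LP_{\le M}h$, the uniform tail bound, and the classical Duhamel continuity argument for $L^{1}_{t}H^{\sigma}_{x}$ data avoids that issue and is the more robust argument (note only that $\LP_{N}(\mathrm{Id}-\LP_{\le M})h$ vanishes unless $N\gtrsim M$ and that $\mathrm{Id}-\LP_{\le M}$ is bounded on $X_{N}^{*}$ uniformly in $M$, which justifies calling the error a tail of the convergent series).

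One step you should spell out is the claim that finiteness of $\|\LP_{N}h\|_{X_{N}^{*}(I)}$ gives $\LP_{N}h\in L^{1}_{t}L^{2}_{x}$ on bounded time intervals. It is not enough to say that the $X_{N}^{*}$-norm "dominates a mixed-Lebesgue norm": to extract integrability of $h$ from the duality definition \eqref{eq:def:X-norm-dual} you must exhibit a test class whose $X_{N}(I)$ norms you can control, and the maximal-in-time components of $X_{N}$ (e.g.\ $L_{e_{l}}^{(\frac{2}{1-\epsilon_{0}},\frac{2}{\epsilon_{0}},\frac{2}{\epsilon_{0}})}$, which is of $L^{2}_{x_{l}}L^{\infty}_{t}L^{\infty}_{x'}$ type) are \emph{not} bounded by $L^{\infty}_{t}L^{2}_{x}$ for arbitrary time-truncated, frequency-localized functions. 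The fix is easy: test against separable functions $\chi(t)\psi(x)$ supported in $[-T,T]$ with $\psi$ frequency-localized at scale $N$; then every directional norm factorizes into a time norm of $\chi$ times a spatial norm of $\psi$, Bernstein controls the spatial factors by $C(N)\|\psi\|_{L^{2}}$, and Hölder on $[-T,T]$ controls the time factors, yielding $\|\chi\otimes\tilde{\LP}_{N}\psi\|_{X_{N}(I)}\le C(N,T)\|\chi\|_{L^{2/\epsilon_{0}}_{t}}\|\psi\|_{L^{2}_{x}}$ and hence, by duality and density of linear combinations of such tensors, $\LP_{N}h\in L^{(2/\epsilon_{0})'}_{t}L^{2}_{x}([-T,T])\subset L^{1}_{t}L^{2}_{x}([-T,T])$. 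With that insertion your proof is complete.
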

\begin{proposition}\label{prop:adjoint-bound}
Under the assumptions of \Cref{prop:main-linear-estimate}, for any
function \(h\in X^{*,\sigma+\epsilon}(\R)\) the limit
\[\eqnum\label{eq:limit-adjoint-existence}
\lim_{t\to+\infty}\int_{0}^{t}e^{-is\Delta} h(s)\dd s
\]
exists in \(H_x^{\sigma}(\R^{d})\) and satisfies
\[\eqnum\label{eq:limit-adjoint-bound}
\Big\| \lim_{t\to\infty}\int_{0}^{t}e^{-is\Delta} h(s)\dd s\Big\|_{H_x^{\sigma}(\R^{d})} \lesssim \|h\|_{X^{*,\sigma+\epsilon}(\R)}.
\]
Furthermore, if \(v_{0}\in H^{\sigma}_{x}(\R^{d})\) then the limit
\[\eqnum\label{eq:limit-scattering-existence}
\lim_{t\to+\infty} e^{-it\Delta}\mbb{I}_{v_{0}}(h)(t)
\]
exists in \(H_x^{\sigma}(\R^{d})\) and satisfies
\[\eqnum\label{eq:limit-scattering-bound}
\big\|e^{-it\Delta} \mbb{I}_{v_{0}}(h)(t)\big\|_{H^{\sigma}_{x}(\R^{d}))}\lesssim_{\epsilon} \|v_{0}\|_{H^{\sigma}_{x}(\R^{d})}+ \|h\|_{X^{*,\sigma+\epsilon}(I)}.
\]
\end{proposition}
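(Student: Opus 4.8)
The plan is to reduce both assertions to a single uniform-in-time bound on the ``Duhamel tails'' $\int_{t_{1}}^{t_{2}}e^{-is\Delta}h(s)\dd s$ and then to run a Cauchy criterion. First I would record that, expanding the definition of $\mbb{I}_{v_{0}}$ and using that $e^{-it\Delta}$ is unitary on $H^{\sigma}_{x}(\R^{d})$,
\[
e^{-it\Delta}\mbb{I}_{v_{0}}(h)(t)=v_{0}\mp i\int_{0}^{t}e^{-is\Delta}h(s)\dd s .
\]
Thus \eqref{eq:limit-scattering-existence} and \eqref{eq:limit-scattering-bound} follow from \eqref{eq:limit-adjoint-existence}, \eqref{eq:limit-adjoint-bound} and the triangle inequality, and it is enough to study $G(t)\eqd\int_{0}^{t}e^{-is\Delta}h(s)\dd s$.

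For the key estimate, fix $0\le t_{1}\le t_{2}$ and use the isometry $e^{-it_{2}\Delta}$ to write
\[
\Big\|\int_{t_{1}}^{t_{2}}e^{-is\Delta}h(s)\dd s\Big\|_{H^{\sigma}_{x}(\R^{d})}=\Big\|\int_{t_{1}}^{t_{2}}e^{i(t_{2}-s)\Delta}h(s)\dd s\Big\|_{H^{\sigma}_{x}(\R^{d})}=\big\|w(t_{2})\big\|_{H^{\sigma}_{x}(\R^{d})},
\]
where $w(t)\eqd\mp i\int_{t_{1}}^{t}e^{i(t-s)\Delta}h(s)\dd s$ solves the inhomogeneous Schrödinger equation on $[t_{1},t_{2}]$ with vanishing datum at $t=t_{1}$. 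Translating time so that the initial instant becomes $0$ turns $w$ into $\mbb{I}_{0}$ applied to a time translate of $h$; since the norms $X^{\xs}$ and $X^{*,\xs}$ are built from translation invariant space--time norms, \Cref{prop:linear-nonhomogeneous-continuity-bound} applies and yields
\[
\Big\|\int_{t_{1}}^{t_{2}}e^{-is\Delta}h(s)\dd s\Big\|_{H^{\sigma}_{x}(\R^{d})}\lesssim\|h\|_{X^{*,\sigma+\epsilon}([t_{1},t_{2}))}\le\|h\|_{X^{*,\sigma+\epsilon}(\R)} .
\]
Taking $t_{1}=0$ shows that $G(t)\in H^{\sigma}_{x}(\R^{d})$ is well defined with $\sup_{t\ge 0}\|G(t)\|_{H^{\sigma}_{x}(\R^{d})}\lesssim\|h\|_{X^{*,\sigma+\epsilon}(\R)}$, so once convergence is proved, \eqref{eq:limit-adjoint-bound} follows by lower semicontinuity of the norm.

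It remains to prove that $G(t)$ is Cauchy as $t\to+\infty$. Since $G(t_{2})-G(t_{1})=\int_{t_{1}}^{t_{2}}e^{-is\Delta}h(s)\dd s$, the previous step reduces this to the tail bound
\[
\lim_{t_{1}\to+\infty}\ \sup_{t_{2}>t_{1}}\ \|h\|_{X^{*,\sigma+\epsilon}([t_{1},t_{2}))}=0 .
\]
I expect this tail bound to be the main obstacle: $X^{*,\sigma+\epsilon}$ is a dual norm, for which vanishing of tails is not automatic. I would argue it as follows. On each Littlewood--Paley block $N$ the norm $\|\cdot\|_{X_{N}(I)}$ is, up to a constant independent of $N$ and $I$, the maximum of the finitely many space--time norms appearing in \eqref{eq:def:X-norm}, each having finite integrability exponents in time and in space. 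Consequently $\|\cdot\|_{X_{N}^{*}(I)}$ is comparable to the infimal convolution of the corresponding conjugate-exponent norms; for these, restriction to a subinterval is bounded uniformly by the norm on $\R$, and on a half-line $[t_{1},\infty)$ it tends to $0$ as $t_{1}\to+\infty$ by dominated convergence. Inserting this into the definition \eqref{eq:def:X-norm-dual}, namely $\|h\|_{X^{*,\sigma+\epsilon}([t_{1},t_{2}))}^{2}=\sum_{N}N^{2(\sigma+\epsilon)}\|\LP_{N}h\|_{X_{N}^{*}([t_{1},t_{2}))}^{2}$, and splitting the sum into $N\le N_{0}$ (finitely many blocks, each tail going to $0$ as $t_{1}\to+\infty$) and $N>N_{0}$ (controlled uniformly by $\sum_{N>N_{0}}N^{2(\sigma+\epsilon)}\|\LP_{N}h\|_{X_{N}^{*}(\R)}^{2}$, the tail of a convergent series), yields the claim. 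The rest of the proof is the bookkeeping above.
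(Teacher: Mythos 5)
Your argument is correct and essentially the paper's own: both reduce the statement to a Cauchy criterion for $t\mapsto\int_0^t e^{-is\Delta}h(s)\,\dd s$, control the increments through \Cref{prop:linear-nonhomogeneous-continuity-bound} by the $X^{*,\sigma+\epsilon}$-norm of $h$ on $[t_1,t_2)$ (the paper inserts the cutoff $\1_{[t_1,t_2)}$ and applies the bound on all of $\R$ rather than translating time, which is an immaterial difference), then conclude from the vanishing of the $X^{*,\sigma+\epsilon}$-norm of the tails of $h$, and finally obtain the scattering claims from the identity $e^{-it\Delta}\mbb{I}_{v_{0}}(h)(t)=v_{0}\mp i\int_{0}^{t}e^{-is\Delta}h(s)\,\dd s$. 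Your duality/frequency-splitting justification of the tail decay of the dual norm is a correct (and welcome) elaboration of the step the paper asserts in a single line in \eqref{eq:alfh}.
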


\begin{proof}[Proof of \Cref{prop:linear-nonhomogeneous-continuity-bound}]
For conciseness, let \(v\eqd\mbb{I}_{v_{0}}(h)\). To show \eqref{eq:continuity-bound} it is sufficient to prove the point-wise bound
\[\eqnum\label{eq:Duhamel-Hs-pointwise-bound}
\|v(t)\|_{H^{\sigma}_{x}(\R^{d})}\lesssim \|v_{0}\|_{H_x^{\sigma}(\R^{d})}+ \|h\|_{X^{*,\sigma+\epsilon}(I)}.
\]
for any $t \in I$. Indeed, \eqref{eq:Duhamel-Hs-pointwise-bound} implies that 
\[
\|v\|_{L^{\infty}_{t}(I, H^{\sigma}(\R^{d}))}\lesssim \|v_{0}\|_{H^{\sigma}_{x}(\R^{d})}+ \|h\|_{X^{*,\sigma+\epsilon}(I)}\,,
\]
and continuity of $t \mapsto \big\|\mbb{I}_{v_{0}}(h)\big\| (t)$ is trivial for the dense class of
functions \(v_{0}\in C^{\infty}_{c}(\R^{d})\) and \(h \in C^{\infty}_{c}(I\times\R^{d})\). The
claim \eqref{eq:continuity-bound} then follows by approximation.

Fix  \(t\in I\) and let us deduce \eqref{eq:Duhamel-Hs-pointwise-bound} by duality from \Cref{prop:main-linear-estimate}. By Duhamel's formula \eqref{eq:duhamel} we have 
\[
\big\|v(t)\big\|_{H^{\sigma}_{x}(\R^{d})} 
\lesssim  \big\|e^{it\Delta} v_{0}\big\|_{H^{\sigma}_{x}(\R^{d})}  + 
\Big\|\int_{0}^{t} e^{i(t-s)\Delta} h(s) \dd s\Big\|_{H^{\sigma}_{x}(\R^{d})}.
\]
The definition of the Sobolev spaces \eqref{eq:Sobolev-spaces} and of the Schrödinger evolution group \eqref{eq:schroedinger-propagation-group}, imply for any $t\in I$ that
\[
\|e^{it\Delta} v_{0}\|_{H^{\sigma}_{x}(\R^{d})}=\|v_{0}\|_{H^{\sigma}_{x}(\R^{d})}
\]
and 
\[
\Big\|\int_{0}^{t} e^{i(t-s)\Delta} h(s) \dd s\Big\|_{H^{\sigma}_{x}(\R^{d})}
=\Big\|\int_{0}^{t} e^{-is\Delta} h(s) \dd s\Big\|_{H^{\sigma}_{x}(\R^{d})}.
\]
Then, using the \(H^{\sigma}_{x}(\R^{d})\) -- \(H^{-\sigma}_{x}(\R^{d})\) duality and that \(\sum_{N\in\N}\tilde{\LP}_{N}\LP_{N}\) is the identity operator (see \eqref{eq:def:LP-modified}), we obtain that 
\[
\begin{aligned}[t]
 \Big\|\int_{0}^{t} e^{-is\Delta} h(s) \dd s\Big\|_{H^{\sigma}_{x}(\R^{d})} &= \sup_{v_{*}} \Big| \int_{\R^{d}}\int_{0}^{t} \bar{v_{*}(x)} e^{-is\Delta} h(s,x) \dd s \dd x \Big|
\\
  &= \sup_{v_{*}} \Big| \int_{\R^{d}}\int_{0}^{t} \bar{e^{is\Delta} v_{*}(x)}  h(s,x) \dd s \dd x \Big|\\
&= \sup_{v_{*}} \Big|\sum_{N\in\N}\, \int_{\R^{d}}\!\int_{0}^{t} \bar{\LP_{N}e^{is\Delta} v_{*}(x)}  \tilde{\LP}_{N}h(s,x) \dd s \dd x \Big|\,,
\end{aligned}
\]
where the supremum is taken over $v_{*} \in H^{-\sigma}_{x}(\R^d)$ with $\|v_{*}\|_{H^{-\sigma}_{x}(\R^d)} \leq 1$. Using the triangle inequality, the \(X_{N}(I)\) -- \(X_{N}^{*}(I)\) duality, the Cauchy-Schwarz inequality, as well as \eqref{eq:X-N-comparability} with the definition \eqref{eq:def:X-norm} of the \(X_{N}\) norm, we obtain that 
\[
\begin{aligned}[t]
 \Big\|\int_{0}^{t} e^{-is\Delta} h(s) \dd s\Big\|_{H^{\sigma}_{x}(\R^{d})} &\leq   \sup_{v_{*}}  \sum_{N \in 2^{\N}}  N^{-\sigma-\epsilon}\big\|\LP_{N} e^{it\Delta}v_{*}\big\|_{X_{N}(I)} N^{\sigma+\epsilon}\big\|\tilde{\LP}_{N}h\big\|_{X_{N}^{*}(I)} \,,
\\
&  \leq   \sup_{v_{*}}  \big\| e^{it\Delta}v_{*}\big\|_{X^{-\sigma-\epsilon}(I)} \big\|h\big\|_{X^{\sigma+\epsilon,*}(I)} \,,
\end{aligned}
\]
Then \eqref{eq:non-homogeneous-bound-2} with \(\sigma\) replaced by \(-\sigma-\epsilon\), together with the condition \(\|v_{*}\|_{H^{-\sigma}_{x}(\R^{d})}\leq1\) provides us with the bound
\[
\big\| e^{it\Delta}v_{*}(t,x)\big\|_{X^{-\sigma-\epsilon}(I)}\lesssim\|v_{*}\|_{H^{-\sigma}_{x}(\R^{d})}\lesssim 1,
\]
and, therefore,
\[
\Big\|\int_{0}^{t} e^{i(t-s)\Delta} h(s) \dd s\Big\|_{H^{\sigma}_{x}(\R^{d})}\lesssim \big\|h\big\|_{X^{\sigma+\epsilon,*}(I)} \,,
\]
as desired.
\end{proof}
\begin{proof}[Proof of \Cref{prop:adjoint-bound}]
Since \(\|h\|_{X^{*,\sigma+\epsilon}(\R)}<\infty\) and all norms in $X^{*,\sigma+\epsilon}(\R)$ are Lebesgue norms with finite integrability exponent, it holds that 
\[ \eqnum\label{eq:alfh}
\lim_{T\to\infty}\big\|\1_{[T,\infty)} h\big\|_{X^{*,\sigma+\epsilon}(\R)}=0\,.    
\]
We claim that \( \Big(t\mapsto \int_{0}^{t} e^{-is\Delta} h(s)\dd s\Big)_{t > 0}\) is Cauchy in \(H^{\sigma}(\R^{d})\) as \(t\to+\infty\). Indeed, by \eqref{eq:alfh} 
for any \(\tilde{\epsilon}>0\) there exists \(T_{\tilde{\epsilon}}>0\) such that
\[ 
\big\|\1_{[T_{\tilde \varepsilon},\infty)} h\big\|_{X^{*,\sigma+\epsilon}(\R)} \leq   \tilde{\epsilon} \,.
\]
Then, for any $t_1, t_2$ with $T_{\tilde{\epsilon}}\leq t_{1}\leq t_{2}$ one has, by \eqref{eq:continuity-bound},
\[
\begin{aligned}
\Big\|\int_{0}^{t_2}e^{-is\Delta} h(s) \, \dd s - \int_{0}^{t_1}e^{-is\Delta} h(s) \, \dd s \Big\|_{H_x^{\sigma}(\R^{d})}
& =
\Big\|\int_{0}^{\infty}e^{-is\Delta}\1_{[t_{1},t_{2})}(s)h(s) \, \dd s \Big\|_{H_x^{\sigma}(\R^{d})}
\\
& \hspace{-10em}\lesssim \big\|\1_{[t_1,t_2]} h\big\|_{X^{*,\sigma+\epsilon}(\R)} \lesssim  \big\|\1_{[T_{\tilde{\epsilon}}, \infty)} h\big\|_{X^{*,\sigma+\epsilon}(\R)} \lesssim \tilde{\epsilon}\,.
\end{aligned}
\]
Hence, 
\[
\lim_{t\to\infty}\int_{0}^{t}e^{-is\Delta} h(s)\dd s 
\qquad \text{ exists in }  H^{\sigma}_{x}(\R^{d}).
\]
 Finally, using \eqref{eq:continuity-bound} once more
\[
\Big\|\lim_{t\to\infty}\int_{0}^{t}e^{-is\Delta} h(s)\dd s \Big\|_{H_x^\sigma(\R^d)}
\leq
\lim_{t\to\infty}\Big\|\int_{0}^{t}e^{-is\Delta} h(s)\dd s \Big\|_{H^\sigma_x(\R^d)} \lesssim \|h\|_{X^{*,\sigma+\epsilon}(\R)}
\]
and  \eqref{eq:limit-adjoint-bound} follows.

The existence of the limit \eqref{eq:limit-adjoint-existence} and the bound \eqref{eq:limit-scattering-bound} follow from \eqref{eq:limit-adjoint-existence} and from the bound \eqref{eq:limit-adjoint-bound} by noticing that
\[
e^{-it\Delta} \mbb{I}_{v_{0}}(h)(t)=v_{0} \mp \int_{0}^{t}e^{-s\Delta}h(s)\dd s.
\]
\end{proof}

The proof of \Cref{prop:main-linear-estimate} relies on two key
steps. First, in \Cref{lem:linear-evolution-disjoint-times} below, we
prove estimates analogous to \eqref{eq:non-homogeneous-bound} and
\eqref{eq:non-homogeneous-bound-unit-scale} with (see Duhamel's
formula \eqref{eq:duhamel})
\[\eqnum\label{eq:duh-max}
h\mapsto v = \int_{I} \1_{s<t}\,e^{i(t-s)\Delta}h(s,x)\dd s
\]
replaced by 
\[\eqnum\label{eq:duh-non-max}
h\mapsto \int_{I}e^{i(t-s)\Delta}h(s,x)\dd s \,.
\]
Second, the ``Christ-Kiselev'' procedure illustrated in the proof of
\Cref{prop:main-linear-estimate} allows one to deduce bounds on \eqref{eq:duh-max} from the bounds on \eqref{eq:duh-non-max}, completing the proof.

\begin{lemma} \label{lem:linear-evolution-disjoint-times} Fix any
\(0 < \epsilon \lesssim 1\), any \(0 < \epsilon_{0} \lesssim_{\epsilon} 1\). There exists a constant
\(c=c(\epsilon,\epsilon_{0})>0\) such that for any $I \subset \R$ and any $N\in 2^{\mathbb{N}}$ one has
\[\eqnum\label{eq:non-homogeneous-bound-J-Jprime}
\Big\| \int_{I} e^{ i (t - s) \Delta} \LP_{N} h (s) \dd s \Big\|_{X_{N} (I)}
\lesssim_{\epsilon}\big\langle|I|^{-1}\big\rangle^{-c} N^{\epsilon} \big\| \LP_{N} h \big\|_{X_{N}^{*} (I)} \,.
\]
If in addition \(\diam\big(\spt (\FT{h})\big)\leq2R\), then
\[\eqnum\label{eq:non-homogeneous-bound-unit-scale-J-Jprime}
\Big\| \int_{I} e^{ i (t - s) \Delta} \LP_{N} h (s) \dd s \Big\|_{Y_{N} (I)}
\lesssim_{R,\epsilon} \big\langle|I|^{-1}\big\rangle^{-c} N^{\epsilon} \big\| \LP_{N} h \big\|_{X_{N}^{*} (I)}.
\]
All implicit constants are allowed to depend on \(\epsilon,\epsilon_{0}\) but are independent of \(I\), $N$, and \(h\). 
\end{lemma}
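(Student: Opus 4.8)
The plan is to factor the operator through the homogeneous Schrödinger flow and then close by duality, so that everything reduces to a single dispersive estimate for $e^{it\Delta}$ in the building-block norms that define $X_N$ and $Y_N$. Concretely, setting $F\eqd\int_I e^{-is\Delta}\LP_N h(s)\,\dd s$ one has
\[
\int_I e^{i(t-s)\Delta}\LP_N h(s)\,\dd s=e^{it\Delta}F ,
\]
and $\FT F$ is supported where $\FT{\LP_N h}$ is; in particular $F$ is frequency-localized at scale $N$, and in the setting of \eqref{eq:non-homogeneous-bound-unit-scale-J-Jprime} also $\diam\big(\spt \FT F\big)\le 2R$. Hence it suffices to establish the homogeneous estimate
\[
\big\|e^{it\Delta}\LP_N g\big\|_{X_N(I)}\lesssim_{\epsilon} N^{\epsilon}\big\langle|I|^{-1}\big\rangle^{-c}\|\LP_N g\|_{L^2(\R^d)}
\]
together with its $Y_N$ analogue under the hypothesis $\diam\big(\spt\FT g\big)\le 2R$, and then to bound $\|F\|_{L^2}$ by duality: since $\tilde{\LP}_N\LP_N=\LP_N$ and $\LP_N,\tilde{\LP}_N$ are self-adjoint, for $\|g\|_{L^2}\le 1$ we may write $\int_{\R^d}F\,\bar g\,\dd x=\int_{I\times\R^d}\LP_N h(s,x)\,\overline{e^{is\Delta}\tilde{\LP}_N g(x)}\,\dd s\,\dd x$, so the definition of $X_N^*(I)$ and the comparability \eqref{eq:X-N-comparability} give $\|F\|_{L^2}\lesssim_{\epsilon}N^{\epsilon}\langle|I|^{-1}\rangle^{-c}\|\LP_N h\|_{X_N^*(I)}$. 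Combining the two displays (and relabelling $\epsilon$ and $c$, using $\langle\cdot\rangle\ge1$) yields \eqref{eq:non-homogeneous-bound-J-Jprime} and \eqref{eq:non-homogeneous-bound-unit-scale-J-Jprime}; note that in the unit-scale case only the \emph{general} homogeneous bound is used in the duality step, which is why the right-hand side there is still an $X_N^*$ norm. Throughout one first assumes $h,g\in C^\infty_c$ and passes to the limit by density.

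For the homogeneous estimate, recall that $\|\cdot\|_{X_N(I)}$ is a fixed finite sum of space-time norms of the three types appearing in \Cref{lem:strichartz}, \Cref{lem:dir-maximal}, and \Cref{lem:dir-local-smoothing}, each carried by an explicit power of $N$. First I would produce for each summand a \emph{global} bound on $\R$ of the form $N^{O(\epsilon_0)}\|\LP_N g\|_{L^2}$. The two Strichartz-type norms follow from \Cref{lem:strichartz} after using Bernstein (\Cref{lem:bernstein}) to correct the $\epsilon_0$-subcritical Lebesgue exponents. The directional-maximal-type norms are obtained by interpolating the endpoint estimate \eqref{eq:dir-maximal} (in the unit-scale case \eqref{eq:dir-maximal-unit-scale}) against a Strichartz estimate rewritten as a directional norm via Minkowski's integral inequality, and against the crude bound $\|e^{it\Delta}\LP_N g\|_{L_{e_l}^{(\infty,\infty,\mf{c})}(\R)}\lesssim N^{O(1)}\|\LP_N g\|_{L^2}$ coming from one-dimensional Bernstein in $x_l$ together with Bernstein in $x'$; interpolating in the three exponent slots (some directly, since $\mf{c}$ in \Cref{lem:dir-maximal} is free) reaches the $\epsilon_0$-perturbed triples of \eqref{eq:def:X-norm}. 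The directional-smoothing-type norms, which carry the cone projections $\UP_{e_l}$ exactly as in \eqref{eq:def:X-norm} and \eqref{eq:def:Y-norm}, are handled the same way starting from \eqref{eq:dir-local-smoothing} (resp.\ \eqref{eq:dir-local-smoothing-unit-scale}). The $N$-weights in \eqref{eq:def:X-norm} and \eqref{eq:def:Y-norm} are precisely the reciprocals of the $N$-powers produced by \Cref{lem:dir-maximal} and \Cref{lem:dir-local-smoothing} as the admissible exponent is pushed to its extreme value, so that after multiplication by the weights the net loss is $N^{O(\epsilon_0)}$, which is $\le N^{\epsilon}$ once $\epsilon_0\lesssim_{\epsilon}1$.

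To extract the factor $\langle|I|^{-1}\rangle^{-c}$ I would interpolate these global bounds with crude short-interval bounds. When $|I|\ge 1$ nothing is needed, since $\langle|I|^{-1}\rangle^{-c}\approx 1$. When $|I|<1$, Hölder in $t$ trades the finite time-integrability exponent $\mf{b}<\infty$ of each summand for an extra factor $|I|^{1/\mf{b}}$ at the cost of replacing $L^{\mf{b}}_t$ by $L^\infty_t$; the resulting norm is then controlled by a fixed (possibly large) power of $N$ times $\|\LP_N g\|_{L^2}$, again using Bernstein, the crude $L_{e_l}^{(\infty,\infty,\cdot)}$ bound above, and the fact that $e^{it\Delta}$ is an $L^2$-isometry. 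The geometric interpolation $\|v\|\le\|v\|_{\mathrm{global}}^{1-\theta}\|v\|_{\mathrm{short}}^{\theta}$ with $\theta$ small then gives $N^{\epsilon}|I|^{c}\|\LP_N g\|_{L^2}$ for a small $c>0$; taking $c$ to be the minimum of the finitely many exponents so produced yields the claimed homogeneous bound. The $Y_N$ version is identical, using the unit-scale estimates \eqref{eq:dir-maximal-unit-scale}, \eqref{eq:dir-local-smoothing-unit-scale} and \eqref{eq:unit-scale-bernstein} throughout, which are available because $\diam(\spt\FT g)\le 2R$.

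The main obstacle is the global estimate for the directional summands. The estimates \Cref{lem:dir-maximal} and \Cref{lem:dir-local-smoothing} are available only at the single corner exponent triples $(2,\infty,\mf{c})$ and $(\infty,2,\mf{c})$, whereas the norms defining $X_N$ and $Y_N$ use $\epsilon_0$-perturbations in which all three exponents are finite; widening or narrowing the first slot forces one to interpolate against Strichartz estimates and against the trivial $L_{e_l}^{(\infty,\infty,\cdot)}$ bound, and one must use Minkowski's inequality to rearrange the order of the $x_l$- and $t$-integrations, all while tracking the Bernstein losses coming from frequency localization so that they remain $N^{O(\epsilon_0)}$. Checking that the resulting $N$-powers exactly cancel the prescribed weights $N^{\pm 1/2}$, $N^{\pm (d-1)/2}$, $N^{\pm (d-2)/2}$ is what pins down the precise exponents in \eqref{eq:def:X-norm} and \eqref{eq:def:Y-norm}, and is the technical heart of the argument.
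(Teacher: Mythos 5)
Your proposal is correct and follows essentially the same architecture as the paper's proof: (i) a global homogeneous bound $\|e^{it\Delta}\LP_N g\|_{X_N(I)}\lesssim N^{O(\epsilon_0)}\|\LP_N g\|_{L^2}$ (and its $Y_N$ analogue under the unit-scale support hypothesis), (ii) a crude short-interval bound interpolated against it to produce the factor $\big\langle|I|^{-1}\big\rangle^{-c}$, and (iii) the factorization $\int_I e^{i(t-s)\Delta}\LP_N h(s)\,\dd s=e^{it\Delta}F$ with $\|F\|_{L^2}$ controlled by dualizing the homogeneous bound against $X_N^*$ — this is exactly Steps 1–4 of the paper (attaching the time factor already in the $\|F\|_{L^2}$ bound rather than only in the final application of the homogeneous estimate is harmless). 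The one substantive difference is how you reach the $\epsilon_0$-perturbed directional triples in step (i): the paper interpolates the maximal estimate \eqref{eq:dir-maximal} directly against the smoothing estimate \eqref{eq:dir-local-smoothing}, since with parameter $\theta=\epsilon_0$ the triples $\big(\tfrac{2}{1-\epsilon_0},\tfrac{2}{\epsilon_0},\cdot\big)$ and $\big(\tfrac{2}{\epsilon_0},\tfrac{2}{1-\epsilon_0},\cdot\big)$ lie exactly on the segment joining $(2,\infty,\cdot)$ and $(\infty,2,\cdot)$, which hits the exponents and the $N$-weights in one stroke; your scheme interpolates each endpoint against Strichartz-as-directional norms and a crude $(\infty,\infty,\cdot)$ bound, which can also be made to work (the auxiliary points enter with weight $O(\epsilon_0)$, so the losses stay $N^{O(\epsilon_0)}$) but needs more bookkeeping: Minkowski's rearrangement requires $p\ge q$, the third slot must be corrected by Bernstein in $x'$, and the crude $(\infty,\infty,\cdot)$ bound by itself can never supply the finite exponents in the $t$- and $x_l$-slots (there is no decay in $x_l$ over all of $\R$), so those slots must be carried by the Strichartz-directional point or by the maximal/smoothing estimates themselves. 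The same caution applies to your short-interval step: after Hölder in $t$, the outermost $L^{\mf{a}}_{x_l}(\R)$ norm with finite $\mf{a}$ still requires the directional maximal estimate (this is precisely how the paper's Step 2 handles the norms with $\mf{a}<\mf{b}$), not only Bernstein and the $L^2$-isometry; since \eqref{eq:dir-maximal} is in your toolkit, this is an imprecision in the write-up rather than a genuine gap.
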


\begin{proof}[Proof of \Cref{lem:linear-evolution-disjoint-times}]
For this proof we allow implicit constants to depend without mention on $\epsilon, \epsilon_0$, and  \(R>0\) (if the support is assumed to be bounded). 

\textbf{Step 1} We claim that for any $g\in C^{\infty}_{c}(\R^{d})$ we have
\[\eqnum\label{eq:L2-to-X-bound}
\Big\| e^{ i t \Delta} \LP_{N} g \Big\|_{X_{N} (I)} \lesssim N^{\epsilon} \| \LP_{N} g \|_{L^{2}(\R^{d})}
\]
and if \(\diam\big(\spt(\FT g)\big)\leq 2R\), then
\[\eqnum\label{eq:L2-to-Y-bound}
\Big\| e^{ i t \Delta} \LP_{N} g \Big\|_{Y_{N} (I)}
\lesssim
N^{\epsilon} \| \LP_{N} g \|_{L^{2}(\R^{d})} \,.
\]

Indeed, using Bernstein's inequality (see \Cref{lem:bernstein}) and Strichartz's estimate (see \Cref{lem:strichartz}) 
for admissible pairs $(\frac{2}{\epsilon_0}, \frac{2}{1-2\epsilon_0/d})$ and $(\frac{2}{1- \epsilon_0}, \frac{2}{d-2+2\epsilon_0})$
we find
\[\eqnum\label{eq:fefl}
\begin{aligned}[c]
& \big \| e^{ i t \Delta} \LP_{N} g \big\|_{L^{\frac{2}{\varepsilon_0}}_{t} L^{\frac{2}{1 - \epsilon_0}}_{x} ( I )} 
\begin{aligned}[t]
& \lesssim N^{| O (\epsilon_0) |} \big\| e^{ i t \Delta} \LP_{N} g \big\|_{L^{\frac{2}{\varepsilon_0}}_{t} L^{\frac{2}{1 - 2\epsilon_0/d}}_{x} ( I )}
\\
& \lesssim N^{  | O (\epsilon_0) |} \big\| \LP_{N} g\big \|_{L^2 (\R^{d})},
\end{aligned}
\\
& \big\| e^{ i t \Delta} \LP_{N} g \big\|_{L_{t}^{ \frac{ 2}{1-\varepsilon_0}} L_{x}^{ \frac{2d }{d-2} \frac{1}{1-\varepsilon_0}}  (I )} 
\begin{aligned}[t]
& \lesssim
N^{  | O (\epsilon_0) |} \big\| e^{ i t \Delta} \LP_{N} g \big\|_{L_{t}^{ \frac{ 2}{1-\varepsilon_0}} L_{x}^{ \frac{2d }{d-2 + 2\varepsilon_0} }  (I )}
\\
& \lesssim N^{  | O (\epsilon_0) |}\big\| \LP_{N} g \big\|_{L^2 (\R^{d})}.
\end{aligned}
\end{aligned}
\]
Using interpolation between $L_{e_l}^{(2,\infty ,\infty)}(I)$ and $L_{e_l}^{(\infty ,2,2)}(I)$ norms, the directional maximal estimate \eqref{eq:dir-maximal} with $\mf{c} = \infty$, the directional smoothing estimate \eqref{eq:dir-local-smoothing} with $\mf{c} = 2$, and the boundedness of \(\UP_{e_{l}}\) on \(L^{2}(\R^{d})\), we obtain  
\[
\begin{aligned}
\big\| e^{ i t \Delta} \LP_{N} \UP_{e_l} g \big\|_{L_{e_{l}}^{(\frac{2}{1-\varepsilon_0}, \frac{2}{ \epsilon_0}, \frac{2}{ \epsilon_{0}})} (I )}
\lesssim 
\big\| e^{ i t \Delta} \LP_{N} \UP_{e_l} g \big\|_{L_{e_{l}}^{(2, \infty, \infty)} (I )}^{1 - |O(\epsilon_0)|} 
\big\| e^{ i t \Delta} \LP_{N} \UP_{e_l} g \big\|_{L_{e_{l}}^{(\infty, 2, 2)} (I )}^{|O(\epsilon_0)|} 
\\ \qquad 
\lesssim N^{ \frac{d-1}{2} + | O (\epsilon_{0}) |} \big\| \LP_{N} \UP_{e_l} g \big\|_{L^2 (\R^{d})}
\lesssim N^{ \frac{d-1}{2} + | O (\epsilon_{0}) |} \big\| \LP_{N} g \big\|_{L^2 (\R^{d})}
\end{aligned}
\]
and 
\[
\begin{aligned}
\big\| e^{ i t \Delta} \LP_{N}\UP_{e_l} g \big\|_{L_{e_{l}}^{(\frac{2}{\varepsilon_0}, \frac{2}{1- \epsilon_0}, \frac{2}{ 1-\epsilon_{0}})} (I )}
\lesssim 
\big\| e^{ i t \Delta} \LP_{N} \UP_{e_l} g \big\|_{L_{e_{l}}^{(2, \infty, \infty)} (I )}^{|O(\epsilon_0)|} 
\big\| e^{ i t \Delta} \LP_{N} \UP_{e_l} g \big\|_{L_{e_{l}}^{(\infty, 2, 2)} (I )}^{1-|O(\epsilon_0)|} 
\\ \qquad 
\lesssim N^{ -\frac{1}{2} + | O (\epsilon_{0}) |} \big\| \LP_{N} \UP_{e_l} g \big\|_{L^2 (\R^{d})}
\lesssim N^{ -\frac{1}{2} + | O (\epsilon_{0}) |} \big\| \LP_{N}  g \big\|_{L^2 (\R^{d})}\,.
\end{aligned}
\]
Also, an interpolation between $L_{e_l}^{(2,\infty , \frac{\mf{c}_0}{1- \epsilon_{0}})}(I)$ and $L_{e_l}^{(\infty ,2,\frac{\mf{c}_0}{1- \epsilon_{0}})}(I)$ norms, directional estimates \eqref{eq:dir-maximal} and \eqref{eq:dir-local-smoothing} with $\mf{c} = \frac{\mf{c}_0}{1- \epsilon_{0}} > \mf{c}_0$, and the boundedness 
of \(\UP_{e_{l}}\) on \(L^{2}(\R^{d})\)   yield
\[
\big\| e^{ i t \Delta} \LP_{N} \UP_{e_l} g \big\|_{L_{e_{l}}^{(\frac{2}{1-\varepsilon_0}, \frac{2}{\epsilon_0}, \frac{\mf{c}_0}{1- \epsilon_{0}})} (I)}
\lesssim N^{ \frac{1}{2} + | O (\epsilon_{0}) |} \big\| \LP_{N} g \big\|_{L^{2}(\R^{d})}\,.
\]
Finally,  an interpolation between $L_{e_l}^{(\infty, 2, \frac{2}{ \epsilon_{0}})}(I)$ and $L_{e_l}^{(2 ,\infty, \frac{2}{ \epsilon_{0}})}(I)$ norms and directional estimates \eqref{eq:dir-maximal} and \eqref{eq:dir-local-smoothing} with $\mf{c} = \frac{2}{ \epsilon_{0}}$ and small $\epsilon_0 > 0$ yield
\[
\big\| e^{ i s \Delta} \LP_{N} \UP_{e_l} g \big\|_{L_{e_{l}}^{(\frac{2}{\varepsilon_0}, \frac{2}{1- \epsilon_0}, \frac{2}{ \epsilon_{0}})} (I )}
\lesssim N^{ \frac{d-2}{2} + | O (\epsilon_{0}) |} \big\| \LP_{N} g \big\|_{L^{2}_{x} (\R^{d})}\,. 
\]
Summing these estimates provides us with \eqref{eq:L2-to-X-bound}
as long as $\epsilon_0 \lesssim_\epsilon 1$.

If \(\diam\big(\spt(\FT{g})\big)\leq2R\), then 
the proof of \eqref{eq:L2-to-Y-bound} is analogous to the proof of 
\eqref{eq:L2-to-X-bound}, where we respectively use 
\eqref{eq:dir-maximal-unit-scale} and \eqref{eq:dir-local-smoothing-unit-scale} instead of \eqref{eq:dir-maximal} and \eqref{eq:dir-local-smoothing}.

\textbf{Step 2} We claim that if \(|I|<1\), then (independently of the support of \(\FT{g}\)):
\[\eqnum\label{eq:trivial-duhamel-bound}
\big\| e^{ i t \Delta} \LP_{N} g \big\|_{X_{N} (I)}
+
\big\| e^{ i t \Delta} \LP_{N} g \big\|_{Y_{N} (I)} \lesssim N^{d} |I|^{\frac{\epsilon_{0}}{2}} \| \LP_{N} g \|_{L^{2}(\R^{d})}.
\]
Note that every summand in the definitions \eqref{eq:def:X-norm} and \eqref{eq:def:Y-norm} of $X_N(I)$ and $Y_N(I)$ is a norm of the form $L^p_tL^q_x(I)$ with $p,q\in\big[2,\sfrac{2}{\epsilon_{0}}\big]$, or $L^{(\mf{a},\mf{b},\mf{c})}_{e_{l}}(I)$ for some $\mf{a},\mf{b},\mf{c}\in\big[2,\sfrac{2}{\epsilon_{0}}\big]$, and \(l\in\{1,\ldots,d\}\). Then the Strichartz estimates (\Cref{lem:strichartz}), Bernstein's inequality (\Cref{lem:bernstein}), and Hölder's inequality, yield
\[
\begin{aligned}
\big\|e^{ i t \Delta} \LP_{N} g \big\|_{L^p_tL^q_x(I)} & \lesssim N^{\frac{d}{2}}\big\|e^{ i t \Delta} \LP_{N} g \big\|_{L^p_tL^2_x(I)}
\\
& \lesssim N^{d} |I|^\frac{1}{p} \big\|e^{ i t \Delta} \LP_{N} g \big\|_{L^\infty_tL^2_x(I)} 
 \lesssim  N^{d} |I|^{\frac{\epsilon_{0}}{2}} \big\| \LP_{N} g \big\|_{L^{2}(\R^{d})} \,.
\end{aligned}
\]
and if \(\mf{a}\geq\mf{b}\), then (after an exchange of integrals with respect to $x_1$ and $t$)
\[
\begin{aligned}
\big\|e^{ i t \Delta} \LP_{N} g \big\|_{L^{(\mf{a}, \mf{b}, \mf{c})}_{e_{l}}(I)}
& \lesssim N^{\frac{d}{2}}\|e^{ i t \Delta} \LP_{N} g \|_{L^{\mf{b}}_{t} L^{2}_{x}(I)}
\\
&\lesssim N^{d} |I|^{\frac{1}{\mf{b}}} \|e^{ i t \Delta} \LP_{N} g \|_{L^{\infty}_{t} L^{2}_{x}(I)}
 \lesssim  N^{d} |I|^{\frac{\epsilon_{0}}{2}} \| \LP_{N} g \|_{L^{2}(\R^{d})} \,.
\end{aligned}
\]
If \(\mf{a}<\mf{b}\), then \(\mf{c}>\mf{c}_{0}\) and the directional maximal estimate \eqref{eq:dir-maximal} implies that 
\[
\begin{aligned}[t]
\big\|e^{ i t \Delta} \LP_{N} g \big\|_{L^{(\mf{a}, \mf{b}, \mf{c})}_{e_{l}}(I)}
&\lesssim N^{\frac{1}{2}}\|e^{ i t \Delta} \LP_{N} g \|_{L^{(2,\mf{b}, \mf{c})}_{e_{l}}(I)}
\\
& \lesssim N^{\frac{1}{2}} |I|^{\frac{1}{\mf{b}}} \|e^{ i t \Delta} \LP_{N} g \|_{L^{(2, \infty, \mf{c})}_{e_{l}}(I)} 
\lesssim  N^{d} |I|^{\frac{\epsilon_{0}}{2}} \| \LP_{N} g \|_{L^{2}(\R^{d})} \,.
\end{aligned}
\]
Thus, \eqref{eq:trivial-duhamel-bound} follows from Minkowski's inequality.

\textbf{Step 3}
Interpolating bounds \eqref{eq:trivial-duhamel-bound} with \eqref{eq:L2-to-X-bound} gives for $\theta=\frac{\epsilon}{d}\in(0,1)$ that 
\[\eqnum\label{eq:srdesx}
\begin{aligned}
\big\| e^{ i t \Delta} \LP_{N} g \big\|_{X_{N} (I)}
& \lesssim \min\Big( N^{\theta d + (1- \theta)\epsilon} |I|^{\theta\frac{\epsilon_0}{2}}, N^{\epsilon} \Big) \big\| \LP_{N} g \big\|_{L^{2}(\R^{d})}
\\
& \lesssim N^{2 \epsilon}\langle |I|^{-1}\rangle^{-c} \|\LP_{N}g \|_{L^{2}(\R^{d})} \,.
\end{aligned}
\]
for \(c=c_{\theta}=\frac{\epsilon\epsilon_{0}}{2d}\). Analogously, if \(\diam\big(\spt(\FT {g})\big)\leq 2R\), 
by interpolating \eqref{eq:trivial-duhamel-bound} with \eqref{eq:L2-to-Y-bound}
one obtains
\[\eqnum\label{eq:srdesy}
\big\| e^{ i t \Delta} \LP_{N} g \big\|_{Y_{N} (I)} \lesssim N^{2\epsilon}\langle |I|^{-1}\rangle^{-c} \|g \|_{L^{2}(\R^{d})} \,.
\]

\textbf{Step 4}
The rest of the proof closely follows \cite[Lemma $3.2$]{casterasAlmostSureLocal2022}, but for completeness we recall details here. By dualizing \eqref{eq:L2-to-X-bound}  we obtain that 
\[ \eqnum\label{eq:iwtgtb}
\Big\| \int_{I} e^{- i s \Delta} \LP_{N} h \dd s \Big\|_{L^{2} (\R^{d})} \lesssim N^{ \epsilon} \big\| \LP_{N} h \big\|_{X_{N}^{*} (I)}
\]
for any \(h\in C^{\infty}_{c}(I\times\R^{d})\). Consequently, \eqref{eq:srdesx} applied to the function \(g := \int_{I}e^{ -is \Delta} \LP_{N} h (s) \dd s\) implies
\[
\Big\| \int_{I} e^{ i (t - s) \Delta} \LP_{N} h (s) \dd s \Big\|_{X_{N} (I)}
\begin{aligned}[t]
& = \Big\| e^{it\Delta}\int_{I} e^{ -i s \Delta} \LP_{N} h (s) \dd s \Big\|_{X_{N} (I)}
\\
& \lesssim N^{ 2\epsilon}\big\langle|I|^{-1}\big\rangle^{-c}  \Big\| \int_{I} e^{- i s \Delta} \LP_{N} h (s) \dd s \Big\|_{L^{2}(\R^{d})}
\\ 
& \lesssim  N^{ 3\epsilon} \big\langle|I|^{-1}\big\rangle^{-c}\big\| \LP_{N} h \big\|_{X_{N}^{*} (I)}
\end{aligned}
\]
and \eqref{eq:non-homogeneous-bound-J-Jprime} follows
with $\epsilon$ replaced by $3\epsilon$.
Similarly, if \(\diam\big(\spt(\FT{h})\big)\leq2R\), then from \eqref{eq:L2-to-Y-bound} and \eqref{eq:iwtgtb} it follows that
\[
\Big\| \int_{I} e^{ i (t - s) \Delta} \LP_{N} h (s) \dd s \Big\|_{Y_{N} (I)}
\begin{aligned}[t]
& = \Big\| e^{it\Delta}\int_{I} e^{ -i s \Delta} \LP_{N} h (s) \dd s \Big\|_{Y_{N} (I)}
\\
& \lesssim \big\langle|I|^{-1}\big\rangle^{-c} N^{ 2\epsilon} \Big\| \int_{I} e^{- i s \Delta} \LP_{N} h (s) \dd s \Big\|_{L^{2}(\R^{d})}
\\ 
& \lesssim \big\langle|I|^{-1}\big\rangle^{c} N^{ 3\epsilon} \big\| \LP_{N} h \big\|_{X_{N}^{*} (I)}\,,
\end{aligned}
\]
where we used that if \(\diam\big(\spt(\FT{h})\big)\leq2R\), then \(\diam\big(\spt(\FT{g})\big)\leq2R\). This concludes the proof.
\end{proof}

Next, we finish the proof of \Cref{prop:main-linear-estimate} by
employing the ``Christ-Kiselev'' procedure: a dyadic partitioning
argument of the time direction. In its simplest form the
``Christ-Kiselev'' Lemma can be stated as follows. Suppose that
\(T:L^{p}(\R) \to L^{q}(\R)\) is a bounded linear operator, and
\(1\leq p<q\leq\infty\), then the maximal operator
\(T_{*} : L^{p}(\R) \to L^{q}(\R)\) defined as
\[
T_{*}h(t)\eqd\sup_{c}\Big|T\big(\1_{(-\infty,c)}h\big)(t)\Big|.
\]
is also bounded from \(L^{p}(\R) \) to \( L^{q}(\R)\).

In our case the operator \eqref{eq:duh-non-max} plays the role of
\(T\), while \eqref{eq:duh-max} is point-wise controlled by \(T_{*}\).  The
added difficulty stems from the fact that our norms (\(X_{N}\),
\(Y_{N}\), and \(X_{N}^{*}\)) are not simple Lebesgue norms in the time
variable. However, we use that all integrability exponents in the
space-time norm \(X_{N}\) and \(Y_{N}\) are larger than \(2\), while the
norm \(X_{N}^{*}\) can be thought of as a combination of mixed-exponent
Lebesgue space with exponents smaller than \(2\). Since we employ a
procedure similar to our previous work \cite{casterasAlmostSureLocal2022}, we merely sketch the
proof, and we highlight the differences  due to the different
definitions of the spaces $X_{N}(I)$ and \(Y_{N}(I)\).

\begin{proof}[Proof of \Cref{prop:main-linear-estimate}]
Since the proof follows the arguments of \cite[Lemma 3.3]{casterasAlmostSureLocal2022}, we only
provide main ideas and highlight differences, which are due to the
specific definitions of the spaces $X_{N}(I)$ and \(Y_{N}(I)\). We allow
implicit constants to depend on $\epsilon$ and $\epsilon_0$ without mention.

The estimate \eqref{eq:non-homogeneous-bound-2} follows from
\eqref{eq:non-homogeneous-bound} after squaring, multiplying by
$N^{2\sigma}$, and summing with respect to $N \in 2^\N$. Thus, we concentrate
on \eqref{eq:non-homogeneous-bound}. Also, we only focus on the proof
of \eqref{eq:non-homogeneous-bound} since the proof of
\eqref{eq:non-homogeneous-bound-unit-scale} only differ in the
definition of the space $\tilde{X}_N$ below, with obvious modifications.

The estimate \eqref{eq:non-homogeneous-bound} follows from the
Duhamel's formula \eqref{eq:duhamel}, and from the linear homogeneous
bound \eqref{eq:L2-to-X-bound} once we prove
\[ \eqnum\label{claim:main-linear-estimate}
\Big\| \int_{0}^{t} e^{ i (t - s) \Delta} \LP_{N} h (s) \dd s \Big\|_{X_{N} (I)} \lesssim N^{|O(\epsilon_{0})|}\langle|I|^{-1}\big\rangle^{-c} \| \LP_{N} h \|_{X_{N} ^{*}(I)}.
\]
To prove \eqref{claim:main-linear-estimate}, let us define the norm
\[
\|v\|_{\tilde{X}_{N} (I)}^{\frac{2}{1 - \epsilon_{0}}} \eqd
\begin{aligned}[t]
& \| v \|_{L_{t}^{\frac{2}{\varepsilon_0}} L_{x}^{\frac{2}{1- \epsilon_{0}}} (I)}^{\frac{2}{1 - \epsilon_{0}}}
+ \| v \|_{L_{t}^{ \frac{2}{1-\varepsilon_0}} L_{x}^{ \frac{2d }{d-2} \frac{1}{1-\varepsilon_0}} (I)}^{\frac{2}{1 - \epsilon_{0}}}
\\
& \quad + \sum_{l = 1}^{d} \Big(N^{-\frac{d-1}{1-\varepsilon_0}} \big\| v \big\|_{L_{e_{l}}^{(\frac{2}{1-\varepsilon_0}, \frac{2}{ \epsilon_{0}}, \frac{2}{ \epsilon_{0}} )} (I)}^{\frac{2}{1 - \epsilon_{0}}}
+ N^{-\frac{1}{1-\varepsilon_0}} \big\| v \big\|_{L_{e_{l}}^{(\frac{2}{1-\varepsilon_0}, \frac{2}{ \epsilon_{0}}, \frac{\mf{c}_0}{1- \epsilon_{0}} )} (I)}^{\frac{2}{1 - \epsilon_{0}}} \Big)
\\
& \quad + \sum_{l = 1}^{d} \Big(
\begin{aligned}[t]
& N^{\frac{1}{ 1-\varepsilon_0 }} \|U_{e_{l}} v \|_{L_{e_{l}}^{(\frac{2}{\epsilon_{0}}, \frac{2}{1-\varepsilon_0}, \frac{2}{1-\varepsilon_0})} (I)}^{\frac{2}{1 - \epsilon_{0}}}
\\
& \qquad + N^{-\frac{d-2}{ 1-\varepsilon_0 }} \|U_{e_{l}} v \|_{L_{e_{l}}^{(\frac{2}{\epsilon_{0}}, \frac{2}{1-\varepsilon_0}, \frac{2}{\varepsilon_0})} (I)}^{\frac{2}{1 - \epsilon_{0}}} \Big),
\end{aligned}
\end{aligned}
\]
that is equivalent to \(X_{N} (I)\) uniformly in \(N\) and in the interval \(I\). We claim that the norm $\tilde{X}_N(I)$ possesses the following property:
\[\eqnum\label{eq:X-concat-sim}
\| v_{1} + v_{2} \|_{\tilde{X}_{N} (I)}^{\frac{2}{1 - \epsilon_{0}}} \leq \| v_{1} \|_{\tilde{X}_{N} (J_{1})}^{\frac{2}{1 - \epsilon_{0}}} + \| v_{2}
\|_{\tilde{X}_{N} (J_{2})}^{\frac{2}{1 - \epsilon_{0}}}
\]
as long as $v_{1}, v_{2} \st I\times \R^{d} \rightarrow \C$ are supported respectively on disjoint time intervals $J_{1}, J_{2} \subset I$. 
Let us postpone showing \eqref{eq:X-concat-sim} until the end of the proof of the proposition. 

By assuming \eqref{eq:X-concat-sim}, the rest of the proof follows
the proof of \cite[Lemma 3.3]{casterasAlmostSureLocal2022} line by line , and therefore we only
outline main ideas.  The norm $\tilde{X}_{N}^{*}(I)$, dual to $\tilde{X}_{N}(I)$
(cf. \eqref{eq:def:X-norm-dual}), possesses a property converse to \eqref{eq:X-concat-sim}: for
any functions $h_{1}, h_{2} \st I \times \R^{d} \rightarrow \C$ supported on disjoint time
intervals $J_{1}, J_{2} \subset I$ it holds that
\[ \eqnum\label{eq:Xstar-concat-sim}
\big\| h_{1} + h_{2} \big\|_{\tilde{X}_{N}^{*} (I)}^{\frac{2}{1 +    \varepsilon_{0}}}
\geq
\| h_{1} \|_{\tilde{X}_{N}^{*} (J_{1})}^{\frac{2}{1 + \varepsilon_{0}}} + \| h_{2} \|_{\tilde{X}_{N}^{*} (J_{2})}^{\frac{2}{1 + \varepsilon_{0}}}\,.
\]
Consequently, by induction, if functions $(v_{k})_{k\in\N}$ are supported
on disjoint time intervals $(J_k)_{k \in \N}$, then
\begin{align}
\eqnum\label{eq:X-concat}
& \Big\| \sum_{k} v_{k}  \Big\|_{\tilde{X}_{N} (I)}^{\frac{2}{1 -    \varepsilon_{0}}} \leq \sum_{k} \| v_{k} \|_{\tilde{X}_{N} (J_{k})}^{\frac{2}{1- \varepsilon_{0}}}\,.
\end{align}
Analogously, if $(h_{k})_{k\in\N}$ are supported on disjoint time
intervals $(J_k)_{k \in \N}$, then
\begin{align}
\eqnum\label{eq:Xstar-concat} &\Big\| \sum_{k} h_{k} \Big\|_{\tilde{X}_{N}^{*}
(I)}^{\frac{2}{1 + \varepsilon_{0}}} \geq \sum_{k} \| h_{k} \|_{\tilde{X}_{N}^{*}
(J_{k})}^{\frac{2}{1 + \varepsilon_{0}}} \,.
\end{align}

Fix $h \st I \times \R^{d} \rightarrow \C$ and without loss of generality suppose $I =
[0, T_{0}]$. Then, there is a sequence of intervals $\big\{ I^{n}_{k} =
[t^{n}_{k}, t^{n}_{k + 1}) \big\}_{n \in \N, k \in \{ 0, \ldots, 2^{n} - 1 \}}$ such
that $t_{k}^{n} \leq t^{n}_{k + 1}$, the intervals $\{ I^{n}_{k} \}_{k \in \{
0, \ldots, 2^{n} - 1 \}}$ form a partition of $I$ and $I^{n}_{k} = I_{2
k}^{n + 1} \cup I_{2 k + 1}^{n + 1}$. Furthermore, the intervals are
constructed so that
\[\eqnum\label{eq:dgct}   
\| h  \|_{\tilde{X}_{N}^{*} (I^{n}_{k})} \lesssim 2^{- \frac{1 + \varepsilon_{0}}{2}n} \| h \|_{\tilde{X}_{N}^{*} (I)} . 
\]
The construction of such intervals is provided in \cite[Lemma 3.3]{casterasAlmostSureLocal2022}
and uses only the continuity of the map
\[
s\mapsto \|\1_{(-\infty,s)}(t)h(t,x)\|_{\tilde{X}_{N}^{*}}.
\]
Then for any $t \in I$ we have
\[
\1_{[0, t)} (s) = \sum_{n = 1}^{\infty} {\sum^{2^{n }-1}_{k = 0}}  \1_{I_{ k}^{n}} (s) \1_{I_{ k + 1}^{n}} (t) 
\]
and by triangle inequality and \eqref{eq:X-concat} for any regular
$(t, s, x) \mapsto F(t, s, x)$
\[
\Big\| \int_{0}^{t} F(t, s) \dd s \Big\|_{\tilde{X}_{N} (I)}  
\lesssim \sum_{n = 1}^{\infty} \Big( {\sum^{2^{n} - 1}_{k = 0}}  \Big\| \int_{I_{ k}^{n}} F(t, s) \dd s \Big\|_{\tilde{X}_{N} (I_{ k + 1}^{n})}^{\frac{2}{1 - \varepsilon_{0}}} \Big)^{\frac{1 - \varepsilon_{0}}{2}} \,,
\]
where we suppressed the dependence of $F$ on $x$. For more details see
\cite[proof of Lemma 3.3]{casterasAlmostSureLocal2022}.  Then, since norms $X_{N} (I)$ and
$\tilde{X}_{N} (I)$ are equivalent and $I_{ k}^{n}$ and $I_{ k + 1}^{n}$
are disjoint, it follows from \eqref{eq:Xstar-concat}, from
\Cref{lem:linear-evolution-disjoint-times} with $F(t, s) = e^{ i (t -
s)\Delta} \LP_{N} h (s)$, and from \eqref{eq:dgct} that
\[
\begin{aligned}[t]
\Big\| \int_{0}^{t} e^{i (t - s) \Delta} \LP_{N} h (s) \dd s  \Big\|_{X_{N} (I)}
\hspace{-3em}&\hspace{3em} \lesssim 
\sum_{n = 1}^{\infty} \Big( {\sum^{2^{n} - 1}_{k = 0}}  \Big\| \int_{I_{ k}^{n}} e^{i (t - s) \Delta} \LP_{N} h (s) \dd s \Big\|_{\tilde{X}_{N} (I_{ k + 1}^{n})}^{\frac{2}{1 - \varepsilon_{0}}} \Big)^{\frac{1 - \varepsilon_{0}}{2}}
\\
& \lesssim N^{\epsilon} \langle |I|^{-1}\rangle^{-c}  \sum_{n = 1}^{\infty} \Big( \sum^{2^{n} - 1}_{k = 0} \|h\|_{\tilde{X}_{N}^{*} (I_{k}^{n})}^{\frac{2}{1 - \varepsilon_{0}}} \Big)^{\frac{1 - \varepsilon_{0}}{2}} 
\\
& \lesssim N^{\epsilon}\langle |I|^{-1}\rangle^{-c} \sum_{n = 1}^{\infty} \Big( \sum^{2^{n} - 1}_{k = 0}( {2^{- \frac{1 + \varepsilon_{0}}{2} n}}  )^{\frac{2}{1 - \varepsilon_{0}}} \Big)^{\frac{1 - \varepsilon_{0}}{2}} \big\| \LP_{N} h \big\|_{\tilde{X}_{N}^{*} (I)}
\\
& \lesssim N^{\epsilon} \langle |I|^{-1}\rangle^{-c}  \sum_{n = 1}^{\infty}  \big( {2^{n}}  2^{- \frac{1 + \varepsilon_{0}}{1 - \varepsilon_{0}} n} \big)^{\frac{1 - \varepsilon_{0}}{2}} \big\| \LP_{N} h \big\|_{X_{N}^{*} (I)}
\\
& \lesssim N^{\epsilon} \langle |I|^{-1}\rangle^{-c} \big\| \LP_{N} h \big\|_{X_{N}^{*} (I)} 
\end{aligned}
\]
and the proof of \eqref{eq:non-homogeneous-bound} and \eqref{eq:non-homogeneous-bound-unit-scale} follows.

It remains to prove \eqref{eq:X-concat-sim}. In the following, we prove appropriate inequalities for every summand in the definition of $\tilde{X}_{N}$. Since the summands of $\tilde{Y}_{N}$ are the same, the claimed bound \eqref{eq:X-concat-sim} also follows for $\tilde{X}_{N}$ replaced by $\tilde{Y}_{N}$.

Let us prove a more general statement for functions $v_{1}$ and $v_{2}$ that have disjoint support in time.  Since $(a + b)^{s} \leq a^{s} + b^{s}$ for any $s \in [0, 1]$, we can use the triangle inequality to deduce that for any $1 \leq r \leq p \leq \infty$ we have
\[
\big\| v_{1} + v_{2} \|_{L_{t}^{p} L_{x}^{q} (I )}^{r}
\begin{aligned}[t]
& \leq  \Big\| \| v_{1} \|_{L_{x}^{q}} + \|v_{2}\|_{L_{x}^{q}} \Big\|_{L_{t}^{p}(I)}^{r}
\\
& = \Big(\int_{I} \big(  \| v_{1} (t) \|^{p}_{L^{q}_{x}} + \| v_{2} (t)\|^{p}_{L^{q}_{x}}  \big) \dd t\Big)^{\frac{r}{p}}
\\
& \leq \Big(\int_{I}   \| v_{1} (t) \|^{p}_{L^{q}_{x}} \dd t\Big)^{\frac{r}{p}}
+ \Big( \int_{I}\| v_{2} (t)\|^{p}_{L^{q}_{x}}   \dd t\Big)^{\frac{r}{p}}
\\
& =  \| v_{1} \|_{L_{t}^{p} L_{x}^{q} (I )}^{r} + \| v_{2} \|_{L_{t}^{p} L_{x}^{q} (I )}^{r} .
\end{aligned}
\]
For the directional norms, fix $\mf{c} \in [1, \infty)$ 
and assume $1 \leq r \leq \min \{ \mf{a},\mf{b}\}$. Let us show that for $v_{1}$ and $v_{2}$ with disjoint supports (in time) we have
\[\eqnum\label{eq:r-convexity}
\begin{aligned}
\|v_{1} + v_{2}\|_{L^{(\mf{a}, \mf{b}, \mf{c})}_{e_{1}}}^{r} \leq \|v_{1}\|_{L^{(\mf{a}, \mf{b}, \mf{c})}_{e_{1}}}^{r} + \|v_{2}\|_{L^{(\mf{a}, \mf{b}, \mf{c})}_{e_{1}}}^{r}.
\end{aligned}
\]
Indeed for any fixed $x_{1} \in \R$, that
\[\eqnum\label{eq:sisst}
 \begin{aligned}
 \hspace{3em} &  \hspace{-3em}\Big\| \| v_{1} (t,x_{1},x') + v_{2} (t,x_{1},x') \|_{L^{\mf{c}}_{x'}(\R^{d-1})} \Big\|^{\mf{b}}_{L^{\mf{b}}_{t} ( I )}
 \\
 & \leq \big\| \|v_{1} (t,x_{1},x')\|_{L^{\mf{c}}_{x'}(\R^{d-1})}
 + \|v_{2} (t,x_{1},x')  \|_{L^{\mf{c}}_{x'}(\R^{d-1})} \big\|^{\mf{b}}_{L^{\mf{b}}_{t} ( I )}
\\
& = \Big\| \|v_{1} (t,x_{1},x')\|_{L^{\mf{c}}_{x'}(\R^{d-1})} \Big\|^{\mf{b}}_{L^{\mf{b}}_{t} ( I )}
+ \Big\|\|v_{2} (t,x_{1},x') \|_{L^{\mf{c}}_{x'}(\R^{d-1})}  \Big\|^{\mf{b}}_{L^{\mf{b}}_{t} ( I )}.
\end{aligned}
\]

First, we assume that $\mf{a}\leq \mf{b}$ and in particular $r \leq \mf{a}$. Using that $(a + b)^{s} \leq a^{s} + b^{s}$ for any $s \in [0, 1]$ we obtain 
\[\begin{aligned}
\hspace{5em} &\hspace{-5em}
\Big\| \|v_{1} (t,x_{1},x') + v_{2} (t,x_{1},x')\|_{L^{\mf{c}}_{x'}(\R^{d-1})} \Big\|^{\mf{a}}_{L^{\mf{b}}_{t}( I )}
\\
& \leq \Big(\big\| \|v_{1} (t,x_{1},x')\|_{L^{\mf{c}}_{x'}(\R^{d-1})} \big\|^{\mf{b}}_{L^{\mf{b}}_{t} ( I )}
+ \big\|\|v_{2} (t,x_{1},x') \|_{L^{\mf{c}}_{x'}(\R^{d-1})} \big\|^{\mf{b}}_{L^{\mf{b}}_{t} ( I )}\Big)^{\frac{\mf{a}}{\mf{b}}}
\\
&  \leq \big\| \|v_{1} (t,x_{1},x')\|_{L^{\mf{c}}_{x'}(\R^{d-1})} \big\|^{\mf{a}}_{L^{\mf{b}}_{t} (  I )}
+ \big\|\|v_{2} (t,x_{1},x') \|_{L^{\mf{c}}_{x'}(\R^{d-1})} \big\|^{\mf{a}}_{L^{\mf{b}}_{t} ( I )} .
\end{aligned}
\]
Integrating in $x_{1}$ yields
\[
\|v_{1} + v_{2}\|_{L^{(\mf{a}, \mf{b}, \mf{c})}_{e_{1}}}^{\mf{a}}
\leq
\|v_{1}\|_{L^{(\mf{a}, \mf{b}, \mf{c})}_{e_{1}}}^{\mf{a}} + \|v_{2}\|_{L^{(\mf{a}, \mf{b}, \mf{c})}_{e_{1}}}^{\mf{a}}\,.
\]
After raising both sides to the power $\frac{r}{\mf{a}}$ and using again that $(a + b)^{s} \leq a^{s} + b^{s}$, for $s = \frac{r}{\mf{a}} \leq 1$, it follows
\[
\|v_{1} + v_{2}\|_{L^{(\mf{a}, \mf{b}, \mf{c})}_{e_{1}}}^{r} \leq
\|v_{1}\|_{L^{(\mf{a}, \mf{b}, \mf{c})}_{e_{1}}}^{r} + \|v_{2}\|_{L^{(\mf{a}, \mf{b}, \mf{c})}_{e_{1}}}^{r},
\]
as desired.

Second, we assume that $\mf{a} \geq \mf{b}$, and in particular that $r \leq \mf{b}$. Then, \eqref{eq:sisst} and the triangle inequality yield
\[
\| v_{1} + v_{2} \|_{L_{e_{1}}^{(\mf{a}, \mf{b}, \mf{c})} (I \times  \R^{d})}^{r}
\begin{aligned}[t]
& \leq \Big(\int_{\R}  \big( \big\|\| v_{1} (x_{1}) \|_{L^{\mf{c}}_{x'}}\|^{\mf{b}}_{L^{\mf{b}}_{t}}
+ \big\|\| v_{2} (x_{1})\|_{L^{\mf{c}}_{x'}}\big\|^{\mf{b}}_{L^{\mf{b}}_{t}} \big)^{\frac{\mf{a}}{\mf{b}}}  \dd x_{1}\Big)^{\frac{r}{\mf{a}}}
\\
& = \Big\| \big\| \| v_{1}  \|_{L^{\mf{c}}_{x'}} \big\|^{\mf{b}}_{L^{\mf{b}}_{t}}
+ \big\| \| v_{2}\|_{L^{\mf{c}}_{x'}}\big\|^{\mf{b}}_{L^{\mf{b}}_{t}} \Big\|_{L^{\frac{\mf{a}}{\mf{b}}}_{x_{1}}}^{\frac{r}{\mf{b}}}
\\
& \leq \Big(\Big\| \big\| \| v_{1}  \|_{L^{\mf{c}}_{x'}}\big\|^{\mf{b}}_{L^{\mf{b}}_{t}} \Big\|_{L^{\frac{\mf{a}}{\mf{b}}}_{x_{1}}}
+ \Big\| \big\| \|v_{2} \|_{L^{\mf{c}}_{x'}} \big\|^{\mf{b}}_{L^{\mf{b}}_{t}} \Big\|_{L^{\frac{\mf{a}}{\mf{b}}}_{x_{1}}} \Big)^{\frac{r}{\mf{b}}}
\\
& = \Big( \| v_{1} \|_{L_{e_{1}}^{(\mf{a}, \mf{b}, \mf{c})} (I \times\R^{d})}^{\mf{b}}
+ \| v_{2} \|_{L_{e_{1}}^{(\mf{a}, \mf{b}, \mf{c})} (I \times\R^{d})}^{\mf{b}}\Big)^{\frac{r}{\mf{b}}}
\\
& \leq \| v_{1} \|_{L_{e_{1}}^{(\mf{a}, \mf{b}, \mf{c})} (I \times\R^{d})}^{r} + \| v_{2} \|_{L_{e_{1}}^{(\mf{a}, \mf{b}, \mf{c})} (I \times\R^{d})}^{r},
\end{aligned}
\]
as desired.

Having shown that \eqref{eq:r-convexity} holds, the desired bound \eqref{eq:X-concat-sim} follows for $r = \frac{2}{1 - \varepsilon_{0}}$, which indeed satisfies $r \leq \min\{\mf{a}, \mf{b}\}$ 
for any directional norm in the definition of $X_N(I)$ or $Y_N(I)$. 
The proof of the desired results is finished. 
\end{proof}

\section{The multilinear estimates}\label{sec:multilinear-estimates}

We start this section by proving a bilinear estimates in terms of
directional space-time norms.  Then, we derive crucial trilinear
estimates that allow us to control the cubic nonlinearity in
\eqref{eq:NLS-deterministic}.

\begin{lemma}\label{lem:bilinear-2-2-bound}
Fix any \(0 < \epsilon \lesssim 1\) and \(0 < \epsilon_{0} \lesssim_{\epsilon} 1\). For any two functions
$h_{+}, h_{-} \st \R \times \R^{d} \rightarrow \C$ and any $N_{+}, N_{-} \in 2^{\N}$ with $N_{+} \geq N_{-}$
it holds that 
\[\eqnum\label{eq:bilinear-bound}
\begin{aligned}
\|h_{+} h_{-} \|_{L^{2}_{t} L^{2}_{x}  (\R \times \R^{d})} \lesssim_{\epsilon_{0}}N_{+}^{\epsilon} \Big(\frac{N_{+}}{N_{-}}\Big)^{-\frac{1}{2}}
\times
\begin{dcases}
N_{-}^{\xs_{c}}\|h_{+} \|_{X_{N_{+}} (\R) }  \|h_{-} \|_{X_{N_{-}} (\R) },
\\
\|h_{+} \|_{Y_{N_{+}} (\R) } \| h_{-} \|_{Y_{N_{-}} (\R) },
\\
\| h_{+} \|_{X_{N_{+}} (\R) } \| h_{-} \|_{Y_{N_{-}} (\R) },
\\
\| h_{+} \|_{Y_{N_{+}} (\R) } \| h_{-} \|_{X_{N_{-}} (\R) } \,,
\end{dcases}
\end{aligned}
\]
where \(\xs_{c}\eqd\frac{d-2}{2}\). 
\end{lemma}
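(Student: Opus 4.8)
The plan is to prove \eqref{eq:bilinear-bound} by splitting $h_{+}$ along the directional frequency cones $\mf{U}_{e_{l}}$ and then invoking nothing beyond Hölder's inequality: all the dispersive content is already built into the definitions \eqref{eq:def:X-norm}, \eqref{eq:def:Y-norm} of $X_{N}$ and $Y_{N}$, so the proof reduces to bookkeeping of Lebesgue exponents. One may first reduce to $h_{\pm}\in C^{\infty}_{c}(\R\times\R^{d})$ by density (all integrability exponents in sight are finite) and assume the right-hand side of \eqref{eq:bilinear-bound} is finite. Since the cones $\{\mf{U}_{e_{l}}\}_{l=1}^{d}$ partition $\R^{d}\setminus\{0\}$, one has $h_{+}=\sum_{l=1}^{d}\UP_{e_{l}}h_{+}$, hence $\|h_{+}h_{-}\|_{L^{2}_{t}L^{2}_{x}}\leq\sum_{l=1}^{d}\|(\UP_{e_{l}}h_{+})h_{-}\|_{L^{2}_{t}L^{2}_{x}}$.

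The key step is the following Hölder estimate, tailored to match the norms: writing $x=(x_{l},x_{l}')\in\R\times\R^{d-1}$, for every $l$ and every pair $\mf{c}_{+},\mf{c}_{-}\in[2,\infty]$ with $\tfrac{1}{\mf{c}_{+}}+\tfrac{1}{\mf{c}_{-}}=\tfrac12$, successive applications of Hölder to $\int_{\R}\int_{\R}\int_{\R^{d-1}}|\UP_{e_{l}}h_{+}|^{2}|h_{-}|^{2}\,\dd x_{l}'\dd t\,\dd x_{l}$ — first in $x_{l}'$ with exponents $\tfrac{\mf{c}_{+}}{2},\tfrac{\mf{c}_{-}}{2}$, then in $t$ with exponents $\tfrac{1}{1-\epsilon_{0}},\tfrac{1}{\epsilon_{0}}$, then in $x_{l}$ with exponents $\tfrac{1}{\epsilon_{0}},\tfrac{1}{1-\epsilon_{0}}$ — yield
\[
\|(\UP_{e_{l}}h_{+})h_{-}\|_{L^{2}_{t}L^{2}_{x}}\leq\|\UP_{e_{l}}h_{+}\|_{L_{e_{l}}^{(\frac{2}{\epsilon_{0}},\frac{2}{1-\epsilon_{0}},\mf{c}_{+})}(\R)}\,\|h_{-}\|_{L_{e_{l}}^{(\frac{2}{1-\epsilon_{0}},\frac{2}{\epsilon_{0}},\mf{c}_{-})}(\R)}.
\]
Summing over $l$ and using $\sum_{l}a_{l}b_{l}\leq(\sum_{l}a_{l})(\sum_{l}b_{l})$ for nonnegative numbers, the four bounds of \eqref{eq:bilinear-bound} will follow once the two factors are matched against the summands in \eqref{eq:def:X-norm} and \eqref{eq:def:Y-norm}.

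For the first three estimates one takes $\mf{c}_{+}=\tfrac{2}{1-\epsilon_{0}}$, $\mf{c}_{-}=\tfrac{2}{\epsilon_{0}}$: the directional-smoothing norm $\|\UP_{e_{l}}h_{+}\|_{L_{e_{l}}^{(2/\epsilon_{0},2/(1-\epsilon_{0}),2/(1-\epsilon_{0}))}}$ enters \emph{both} $\|h_{+}\|_{X_{N_{+}}}$ and $\|h_{+}\|_{Y_{N_{+}}}$ with weight $N_{+}^{1/2}$, so $\sum_{l}(\cdots)\lesssim N_{+}^{-1/2}\|h_{+}\|_{X_{N_{+}}}$ (and likewise with $Y$), while the directional-maximal norm $\|h_{-}\|_{L_{e_{l}}^{(2/(1-\epsilon_{0}),2/\epsilon_{0},2/\epsilon_{0})}}$ enters $\|h_{-}\|_{X_{N_{-}}}$ with weight $N_{-}^{-(d-1)/2}$ and $\|h_{-}\|_{Y_{N_{-}}}$ with weight $N_{-}^{-1/2}$, so $\sum_{l}(\cdots)\lesssim N_{-}^{(d-1)/2}\|h_{-}\|_{X_{N_{-}}}$ (resp. $N_{-}^{1/2}\|h_{-}\|_{Y_{N_{-}}}$). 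Multiplying, and using $N_{-}^{(d-1)/2}=N_{-}^{1/2}N_{-}^{\xs_{c}}$ together with $(N_{+}/N_{-})^{-1/2}=N_{+}^{-1/2}N_{-}^{1/2}$, produces the first three lines of \eqref{eq:bilinear-bound} (with the factor $N_{+}^{\epsilon}$ to spare). For the last estimate, where $h_{-}$ is controlled only in $X_{N_{-}}$, one instead takes $\mf{c}_{-}=\tfrac{\mf{c}_{0}}{1-\epsilon_{0}}$ and $\mf{c}_{+}=\tfrac{2(d-1)}{1+(d-2)\epsilon_{0}}$; using $\mf{c}_{0}=2\tfrac{d-1}{d-2}$ one checks $\tfrac{1}{\mf{c}_{+}}+\tfrac{1}{\mf{c}_{-}}=\tfrac12$ and, for $\epsilon_{0}$ small depending on $d$, that $\tfrac{2}{1-\epsilon_{0}}\leq\mf{c}_{+}\leq\tfrac{2}{\epsilon_{0}}$. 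Then $\|h_{-}\|_{L_{e_{l}}^{(2/(1-\epsilon_{0}),2/\epsilon_{0},\mf{c}_{0}/(1-\epsilon_{0}))}}$ occurs in $\|h_{-}\|_{X_{N_{-}}}$ with the favorable weight $N_{-}^{-1/2}$, while $\|\UP_{e_{l}}h_{+}\|_{L_{e_{l}}^{(2/\epsilon_{0},2/(1-\epsilon_{0}),\mf{c}_{+})}}$ is estimated by interpolating, in the innermost exponent and keeping the first two fixed, the two norms $\|\UP_{e_{l}}h_{+}\|_{L_{e_{l}}^{(2/\epsilon_{0},2/(1-\epsilon_{0}),2/(1-\epsilon_{0}))}}$ and $\|\UP_{e_{l}}h_{+}\|_{L_{e_{l}}^{(2/\epsilon_{0},2/(1-\epsilon_{0}),2/\epsilon_{0})}}$, both of which carry the weight $N_{+}^{1/2}$ in $Y_{N_{+}}$; this gives $N_{+}^{-1/2}\|h_{+}\|_{Y_{N_{+}}}\cdot N_{-}^{1/2}\|h_{-}\|_{X_{N_{-}}}=(N_{+}/N_{-})^{-1/2}\|h_{+}\|_{Y_{N_{+}}}\|h_{-}\|_{X_{N_{-}}}$.

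The main obstacle is the last estimate, $\|h_{+}\|_{Y_{N_{+}}}\|h_{-}\|_{X_{N_{-}}}$: because $h_{-}$ is measured only in $X_{N_{-}}$, the naive choice $\mf{c}_{-}=2/\epsilon_{0}$ costs an extra factor $N_{-}^{\xs_{c}}$, and the remedy is to route $h_{-}$ through the directional-maximal norm with spatial exponent $\mf{c}_{0}/(1-\epsilon_{0})$ — the one carrying weight $N^{-1/2}$ in \emph{both} $X_{N}$ and $Y_{N}$ — and then absorb the off-endpoint exponent $\mf{c}_{+}$ forced on the $h_{+}$ side by interpolation. That interpolation preserves the weight $N_{+}^{-1/2}$ only because the two directional-smoothing norms share the weight $N_{+}^{1/2}$ in $Y_{N}$ but not in $X_{N}$, which is precisely why this route fails when $h_{+}$ lies only in $X_{N_{+}}$; the remaining point to verify is the exponent arithmetic guaranteeing $\mf{c}_{+}\in[\tfrac{2}{1-\epsilon_{0}},\tfrac{2}{\epsilon_{0}}]$, which is where the smallness of $\epsilon_{0}$ relative to $d$ is used.
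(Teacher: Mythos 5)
Your proposal is correct and follows essentially the same route as the paper's proof: decompose $h_{+}=\sum_{l}\UP_{e_{l}}h_{+}$, pair the directional-smoothing norm of $\UP_{e_{l}}h_{+}$ against the directional-maximal norm of $h_{-}$ via Hölder with exponents $\frac{2}{\epsilon_{0}},\frac{2}{1-\epsilon_{0}}$ (giving the first three bounds directly from the weights in \eqref{eq:def:X-norm}--\eqref{eq:def:Y-norm}), and for the fourth bound route $h_{-}$ through the $\mf{c}_{0}/(1-\epsilon_{0})$ maximal norm and interpolate the dual inner exponent on the $h_{+}$ side between the two $Y_{N_{+}}$ smoothing norms, exactly as in the paper (your $\mf{c}_{+}=\frac{2(d-1)}{1+(d-2)\epsilon_{0}}$ is the paper's $\frac{2\mf{c}_{0}}{\mf{c}_{0}-2(1-\epsilon_{0})}$).
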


\begin{proof}
Let $\UP_{e_{l}}$ be as in \Cref{lem:dir-local-smoothing}, and recall that 
\(\sum_{l} \UP_{e_{l}} = \Id\). The triangle inequality implies
\[
\|h_{+}\, h_{-} \|_{L^{2}_{t} L^{2}_{x}  (\R \times \R^{d})}\leq\sum_{l=1}^{d}\big\|(\UP_{e_{l}}h_{+} )\, h_{-} \big\|_{L^{2}_{t} L^{2}_{x}  (\R \times \R^{d})} \,, 
\]  
and therefore it suffices to prove \eqref{eq:bilinear-bound}
for $h_+$ replaced by $\UP_{e_{l}}h_{+}$. 

By Fubini's Theorem and Hölder's inequality we obtain that 
\[
\big\|(\UP_{e_{l}}h_{+} ) \, h_{-} \big\|_{L^{2}_{t} L^{2}_{x}  (\R \times \R^{d})}
\begin{aligned}[t]
& = \big\|(\UP_{e_{l}}h_{+} )\, h_{-} \big\|_{L_{e_{l}}^{(2,2,2)} (\R )} 
\\
& \lesssim \|\UP_{e_{l}}h_{+}\|_{L_{e_{l}}^{(\frac{2}{\epsilon_{0}}, \frac{2}{1 - \epsilon_{0}}, \frac{2}{1-\epsilon_{0}} )} (\R) }  \| h_{-} \|_{L_{e_{l}}^{( \frac{2}{1 - \epsilon_{0}},\frac{2}{\epsilon_{0}}, \frac{2}{\epsilon_{0}})} (\R) } \,.
\end{aligned}
\]
Then, the first three bounds in \eqref{eq:bilinear-bound} follow 
from the definitions \eqref{eq:def:X-norm} and \eqref{eq:def:Y-norm} of the norms \(X_{N}\) and \(Y_{N}\).

To prove the last estimate in \eqref{eq:bilinear-bound}, we use Hölder's inequality to obtain that
\[
\big\|(\UP_{e_{l}}h_{+} )\,h_{-} \big\|_{L_{e_{l}}^{(2,2,2)} (\R )}
\lesssim
\|\UP_{e_{l}}h_{+}\|_{L_{e_{1}}^{(\frac{2}{\epsilon_{0}}, \frac{2}{1 - \epsilon_{0}}, \frac{2\mf{c}_{0}}{\mf{c}_{0} - 2(1-\epsilon_{0})} )} (\R) }
\|h_{-} \|_{L_{e_{l}}^{( \frac{2}{1 - \epsilon_{0}},\frac{2}{\epsilon_{0}}, \frac{\mf{c}_{0}}{1-\epsilon_{0}})} (\R) }\,.
\]
Then from the definition \eqref{eq:def:X-norm} of the norm \(X_{N}\) it follows that 
\[
\|h_{-} \|_{L_{e_{l}}^{( \frac{2}{1 - \epsilon_{0}},\frac{2}{\epsilon_{0}}, \frac{\mf{c}_{0}}{1-\epsilon_{0}})} (\R) } 
\lesssim N_{-}^{\frac{1}{2}}\|h_{-} \|_{X_{N_{-}}(\R) } \,.
\]
In addition, since \(\mf{c}_{0}=2\frac{d-1}{d-2} \geq 2\), it holds that 
\(\frac{2}{1 - \epsilon_{0}} \leq \frac{2\mf{c}_{0}}{\mf{c}_{0} - 2(1-\epsilon_{0})} \leq \frac{2}{\epsilon_{0}} \)
for any \(0<\epsilon_{0}<2^{-100}\).  Thus, interpolation with an appropriately chosen parameter \(\theta\in[0,1]\)  and the definition \eqref{eq:def:Y-norm} of 
\(Y_{N}\), give that 
\[
\begin{aligned}[t]
& \|\UP_{e_{l}}h_{+}\|_{L_{e_{l}}^{(\frac{2}{\epsilon_{0}}, \frac{2}{1 - \epsilon_{0}}, \frac{2\mf{c}_{0}}{\mf{c}_{0} - 2(1-\epsilon_{0})} )} (\R)}     \hspace{-0.7em} \leq 
\|\UP_{e_{l}}h_{+}\|_{L_{e_{l}}^{(\frac{2}{\epsilon_{0}}, \frac{2}{1 - \epsilon_{0}}, \frac{2}{1-\epsilon_{0}} )} (\R) }^{1-\theta}
\|\UP_{e_{l}}h_{+}\|_{L_{e_{l}}^{(\frac{2}{\epsilon_{0}}, \frac{2}{1 - \epsilon_{0}}, \frac{2}{\epsilon_{0}} )} (\R) }^{\theta}    
\\
 &\qquad\lesssim  N_+^{\frac{1-\theta}{2}} \|\UP_{e_{l}}h_{+}\|_{Y_{N_+}(\R)}^{1-\theta}
N_+^{\frac{\theta}{2}} \|\UP_{e_{l}}h_{+}\|_{Y_{N_+}(\R)}^{\theta}
= N_{+}^{-\frac{1}{2}} \|\UP_{e_{l}}h_{+}\|_{Y_{N_{+}}(\R) } 
\end{aligned}
\]
and the proof is finished. 
\end{proof}

The above lemma is a crucial ingredient for showing that product of
three functions taken from the spaces \(Y^{S}\) or \(X^{\xs}\) lies in the
space \(X^{*,\sigma_{*}}\), which we use to control the non-homogeneity
\(h\) in \eqref{eq:non-homogeneous-schroedinger}. The specific
regularity of the product depends on whether the functions in play
belong to \(Y^{S}\) or to \(X^{\xs}\) and on the respective regularity
exponents \(S\) and \(\xs\). Since the three-fold product of functions is
estimated in the norm \(X^{*,\sigma_{*}}\), defined by duality in
\eqref{eq:def:X-norm-dual}, the proof of
\Cref{prop:trilinear-estimate} naturally reduces to showing bounds on
a \(4\)-linear integral form, formulated in \Cref{lem:fourlinear}. The
\(4\)-linear estimate \eqref{eq:4linear-form} of \Cref{lem:fourlinear}
in turn can be reduced using the Cauchy-Schwarz inequality to the
bilinear \(L^{2}\) estimate of \Cref{lem:bilinear-2-2-bound}.

\begin{proposition}\label{prop:trilinear-estimate}
Fix any 
\[\eqnum\label{eq:trilinear-exponent-assumptions}
    0 < S_1 \leq S_2 \leq S_3 < \xs_c < \xs 
    \,,
\]
and choose any \( 0 < \epsilon \lesssim_{S_{j},\xs}\!1\) and
\(0 < \epsilon_{0} \lesssim_{\epsilon, S_j,\xs} \!1\).  Then for any
$z_j$ and \(v_j\), $j\in\{1,2,3\}$, the following estimates hold:
\[\eqnum\label{eq:YYY}
\begin{aligned}
&\big\|z_{1}z_{2}z_{3}\big\|_{X^{*,\sigma_{*}}(\R)}\lesssim\|z_{1}\|_{Y^{S_{1}+\epsilon  }(\R)}\|z_{2}\|_{Y^{S_{2}+\epsilon}(\R)}\|z_{3}\|_{Y^{S_{3}+\epsilon}(\R)}
\\
&\qquad \text{with } \sigma_{*}\eqd S_{1}+ \min\Big(S_{2},\frac{1}{2}\Big)+\min\Big(S_{3},\frac{1}{2}\Big) \,.
\end{aligned}
\]
\[\eqnum\label{eq:YYX}
\begin{aligned}
& \big\|z_{1} z_{2}v_{3}\big\|_{X^{*,\sigma_{*}}(\R)} \lesssim\|z_{1}\|_{Y^{S_{1}+\epsilon  }(\R)}\|z_{2}\|_{Y^{S_{2}+\epsilon}(\R)}\|v_{3}\|_{X^{\xs+\epsilon}(\R)}
\\
& \qquad \text{with } \sigma_{*}\eqd S_{1}+\min\Big(S_{2},\frac{1}{2}\Big)+\frac{1}{2}\,.
\end{aligned}
\]
\[\eqnum\label{eq:YXX}
\begin{aligned}
& \big\|z_{1}v_{2}v_{3}\big\|_{X^{*,\sigma_{*}}(\R)} \lesssim
\|z_{1}\|_{Y^{S_1+\epsilon}(\R)}\|v_{2}\|_{X^{\xs+\epsilon}(\R)}\|v_{3}\|_{X^{\xs+\epsilon}(\R)}
\\
& \qquad\text{with } \sigma_{*}\eqd\min\big(S_1+\xs,\,S_1+1\big)\,.
\end{aligned}
\]
\[\eqnum\label{eq:XXX}
\begin{aligned}
& \big\|v_{1}v_{2}v_{3}\big\|_{X^{*,\sigma_{*}}(\R)} \lesssim \|v_{1}\|_{X^{\xs+\epsilon}(\R)}\|v_{2}\|_{X^{\xs+\epsilon}(\R)}\|v_{3}\|_{X^{\xs+\epsilon}(\R)}
\\ & \qquad\text{with } \sigma_{*}\eqd \xs+2(\xs-\xs_{c})\,.
\end{aligned}
\]
The implicit constants are allowed to depend on \(\epsilon\), \(\epsilon_{0}\), $\xs$, and $S_{j}$. 
\end{proposition}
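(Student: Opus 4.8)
The plan is to prove all four bounds by a single duality-and-Littlewood--Paley argument, distinguishing the cases only when selecting which variant of \Cref{lem:bilinear-2-2-bound} to invoke. By the definition \eqref{eq:def:X-norm-dual} of the $X^{*,\sigma_{*}}$-norm together with the duality between $X_{N}$ and $X_{N}^{*}$ on each Littlewood--Paley block (equivalently \eqref{eq:X-duality}), it suffices to bound the $4$-linear form
\[
\Lambda \eqd \Big| \int_{\R \times \R^{d}} v_{0}\, w_{1}\, w_{2}\, w_{3} \,\dd t \,\dd x \Big|
\]
for $w_{j} \in \{z_{j}, v_{j}\}$ as dictated by the case, where $v_{0}$ ranges over test functions with $\|v_{0}\|_{X^{-\sigma_{*}}(\R)} \leq 1$; complex conjugates on the factors are immaterial since they only reflect Fourier supports. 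Inserting Littlewood--Paley decompositions $v_{0} = \sum_{N_{0}} \LP_{N_{0}} v_{0}$ and $w_{j} = \sum_{N_{j}} \LP_{N_{j}} w_{j}$ gives $\Lambda \leq \sum_{N_{0}, N_{1}, N_{2}, N_{3}} \Lambda_{\vec{N}}$, and since the Fourier support of $\LP_{N_{1}} w_{1}\, \LP_{N_{2}} w_{2}\, \LP_{N_{3}} w_{3}$ lies in the sumset of the three annuli, the block $\Lambda_{\vec{N}}$ vanishes unless the two largest among $N_{0}, N_{1}, N_{2}, N_{3}$ are comparable; writing $N_{\max} \geq N_{\mathrm{med}}$ for those two, we have $N_{\max} \approx N_{\mathrm{med}}$, which collapses the sum from four to effectively three free dyadic scales.

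For each surviving block I would group the four frequency-localized factors into two pairs, apply the Cauchy--Schwarz inequality in $L^{2}_{t,x}$ to bound $\Lambda_{\vec{N}}$ by a product of two $L^{2}_{t} L^{2}_{x}$ bilinear norms, and estimate each such norm by \Cref{lem:bilinear-2-2-bound}. The pairing is chosen case by case to minimize losses: the $v_{0}$-factor, controlled only in an $X$-type norm, is always paired with one of the $z_{j}$'s, so the mixed variants $X_{N_{+}} \times Y_{N_{-}}$ or $Y_{N_{+}} \times X_{N_{-}}$ of \eqref{eq:bilinear-bound} apply, giving the half-derivative gain with no $\xs_{c}$-loss. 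In case \eqref{eq:YYY} the remaining pair is $Y \times Y$; in case \eqref{eq:YYX} it too can be arranged mixed; in \eqref{eq:YXX} exactly one $X \times X$ pair is unavoidable; and in \eqref{eq:XXX} both pairs are $X \times X$, forcing the lossy first variant of \eqref{eq:bilinear-bound} with its factor $N_{<}^{\xs_{c}}$. In every case each block is then bounded by $N_{\max}^{|O(\epsilon_{0})|}$ times a product of gains $(N_{>}/N_{<})^{-\sfrac{1}{2}}$, at most two factors $N_{<}^{\xs_{c}}$, and the block norms $\|\LP_{N_{0}} v_{0}\|_{X_{N_{0}}}$ together with $\|\LP_{N_{j}} w_{j}\|_{Y_{N_{j}}}$ or $\|\LP_{N_{j}} w_{j}\|_{X_{N_{j}}}$.

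To conclude I would write each block norm as $N_{j}^{-(S_{j}+\epsilon)}$, $N_{j}^{-(\xs+\epsilon)}$, or $N_{0}^{\sigma_{*}}$ times the corresponding weighted block, sum the weighted blocks in each $N_{j}$ by Cauchy--Schwarz to recover $\|z_{j}\|_{Y^{S_{j}+\epsilon}}$, $\|v_{j}\|_{X^{\xs+\epsilon}}$ and $\|v_{0}\|_{X^{-\sigma_{*}}} \leq 1$, and reduce everything to the convergence of a geometric sum over the remaining dyadic parameters. The net exponents in that sum encode precisely the gap between $\sigma_{*}$ and the naive total regularity: the caps $\min(S_{j}, \sfrac{1}{2})$ in \eqref{eq:YYY}--\eqref{eq:YYX} mark the threshold above which the half-derivative bilinear gain is already saturated, and the surplus $2(\xs - \xs_{c}) > 0$ in \eqref{eq:XXX} (and the corresponding surplus in \eqref{eq:YXX}) is what remains after the $N_{<}^{\xs_{c}}$ losses are absorbed against the excess regularity $\xs > \xs_{c}$; this is exactly what forces the stated values of $\sigma_{*}$. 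The leftover $\epsilon$ in the norms, set against the $|O(\epsilon_{0})|$ Bernstein losses, makes every net exponent strictly negative once $\epsilon_{0} \lesssim_{\epsilon, S_{j}, \xs} 1$ is small enough, and the tie $N_{\max} \approx N_{\mathrm{med}}$ allows the remaining free index to be summed off, so the series converges.

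The main difficulty lies in this last dyadic summation: one must pick, in each of the four cases, a pairing of the four factors and a variant of \Cref{lem:bilinear-2-2-bound} such that every bilinear factor carries the half-derivative gain and no factor is left without an $L^{2}$-type control, and then verify that the accumulated powers of $N$ --- from the gains, from the $N_{<}^{\xs_{c}}$ losses, and from the weights dictated by $\sigma_{*}$ --- assemble into a genuinely summable geometric series. It is convenient to isolate the Cauchy--Schwarz-plus-bilinear step into a separate four-linear lemma bounding the form $\Lambda$ above, whose proof is then routine, and to carry out the delicate multi-scale summation in the proof of the proposition itself.
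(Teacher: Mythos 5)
Your overall strategy is the same as the paper's: dualize $X^{*,\sigma_*}$ against $X^{-\sigma_*}$ blockwise, decompose in Littlewood--Paley pieces, discard blocks by Fourier-support orthogonality so that the two largest frequencies are comparable, split the four factors into two pairs by Cauchy--Schwarz, estimate each pair by \Cref{lem:bilinear-2-2-bound}, and sum the dyadic series using the $\epsilon$ of room (this is exactly the content of the paper's \Cref{lem:fourlinear} and of the proof of the proposition). However, there are two genuine gaps. First, your pairing rule is not always admissible: the Cauchy--Schwarz step only gains if each of the two comparable \emph{high}-frequency factors is paired with a \emph{low}-frequency one, so the admissible pairings are dictated by the frequency ordering, not by the types of the factors. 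In case \eqref{eq:YXX}, if the dual factor $v_*$ and the single $z_1$ happen to carry the two highest (comparable) frequencies, then $v_*$ is forced into an $X\times X$ pair with one of the $v_j$'s and picks up an $N_<^{\xs_c}$ loss; the claim that ``the $v_0$-factor is always paired with one of the $z_j$'s, giving the half-derivative gain with no $\xs_c$-loss'' simply fails in this configuration, and your heuristic derivation of $\sigma_*=\min(S_1+\xs,S_1+1)$ does not cover it.

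Second, and more importantly, the step that actually pins down the stated values of $\sigma_*$ is asserted rather than verified. After the bilinear estimates, each block carries a definite power of the four dyadic parameters (gains $(N_>/N_<)^{-1/2}$, losses $N_<^{\xs_c}$ for $X\times X$ pairs, the weight $N_0^{\sigma_*}$ from the dual factor, and the weights $N_j^{-(S_j+\epsilon)}$ or $N_j^{-(\xs+\epsilon)}$), and one must check, for \emph{every} case and \emph{every} admissible frequency ordering of $\{v_*,z_j,v_j\}$ (e.g.\ in the $zvv$ case, separately according to whether the lowest-frequency factor is $v_*$, $z_1$, or a $v_j$), that the net exponent is nonpositive so the series converges after using the $\epsilon$ room; this is precisely the case analysis behind the paper's inequalities \eqref{eq:nlcn1}--\eqref{eq:nlcn2}, and it is where the caps $\min(S_j,\sfrac12)$ and the surplus $2(\xs-\xs_c)$ actually get justified. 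Locating the ``main difficulty'' in the dyadic summation inverts the true balance: the summation is routine once each block inequality holds with an $\epsilon$ margin, while the blockwise exponent bookkeeping across all configurations is the substance of the proof and is missing from your proposal.
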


We deduce  \Cref{prop:trilinear-estimate} by duality from  \Cref{lem:fourlinear} below, which is proved at the end of the section. 

Before proceeding, we introduce some notation. For $\sigma_{*}$ as in each
case of \Cref{prop:trilinear-estimate} (detailed below) we choose
\(v_* \in X^{-\sigma_*}(\R)\). Next, for any function
$h \in \{v_*, z_j, v_j\}$, $j = 1, 2, 3$, and $N \in 2^\N$ we define
quantities
\[\eqnum\label{eq:srnta}
Z_{N}( h) \eqd
\begin{cases}
\|v_*\|_{X_N(\R)} & \text{if } h = v_* \,,
\\
\|v_j\|_{X_N(\R)} & \text{if } h = v_j \,,
\\
\|z_j\|_{Y_N(\R)} & \text{if } h = z_j \,,
\end{cases}
\qquad 
\alpha(h) \eqd
\begin{cases}
-\sigma_{*} & \text{if } h = v_* \,,
\\
\xs & \text{if } h = v_j \,,
\\
S_j & \text{if } h = z_j \,,
\end{cases}    
\]
and we denote by $\ms{S}(h_1, \cdots, h_4)$ the set of permutations of a quadruple $(h_1, \cdots, h_4)$. 

\begin{lemma}\label{lem:fourlinear}
Fix exponents as in \eqref{eq:trilinear-exponent-assumptions}, and
choose any $0 < \epsilon \lesssim 1$ and
$0 < \epsilon_0 \lesssim_{\epsilon, S_j, \xs} 1$. Then for any
\(N_{j}\in2^{\N}\), \(j\in\{1,2,3,4\}\), we have that
\[\eqnum\label{eq:4-linear-orthogonality}
\int_{\R\times \R^{d}}\LP_{N_{1}}h_{1}\LP_{N_{2}}h_{2}\LP_{N_{3}}h_{3}\LP_{N_{4}}h_{4}\dd t\dd x = 0
\]
unless \(N_{j}\lesssim\sum_{j'\neq j} N_{j'}\) for all \(j\in\{1,2,3,4\}\). Furthermore, 
\[\eqnum\label{eq:4linear-form}
\Big|\int_{\R\times \R^{d}}\LP_{N_{1}}h_{1}\LP_{N_{2}}h_{2}\LP_{N_{3}}h_{3}\LP_{N_{4}}h_{4}\dd t\dd x\Big|\lesssim (N_{1} N_{2}N_{3}N_{4})^{\epsilon}
\prod_{j = 1}^4 N_j^{\alpha(h_j)} Z_{N_{j}}\big(\LP_{N_{j}} h_j\big)
\]
 provided that one of the following conditions is satisfied:

\begin{description}
\item[Case $zzz$] \((h_1, \cdots, h_4) \in \ms{S}(v_*, z_1, z_2, z_3) \) and \(\sigma_{*}\) as in \eqref{eq:YYY}.
\item[Case $zzv$] \( (h_1, \cdots, h_4) \in \ms{S}(v_*, z_1, z_2, v_3)\) and \(\sigma_{*}\) as in \eqref{eq:YYX}.
\item[Case $zvv$] \( (h_1, \cdots, h_4) \in  \ms{S}(v_*, z_1, v_2, v_3) \) and \(\sigma_{*}\) as in \eqref{eq:YXX}.
\item[Case $vvv$] \((h_1, \cdots, h_4) \in  \ms{S}(v_*, v_1, v_2, v_3) \) and \(\sigma_{*}\) as in \eqref{eq:XXX}.
\end{description}

\end{lemma}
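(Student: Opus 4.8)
The plan is to reduce the $4$-linear estimate to the bilinear $L^2$-bound \Cref{lem:bilinear-2-2-bound} by Cauchy--Schwarz, and then to sum over the dyadic frequencies. First I would establish the orthogonality claim \eqref{eq:4-linear-orthogonality}: by Plancherel the spatial integral is a convolution of the four Fourier transforms, supported where $\xi_1+\xi_2+\xi_3+\xi_4=0$ with $|\xi_j|\approx N_j$ (accounting for complex conjugates, which only flip signs of frequencies); if some $N_j$ strictly dominates the sum of the others, the constraint cannot be met, so the integral vanishes. This reduces the analysis to the ``balanced'' regime $\max_j N_j\lesssim\sum_{j'\neq j}N_{j'}$, in particular to the case where the two largest among $N_1,N_2,N_3,N_4$ are comparable; call them $N_+\geq N_-$ the two largest, paired so that one function is ``plus-type'' and one ``minus-type''.

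For \eqref{eq:4linear-form} itself, I would split the four functions into two pairs and apply Cauchy--Schwarz in $(t,x)$:
\[
\Big|\int \LP_{N_1}h_1\cdots\LP_{N_4}h_4\Big|\leq \|\LP_{N_{i}}h_{i}\,\LP_{N_{j}}h_{j}\|_{L^2_{t,x}}\,\|\LP_{N_{k}}h_{k}\,\LP_{N_{l}}h_{l}\|_{L^2_{t,x}},
\]
choosing the pairing judiciously: one should pair each of the two largest-frequency functions with one of the two smallest, so that in each factor \Cref{lem:bilinear-2-2-bound} applies with a genuine gain $(N_+/N_-)^{-1/2}$. Applying \Cref{lem:bilinear-2-2-bound} to each factor — using the $Y\cdot Y$, $X\cdot Y$, or $X\cdot X$ version according to the types of $h_i,h_j$ — yields, after inserting the definitions of $Z_N$ and $\alpha$, a bound of the shape
\[
\lesssim (N_1N_2N_3N_4)^{O(\epsilon)}\,\Big(\tfrac{N_1^{(1)}}{N_1^{(2)}}\Big)^{-1/2}\Big(\tfrac{N_2^{(1)}}{N_2^{(2)}}\Big)^{-1/2} M\ \prod_{j}N_j^{\alpha(h_j)}Z_{N_j}(\LP_{N_j}h_j),
\]
where the extra factors $M$ and the exponents collect the $N_-^{\xs_c}$ loss that appears when two low-frequency functions of $X$-type are multiplied (the first line of \eqref{eq:bilinear-bound}). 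The key bookkeeping is to verify, case by case ($zzz$, $zzv$, $zvv$, $vvv$), that the exponents $\alpha(h_j)=-\sigma_*,\,\xs,\,S_j$ with $\sigma_*$ chosen as in \eqref{eq:YYY}--\eqref{eq:XXX} exactly absorb all these $N$-powers: the $-\tfrac12$ from the bilinear gains, the $\xs_c$ loss when it occurs, and the constraint coming from orthogonality that the two smallest frequencies are bounded by (a constant times) the largest. In the $zzz$ case $\sigma_*=S_1+\min(S_2,\tfrac12)+\min(S_3,\tfrac12)$ is precisely what one gets pairing $v_*$ (with the largest frequency, since $-\sigma_*$ is the most negative exponent) against $z_1$ (smallest) and noting that each $z_j$, $j=2,3$ contributes either $S_j$ (if $S_j\le\tfrac12$, no gain needed) or $\tfrac12$ (if $S_j>\tfrac12$, gained from the bilinear estimate); the other cases are similar with $\xs>\xs_c$ providing room when $X$-type functions appear.

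The main obstacle I anticipate is the case analysis of \emph{which} frequency is largest and \emph{how} the permutation distributes the types, because the optimal pairing for Cauchy--Schwarz depends on it, and one must check that in every subcase the gain from \Cref{lem:bilinear-2-2-bound} is attached to the correct ratio $N_+/N_-$. In particular, when $v_*$ does not carry the largest frequency — say two of the $z_j$ or $v_j$ carry the top two comparable frequencies — one must pair those two together (losing the gain between them, but that is fine since their combined exponent $\ge 2S_1>0$ or $\ge 2\xs>\xs_c$ still dominates) or, better, pair each against $v_*$ and the remaining small one; checking that the exponent inequalities $\sigma_*\le$ (sum of the appropriate $\min$'s) hold in all these configurations is the genuine content. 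The $\epsilon$-losses are harmless: every application of \Cref{lem:bilinear-2-2-bound} produces $N_+^{\epsilon}$, and \eqref{eq:trilinear-exponent-assumptions} with $\xs<\xs_c+\tfrac12$ leaves a strictly positive margin, so after fixing $\epsilon,\epsilon_0$ small depending on $S_j,\xs$ the powers of $N_j$ combine to exactly $\prod_j N_j^{\alpha(h_j)}$ times $(N_1N_2N_3N_4)^{O(\epsilon)}$, as claimed.
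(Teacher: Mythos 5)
Your plan follows essentially the same route as the paper's proof: orthogonality from Fourier-support considerations, Cauchy--Schwarz pairing each of the two comparable highest-frequency factors with one of the two lower ones (keeping the better of the two pairings), \Cref{lem:bilinear-2-2-bound} with the $\xs_{c}$-loss whenever both members of a pair are $X$-type (note this loss is tied to the types in the pair, including $v_{*}$, not to the frequencies being low), and finally exponent bookkeeping. Be aware that the case-by-case verification you defer is precisely where the paper spends most of its effort, reducing the matter to two scalar inequalities in the exponents $\alpha,\beta,\sigma_{*}$ and checking them separately in the cases $zzz$, $zzv$, $zvv$, $vvv$.
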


\begin{proof}[Proof of \Cref{prop:trilinear-estimate}]
Choose any $h_j \in \{z_j, v_j\}$, $j = \{1, 2, 3\}$. The definition of $X^{*,\sigma_{*}}$ and $X_N$ in \eqref{eq:def:X-norm-dual},  \(\Id=\sum_{N\in2^{\N}}\LP_{N}\),  the triangle inequality, and $\Big(\sum |a_j|^2\Big)^{\sfrac{1}{2}} \leq \sum |a_j|$ imply that 
\[
\big\|h_{1}h_{2}h_{3}\big\|_{X^{*, \sigma_{*}}(\R)}
\begin{aligned}[t]
& = \Big(\sum_{N \in 2^\N } N^{2\sigma_{*}}\big\|\LP_{N}\big(h_{1}h_{2}h_{3}\big)\big\|_{X^{*}_{N}(\R)}^{2}\Big)^{\sfrac{1}{2}}
\\
& \leq \sum_{N,N_{1},N_{2},N_{3} \in 2^\N}N^{\sigma_{*}}
\big\|\LP_{N}\big(P_{N_{1}} h_{1}P_{N_{2}}h_{2}P_{N_{3}}h_{3}\big)\big\|_{X^{*}_{N}(\R)}
\\
& \leq \sum_{N,N_{1},N_{2},N_{3} \in 2^\N}N^{\sigma_{*}}\sup_{v_{*}}
\Big|\int_{\R\times \R^{d}}\LP_{N}v_{*} \LP_{N_{1}}h_{1}\LP_{N_{2}}h_{2}\LP_{N_{3}}h_{3}\dd t\dd x\Big| \,,
\end{aligned}
\]
where the upper bound in the last expression is taken over all \(v_{*}\in
C^{\infty}_{c}(\R\times\R^{d})\) with \(\|\LP_N v_{*}\|_{X_{N}}\leq 1\).  Note that the
assumptions of \Cref{lem:fourlinear} on \(S_j\), \(\xs\), \(\epsilon\), \(\epsilon_0\), and $\sigma_*$
coincide with those of \Cref{prop:trilinear-estimate}, and therefore \eqref{eq:4linear-form}
with $\epsilon$ replaced by $\frac{\epsilon}{8}$ yields
\[
\Big|\int_{\R\times \R^{d}}\LP_{N}v_{*} \LP_{N_{1}}h_{1}\LP_{N_{2}}h_{2}\LP_{N_{3}}h_{3}\dd t\dd x\Big| 
\begin{aligned}[t]
&\lesssim  (N_1N_2N_3N)^{\frac{\epsilon}{8}} \prod_{j = 1}^3 N_j^{ \alpha(h_j)} Z_{N_{j}}\big(\LP_{N_{j}} h_j\big)
\\
& \hspace{-2em} =  (N_1N_2N_3)^{-\frac{3\epsilon}{8}} N^{\frac{\epsilon}{8}} \prod_{j = 1}^3 N_j^{\alpha(h_j) + \epsilon/2 } Z_{N_{j}}\big(\LP_{N_{j}} h_j\big) \,.
\end{aligned}
\]
We may assume that $N \lesssim \max_j N_j$ since otherwise, by \eqref{eq:4-linear-orthogonality}, the left-hand side vanishes. We have that $(N_1N_2N_3)^{-\frac{3\epsilon}{8}} N^{\frac{\epsilon}{8}} \lesssim N^{-\frac{\epsilon}{4}}$, and consequently
\[
\begin{aligned}
& \sum_{N,N_{1},N_{2},N_{3} \in 2^\N}  (N_1N_2N_3)^{-\frac{3\epsilon}{8}} N^{\frac{\epsilon}{8}} \prod_{j = 1}^3 N_j^{\alpha(h_j) + \epsilon} Z_{N_{j}}\big(\LP_{N_{j}} h_j\big) 
\\
& \hspace{10em} \lesssim \sum_{N\in 2^\N} N^{-\frac{\epsilon}{4}}  \prod_{j = 1}^3 \Big(\sum_{N_{j} \in 2^\N}  N_j^{\alpha(h_j)  + \epsilon/2}  Z_{N_{j}}\big(\LP_{N_{j}} h_j\big)\Big)\,.
\end{aligned}
\]
Using the Cauchy-Schwarz inequality and the definitions of $Z_{N_{j}}(h_{j})$, $\alpha(h_j)$,  $X^{\xs}$, and $Y^S$ we have  
\[
\begin{aligned}
\sum_{N_{j} \in 2^N} N^{\alpha(h_{j}) + \epsilon /2} Z_{N_{j}}(\LP_{N_{j}} h_{j})
& \lesssim \Big(\sum_{N\in 2^\N} N_{j}^{-\varepsilon}\Big)^{\frac{1}{2}}\Big(\sum_{N \in 2^N} N_{j}^{2\alpha(h) + 2\epsilon} Z_{N_{j}}\big(\LP_{N_{j}} h_{j}\big)^2 \Big)^{\frac{1}{2}}
\\
& \lesssim_{\epsilon}  \begin{cases}
\|z_j\|_{Y^{S_j + \epsilon}(\R)} & \text{if } h = z_j \,,
\\
\|v_j\|_{X^{\xs+ \epsilon}(\R) } & \text{if } h = v_j \,.
\end{cases}
\end{aligned}
\]
\end{proof}

\begin{proof}[Proof of \Cref{lem:fourlinear}]
Note that the right-hand sides of \eqref{eq:4-linear-orthogonality} and \eqref{eq:4linear-form}, as well as the subsequent conditions are the same if we exchange \(P_{N_{j}}h_{j}\) with \(P_{N_{j'}}h_{j'}\) \(j,j'\in\{1,2,3,4\}\), and therefore we assume without loss of generality, that \(N_{1}\geq N_{2}\geq N_{3}\geq N_{4}\).

First, we prove an orthogonality observation
\eqref{eq:4-linear-orthogonality}. For a contradiction suppose that
\(N_{1}\lesssim N_{2}+N_{3}+N_{4}\) does not hold, and in particular, suppose
that \(N_{1}\geq 2^{5} N_{2}\). By \eqref{eq:LP-bump-geometry} we have
\(\spt \big(\Fourier(\LP_{N_{1}}h_{1})\big)\subset B_{N_{1}}\setminus B_{2^{-2}N_{1}} \), since
\(N_{1}\geq 2^{5}N_2 \geq 2^{5} > 1\). On the other hand, since \(\Fourier(h_{2}h_{3}h_{4})=\FT{h_{2}}*\FT{h_{3}}*\FT{h_{4}}\), it holds that 
\[
\spt \big( \Fourier(h_{2}h_{3}h_{4})\big)\subset \spt \big( \FT{h_{2}}\big)+\spt \big( \FT{h_{3}}\big)+\spt \big( \FT{h_{4}}\big),
\]
and since $N_2 \geq N_3 \geq N_4$, \(N_{1}\geq 2^{5}N_{2}\), and  \(\spt\Big(\Fourier\big(\LP_{N}h\big)\Big)\subset B_{N} \) by \eqref{eq:LP-bump-geometry}, we have 
\[\eqnum\label{eq:spest}
\spt \Big(\Fourier\big(\LP_{N_{2}}h_{2}\LP_{N_{3}}h_{3}\LP_{N_{4}}h_{4}\big) \Big)\subset B_{N_{2}}+B_{N_{3}}+B_{N_{4}}\subset B_{2^{3} N_{2}}\subset B_{2^{-2}N_{1}} \,,
\]
Thus \eqref{eq:4-linear-orthogonality} holds by Plancherel's identity.

We henceforth suppose that \(N_{1}\approx N_{2}\) and we proceed to prove bound \eqref{eq:4linear-form}.
To estimate the left-hand side of \eqref{eq:4linear-form} we apply the Cauchy-Schwarz inequality pairing one function with high frequency ($N_1$ or $N_2$) with a function with a lower frequency ($N_3$ or $N_4$). This gives the bound
\[
\begin{aligned}
&  \Big|\int_{\R\times \R^{d}}\LP_{N_{1}}h_{1}\LP_{N_{2}}h_{2}\LP_{N_{3}}h_{3}\LP_{N_{4}}h_{4}\dd t\dd x\Big| \\
 & \hspace{5em} \leq  \min\Big(
\begin{aligned}[t]
& \|\LP_{N_{1}} h_{1}\LP_{N_{4}}h_{4} \|_{L^{2}_{t} L^{2}_{x} ( \R \times \R^{d} )}
\|\LP_{N_{2}} h_{2}\LP_{N_{3}}h_{3}\|_{L^{2}_{t} L^{2}_{x} (\R  \times \R^{d}  )} ,
\\
& \qquad\|\LP_{N_{1}} h_{1}\LP_{N_{3}}h_{3} \|_{L^{2}_{t} L^{2}_{x} ( \R \times \R^{d} )}
\|\LP_{N_{2}} h_{2}\LP_{N_{3}}h_{4}\|_{L^{2}_{t} L^{2}_{x} (\R  \times \R^{d}  )} \Big).
\end{aligned}
\end{aligned}
\]
By definitions in \eqref{eq:srnta} and \Cref{lem:bilinear-2-2-bound} for any \(l<l'\in\{1,2,3,4\}\) it holds that 
\[
\begin{aligned}
\|\LP_{N_{l}} h_{l}\LP_{N_{l'}}h_{l'} \|_{L^{2}_{t} L^{2}_{x} ( \R \times \R^{d} )} &  \lesssim  N_l^{\epsilon -\sfrac{1}{2}-\alpha(h_{l})} N_{l'}^{\sfrac{1}{2}  - \alpha(h_{l'})+\beta(h_{l},h_{l'})} 
\\
& \hspace{5em}\times \prod_{j \in \{l, l'\}} N_j^{\alpha(h_j)}Z_{N_{j}}(\LP_{N_{j}} h_{j}) \,,\end{aligned}
\]
where
\[
\beta(h_{l},h_{l'}) \eqd
\begin{cases}
\xs_{c} & \text{if } h_l, h_{l'} \in \{v_*,v_1,v_{2},v_3\}\,,\\
0 & \text{otherwise}.
\end{cases}
\]
Hence, since $N_1 \approx N_2$, 
we obtain
\[
\begin{aligned}
&  \Big|\int_{\R\times \R^{d}}\LP_{N_{1}}h_{1}\LP_{N_{2}}h_{2}\LP_{N_{3}}h_{3}\LP_{N_{4}}h_{4}\dd t\dd x\Big|
\leq N_{1}^{2\epsilon-1-\alpha(h_{1})-\alpha(h_{2})}
N_{3}^{\sfrac{1}{2}-\alpha(h_{3})}N_{4}^{\sfrac{1}{2}-\alpha(h_{3})}
\\
& \hspace{5em}\times
\min\Big( N_{3}^{\beta(h_{2},h_{3})}N_{4}^{\beta(h_{1},h_{4})}, N_{3}^{\beta(h_{1},h_{3})}N_{4}^{\beta(h_{2},h_{4})}\Big)
 \prod_{j =1}^{4} N_j^{\alpha(h_j)}Z_{N_{j}}(\LP_{N_{j}} h_{j}) \,.
\end{aligned}
\]
By homogeneity of the required bound \eqref{eq:4linear-form} we can assume, without loss of generality, that  \(N_j^{\alpha(h_j)}Z_{N_{j}}(\LP_{N_{j}} h_{j}) = 1\), \(j\in\{1,2,3,4\}\). Thus, recalling that \(N_{1}\approx N_{2}\), we reduce \eqref{eq:4linear-form} to proving
\begin{multline}\eqnum\label{eq:sieww}
N_{3}^{\sfrac{1}{2}-\alpha(h_{3})-\epsilon}N_{4}^{\sfrac{1}{2}-\alpha(h_{4})-\epsilon}
\min\Big( N_{3}^{\beta(h_{2},h_{3})}N_{4}^{\beta(h_{1},h_{4})}, N_{3}^{\beta(h_{1},h_{3})}N_{4}^{\beta(h_{2},h_{4})}\Big)  
\\
\lesssim N_{1}^{1+\alpha(h_{1})+\alpha(h_{2})}
\end{multline}  
We first claim that \(\alpha(h_{1})+\alpha(h_{2})+1\geq0\). Indeed, \(\alpha(h)<0\) only if \(h=v_{*}\), in which case \(\alpha(h)=-\sigma_{*}\). Assuming, without loss of generality, that $h_1 = v_*$, in each of 
the cases zzz, zzv, or zvv,  we replace the minimum in \eqref{eq:YYY}, in \eqref{eq:YYX}, or in
\eqref{eq:YXX} by $\frac{1}{2}$ or by $S + 1$ to obtain that
\[
\sigma_{*}\leq  S_{1}+1 \leq  S_{j}+1\leq\xs+1, \qquad j\in\{1,2,3\};
\]
the claim follows. Finally, in the case vvv we have $h_2 = v$, and therefore $\alpha(h_{2}) = \xs$. Consequently, 
\[
\sigma_{*} =  \xs + 2(\xs - \xs_c) \leq  \xs+1,  
\]
and the claim follows.

Thus, since \(N_{1}\geq N_{3}\) it is sufficient to show \eqref{eq:sieww} for \(N_{1}=N_{3}\), that is,  to show
\[\eqnum\label{eq:ctble}
\begin{aligned}  
\hspace{10em}& \hspace{-10em}\min\Big( N_{3}^{\beta(h_{2},h_{3})}N_{4}^{\beta(h_{1},h_{4})}, N_{3}^{\beta(h_{1},h_{3})}N_{4}^{\beta(h_{2},h_{4})}\Big)
\\
& \lesssim  N_{3}^{\sfrac{1}{2}+\alpha(h_{1})+\alpha(h_{2})+\alpha(h_{3})}N_{4}^{-\sfrac{1}{2}+\alpha(h_{4})}
\end{aligned}
\]
After standard algebraic manipulations, \eqref{eq:ctble} follows if we show that for any $N_3 \geq N_4$ either 
\[
N_4^{\beta(h_{1},h_{4}) +\sfrac{1}{2}-\alpha(h_{4}) } \lesssim N_3^{\sfrac{1}{2}+\alpha(h_{1})+\alpha(h_{2})+\alpha(h_{3}) - \beta(h_{2},h_{3})}
\] 
or 
\[
N_4^{\beta(h_{2},h_{4}) +\sfrac{1}{2}-\alpha(h_{4}) } \lesssim N_3^{\sfrac{1}{2}+\alpha(h_{1})+\alpha(h_{2})+\alpha(h_{3}) - \beta(h_{1},h_{3})} \,.
\]
Since $N_3 \geq N_4$ are arbitrary, it suffices to 
show that either 
\[
   \max\{0, \beta(h_{1},h_{4}) +\sfrac{1}{2}-\alpha(h_{4}) \}  \leq \sfrac{1}{2}+\alpha(h_{1})+\alpha(h_{2})+\alpha(h_{3}) - \beta(h_{2},h_{3})
\] 
or 
\[
  \max\{0, \beta(h_{2},h_{4}) +\sfrac{1}{2}-\alpha(h_{4}) \}  \leq \sfrac{1}{2}+\alpha(h_{1})+\alpha(h_{2})+\alpha(h_{3}) - \beta(h_{1},h_{3})
\]
holds. The expressions above can be rewritten in more symmetric form as
\[ \eqnum\label{eq:nlcn1}
\beta(h_{2},h_{3}) +  \max\{\alpha(h_{4}) -  \sfrac{1}{2}, \beta(h_{1},h_{4}) \}  \leq \alpha(h_{1})+\alpha(h_{2})+\alpha(h_{3}) 
+\alpha(h_{4})
\]
or 
\[ \eqnum\label{eq:nlcn2}
  \beta(h_{1},h_{3}) +  \max\{\alpha(h_{4})  -\sfrac{1}{2}, \beta(h_{2},h_{4})\}  \leq \alpha(h_{1})+\alpha(h_{2})+\alpha(h_{3}) 
    +\alpha(h_{4})
\]
Next, we discuss each case separately. 

\noindent
\textbf{Case $zzz$}. Then, 
\[ 
\alpha(h_{1})+\alpha(h_{2})+\alpha(h_{3}) 
    +\alpha(h_{4}) = -\sigma_* + S_1 + S_2 + S_3    
\]
and $\beta(h_j, h_k) = 0$ for any $j \neq k$, and therefore \eqref{eq:nlcn1} is equivalent to 
\[ 
   \max\{\alpha(h_{4}) -  \sfrac{1}{2}, 0 \}  \leq 
   -\sigma_* + S_1 + S_2 + S_3 \,.
\]
Clearly the left hand side is largest if 
$\alpha(h_4) = S_3$, and consequently \eqref{eq:nlcn1} holds if 
\[ 
 \sigma_* \leq  S_1 + S_2 + S_3 - 
 \max\{S_3 -  \sfrac{1}{2}, 0 \} = 
 S_1 + S_2  + \min\{S_3, \sfrac{1}{2}\} \,,
\]
which holds by \eqref{eq:YYY}. 

\noindent
\textbf{Case $zzv$}. Then, 
\[ 
\alpha(h_{1})+\alpha(h_{2})+\alpha(h_{3}) 
    +\alpha(h_{4}) = -\sigma_* + S_1 + S_2 + \xs    
\]
and either $\beta(h_2, h_3) = 0$ or $\beta(h_1, h_3) = 0$. Suppose $\beta(h_2, h_3) = 0$, the case $\beta(h_1, h_3) = 0$
follows analogously by proving \eqref{eq:nlcn2} instead of \eqref{eq:nlcn1}. Then, \eqref{eq:nlcn1} 
is equivalent to 
\[ 
  \max\{\alpha(h_{4}) -  \sfrac{1}{2}, \beta(h_{1},h_{4}) \}  \leq -\sigma_* + S_1 + S_2 + \xs \,.
\] 
Since $\alpha(h_{4}) \leq \xs$ and $\beta(h_{1},h_{4}) \leq \xs_c$, then \eqref{eq:nlcn1} follows if we show that 
\[ 
  \max\{\xs -  \sfrac{1}{2}, \xs_c \}  \leq -\sigma_* + S_1 + S_2 + \xs \,,
\] 
which is equivalent to 
\[ 
  \sigma_*   \leq  S_1 + S_2 + \xs  + 
  \min\{\sfrac{1}{2}, \xs - \xs_c \} \,,
\] 
that follows from \eqref{eq:YYX}. 

\noindent
\textbf{Case $zvv$}. Then, 
\[ 
\alpha(h_{1})+\alpha(h_{2})+\alpha(h_{3}) 
    +\alpha(h_{4}) = -\sigma_* + S_1 + \xs + \xs 
    \,.
\]
We discuss three cases. If $h_4 = v$, then $z_1 \in \{h_1, h_2, h_3\}$ and consequently either 
$\beta(h_2, h_3) = 0$ or $\beta(h_1, h_3) = 0$. Suppose $\beta(h_2, h_3) = 0$, the case $\beta(h_1, h_3) = 0$
follows analogously by proving \eqref{eq:nlcn2} instead of \eqref{eq:nlcn1}. Then, \eqref{eq:nlcn1} 
is equivalent to 
\[ 
  \max\{\xs -  \sfrac{1}{2}, \beta(h_{1},h_{4}) \}  \leq -\sigma_* + S_1 + 2\xs \,.
\] 
Since $\beta(h_{1},h_{4}) \leq \xs_c$, then \eqref{eq:nlcn1} follows if we show that 
\[ 
  \max\{\xs -  \sfrac{1}{2}, \xs_c \}  \leq -\sigma_* + S_1 + 2\xs  \,,
\] 
which is equivalent to 
\[ 
  \sigma_*   \leq S_1 + \xs  + 
  \min\{ \sfrac{1}{2}, \xs -\xs_c\} \,,
\] 
that follows from \eqref{eq:YXX}.

If $h_4 = z_1$, then  $\beta(h_1, h_4) = 0$, 
$\beta(h_2, h_3) \leq \xs_c$, 
and $\alpha(h_{4}) = S_1$ implies that 
\eqref{eq:nlcn1} holds if 
\[ 
\xs_c + \max\{S_1 -  \sfrac{1}{2}, 0 \}  \leq -\sigma_* + S_1 + 2\xs \,.
\] 
After standard algebraic manipulations, 
the latter is equivalent to 
\[ 
\sigma_* \leq 2\xs - \xs_c + 
\min\{\sfrac{1}{2}, S_1\}  
\,,
\] 
which follows from \eqref{eq:YXX}. 

If $h_4 = v_*$, then $\alpha(h_{4}) -  \sfrac{1}{2} < 0$, and either $\beta(h_2, h_3) = 0$ or $\beta(h_1, h_3) = 0$. As above, we suppose $\beta(h_2, h_3) = 0$
and we note that \eqref{eq:nlcn1} follows if we show 
\[ 
   \xs_c \leq  -\sigma_* + S_1 + 2\xs, 
\]
or equivalently 
\[ 
    \sigma_* \leq  S_1 + 2\xs - \xs_c, 
\]
which follows from \eqref{eq:YXX}. 

\noindent
\textbf{Case $vvv$}. Then, 
\[ 
\alpha(h_{1})+\alpha(h_{2})+\alpha(h_{3}) 
    +\alpha(h_{4}) = -\sigma_* + 3\xs    
\]
and $\beta(h_j, h_k) \leq \xs_c$ for each $j \neq k$. Then, \eqref{eq:nlcn1} follows if we show
\[ 
\xs_c + \max\{\xs -  \sfrac{1}{2}, \xs_c \}  \leq -\sigma_* + 3\xs \,.
\] 
The latter is equivalent to 
\[ 
\sigma_*  \leq  2(\xs - \xs_c) + 
  \min\{\sfrac{1}{2} + \xs_c, \xs\}
  \,,
\] 
and \eqref{eq:nlcn1} follows from \eqref{eq:XXX}. 
\end{proof}

We conclude this section by a crucial estimate that justifies the regularity computation encoded by the definition \eqref{eq:def:multilinear-regularity} of \(\mu(k,S)\)

\begin{lemma}[Inductive property of $\mu(k,S)$]\label{lem:mu-inductive-property}
For any \(S>0\) and any \(k_{1},k_{2},k_{3}\in\N\setminus\{0\}\), with
\(k_{1}\leq k_{2}\leq k_{3}\), it holds that
\[\eqnum\label{eq:mu-inductive-property}
\mu(k_{1}+k_{2}+k_{3},S) \leq\mu(k_{1},S)+ \min\big(\mu(k_{2},S),\sfrac{1}{2}\big)+ \min\big(\mu(k_{3},S),\sfrac{1}{2}\big).
\]

Furthermore, setting \(k\eqd k_{1}+k_{2}+k_{3}\)
\(0 < \epsilon \lesssim_{k} 1\), and any
\(0 < \epsilon_{0} \lesssim_{k, S, \epsilon} 1\). Then for any functions
$z_j$, $j \in \{1, 2, 3\}$, it holds that
\[\eqnum\label{eq:trilinear-Y-mu-bound}
\|z_{1}z_{2}z_{3}\|_{X^{*,\mu(k,S)}}\lesssim \prod_{j = 1}^{3} \|z_{j}\|_{Y^{\mu(k_{j},S)+\epsilon}} \,,
\]
where the implicit constant depends on $k$, $\epsilon$, $\epsilon_0$, and $S$. 

\end{lemma}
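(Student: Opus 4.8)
The plan is to establish the two displays in turn; the first is an elementary finite case analysis, the second reduces to estimate \eqref{eq:YYY} of \Cref{prop:trilinear-estimate}.

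For the pointwise inequality \eqref{eq:mu-inductive-property} I would abbreviate $a_{j}\eqd\mu(k_{j},S)=\min\!\big(k_{j}S,\,2S+\sfrac{1}{2},\,S+1\big)$ and record the two elementary facts that $a_{j}\geq S$ for every $j$ (each entry of the minimum is $\geq S$ since $k_{j}\geq1$ and $S>0$) and that $a_{1}\leq a_{2}\leq a_{3}$. Since $\mu(k_{1}+k_{2}+k_{3},S)$ is the minimum of $(k_{1}+k_{2}+k_{3})S$, $2S+\sfrac{1}{2}$ and $S+1$, it suffices to bound the right-hand side $R\eqd a_{1}+\min(a_{2},\sfrac{1}{2})+\min(a_{3},\sfrac{1}{2})$ below by one of these three quantities. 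If $a_{2}\geq\sfrac{1}{2}$ (hence $a_{3}\geq\sfrac{1}{2}$) then $R=a_{1}+1\geq S+1$. If $a_{2}<\sfrac{1}{2}$, then since $2S+\sfrac{1}{2}>\sfrac{1}{2}$ and $S+1>1$ the minima defining $a_{1}$ and $a_{2}$ are attained at their first entry, so $a_{1}=k_{1}S$ and $a_{2}=k_{2}S$; if moreover $a_{3}\geq\sfrac{1}{2}$ then $R=(k_{1}+k_{2})S+\sfrac{1}{2}\geq 2S+\sfrac{1}{2}$ because $k_{1}+k_{2}\geq2$, and if $a_{3}<\sfrac{1}{2}$ then likewise $a_{3}=k_{3}S$ and $R=(k_{1}+k_{2}+k_{3})S$. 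In each branch \eqref{eq:mu-inductive-property} follows.

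For the trilinear bound \eqref{eq:trilinear-Y-mu-bound} I would apply \eqref{eq:YYY} of \Cref{prop:trilinear-estimate} with the regularity exponents $S_{j}\eqd\mu(k_{j},S)$, which by the above satisfy $0<S_{1}\leq S_{2}\leq S_{3}$, obtaining (for $0<\epsilon\lesssim 1$ and $0<\epsilon_{0}\lesssim_{\epsilon}1$, the constant also depending on $k,S$ through the sizes of the $S_{j}$)
\[
\|z_{1}z_{2}z_{3}\|_{X^{*,\sigma_{*}}(\R)}\lesssim\prod_{j=1}^{3}\|z_{j}\|_{Y^{\mu(k_{j},S)+\epsilon}(\R)},
\qquad
\sigma_{*}\eqd\mu(k_{1},S)+\min\!\big(\mu(k_{2},S),\sfrac{1}{2}\big)+\min\!\big(\mu(k_{3},S),\sfrac{1}{2}\big).
\]
By the first part $\sigma_{*}\geq\mu(k,S)$ with $k=k_{1}+k_{2}+k_{3}$, and since $X^{*,\sigma}$ is non-decreasing in $\sigma$ we get $\|z_{1}z_{2}z_{3}\|_{X^{*,\mu(k,S)}(\R)}\leq\|z_{1}z_{2}z_{3}\|_{X^{*,\sigma_{*}}(\R)}$, which is \eqref{eq:trilinear-Y-mu-bound}.

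The one point that requires care -- and the main obstacle -- is that \Cref{prop:trilinear-estimate} is stated under the normalization $0<S_{1}\leq S_{2}\leq S_{3}<\xs_{c}$, whereas here $S_{3}=\mu(k_{3},S)$ can exceed $\xs_{c}$ (indeed $\mu(k_{3},S)\to\min(2S+\sfrac{1}{2},S+1)$ as $k_{3}\to\infty$, which lies above $\xs_{c}$ precisely in the range of $S$ of interest), and in that situation merely capping the $S_{j}$ at $\xs_{c}-\delta$ would shrink $\sigma_{*}$ below $\mu(k,S)$. One therefore has to revisit the proof of \eqref{eq:YYY}: tracing the deduction through \Cref{lem:fourlinear}, in the relevant \textbf{case $zzz$} all the ``$\beta$'' terms vanish (apart from $v_{*}$ no $v_{j}$ is present), so the only bilinear input is the $Y\times Y$ and $X\times Y$ cases of \Cref{lem:bilinear-2-2-bound}, none of which involves $\xs_{c}$, and the resulting algebraic condition collapses to $\sigma_{*}\leq S_{1}+S_{2}+\min(S_{3},\sfrac{1}{2})$, which holds for arbitrary $0<S_{1}\leq S_{2}\leq S_{3}$ since $\min(S_{2},\sfrac{1}{2})\leq S_{2}$. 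Hence \eqref{eq:YYY} is in fact valid without the restriction $S_{3}<\xs_{c}$ (that restriction being needed only for the cases \eqref{eq:YYX}--\eqref{eq:XXX}, which see the $X^{\xs}$ norms), and the argument above goes through as stated; alternatively, one reproves the case $zzz$ bound directly from \Cref{lem:bilinear-2-2-bound} together with the Cauchy--Schwarz and orthogonality bookkeeping used in \Cref{prop:trilinear-estimate}, with the $S_{j}$ left unrestricted. Everything else -- the finite case analysis of the first part, the monotonicity of $X^{*,\sigma}$ and $Y^{\sigma}$ in $\sigma$, and the $\epsilon$-bookkeeping -- is routine.
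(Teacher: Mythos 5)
Your proposal is correct and follows essentially the paper's own route: the same three-case analysis (according to whether $\mu(k_2,S)$ and $\mu(k_3,S)$ lie above or below $\sfrac{1}{2}$) gives \eqref{eq:mu-inductive-property}, and then \eqref{eq:trilinear-Y-mu-bound} is obtained by applying \eqref{eq:YYY} with $S_j=\mu(k_j,S)$ and using the first part together with the monotonicity of $X^{*,\sigma}$ in $\sigma$. Your additional point is well taken and correctly resolved: the paper's one-line citation of \eqref{eq:YYY} glosses over the fact that $\mu(k_j,S)$ may exceed $\xs_c$, and your check that in the case $zzz$ of \Cref{lem:fourlinear} only the $Y\times Y$ and $X\times Y$ cases of \Cref{lem:bilinear-2-2-bound} enter (so all $\beta$-terms vanish and the constraint reduces to $\sigma_*\leq S_1+S_2+\min(S_3,\sfrac{1}{2})$, with no use of $S_3<\xs_c$) is exactly the justification needed for that application.
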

\begin{proof}[Proof of \Cref{lem:mu-inductive-property}]
Without loss of generality, suppose \(k_{1}\leq k_{2}\leq k_{3}\). Let
\[
J\eqd\mu(k_{1},S)+ \min\big(\mu(k_{2},S),\sfrac{1}{2}\big)+ \min\big(\mu(k_{3},S),\sfrac{1}{2}\big).
\]
Trivially, we have $\mu(|k_{j}|,S) \geq S$, \(j\in\{1,2,3\}\). If $\mu(k_{3},S)\geq \mu(k_{2},S) \geq \frac{1}{2}$ then 
\[
 J \geq S + \frac{1}{2} + \frac{1}{2} = S + 1 \geq  \mu(k_{1}+k_{2}+k_{3},S)\,, 
\]
and \eqref{eq:mu-inductive-property} follows. If $\mu(k_{3},S) \geq \frac{1}{2} \geq \mu(k_{2},S)$, then
\[
J \geq S + S + \frac{1}{2} = 2S + \frac{1}{2} \geq  \mu(k_{1}+k_{2}+k_{3},S) \,,
\]
and \eqref{eq:mu-inductive-property} follows.  Finally, if $ \frac{1}{2} \geq \mu(k_{3},S)\geq \mu(k_{2},S)$ then
\[
 J = \mu(k_{1},S) + \mu(k_{2},S)  + \mu(k_{3},S)
\]
and in addition $\mu(k_{j},S) = k_{j}S$, \(j\in\{1,2,3\}\) since \(\min(S+1,2S+\frac{1}{2})>\frac{1}{2}\) and \(k_{1}\leq k_{2}\). Then we deduce that 
\[
 J =  \big(k_{1}+k_{2}+k_{3}\big)S\geq\mu(k_{1}+k_{2}+k_{3},S),
\]
and \eqref{eq:mu-inductive-property} follows.

Finally, bound \eqref{eq:trilinear-Y-mu-bound} follows from bound \eqref{eq:YYY} with \(S\) replaced by \(S+\epsilon\). This completes the proof.
\end{proof}

\section{The deterministic fixed point}\label{sec:deteministic-fixed-point}
The main goal of this section is to prove
\Cref{thm:intro-deterministic-controlled-well-posedness}.

Recall that solution \(u\) to \eqref{eq:NLS-deterministic} can be seen as fixed
point of the map \(u\mapsto \mbb{I}_{f}(|u|^{2}u)\) with
\(\mbb{I}_{f}\) given by \eqref{eq:duhamel}. Having assumed that \(u\) is a
function of the form given by
\eqref{eq:solution-decomposition--deterministic} for some \(M\in\N\), it
follows by direct substitution that \(u\) is a solution to
\eqref{eq:NLS-deterministic} if and only if the remainder term
\(u^{\#}_{M}\eqd u-z_{\leq M}\), with
\(z_{\leq M}\eqd\sum_{k\leq M}z_{k}\), is fixed point of the  map
\(v \mapsto \mc{J}_{z,M}(v)\eqd \mbb{I}_{0}\big(\Phi_{z_{\leq M}}[v]+[z,z,z]_{>M}\big)\), given by
\eqref{eq:remainder-iteration-map}.

The boundedness of the map \(\mbb{I}_{0}\) from \(X^{*,\xs_{*}}(\R)\) to
\(X^{*,\xs_{*}}(\R)\), for some \(\xs_{*}>\xs\), has already been
established in \Cref{prop:main-linear-estimate}. Thus, the proof of
\Cref{thm:intro-deterministic-controlled-well-posedness} relies on
estimates on \(\Phi_{z_{\leq M}}[v]\) and \(\big[z,z,z\big]_{>M}\) in the
\(X^{*,\xs_{*}}([0,T])\) norm for some \(\xs_{*}>\xs\) and for small enough intervals \(T>0\).

In \Cref{lem:F-bound} we establish bounds on the Lipschitz constant of
the nonlinear map
\(X^{\xs}(I)\ni v \mapsto \Phi_{z}[v]\in X^{*,\xs_*}\big(I\big)\)
for an interval \(I\subset\R\). This will allow us to prove
\Cref{thm:intro-deterministic-controlled-well-posedness} using the
uniqueness of a fixed point of \(v\mapsto \mc{I}(v)\) by using the Banach fixed
point theorem.

\begin{lemma}\label{lem:F-bound}
Fix \(0<S<\xs_{c}<\xs<\xs_{c}+\sfrac{1}{2}\). Choose
\(0<\epsilon\lesssim_{S,\xs}1\) and
\(0<\epsilon_{0}\lesssim_{\epsilon, S, \xs}1\). For any interval \(I\subseteq\R\), any 
\(z\in Y^{S}(I)\) and \(v_{1},v_{2}\in X^{\xs}(I)\) and any
\[ \eqnum\label{eq:sdoss}
\xs_{*} < \min\big( \xs+2(\xs-\xs_{c}),2S+\sfrac{1}{2},\,S+1, S + \xs\big)-\epsilon \,,
\]
it holds that 
\[\eqnum\label{eq:F-bound}
\big\|\Phi_{z}[v_{1}]\big\|_{X^{*,\xs_{*}}(I)}\lesssim\big(\|v_{1}\|_{X^{\xs}(I)}+\|z\|_{Y^{S}(I)}\big)^{2}\|v_{1}\|_{X^{\xs}(I)},
\]
\[\eqnum\label{eq:F-bound-Lip-1}
\Big\|\Phi_{z}[v_{1}]-\Phi_{z}[v_{2}]\Big\|_{X^{*, \xs_{*}}(I)} \lesssim \big(\|v_{1}\|_{X^{\xs}(I)}+\|v_{2}\|_{X^{\xs}(I)}+\|z\|_{Y^{S}(I)}\big)^{2}
\|v_{1}-v_{2}\|_{X^{\xs}(I)}\,,
\]
and
\[\eqnum\label{eq:F-bound-Lip-2}
\Big\|\Phi_{z}[v]-\Phi_{\tilde{z}}[v]\Big\|_{X^{*, \xs_{*}}(I)} \lesssim
\big(\|v\|_{X^{\xs}(I)}+\|z\|_{Y^{S}(I)}+\|\tilde{z}\|_{Y^{S}(I)}\big)^{2}
\|z-\tilde{z}\|_{Y^{S}(I)}.
\]

In particular, if \(\xs<\min\big(2S+\sfrac{1}{2},\,S+1\big)\), then for any
sufficiently small \(0< \epsilon \lesssim_{S, \xs} 1 \), \eqref{eq:sdoss} holds with
\(\xs_{*}=\xs+\epsilon\).

We remark that the implicit constants may depend on \(S\), \(\xs\), \(\xs_{*}\), \(\epsilon\), and \(\epsilon_{0}\) but not on \(I\), \(z\), \(v_{1}\) or \(v_{2}\).
\end{lemma}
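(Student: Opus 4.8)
The plan is to reduce all three estimates to the trilinear bounds of \Cref{prop:trilinear-estimate} applied to an explicit algebraic expansion of the cubic remainder nonlinearity \eqref{eq:remainder-cubic-nonlinearity}. Writing $\Psi(w)\eqd|w|^{2}w=w^{2}\bar w$ one has $\Phi_{z}[v]=\Psi(z+v)-\Psi(z)$, so $\Phi_{z}[v_{1}]$ expands into a sum of at most seven monomials, each a product of three factors from $\{z,\bar z,v_{1},\bar v_{1}\}$ containing at least one factor $v_{1}$ or $\bar v_{1}$ (the monomials built only from $z,\bar z$ cancel against $\Psi(z)$). First I would group these by the number of $v_{1}$-factors: three monomials of type $zzv$, three of type $zvv$, one of type $vvv$. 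Two preliminary observations make the reduction to \Cref{prop:trilinear-estimate} legitimate: (i) the norms $X^{\sigma},Y^{\sigma},X^{*,\sigma}$ and the estimates \eqref{eq:YYY}--\eqref{eq:XXX} are unchanged if one complex-conjugates any of the factors, since conjugation preserves every Lebesgue-type norm and only reflects the spatial Fourier support, hence affects neither the diameter arguments nor the dyadic localization used in \Cref{lem:bilinear-2-2-bound} and \Cref{lem:fourlinear}; (ii) the estimates of \Cref{prop:trilinear-estimate} hold verbatim over any time interval $I$ in place of $\R$, their proof using only spatial Fourier supports together with \Cref{lem:bilinear-2-2-bound}, which is equally valid with the $t$-integrations restricted to $I$.

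Next I would estimate each monomial, lowering the input regularities slightly to absorb the $+\epsilon$ losses built into \eqref{eq:YYX}--\eqref{eq:XXX}. Given $\epsilon$ as in the hypothesis, put $\epsilon'\eqd\epsilon/3$ (small enough that $\xs-\epsilon'>\xs_{c}$) and apply \Cref{prop:trilinear-estimate} with $S_{1}=S_{2}=S_{3}=S-\epsilon'$, with $\xs$ replaced by $\xs-\epsilon'$, and with parameter $\epsilon'$. A $zzv$ monomial is then bounded by \eqref{eq:YYX} in $X^{*,\sigma_{1}}$ with $\sigma_{1}=(S-\epsilon')+\min(S-\epsilon',\sfrac{1}{2})+\sfrac{1}{2}\geq\min(2S+\sfrac{1}{2},\,S+1)-\epsilon$, with right-hand side $\|z\|_{Y^{S}(I)}^{2}\|v_{1}\|_{X^{\xs}(I)}$; a $zvv$ monomial by \eqref{eq:YXX} in $X^{*,\sigma_{2}}$ with $\sigma_{2}=\min(S+\xs-2\epsilon',\,S+1-\epsilon')\geq\min(S+\xs,\,S+1)-\epsilon$ and right-hand side $\|z\|_{Y^{S}(I)}\|v_{1}\|_{X^{\xs}(I)}^{2}$; and the $vvv$ monomial by \eqref{eq:XXX} in $X^{*,\sigma_{3}}$ with $\sigma_{3}=(\xs-\epsilon')+2(\xs-\epsilon'-\xs_{c})=\xs+2(\xs-\xs_{c})-\epsilon$ and right-hand side $\|v_{1}\|_{X^{\xs}(I)}^{3}$. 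Since $X^{*,\sigma}$ is non-decreasing in $\sigma$ and $\sigma_{1},\sigma_{2},\sigma_{3}\geq\xs_{*}$ by \eqref{eq:sdoss}, I would replace each $\sigma_{i}$ by $\xs_{*}$, sum the (at most seven) contributions, and regroup into $(\|v_{1}\|_{X^{\xs}(I)}+\|z\|_{Y^{S}(I)})^{2}\|v_{1}\|_{X^{\xs}(I)}$, which is \eqref{eq:F-bound}.

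For the two Lipschitz bounds I would run the same scheme on differences, using $\Psi(a)-\Psi(b)=(a-b)(a+b)\bar a+b^{2}\overline{(a-b)}$. Taking $a=z+v_{1}$, $b=z+v_{2}$ expresses $\Phi_{z}[v_{1}]-\Phi_{z}[v_{2}]=\Psi(z+v_{1})-\Psi(z+v_{2})$ as a sum of trilinear monomials, each carrying exactly one factor $v_{1}-v_{2}$ or $\overline{v_{1}-v_{2}}$ (an $X^{\xs}$-bounded, hence $v$-type, factor) and two further factors from $\{z,\bar z,v_{1},\bar v_{1},v_{2},\bar v_{2}\}$, so again only types $zzv,zvv,vvv$ appear and the bounds above give \eqref{eq:F-bound-Lip-1}. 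For \eqref{eq:F-bound-Lip-2} I would write $\Phi_{z}[v]-\Phi_{\tilde z}[v]=\big(\Psi(z+v)-\Psi(\tilde z+v)\big)-\big(\Psi(z)-\Psi(\tilde z)\big)$ and apply the identity with $a-b=z-\tilde z$ in each bracket; a short computation shows the monomials formed only from $z,\bar z,\tilde z,\bar{\tilde z}$ arising from the first bracket cancel exactly those from the second, so every surviving monomial carries both a factor $z-\tilde z$ or $\overline{z-\tilde z}$ (a $Y^{S}$-bounded, hence $z$-type, factor, since $z,\tilde z\in Y^{S}(I)$) and at least one $v$-factor. Hence only types $zzv$ and $zvv$ occur, and \eqref{eq:YYX}--\eqref{eq:YXX} yield \eqref{eq:F-bound-Lip-2}. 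The ``in particular'' assertion is then immediate: if $\xs<\min(2S+\sfrac{1}{2},\,S+1)$, then for $\epsilon$ small enough in terms of $S$ and $\xs$ the four inequalities $\xs+\epsilon<\xs+2(\xs-\xs_{c})-\epsilon$, $\xs+\epsilon<2S+\sfrac{1}{2}-\epsilon$, $\xs+\epsilon<S+1-\epsilon$, $\xs+\epsilon<S+\xs-\epsilon$ hold, which is exactly \eqref{eq:sdoss} with $\xs_{*}=\xs+\epsilon$.

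The step I expect to be the main obstacle is the combinatorial bookkeeping of the difference expansions, in particular for \eqref{eq:F-bound-Lip-2}: one must check carefully that after cancellation no $zzz$ monomial survives — its best exponent available from \eqref{eq:YYY} is $3S$, which drops below $2S+\sfrac{1}{2}$ when $S<\sfrac{1}{2}$ and would then be too weak — and that no $vvv$ monomial survives either; together with the minor $\epsilon$-juggling needed to pass from the $X^{\xs+\epsilon},Y^{S+\epsilon}$ norms of \Cref{prop:trilinear-estimate} to the $X^{\xs},Y^{S}$ norms of the statement. Beyond \Cref{prop:trilinear-estimate} no new analytic input is required.
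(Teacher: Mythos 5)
Your proposal is correct and follows essentially the same route as the paper: expand $\Phi_{z}[v_{1}]$ (and the differences) into trilinear monomials of types $zzv$, $zvv$, $vvv$, apply \Cref{prop:trilinear-estimate} with the regularity exponents lowered by $\epsilon/3$ to absorb the $+\epsilon$ losses, and regroup, with the same verification that no pure $zzz$ (or $vvv$) monomial survives in the difference in $z$. The only cosmetic differences are that the paper obtains \eqref{eq:F-bound} from \eqref{eq:F-bound-Lip-1} by setting $v_{2}=0$ rather than directly, and handles general $I$ by replacing $z,v$ with $z\1_{I},v\1_{I}$ instead of restating the trilinear estimates on $I$.
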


\begin{proof} [Proof of \Cref{lem:F-bound}]
Without loss of generality, we assume that \(I=\R\). The general statement follows by replacing \(z\) with \(z \1_{I}\) and using that \(\|z\|_{Y^{S}(I)}=\|z\1_{I}\|_{Y^{S}(\R)}\). 

By \eqref{eq:remainder-cubic-nonlinearity} we have \(\Phi_{z}[0]=0\), and therefore \eqref{eq:F-bound} follows from \eqref{eq:F-bound-Lip-1} after setting \(v_{2}=0\). Thus we concentrate on proving \eqref{eq:F-bound-Lip-1}. Using the expression \eqref{eq:remainder-cubic-nonlinearity} for \(\Phi_{z}[v]\) it follows that 
\[
\Phi_{z}[v_{1}]-\Phi_{z}[v_{2}]=G_{1}\big[z,\bar{z},v_{1},\bar{v_{1}},v_{2},\bar{v_{2}}\big] (v_{1}-v_{2})+G_{2}\big[z,\bar{z},v_{1},\bar{v_{1}},v_{2},\bar{v_{2}}\big](\bar{v_{1}-v_{2}}) \,,
\]
where \(G_{j}[z,\bar{z},v_{1},\bar{v_{1}},v_{2},\bar{v_{2}}] \), \(j\in\{1,2\}\), are homogeneous polynomials of degree 2 in the variables \(z\), \(\bar{z}\), \(v_{1}\), \(\bar{v_{1}}\), \(v_{2}\), \(\bar{v_{2}}\). 
Assume $\varepsilon < \min\big(S,\xs-\xs_c\big)$ and observe that 
\[
\begin{aligned}
& \xs_{*}<\min\big(2S+\sfrac{1}{2},\,S+1\big)-\epsilon,
\\
& \xs_{*}<\min\big(S+\xs,\,S+1\big)-\epsilon,
\\
& \xs_{*}<\xs+2(\xs-\xs_{c})-\epsilon.
\end{aligned}
\]
Then, \eqref{eq:YYX}, \eqref{eq:YXX}, and \eqref{eq:XXX}  with \(\xs-\epsilon/3\) and $S_j - \epsilon/3$ replacing, respectively,  \(\xs\) and \(S_j\), $j = 1, 2, 3$ imply
\[
\begin{aligned}
\Big\|\Phi_{z}[v_{1}]-\Phi_{z}[v_{2}]\Big\|_{X^{*,\xs_*}(\R)}
& \leq 
\begin{aligned}[t]
& \Big\|G_{1}\big[z,\bar{z},v_{1},\bar{v_{1}},v_{2},\bar{v_{2}}\big] \big(v_{1}-v_{2}\big)\Big\|_{X^{*,\xs_*}(\R)}
\\
& \qquad +\Big\|G_{2}\big[z,\bar{z},v_{1},\bar{v_{1}},v_{2},\bar{v_{2}}\big] \big(\bar{v_{1}-v_{2}}\big)\Big\|_{X^{*,\xs_*}(\R)}.
\end{aligned}
\\
& \lesssim\big(\|v_{1}\|_{X^{\xs}(I)}+\|v_{1}\|_{X^{\xs}(I)}+\|z\|_{Y^{S}(I)}\big)^{2}\|v_{1}-v_{2}\|_{X^{\xs}(I)}\,,
\end{aligned}
\]
as desired.

Bound \eqref{eq:F-bound-Lip-2} follows analogously by representing
\[
\Phi_{z}[v]-\Phi_{\tilde{z}}[v]
=
\Psi_{1}\big[z,\bar{z},\tilde{z},\bar{\tilde{z}},v,\bar{v}\big] (z-\tilde{z})
+\Psi_{2}\big[z,\bar{z},\tilde{z},\bar{\tilde{z}},v,\bar{v}\big] \bar{(z-\tilde{z})}
\]
with \(\Psi_{1}\big[z,\bar{z},\tilde{z},\bar{\tilde{z}},v,\bar{v}\big]\), \(j\in\{1,2\}\), are homogeneous polynomials of degree  2 in the input variables.
\end{proof}

Next, for any \(t_{0}\geq0\) we define a generalization of
\(v\mapsto\mbb{I}_{v_0}\big(\Phi_{z}[v]+h\big)\) from \eqref{eq:duhamel} as
\[\eqnum\label{eq:def:delayed-iteration-map}
\mc{K}(v) := e^{i(t-t_{0})\Delta}v_{0}\mp i\int_{t_{0}}^{t}e^{i(t-s)\Delta}(\Phi_{z}[v] +h)\dd s \,.
\]
This generalization to \(t_{0}>0\) is important for the proof of the uniqueness of the solution.

In the following lemma we show that \(v\mapsto\mc{K}(v)\) is a contraction on \(X^{\xs}([t_{0},t_{1}))\) if the time interval $[t_{0},t_{1})$ is short enough, or if \(v_{0}\), $z$, and $h$ are small in appropriate norms.

\begin{lemma}\label{lem:iteration-map-bound}
Let \(I\) with \(I\ni t_{0}\) be a time interval. Fix
\(0<S<\xs_{c}<\xs<\xs_{c}+\sfrac{1}{2}\). Choose
\(0<\epsilon\lesssim_{S,\xs}1\) and
\(0<\epsilon_{0}\lesssim_{\epsilon, S, \xs}1\). Then there exist constants
\(C=C(\epsilon,\epsilon_{0},S,\xs)>0\) and
\(c=c(\epsilon,\epsilon_{0},S,\xs)>0\) such that for any
\(z\in Y^{S}(I)\), \(h\in X^{\xs + \epsilon}(I)\), and
\(v_{1},v_{2}\in X^{\xs}(I)\) the following bounds hold:
\[\eqnum\label{eq:iteration-map-bound}
\Big\|\mc{K}(v_1)\Big\|_{X^{\xs}(I)}  \leq C \big\langle |I|^{-1}\big\rangle^{-c} 
\begin{aligned}[t]
\Big(& \|v_{0}\|_{H_x^{\xs+\epsilon} (\R^{d})} + \|h\|_{X^{*,\xs+\epsilon}(I)}
\\
& \quad+ \big(\|v_{1}\|_{X^{\xs}(I)}+\|z\|_{Y^{S}(I)}\big)^{2}\|v_{1}\|_{X^{\xs}(I)} \Big)
\end{aligned}
\]
and 
\[\eqnum\label{eq:iteration-map-lipschitz}
\begin{aligned}
& \Big\|\mc{K}(v_1) - \mc{K}(v_2)\Big\|_{X^{\xs}(I)}
\\
&\qquad \leq C \big\langle |I|^{-1}\big\rangle^{-c} \big(\|v_{1}\|_{X^{\xs}(I)}+\|v_{2}\|_{X^{\xs}(I)}+\|z\|_{Y^{S}(I)}\big)^{2}\big\|v_{1}-v_{2}\big\|_{X^{\xs}(I)}\,.
\end{aligned}
\]

As a consequence:
\begin{itemize}
\item For any \(v_{0}\in H^{\xs+\epsilon}(\R^{d})\),
\(z\in Y^{S}(I)\), \(h\in X^{*,\xs+\epsilon}(I)\), and $\delta_0 > 0$ we define
\[\eqnum\label{eq:contraction-time-bound}
\begin{aligned}
& T_{\delta_{0}}\big(\|v_{0}\|_{H^{\xs+\epsilon}_{x}(\R^{d})},\|z\|_{Y^{S}(I)},\|h\|_{X^{*,\xs+\epsilon}(I)}\big)
\\
& \quad\eqd\frac{1}{2^{100}(C\delta_{0})^{\sfrac{1}{c}}}\big\langle\delta_0+\|v_{0}\|_{H^{\xs+\epsilon}_{x}(\R^{d})} + \|h\|_{X^{*,\xs+\epsilon}(\R)} +\|z\|_{Y^{S}(\R)}\big\rangle^{-\sfrac{3}{c}} \,.
\end{aligned}
\]
Then for any interval \(J=[t_{0},t_{1}]\subseteq I\) with
\[
|J|<T_{\delta_{0}}\big(\|v_{0}\|_{H^{\xs+\epsilon}_{x}(\R^{d})},\|z\|_{Y^{S}(I)},\|h\|_{X^{*,\xs+\epsilon}(I)}\big)
\]
the map $v \mapsto \mc{K}(v)$ is a contraction on the set
\[
\bar{\mathcal{B}}_{\delta_{0}}^{J}\eqd\Big\{v\in X^{\xs}(J)\st \|v\|_{ X^{\xs}(J)}\leq\delta_{0}\Big\}\,,
\]
and thus it admits a unique fixed point in \(\bar{\mathcal{B}}_{\delta_{0}}^{J}\).
\item If \(I=\R\), there exists \(\delta_{0}>0\) such that if
\[
\|z\|_{Y^{S}(\R)}\leq\frac{\delta_{0}}{2C}, \qquad \|v_{0}\|_{H_x^{\xs+\epsilon}(\R^{d})}\leq\frac{\delta_{0}}{2C},\qquad \|h\|_{X^{*,\xs+\epsilon}(\R)}\leq\frac{\delta_{0}}{2C},
\]
then $v \mapsto \mc{K}(v)$ is a contraction on the set
\[
\bar{\mathcal{B}}_{\delta_{0}}^{\R}\eqd\Big\{v\in X^{\xs}(\R)\st \|v\|_{ X^{\xs}(\R)}\leq\delta_{0}\Big\}\,,
\]
and thus admits a unique fixed point in \(\bar{\mathcal{B}}_{\delta_{0}}^{\R}\).
\end{itemize}
\end{lemma}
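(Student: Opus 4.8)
The plan is to derive the two quantitative bounds \eqref{eq:iteration-map-bound} and \eqref{eq:iteration-map-lipschitz} by composing the linear Duhamel estimate \Cref{prop:main-linear-estimate} with the nonlinear bounds of \Cref{lem:F-bound}, and then to read off the two contraction statements from the Banach fixed point theorem.

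First I would reduce to the case $t_{0}=0$. For a space-time function $w$ write $w^{(t_{0})}(\tau,x)\eqd w(t_{0}+\tau,x)$ and set $\tilde I\eqd I-t_{0}$, so $\tilde I\ni 0$. Since $\Phi_{z}[v]$ is a pointwise expression in $(t,x)$ and $e^{it\Delta}$ is invariant under time translations, the change of variables $s=t_{0}+\sigma$, $t=t_{0}+\tau$ in \eqref{eq:def:delayed-iteration-map} gives $\mc{K}(v)^{(t_{0})}=\mbb{I}_{v_{0}}\big(\Phi_{z^{(t_{0})}}[v^{(t_{0})}]+h^{(t_{0})}\big)$ on $\tilde I$. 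All Lebesgue and directional norms entering the definitions of $X^{\sigma}$, $Y^{\sigma}$ and $X^{*,\sigma}$ integrate the time variable only over the ambient interval, hence are translation invariant in $t$; so $\|\mc{K}(v)\|_{X^{\xs}(I)}=\big\|\mbb{I}_{v_{0}}(\Phi_{z^{(t_{0})}}[v^{(t_{0})}]+h^{(t_{0})})\big\|_{X^{\xs}(\tilde I)}$, and likewise the $Y^{S}$ norm of $z$, the $X^{*,\xs+\epsilon}$ norm of $h$, and the $X^{\xs}$ norms of $v_{1},v_{2}$ are unchanged. It therefore suffices to prove \eqref{eq:iteration-map-bound}--\eqref{eq:iteration-map-lipschitz} for $\mbb{I}_{v_{0}}\big(\Phi_{z}[v]+h\big)$ on an interval $I\ni 0$.

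For this I would apply \eqref{eq:non-homogeneous-bound-2} of \Cref{prop:main-linear-estimate} with $\sigma=\xs$ to obtain
\[
\big\|\mbb{I}_{v_{0}}(\Phi_{z}[v_{1}]+h)\big\|_{X^{\xs}(I)}\lesssim\big\langle|I|^{-1}\big\rangle^{-c}\Big(\|v_{0}\|_{H^{\xs+\epsilon}_{x}(\R^{d})}+\big\|\Phi_{z}[v_{1}]\big\|_{X^{*,\xs+\epsilon}(I)}+\|h\|_{X^{*,\xs+\epsilon}(I)}\Big),
\]
and then bound $\big\|\Phi_{z}[v_{1}]\big\|_{X^{*,\xs+\epsilon}(I)}$ using \Cref{lem:F-bound}: its final assertion shows that $\xs_{*}=\xs+\epsilon$ satisfies \eqref{eq:sdoss} once $\epsilon$ is small, provided $\xs<\min\big(2S+\sfrac{1}{2},\,S+1\big)$ — which is precisely the regime in which \Cref{thm:intro-deterministic-controlled-well-posedness} is applied, since there $S>S_{\mrm{min}}$ forces $\min(2S+\sfrac{1}{2},S+1)>\xs_{c}$ — so \eqref{eq:F-bound} gives $\big\|\Phi_{z}[v_{1}]\big\|_{X^{*,\xs+\epsilon}(I)}\lesssim(\|v_{1}\|_{X^{\xs}(I)}+\|z\|_{Y^{S}(I)})^{2}\|v_{1}\|_{X^{\xs}(I)}$; inserting this yields \eqref{eq:iteration-map-bound}. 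For \eqref{eq:iteration-map-lipschitz} I would use that the free term $e^{i(t-t_{0})\Delta}v_{0}$ cancels in $\mc{K}(v_{1})-\mc{K}(v_{2})$, so after the reduction the difference equals $\mbb{I}_{0}\big(\Phi_{z^{(t_{0})}}[v_{1}^{(t_{0})}]-\Phi_{z^{(t_{0})}}[v_{2}^{(t_{0})}]\big)$; applying \Cref{prop:main-linear-estimate} with zero initial datum and then \eqref{eq:F-bound-Lip-1} gives the claim.

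Finally, the two consequences follow from the Banach fixed point theorem once one checks that $\mc{K}$ maps the relevant ball into itself and is a strict contraction on it. For the first consequence, on $J=[t_{0},t_{1}]$ I would use $\big\langle|J|^{-1}\big\rangle^{-c}\lesssim|J|^{c}$ (the remark after \Cref{prop:main-linear-estimate}); restricting to $v_{1},v_{2}\in\bar{\mathcal{B}}_{\delta_{0}}^{J}$ and imposing $|J|<T_{\delta_{0}}(\cdots)$, the right-hand side of \eqref{eq:iteration-map-bound} becomes at most $\delta_{0}$ and the prefactor in \eqref{eq:iteration-map-lipschitz} at most $\sfrac{1}{2}$ — the exponent $\sfrac{1}{c}$ and the $\langle\cdot\rangle^{-\sfrac{3}{c}}$ factor in the definition of $T_{\delta_{0}}$ are calibrated exactly so that these two elementary inequalities close — whence $\mc{K}$ is a $\sfrac{1}{2}$-contraction of $\bar{\mathcal{B}}_{\delta_{0}}^{J}$ into itself and has a unique fixed point there. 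For the second consequence $I=\R$, where $\big\langle|I|^{-1}\big\rangle^{-c}\le 1$, one first picks $\delta_{0}$ small enough that $C\big(2\delta_{0}+\frac{\delta_{0}}{2C}\big)^{2}\le\sfrac{1}{2}$ and then uses the smallness of $\|v_{0}\|$, $\|h\|$, $\|z\|$ relative to $\delta_{0}$ in \eqref{eq:iteration-map-bound}--\eqref{eq:iteration-map-lipschitz} to get self-mapping and strict contraction on $\bar{\mathcal{B}}_{\delta_{0}}^{\R}$. I expect no genuine analytic obstacle: the dispersive content is already packaged in \Cref{prop:main-linear-estimate} and \Cref{lem:F-bound}, so the only delicate points are bookkeeping ones — verifying that $\xs_{*}=\xs+\epsilon$ lies below the threshold \eqref{eq:sdoss}, the time-translation invariance of every norm involved, and the arithmetic confirming that the explicit $T_{\delta_{0}}$ and the stated smallness thresholds make the fixed-point inequalities hold.
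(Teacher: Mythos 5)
Your proposal is correct and follows essentially the same route as the paper: apply the linear Duhamel bound \eqref{eq:non-homogeneous-bound-2} of \Cref{prop:main-linear-estimate}, control the nonlinearity via \eqref{eq:F-bound} and \eqref{eq:F-bound-Lip-1} with \(\xs_{*}=\xs+\epsilon\), and conclude with the Banach fixed point theorem using the \(\langle|I|^{-1}\rangle^{-c}\) factor and the smallness thresholds. The only difference is that you spell out the time-translation reduction to \(t_{0}=0\) and the contraction arithmetic, which the paper leaves implicit; this is fine and does not change the argument.
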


\begin{proof}[Proof of \Cref{lem:iteration-map-bound}]
Assume that \(\epsilon>0\) is chosen small, and in particular that it satisfies
\[
\varepsilon < \frac{1}{2}\min \big(S, 2(\xs-\xs_c), 2S+\sfrac{1}{2} -\xs, S+1-\xs \big).
\]
Fix \(z\in Y^{S}(I)\), \(h\in X^{\xs + \epsilon}(I)\), and \(v_{1},v_{2}\in X^{\xs}(I)\). Then, by \eqref{eq:non-homogeneous-bound-2}
\[
\begin{aligned}[t]
\Big\|\mathcal{K}(v_1)\Big\|_{X^{\xs}(I)}
\leq C \big\langle |I|^{-1}\big\rangle^{-c} \Big(\|v_{0}\|_{H^{\xs+\epsilon} (\R^{d})} + \|h\|_{X^{*,\xs+\epsilon}(I)}+ \big\|\Phi_{z}[v_{1}]\big\|_{X^{*,\xs+\epsilon}(I)}\Big)\,.
\end{aligned}
\]
Since \(\xs+\epsilon<\min\big(\xs+2(\xs-\xs_{c}),\,2S+\sfrac{1}{2},S+1,S +\xs\big)-\epsilon
\) for \(\epsilon>0\) small enough, \eqref{eq:iteration-map-bound} follows from 
\eqref{eq:F-bound} with $s_* = \xs + \epsilon$. Next, \eqref{eq:non-homogeneous-bound-2} implies
\[
\Big\|\mc{K}(v_1) - \mc{K}(v_2)\Big\|_{X^{\xs}(I)}
\leq C  \big\langle |I|^{-1}\big\rangle^{-c} \big\|\Phi_{z}[v_{1}]-\Phi_{z}[v_{2}]\big\|_{X^{*,\xs+\epsilon}(I)}.
\]
and \eqref{eq:F-bound-Lip-1} with $s_* = \xs + \epsilon$ yields \eqref{eq:iteration-map-lipschitz}.

Assuming, without loss of generality, that \(C>1>c>0\), direct computation allows us to deduce from \eqref{eq:iteration-map-bound} that \(\mc{K}\) maps the sets \(\bar{\mathcal{B}}_{\delta_{0}}^{J}\) and \(\bar{\mathcal{B}}_{\delta_{0}}^{\R}\) to themselves under the corresponding assumptions on \(\delta_{0}\), \(z\), and \(h\). Similarly, bound \eqref{eq:iteration-map-lipschitz} shows that \(\mc{K}\) is a contraction on these sets. The existence and uniqueness of the fixed point follows from the Banach contraction mapping principle.
\end{proof}

Finally, let us record the bound on \([z,z,z]_{>M}\) appearing in the
definition of the iteration map
\(\mc{J}_{z,M}(v) \eqd \mbb{I}_{0}\big(\Phi_{z_{\leq M}}[v]+[z,z,z]_{>M}\big)\).

\begin{lemma}\label{lem:z-remainder-term-bound}
Fix \(0<S<\xs_{c}\), \(0<\epsilon\lesssim_{S,\xs}1\), and
\(0<\epsilon_{0}\lesssim_{\epsilon, S, \xs}1\). Then it holds that 
\[\eqnum\label{eq:rz3M-Xs-bound}
\big\| [z,z,z]_{> M} \big\|_{ X^{*,\mu(M+1,S)} (\R)} \lesssim \Big(\max_{k\leq M}\|z_{k}\|_{Y^{\mu(k,S)+\epsilon}(\R)}\Big)^{3}
\]
and the map
\[
\vec{z}_{M}=(z_{k})_{k\leq M} \mapsto [z,z,z]_{> M}
\]
is Lipschitz on bounded sets with a Lipschitz constant bounded by
\[C\Big(\max_{k\leq M}\|z_{k}\|_{Y^{\mu(k,S)+\epsilon}(\R)}\Big)^{2}
\] for some
\(C>0\) with respect to distance introduced in \eqref{eq:initial-data-distance}.
\end{lemma}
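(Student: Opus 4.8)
The plan is to deduce both claims from the trilinear estimate \eqref{eq:trilinear-Y-mu-bound} (equivalently \eqref{eq:YYY}) proved in \Cref{sec:multilinear-estimates}, applied termwise to the finite sum \eqref{eq:def:z-sum} defining $[z,z,z]_{>M}$, together with two elementary monotonicity facts: $k\mapsto\mu(k,S)$ is non-decreasing (immediate from \eqref{eq:def:multilinear-regularity}, since $\mu(k,S)=\min(kS,2S+\sfrac{1}{2},S+1)$ and $S>0$), and $\sigma\mapsto\|\cdot\|_{X^{*,\sigma}}$ is non-decreasing. For \eqref{eq:rz3M-Xs-bound}, I would expand $[z,z,z]_{>M}=\sum z_{k_{1}}\bar{z_{k_{2}}}z_{k_{3}}$ over the at most $M^{3}$ triples $(k_{1},k_{2},k_{3})$ with $1\leq k_{j}\leq M$ and $k_{1}+k_{2}+k_{3}>M$, and apply the triangle inequality. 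For a fixed such triple, $k_{1}+k_{2}+k_{3}\geq M+1$ together with the two monotonicities gives $\|z_{k_{1}}\bar{z_{k_{2}}}z_{k_{3}}\|_{X^{*,\mu(M+1,S)}(\R)}\leq\|z_{k_{1}}\bar{z_{k_{2}}}z_{k_{3}}\|_{X^{*,\mu(k_{1}+k_{2}+k_{3},S)}(\R)}$, and then \eqref{eq:trilinear-Y-mu-bound}, applied with $k=k_{1}+k_{2}+k_{3}\leq 3M$ (the trilinear estimates being insensitive to complex conjugation of their arguments, exactly as used in \Cref{lem:F-bound}), bounds the right-hand side by $\prod_{j=1}^{3}\|z_{k_{j}}\|_{Y^{\mu(k_{j},S)+\epsilon}(\R)}\leq\big(\max_{k\leq M}\|z_{k}\|_{Y^{\mu(k,S)+\epsilon}(\R)}\big)^{3}$. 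Summing the $\lesssim M^{3}$ identical bounds gives \eqref{eq:rz3M-Xs-bound}.

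For the Lipschitz assertion, given two tuples $\vec{z}_{M}=(z_{k})_{k\leq M}$ and $\vec{w}_{M}=(w_{k})_{k\leq M}$ I would write $[z,z,z]_{>M}-[w,w,w]_{>M}$ as the sum over the same triples of
\[
z_{k_{1}}\bar{z_{k_{2}}}z_{k_{3}}-w_{k_{1}}\bar{w_{k_{2}}}w_{k_{3}}=(z_{k_{1}}-w_{k_{1}})\bar{z_{k_{2}}}z_{k_{3}}+w_{k_{1}}\overline{(z_{k_{2}}-w_{k_{2}})}z_{k_{3}}+w_{k_{1}}\bar{w_{k_{2}}}(z_{k_{3}}-w_{k_{3}}).
\]
Each of the three pieces is a product of one ``difference'' factor with two factors drawn from $\{z_{k},w_{k}\}$, so the very same chain of inequalities (monotonicity of $\mu$ and of $X^{*,\sigma}$, then \eqref{eq:trilinear-Y-mu-bound}) bounds each piece by the $Y^{\mu(k_{j},S)+\epsilon}$-norm of the difference factor times the product of the $Y^{\mu(k',S)+\epsilon}$-norms of the other two factors. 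Collecting the three pieces and summing over the $\lesssim M^{3}$ triples yields
\[
\big\|[z,z,z]_{>M}-[w,w,w]_{>M}\big\|_{X^{*,\mu(M+1,S)}(\R)}\lesssim\Big(\max_{k\leq M}\big(\|z_{k}\|_{Y^{\mu(k,S)+\epsilon}(\R)}+\|w_{k}\|_{Y^{\mu(k,S)+\epsilon}(\R)}\big)\Big)^{2}\max_{k\leq M}\|z_{k}-w_{k}\|_{Y^{\mu(k,S)+\epsilon}(\R)},
\]
which is the asserted local Lipschitz bound, with constant $\lesssim\big(\max_{k\leq M}\|z_{k}\|_{Y^{\mu(k,S)+\epsilon}(\R)}\big)^{2}$ on bounded sets.

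This lemma is a finite-sum bookkeeping argument built on material already established, so I do not anticipate a genuine obstacle. The point to watch is keeping the output regularity index at exactly $\mu(M+1,S)$ while invoking the trilinear estimate: this is precisely what the inductive inequality \eqref{eq:mu-inductive-property} guarantees (and which is already internal to the proof of \eqref{eq:trilinear-Y-mu-bound}), and any $\epsilon$-loss incurred there is harmless because $\epsilon$ is a free small parameter and is already present in the $Y^{\mu(k,S)+\epsilon}$ norms on the right-hand side. A secondary, purely notational point is which $Y$-norm one uses to measure the perturbation of the multilinear data: the estimate above naturally produces a Lipschitz bound in the $Y^{\mu(k,S)+\epsilon}$-distance, and reconciling this with the distance \eqref{eq:initial-data-distance} is a matter of the $\epsilon$-conventions fixed once and for all in \Cref{thm:intro-deterministic-controlled-well-posedness}.
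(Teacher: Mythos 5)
Your proposal is correct and follows essentially the same route as the paper's proof: expand $[z,z,z]_{>M}$ as the finite sum \eqref{eq:z-remainder-term}, estimate each summand via the trilinear bound \eqref{eq:trilinear-Y-mu-bound} (together with the recorded monotonicity of $\mu(\cdot,S)$ and of the $X^{*,\sigma}$ norms, which the paper uses implicitly), and obtain the Lipschitz claim from the same telescoping decomposition of the difference. The $\epsilon$-bookkeeping point you flag regarding the distance \eqref{eq:initial-data-distance} is present in the paper's argument as well, so nothing is missing.
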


\begin{proof}[Proof of \Cref{lem:z-remainder-term-bound}]
To prove \eqref{eq:rz3M-Xs-bound} we note that, according to \eqref{eq:def:z-sum} it holds that 
\[\eqnum\label{eq:z-remainder-term}
[z,z,z]_{>M}=\sum_{\mcl{\qquad \substack{\hspace{0.5em} k_{1}+k_{2}+k_{3}\geq M+1\\\hspace{-1em} k_{j}\leq M}}} z_{k_{1}}\bar{z_{k_{2}}} z_{k_{3}}\,,
\]
Since the sum above is finite, we estimate each summand separately in the space $X^{*, \mu(M+1,S)} (\R)$. Without loss of generality, assume that \(k_{1}\leq k_{2}\leq k_{3}\) and use \eqref{eq:trilinear-Y-mu-bound} to obtain the desired claim.

To prove Lipschitz regularity it is sufficient to note that
\[
[z,z,z]_{>M}-[\tilde{z},\tilde{z},\tilde{z}]_{>M}
=
\sum_{\mcl{\qquad \substack{\hspace{0.5em} k_{1}+k_{2}+k_{3}\geq M+1\\\hspace{-1em} k_{j}\leq M}}}
\Big(
(z_{k_{1}}-\tilde{z}_{k_{1}})\bar{z_{k_{2}}} z_{k_{3}}
+
\begin{aligned}[t]
&\tilde{z}_{k_{1}}\bar{(z_{k_{2}}-\tilde{z}_{k_{2}})} z_{k_{3}}
\\ 
&+
\tilde{z}_{k_{1}}\bar{\tilde{z}_{k_{2}}} (z_{k_{3}}-\tilde{z}_{k_{3}})
\Big)\,,
\end{aligned}
\]
and apply the same reasoning as above. 
\end{proof}

We are now ready to prove \Cref{thm:intro-deterministic-controlled-well-posedness}.

\begin{proof}[Proof of \Cref{thm:intro-deterministic-controlled-well-posedness}]
We suppose that \(\epsilon>0\) is small enough, and in particular it satisfies
\[ \eqnum\label{eq:usone}
\varepsilon < \frac{1}{3}\min \Big(S, 2(\xs-\xs_c), 2S+\frac{1}{2} -\xs , S+1-\xs, \mu(M+1,S) - \xs \Big).
\]
Let $C, c$ be constants from \Cref{lem:iteration-map-bound}. 

According to the discussion at the beginning of this section, \(u\) is a
solution to \eqref{eq:NLS-deterministic} on \([0,T)\) of the form
\eqref{eq:solution-decomposition}
if and only if
\(u^{\#}_{M}\) is a fixed point of the map \(v \mapsto
\mc{J}_{z,M}(v)\)
given by \eqref{eq:remainder-iteration-map}. 

\textbf{Local existence of solutions.} 
For $\vec{z}_M$ satisfying \eqref{eq:asoaz}, set \(v_{0}=0\), \(z=z_{\leq M}\),
\(h=[z,z,z]_{>M}\), and \(t_{0}=0\) and define $T := T_{\delta_0 = 1} > 0$,
where $T_{\delta_0}$ is as in \eqref{eq:contraction-time-bound}. Also, for
such a choice of $v_0$, $z$, $h$, and $t_0$, we have \(\mc{J}_{z,M}(v)\eqd\mc{K}(v)
\), with $\mc{K}$ defined in \eqref{eq:def:delayed-iteration-map}. Then 
the existence of a fixed point in \(X^{\xs}([0,T))\) follows from \Cref{lem:iteration-map-bound}, while the estimate \eqref{eq:time-of-existence} follows from 
\Cref{lem:iteration-map-bound},  \eqref{eq:rz3M-Xs-bound} and the bound
\begin{equation}\label{eq:stvonz}
\|z_{\leq M}\|_{Y^{S}(I)}\leq M \max_{k\in\{1,\ldots,M\}}\|z_{k}\|_{Y^{\mu(k,S)}(I)}.    
\end{equation}
We establish the bound on \(\|v\|_{C^{0}\big([0,T);H^{\xs}_{x}(\R^{d})\big)}\) once we prove \eqref{eq:v-classical-space} below.

\textbf{Global existence of solutions.}
The existence of fixed point of $\mc{J}_{z,M}$ in \(X^{\xs}([0,+\infty))\) again follows from
\Cref{lem:iteration-map-bound}. Indeed, \Cref{lem:iteration-map-bound},  \eqref{eq:rz3M-Xs-bound}, and \eqref{eq:stvonz}
show that
\(\|z_{\leq M}\|_{Y^{S}(I)}\) and \(\big\| [z,z,z]_{> M} \big\|_{ X^{*,\mu(M+1,S)}
  (\R)}\) can be made arbitrarily small as long as \(\delta_{0}\) in
\eqref{eq:multilinear-datum-smallness} is chosen small enough.

\textbf{Time-continuity of the solution} We show that if $v \in X^{\xs} (I)$ for some interval \(I\subset[0,T_{0})\), then \(\mc{J}_{z,M}(v) \in C^{0}\big(I,H^{\xs+\epsilon}_{x}(\R^{d})\big)\). 
Indeed, by \Cref{prop:linear-nonhomogeneous-continuity-bound}, \eqref{eq:rz3M-Xs-bound}, and \eqref{eq:F-bound} we
have
\[
\begin{aligned}
\|\mc{J}_{z,M}(v)\|_{C^0(I, H_{x}^{\xs +\varepsilon} (\R^d))}
&\lesssim 
 \big\|\Phi_{z_{\leq M}}[v]\big\|_{X^{\ast ,\xs +2\varepsilon} (I)}
 +\big\|[z,z,z]_{> M}\big\|_{X^{\ast,\xs +2\varepsilon}(I)} \,.
\\
&\lesssim 
 \big(1 + \|v\|_{X^{\xs} (I)}+M \max_{k\in\{1,\ldots,M\}}\|z_{k}\|_{Y^{\mu(k,S)+\epsilon}(I)}\big)^3,
 \end{aligned}
 \]
where we used that the assumptions of \Cref{lem:F-bound} are satisfied
since \eqref{eq:usone} implies that
\[
\xs + 2\varepsilon <\min\big(S + \xs, \xs+2(\xs-\xs_{c}),2S+\frac{1}{2},\,S+1\big)-\epsilon \,.
\]
This concludes the proof of our claim. 

\textbf{Uniqueness of the solution.} Next, we show the uniqueness of the fixed point without any a-priori assumption on the smallness of its \(X^{\xs}\) norm (cf. \Cref{lem:iteration-map-bound}).  
Let \(v_{j}\in X^{\xs}([0,T_{j}])\), \(j\in\{1,2\}\) be any 
two fixed points of \(\mc{J}_{z,M}\) and, without loss of generality, assume that $0 < T_1 \leq T_2$.  Define
\[\eqnum\label{eq:uniqueness-contradiction-time}
t_{0}\eqd\sup\big(t\in[0,T_{1}) \st v_{1}(s)= v_{2}(s) \text{ for all } s\in[0,t] \big)
\]
and for a contradiction, assume that \(t_{0}<T_{1}\). Note that $t_{0}$ is well defined since $v_1(0) = v_2(0)$, and therefore the supremum is taken over non-empty set. Since $v_{1},v_{2}\in C_t^{0}\big([0,T_{1}), H_x^{\xs+\epsilon}(\R^{d})\big)$, thanks to \eqref{eq:v-classical-space}, then \(v_{1}(t_{0})=v_{1}(t_{0})=:v_0\). Then, from the group property of the Schrödinger evolution for all \(t \in [t_{0}, T_{1}]\) we have that
\[
v_{j}(t)=e^{i(t-t_{0})\Delta}v_{0}\mp i\int_{t_{0}}^{t}e^{i(t-s)\Delta}(\Phi_{z_{\leq M}}[v_{j}] + [z,z,z]_{> M}) \, ds \qquad j = 1, 2\,,
\]
that is 
\(v_{j}\), $j = 1, 2$ are both fixed points of the map \(v\mapsto \mc{K}(v)\)  with \(z = z_{\leq M}\) and \(h = [z,z,z]_{> M}\).

By \eqref{eq:rz3M-Xs-bound} and \eqref{eq:F-bound}, there is \(R>0\) such that 
\[\eqnum\label{eq:ball-bounds-on-data-K}
\|z_{\leq M}\|_{Y^{S}([0,T_{1}))}\leq R,\quad \|[z,z,z]_{> M}\|_{X^{\xs}([0,T_{1}))}\leq R,\quad \|v_{0}\|_{H^{\xs+\epsilon}}\leq R.
\]
Estimate \eqref{eq:iteration-map-bound}  shows that there exists \(T\) with \(t_{0}<t_{0}+T<T_{1}\) for which
\[\|v_{1}\|_{X^{\xs}([t_{0},t_{0}+T])}\leq 1
\qquad \text{and } \quad \|v_{2}\|_{X^{\xs}([t_{0},T])}\leq 1
\] 
and
the map \(v\mapsto\mc{K}(v)\) has a unique fixed point on
\(\bar{\mc{B}}_{1}^{[t_{0},t_{0}+T]}\big\{v\st\|v\|_{X^{\xs}([t_{0},t_{0}+T])}\leq 1 \big\}\), a contradiction to the definition of $t_0$. Hence, the desired uniqueness is proved. 

\textbf{Blow-up criterion.} By the uniqueness property we showed above, the upper bound \(T_{\mrm{max}}\) of the existence and uniqueness time of solutions is well defined.
To prove \eqref{eq:blowup}, suppose that \(T_{\mrm{max}}<T_{0}\), and,  for a contradiction, fix a sufficiently large $R$ sufficiently large so that 
\begin{equation} \label{eq:Hs-blowup-contradiction}
\limsup_{t \to T_{\mrm{max}}} \|u^{\#}_{M}(t)\|_{H^{\xs}(\R^d)} \leq R < \infty\,,
\end{equation}
and such that \eqref{eq:ball-bounds-on-data-K} holds with $v_0 = 0$. 
Let $T =
T_{\delta_0 = 1}(R, R, R)$ be as in \eqref{eq:contraction-time-bound} and
fix $t_0 < T_{\mrm{max}}$ such that $t_0 + T > T_{\mrm{max}}$.  Then
\Cref{lem:iteration-map-bound} guarantees that it is possible to
extend the solution to at least \(\big[0,\min(t_{0}+T,T_{0}) \big)\) by
finding a fixed point to the map \(v\mapsto \mc{K}(v)\) on\(\big[t_{0},\min(t_{0}+T,T_{0})\big]\). In addition,
\[
\|u^{\#}_{M}\|_{X^{\xs}\big([0,\min(t_{0}+T,T_{0}) )\big)}
+\|u^{\#}_{M}\|_{C^{0}\big([0,\min(t_{0}+T,T_{0}) );H^{\xs}_{x}(\R^{d})\big)}
<\infty \,,
\]
contradicting that \(T_{\max}\) is the maximal existence time for the
solution. The second equality in \eqref{eq:blowup} then follows from
\eqref{eq:v-classical-space}.

\textbf{Time-continuity and scattering of the multilinear data}
Since \(z_{1}=e^{it\Delta}f\), \eqref{eq:sobolev-isometry-of-linear-evolution} follows from  \eqref{eq:continuity-bound} applied with \(v_{0}=f\) and \(h=0\).

Next,  we focus on \(z_{k}\) with \(2\leq k \leq M\) and show that \eqref{eq:time-continuity-sca} holds.  According to \eqref{eq:def:zk-deterministic}, it holds that that
\begin{equation}\label{eq:amlne}
z_{k}=\mp i \sum_{\mcl{\substack{ k_{1}+k_{2}+k_{3}=k\\ k_{1},k_{2},k_{3}\leq k }} } \mbb{I}_{0}(z_{k_{1}} \bar{z_{k_{2}}} z_{k_{3}})\,,
\end{equation}
Using \eqref{eq:continuity-bound}, $\mu(k,S)+\epsilon \leq \mu(k,S+\epsilon)
\leq \mu(k,S) + (M + 1)\epsilon$,  and \eqref{eq:trilinear-Y-mu-bound} with 
\(S+\epsilon\) instead of \(S\) we obtain that
\begin{equation}\label{eq:abfsc}
\begin{aligned}
\|z_{k}\|_{C^{0}\big([0,T_{0});H^{\mu(k,S)}_{x}(\R^{d})\big)} &\lesssim
\quad 
\sum_{\mcl{\substack{ k_{1}+k_{2}+k_{3}=k\\ k_{1},k_{2},k_{3}\leq k }} }
\Big\|z_{k_{1}} \bar{z_{k_{2}}} z_{k_{3}}\Big\|_{X^{*,\mu(k,S)+\epsilon}([0,T_{0}))}
\\
&\lesssim \prod_{j = 1}^{3} \|z_{k_{j}}\|_{Y^{\mu(k_{j},S)+(M + 1)\epsilon}([0,T_{0}))}\,.
\end{aligned}
\end{equation}
So \eqref{eq:time-continuity-sca} follows if we replace \(\epsilon\) with \(\sfrac{\epsilon}{(M+1)}\).

Clearly, $z_1 = e^{it\Delta} f$ scatters with $w_{1} = f$, i.e. \eqref{eq:zk-scattering} holds.  To show that \(z_{k}\), \(2\leq k\leq M\) scatters, we 
first observe that by \eqref{eq:abfsc} and \eqref{eq:limit-scattering-existence} the limit
\[
w_{k}\eqd \lim_{t\to+\infty} e^{-it\Delta} z_{k}=\mp i \sum_{\mcl{\substack{ k_{1}+k_{2}+k_{3}=k\\ k_{1},k_{2},k_{3}< k }} }
\lim_{t\to+\infty} e^{-it\Delta} \mbb{I}_{0}(z_{k_{1}} \bar{z_{k_{2}}} z_{k_{3}})
\]
exists, where we used \eqref{eq:amlne} in the second equality. 
In addition, \eqref{eq:limit-scattering-bound} implies
\[
\lim_{t\to+\infty} \Big\|z_{k}(t)-e^{it\Delta}w_{k}\Big\|_{H^{\mu(k,S)}_{x}(\R^{d})}
= \lim_{t\to+\infty} \Big\|e^{-it\Delta}z_{k}(t)-w_{k}\Big\|_{H^{\mu(k,S)}_{x}(\R^{d})}
= 0 \,,
\]
and scattering follows.

\textbf{Scattering of global solutions.}
Since we assume \eqref{eq:multilinear-datum-smallness}, then 
$u^{\#}_{M}$ is a fixed point of $\mc{J}_{z,M}$ in \(X^{\xs}([0,+\infty))\), and in particular 
$\|u^{\#}_{M}\|_{X^{\xs}([0,+\infty))} < \infty$.

Using \eqref{eq:F-bound} and \eqref{eq:rz3M-Xs-bound} we obtain for 
sufficiently small $< \epsilon \lesssim_{S,\xs, \xs_s} $ that 
\begin{multline*}
\Big\|\Phi_{z_{\leq  M}}[u^{\#}_{M}]+[z,z,z]_{>M}\Big\|_{X^{*,\xs+\epsilon}([0,+\infty))}
\\
\lesssim\Big(1 + \|u^{\#}_{M}\|_{X^{\xs}([0,+\infty))}+\max_{k\leq M}\|z_{k}\|_{Y^{\mu(k,S)+\epsilon}(\R)}\Big)^{3}<\infty.
\end{multline*}
Then \eqref{eq:limit-scattering-existence}  implies that 
\[
w^{\#}_{M}= \lim_{t\to +\infty} e^{-it\Delta}\mbb{I}_{0}\big(\Phi_{z_{\leq  M}}[u^{\#}_{M}]+[z,z,z]_{>M}\big)\,,
\]
is well defined as the limit in $H^{\xs}_x(\R^d)$, and  
\eqref{eq:remainder-scattering} follows from \eqref{eq:limit-scattering-bound}. 

\textbf{Continuous dependence on the multilinear data.} 
In `Local existence of solutions' we 
already argued that for any \(f\in H^{S+\epsilon}_{x}(\R^{d})\) with \(d(0,f)\leq R\)
there exists a unique solution \(u\in
C^{0}\big([0,T);H^{S+\epsilon}_{x}(\R^{d})\big)\) of the form
\eqref{eq:solution-decomposition--deterministic}, which satisfies
\(\|u^{\#}_{M}\|_{X^{\xs}([0,T))}<\infty \) and
\(u^{\#}_{M}=\mc{J}_{z,M}(u^{\#}_{M})\eqd \mbb{I}_{0}\big(\Phi_{z_{\leq
    M}}[u^{\#}_{M}]+[z,z,z]_{>M}\big)\). \Cref{prop:main-linear-estimate} and 
\eqref{eq:non-homogeneous-bound-2} show that \(\mbb{I}_{0}\) is a bounded
linear map from \(X^{*,\xs+\epsilon}([0,T))\) to \(X^{\xs}([0,T))\). By \Cref{lem:z-remainder-term-bound}, the map
\(\vec{z}_{M} \mapsto [z,z,z]_{>M} \) is Lipschitz continuous with respect to the distance introduced in \eqref{eq:initial-data-distance}. Finally, the map
\((\vec{z}_{M},v)\mapsto \Phi_{z_{\leq M}}[v]\) is Lipschitz continuous due to \eqref{eq:F-bound-Lip-1} and \eqref{eq:F-bound-Lip-2}.
\end{proof}

\section{Probabilistic estimates}\label{sec:multilinear-probabilistic-estimates}

In this section, we  establish probabilistic estimates for
the multilinear correction terms \(\rz_{k}\), defined in
\eqref{eq:def:zk-randomized}, and prove
\Cref{thm:random-multilinear-directional-bounds}.

To prove these estimates we have to keep track of how the random
variables \(\rz_{k}\) depend on the Wiener randomization of \(f\), denoted
by \(\rf\). We use ternary trees to encode the dependence of all
probabilistic terms on the initial datum \(\rf\).  We stress that trees
serve merely as a convenient notation, and we do not use any graph
theory or subtle properties of trees.

More precisely, we define the set \(\TT\) of ternary trees as
$\TT := \bigcup_{n \geq 1} \TT_{n}$, where for each
\(n\in \N\setminus{0}\) the set of trees \(\TT_{n}\) is given by induction as
follows:
\begin{itemize}
\item We say $\tau \in \TT_{1}$ if \(\tau = [ \bullet ]\).
\item We say $\tau \in \TT_{n}$ for $n \geq 1$ if \(\tau = [\tau_{1}, \tau_{2}, \tau_{3}]\) for some $\tau_{j} \in \TT_{n_{j}}$ and $n = n_{1} + n_{2} + n_{3}$ with $1 \leq n_{j} < n$.
\end{itemize}
The index \(n\) can be viewed as the number of leaves of a tree. 

Pictorially, our inductive construction of
\(\tau = [\tau_{1}, \tau_{2}, \tau_{3}]\) can be represented as follows
\begin{center}
\begin{tikzpicture}[level distance=1cm,
  level 1/.style={sibling distance=1cm},
  level 2/.style={sibling distance=1.5cm}]
  \node {$\bullet$}
    child {node {$\tau_1$}
    }
    child {node {$\tau_2$}
    }
    child {node {$\tau_3$}
    };
\end{tikzpicture}
\end{center}
where $\tau_j$, $j = 1, 2, 3$ are ternary trees.  If
$\tau \in \TT_{n}$, we set $|\tau|= n$ In particular, if
\(\tau = [\tau_{1}, \tau_{2}, \tau_{3}]\) then
\(|\tau| = |\tau_{1}| + |\tau_{2}| + |\tau_{3}|\). Under this convention,
$|\tau|$ corresponds to the number of nodes with no descendant, that we
refer to as ``leaves''. We omit the proof, as this fact does not
explicitly enter our discussion.

In the literature, one usually allows a node of a general ternary
tree to have zero, one, two, or three descendants. However, in the
present manuscript, we only consider trees in which each node has
either three descendants (children) or no descendants at all (a leaf). By
induction, one can show there are no ternary trees with an even number
of leaves, that is, \(\TT_{n}=\emptyset\) when \(n\in2\N\).  For example, we have
\[
\Big|\,\big[[ \bullet ] , [\bullet] , [\bullet]\big]\,\Big|=3\,,
\qquad
\Big|\, \Big[ [\bullet], \big[[ \bullet], [\bullet],[ \bullet]\big], [\bullet] \Big]\, \Big| = 5.
\]
Graphically, these trees can be represented as  
\begin{center}
\begin{tikzpicture}[level distance=1cm,
  level 1/.style={sibling distance=1cm},
  level 2/.style={sibling distance=1.5cm}]
  \node {$\bullet$}
    child {node {$\bullet$}
    }
    child {node {$\bullet$}
    }
    child {node {$\bullet$}
    };
\end{tikzpicture}
\qquad\qquad
\begin{tikzpicture}[level distance=1cm,
  level 1/.style={sibling distance=1cm},
  level 2/.style={sibling distance=.5cm}]
  \node {$\bullet$}
    child {node {$\bullet$}
    }
    child {node {$\bullet$}
     child{node {$\bullet$}}
     child{node {$\bullet$}}
     child{node {$\bullet$}}
    }
    child {node {$\bullet$}
    };
\end{tikzpicture}
\end{center}

To each tree \(\tau\in\TT_{n}\) with \(n\in2\N+1\) we inductively assign an
\(n\)-(real) linear tree operator \(R_{\tau}\) mapping \(n\)-tuples of
functions \((f_{1},\ldots,f_{n})\in L^{2}(\R^{d})\) to functions on \(\R\times\R^{d}\).
\begin{itemize}
\item We set \(R_{[\bullet]}[f](t,x) \eqd e^{it\Delta}f(x).\)
\item Inductively, we set
\[\eqnum\label{eq:def:R-inductive}
\begin{aligned}[c]
\hspace{5em}&\hspace{-5em} R_{[\tau_{1},\tau_{2},\tau_{3}]}[\mb{f}_{1}\oplus \mb{f}_{2}\oplus\mb{f}_{3}](t, \cdot)
\\
& \eqd \mp i \int_{0}^{t} e^{i (t - s) \Delta} \Big(R_{\tau_{1}} [\mb{f}_{1}] (s, \cdot)\,\bar{R_{\tau_{2}} [\mb{f}_{2}] (s, \cdot)}\,  R_{\tau_{3}} [\mb{f}_{3}] (s,\cdot)\Big) \dd s \,,
\end{aligned}
 \]
 where the choice of the sign is exactly the opposite to the sign on the right hand side of \eqref{eq:NLS-deterministic}. Here \(\mb{f}_{j}=(f_{j,1},\ldots,f_{j,|\tau_{j}|})\in \big(L^{2}(\R^{d})\big)^{|\tau_{j}|}\) are \(|\tau_{j}|\)-tuples of \(L^{2}(\R^{d})\) functions and 
\[
\mb{f}_{1}\oplus \mb{f}_{2}\oplus\mb{f}_{3}=\big(f_{1,1},\ldots,f_{1,|\tau_{1}|},f_{2,1},\ldots,f_{2,|\tau_{2}|},f_{3,1},\ldots,f_{3,|\tau_{3}|}^{3}\big)\,.
\]
\end{itemize}

To simplify notation, given \(\tau\in\TT\) and one function \(f \in L^{2}(\R^{d})\), we write
\[
R_{\tau}[f] := R_{\tau}\big[f,\ldots,f\big] \,,
\]
where the right-hand side contains $|\tau|$ copies of $f$.

The main result of this section shows that the functions
\(R_{\tau}[\rf]\) lie, almost surely, in the space
\(Y^{\mu(|\tau|,S)}\) with an appropriate regularity \(\mu(|\tau|,S)\) depending on
\(S\), the regularity of \(f\in H^{S}(\R^{d})\), and on \(|\tau|\), the order of
multilinearity of the operator \(R_{\tau}\).

\begin{proposition}\label{prop:tree-bound-probabilistic} 
For any $S \in \R$, $n \geq 1$, let \(\mu(k,S)\) be as defined in
\eqref{eq:def:multilinear-regularity}.

Fix \(\tau\in\TT\), \(S>0\), and let
\(0<\epsilon \lesssim_{|\tau|} 1\) be small. Then for every
\(0 < \epsilon_{0}\lesssim_{\epsilon, |\tau|, S} 1\) there exist constants
$C=C(\epsilon_{0},\epsilon,|\tau|,S) > 0$ independent of
$f\in L^{2}(\R^{d})$, such that for any $\lambda > 0$ it holds that
\[\eqnum\label{eq:tree-bound-probabilistic}
\mathbb{P} \Big(\big\{\omega \in \Omega : \big\| R_{\tau}[\rf] \big\|_{Y^{\mu(|\tau|, S)} (\R)} > \lambda\big\}\Big)
\leq C \exp\Bigg(- \frac{ \lambda^{\frac{2}{|\tau|}}}{C\|f\|_{H^{S +\epsilon}_{x} (\R^{d})}^{2}}\Bigg)\,,
\]
where \(\mf{f}\) is the unit scale randomization of \(f\) given by
\eqref{eq:initial-data-randomization}. In particular,
\[\eqnum\label{eq:R-tau-almost-sure}
\big\|R_{\tau}[\rf] \big\|_{Y^{\mu(|\tau|, S)} (\R)} < \infty \qquad \text{almost surely}
\]
for any \(f\in H^{S+\epsilon}_{x}(\R^{d})\).
\end{proposition}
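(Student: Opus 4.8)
## Proof proposal for Proposition \ref{prop:tree-bound-probabilistic}

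The plan is to prove \eqref{eq:tree-bound-probabilistic} by induction on $|\tau|$, combining the deterministic non-homogeneous estimate \Cref{prop:main-linear-estimate} (in its unit-scale form) with the multilinear bound \eqref{eq:YYY} of \Cref{prop:trilinear-estimate}, and then converting moment bounds into the stated exponential tail via the stochastic chaos estimate \Cref{lem:gaussian-hypercontractivity}. The base case $|\tau| = 1$ amounts to estimating $R_{[\bullet]}[\rf] = e^{it\Delta}\rf$ in $Y^{S}(\R)$. Writing $\rf = \sum_{k} g_k \QP_k f$ and using that each $\QP_k f$ has Fourier support of diameter $O(1)$, the unit-scale linear bound \eqref{eq:L2-to-Y-bound} (applied on each frequency-localized piece) together with almost-orthogonality of the $\QP_k$ gives, after taking $L^p_\omega$ norms and using hypercontractivity of the $(g_k)$, a bound of the form $\big(\E \|e^{it\Delta}\rf\|_{Y^{S}(\R)}^p\big)^{1/p} \lesssim \sqrt{p}\, N^{\epsilon}\text{-type factors} \cdot \|f\|_{H^{S+\epsilon}_x}$; the tail \eqref{eq:tree-bound-probabilistic} with $|\tau| = 1$ then follows from the standard equivalence between Gaussian-type moment growth $\sqrt{p}$ and a $\exp(-\lambda^2/C\|f\|^2)$ tail. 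The $N^\epsilon$ losses from \Cref{prop:main-linear-estimate} are absorbed by summing a geometric series in $N$, which is why the regularity on the right is $S+\epsilon$ rather than $S$.

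For the inductive step, fix $\tau = [\tau_1, \tau_2, \tau_3]$ with $|\tau| = |\tau_1| + |\tau_2| + |\tau_3|$, and relabel so that $|\tau_1| \leq |\tau_2| \leq |\tau_3|$. By the recursive definition \eqref{eq:def:R-inductive}, $R_\tau[\rf] = \mbb{I}_0\big(R_{\tau_1}[\rf]\,\bar{R_{\tau_2}[\rf]}\,R_{\tau_3}[\rf]\big)$. Apply the unit-scale Duhamel bound \eqref{eq:non-homogeneous-bound-unit-scale-2} to pass from $R_\tau[\rf]$ in $Y^{\sigma}$ to the nonlinearity in $X^{*,\sigma+\epsilon}$, and then apply \eqref{eq:YYY} with $S_j + \epsilon \leftrightarrow \mu(|\tau_j|, S)$, which gives
\[
\big\|R_\tau[\rf]\big\|_{Y^{\sigma_*}(\R)} \lesssim \prod_{j=1}^{3} \big\|R_{\tau_j}[\rf]\big\|_{Y^{\mu(|\tau_j|,S)+\epsilon'}(\R)}, \qquad \sigma_* = \mu(|\tau_1|,S) + \min\big(\mu(|\tau_2|,S),\tfrac12\big) + \min\big(\mu(|\tau_3|,S),\tfrac12\big).
\]
The key arithmetic input is \Cref{lem:mu-inductive-property}: $\mu(|\tau|, S) \leq \sigma_*$, so the left-hand side controls the $Y^{\mu(|\tau|,S)}$ norm. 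The subtlety here is that the three factors $R_{\tau_j}[\rf]$ are \emph{not} independent — they are all built from the same randomization — so one cannot simply multiply three independent tail bounds. Instead, one works at the level of the underlying multi-parameter Gaussian (or sub-Gaussian) chaos: expanding each $R_{\tau_j}[\rf]$ in the $g_k$'s, the product $R_{\tau_1}[\rf]\bar{R_{\tau_2}[\rf]}R_{\tau_3}[\rf]$ is a chaos of order (at most) $|\tau_1| + |\tau_2| + |\tau_3| = |\tau|$, and the inductive deterministic estimate above shows its $Y^{\mu(|\tau|,S)}$-norm (as a random variable) is dominated by a chaos of fixed order $|\tau|$; hypercontractivity (\Cref{lem:gaussian-hypercontractivity}) then gives moment growth $\big(\E\|R_\tau[\rf]\|^p\big)^{1/p} \lesssim p^{|\tau|/2}\cdot(\text{deterministic quantity})$, where the deterministic quantity is controlled by induction and ultimately by $\|f\|_{H^{S+\epsilon}_x}^{|\tau|}$. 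Translating $p^{|\tau|/2}$ moment growth into a tail bound yields exactly the stretched-exponential rate $\exp\big(-\lambda^{2/|\tau|}/(C\|f\|^2)\big)$ in \eqref{eq:tree-bound-probabilistic}.

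The main obstacle I anticipate is making the chaos bookkeeping rigorous: one must carry the induction not on the \emph{norm} of $R_{\tau_j}[\rf]$ but on a quantity that records its structure as a (deterministic) multilinear operator applied to the independent family $(g_k \QP_k f)_k$, so that the product of three such objects is again recognizably a chaos of the correct total order with a deterministic kernel bounded through \eqref{eq:YYY}. Concretely, I would state an intermediate deterministic lemma bounding $\|R_\tau[f_1,\dots,f_n]\|_{Y^{\mu(n,S)+\epsilon}(\R)}$ multilinearly by $\prod \|f_j\|$-type quantities over functions with unit-scale Fourier support, prove it by the same induction using \eqref{eq:YYY} and \eqref{eq:non-homogeneous-bound-unit-scale-2}, and only then insert $f_j = g_{k_j}\QP_{k_j}f$, sum over the frequency indices (here the $N^\epsilon$ losses and the $\ell^2$-summation over Littlewood–Paley scales reappear, handled as in the base case and in the proof of \Cref{prop:trilinear-estimate}), and close with \Cref{lem:gaussian-hypercontractivity}. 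The remaining points — that $\epsilon$ can be taken as small as needed depending on $|\tau|$, that $\epsilon_0$ is chosen after $\epsilon$, and that \eqref{eq:R-tau-almost-sure} follows from \eqref{eq:tree-bound-probabilistic} by Borel–Cantelli or simply by finiteness of the tail integral — are routine.
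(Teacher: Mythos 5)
Your proposal is correct and follows essentially the same route as the paper: the decisive step you describe in your final paragraph — a deterministic multilinear lemma for $R_{\tau}$ acting on unit-scale Fourier-localized inputs, proved by induction via \eqref{eq:non-homogeneous-bound-unit-scale-2}, \eqref{eq:YYY} and \Cref{lem:mu-inductive-property}, then insertion of $g_{k}\QP_{k}f$, Wiener-chaos moment bounds from \Cref{lem:gaussian-hypercontractivity}, and conversion to the stretched-exponential tail — is exactly the paper's argument (\Cref{lem:R-estimate} followed by the chaos expansion and \Cref{lem:Chebyshev-probab}). The only omitted technicalities (Minkowski to exchange $\E^{1/\gamma}$ with the $Y_{N}$ norms for $\gamma$ large, and the halved-exponent $Y_{N}^{\sfrac{1}{2}}$ device to handle the square root of the bilinear chaos sum) are routine and consistent with your plan.
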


First we use \Cref{prop:tree-bound-probabilistic} to obtain the proof of 
\Cref{thm:random-multilinear-directional-bounds}.

\begin{proof}[Proof of \Cref{thm:random-multilinear-directional-bounds}]
The set $\bigcup_{k \leq M}\TT_k$ is finite, and therefore by \eqref{eq:def:zk-randomized} there is a constant $\tilde{C} = \tilde{C}(M)$ such that for any integer $k \leq M$, we have 
\[
\begin{aligned}
\mathbb{P} \Big(\big\{\omega \in \Omega : \big\| \rz_{k}\big\|_{Y^{S} (\R)} > \lambda\big\}\Big)
& \leq
\mathbb{P} \Big(\bigcup_{\substack{\tau\in\TT_k}}\big\{\omega \in \Omega : \big\| R_{\tau}[\rf]\big\|_{Y^{\mu(k,S)} (\R)} > \tilde{C}^{-1}\lambda\big\}\Big) 
\\
& \leq \sum_{\substack{\tau\in\TT_k}} \mathbb{P} \Big(\big\{\omega \in \Omega : \big\| R_{\tau}[\rf]\big\|_{Y^{\mu(k,S)} (\R)}
> \tilde{C}^{-1}\lambda\big\}\Big) \,.
\end{aligned}
\]
From \eqref{eq:tree-bound-probabilistic} we obtain that
\[
\mathbb{P} \Big(\big\{\omega \in \Omega : \big\| \rz_{k}\big\|_{Y^{S} (\R)} > \lambda\big\}\Big)\lesssim \tilde{C}C
C \exp\Bigg(- \frac{ \lambda^{\frac{2}{k}}}{\tilde{C}^{2}C\|f\|_{H^{S +\epsilon}_{x} (\R^{d})}^{2}}\Bigg),
\]
as required. 
\end{proof}

Next, we proceed to the proof of
\Cref{prop:tree-bound-probabilistic}. We rely on the deterministic
estimate of \Cref{lem:R-estimate}, which establishes regularity for
the operators \(R_{\tau}\) when evaluated on functions with bounded
frequency support. Then, using \Cref{lem:Chebyshev-probab} below, we
show that the probabilistic bounds of
\Cref{prop:tree-bound-probabilistic} follow from estimates on 
sufficiently large moments of the random variable
\(\big\|R_{\tau}[\rf]\|_{Y^{\mu(|\tau|, S) } (\R)}\). Such moment bounds are
established with help of multi-parameter Wiener chaos estimates
(\Cref{lem:hypercontractivity}).

First, we state and prove or provide references for the required
lemmata. Finally, we prove \Cref{prop:tree-bound-probabilistic}.

\begin{lemma}\label{lem:R-estimate}
Let $\mu(n,S)$ be as in \eqref{eq:def:multilinear-regularity} and fix $\tau \in \TT$ and
\(S, R_0 >0\). Choose any \(0 < \epsilon \lesssim_{|\tau|} 1\) and 
\(0 < \epsilon_{0} \lesssim_{\epsilon, S, |\tau|} 1\).  For any tuple
\(\mb{f}=(f_{1},\ldots f_{|\tau|})\in\big(L^{2}(\R^{d})\big)^{|\tau|}\) of functions with
\(\diam\big(\spt(\FT{f}_{j}))\leq 2R_0\), \(j\in\{1,\ldots,|\tau|\}\), it holds that
\[ \eqnum\label{eq:R-estimate}
\Big\| R_{\tau} \big[ \mb{f} \big]\Big\|_{Y^{\mu(|\tau|,S)} (\R)}  \lesssim \prod_{j = 1}^{| \tau |} \| f_{j} \|_{H^{S+\epsilon}(\R^{d})}.
\]
The implicit constant may depend on \(|\tau|\), \(\epsilon\), \(\epsilon_{0}\), and \(R_0\).
\end{lemma}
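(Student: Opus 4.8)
The plan is to prove \Cref{lem:R-estimate} by induction on the tree structure, propagating a slightly stronger bookkeeping quantity that records not just $H^{S+\epsilon}$-norms of the inputs but also the bounded-frequency-support hypothesis so that the unit-scale estimates from \Cref{sec:nonhomogeneous-estimates} apply at each node. For the base case $\tau=[\bullet]$ we have $R_{[\bullet]}[f]=e^{it\Delta}f$, and since $\mu(1,S)=S$, the bound $\|e^{it\Delta}f\|_{Y^{S}(\R)}\lesssim\|f\|_{H^{S+\epsilon}(\R^{d})}$ follows directly from \eqref{eq:non-homogeneous-bound-unit-scale-2} of \Cref{prop:main-linear-estimate} with $h=0$, using that $\diam(\spt(\FT f))\leq 2R_0$ (after Littlewood--Paley decomposition, each piece has bounded support and one sums the $N^{2\sigma}$-weighted square bounds, with the $N^{\epsilon}$ loss absorbed into the $+\epsilon$ in the Sobolev exponent).

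For the inductive step, suppose $\tau=[\tau_1,\tau_2,\tau_3]$ with $|\tau|=k=k_1+k_2+k_3$, $k_j=|\tau_j|$, and write $u_j\eqd R_{\tau_j}[\mb f_j]$. By the inductive hypothesis $\|u_j\|_{Y^{\mu(k_j,S)}(\R)}\lesssim\prod_{i}\|f_{j,i}\|_{H^{S+\epsilon}(\R^{d})}$; I would actually apply the hypothesis with $\epsilon$ replaced by $\epsilon/3$ (or $\epsilon/(k)$) so that there is room to spare. Then $R_\tau[\mb f]=\mbb I_0(u_1\bar u_2 u_3)$, so I estimate in two moves. First, by \Cref{lem:mu-inductive-property}, specifically \eqref{eq:trilinear-Y-mu-bound} (applied with the $k_j$ reordered so that $k_1\leq k_2\leq k_3$, which is harmless since the roles of $\tau_1,\tau_2,\tau_3$ are symmetric for the purpose of norm bounds), one gets
\[
\big\|u_1\bar u_2 u_3\big\|_{X^{*,\mu(k,S)}(\R)}\lesssim\prod_{j=1}^{3}\|u_j\|_{Y^{\mu(k_j,S)+\epsilon'}(\R)}
\]
for a suitable small $\epsilon'$. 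Here one must note that $\mu(k_j,S)+\epsilon'\leq\mu(k_j,S+\epsilon'')$ for appropriate small exponents, so the inductive hypothesis (stated with a Sobolev loss) feeds in — this is the one bookkeeping subtlety: $\mu$ is not exactly additive in its second argument, but since $\mu(k,S)$ is Lipschitz in $S$ with constant at most $k$, a loss of $\epsilon'$ in the regularity index costs only $O(\epsilon')$ in the Sobolev exponent, which is then renamed. Second, I apply \eqref{eq:non-homogeneous-bound-unit-scale-2} of \Cref{prop:main-linear-estimate} with $v_0=0$ and $h=u_1\bar u_2 u_3$: since each $\FT{f_{j,i}}$ has support of diameter $\leq 2R_0$ and the linear evolution preserves Fourier support, $\diam(\spt(\FT{u_j}))\lesssim R_0$ and hence $\diam(\spt(\Fourier(u_1\bar u_2 u_3)))\lesssim R_0$, so
\[
\big\|R_\tau[\mb f]\big\|_{Y^{\mu(k,S)}(\R)}=\big\|\mbb I_0(u_1\bar u_2 u_3)\big\|_{Y^{\mu(k,S)}(\R)}\lesssim_{R_0}\big\|u_1\bar u_2 u_3\big\|_{X^{*,\mu(k,S)+\epsilon}(\R)}.
\]
Chaining the three displayed estimates and the inductive hypothesis gives \eqref{eq:R-estimate} (after collecting all the small losses into a single $\epsilon$, which is permitted because $\epsilon_0$ is taken small depending on $\epsilon$, $S$, $|\tau|$).

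The main obstacle, and the place requiring care rather than routine work, is the interplay between the three small-parameter losses: the $N^{\epsilon}$ loss in the Duhamel estimate \eqref{eq:non-homogeneous-bound-unit-scale-2}, the $+\epsilon$ derivative loss needed to invoke \Cref{prop:trilinear-estimate}/\eqref{eq:trilinear-Y-mu-bound}, and the fact that the inductive hypothesis itself is stated with a Sobolev loss that compounds across the $\lesssim\log k$ levels of the tree. One has to choose the losses at level $j$ of the recursion to be, say, $\epsilon\cdot 2^{-j}$ (or simply $\epsilon/(3|\tau|)$ uniformly, since the tree has at most $|\tau|$ internal nodes), ensuring the total accumulated loss is $\leq\epsilon$, and check that at every node the constraint $0<\mu(k_j,S)+(\text{loss})<\xs_c<\xs$ implicitly required by \Cref{prop:trilinear-estimate} is met — which it is, since $\mu(k_j,S)\leq\min(2S+\tfrac12,S+1)<\xs_c$ for $S<\xs_c$ (and if $S\geq\xs_c$ one is in the deterministically subcritical regime where the statement is easier or can be reduced to it). The hypothesis $\epsilon_0\lesssim_{\epsilon,S,|\tau|}1$ exactly gives the room to absorb all constants $C(\epsilon_0,\ldots)$ that proliferate along the induction into a single implicit constant depending only on $|\tau|$, $\epsilon$, $\epsilon_0$, $R_0$.
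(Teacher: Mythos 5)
Your proposal is correct and takes essentially the same route as the paper's proof: induction on the tree, base case from \eqref{eq:non-homogeneous-bound-unit-scale-2} with $h=0$, and an inductive step combining the trilinear bound \eqref{eq:trilinear-Y-mu-bound} with the unit-scale Duhamel estimate \eqref{eq:non-homogeneous-bound-unit-scale-2}, together with an inductive control of the Fourier-support diameter (which in fact grows like $2R_0|\tau|$, harmlessly absorbed into the $|\tau|,R_0$-dependent constant) and the same $\epsilon$-bookkeeping via $\mu(k,S+\epsilon)\geq\mu(k,S)+\epsilon$. One parenthetical remark is inaccurate — $\mu(k_j,S)$ need not lie below $\xs_{c}$ — but it is not load-bearing, since you (like the paper) invoke \eqref{eq:trilinear-Y-mu-bound}, which is stated without that restriction.
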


The following lemma relates the regularity parameter \(\mu(|\tau|,S)\) and the trilinear bound \eqref{eq:YYY}. 

Our probabilistic estimates depend crucially on the Wiener chaos
estimates. If \(n=1\) and $g_k$ has a standard normal distribution, the
claim of \Cref{lem:hypercontractivity} reduces to the classical
Khintchine inequality for Gaussian sums. Here, we provide a
generalized statement.

\begin{lemma}\label{lem:hypercontractivity}
Let $( g_{k} )_{k\in\Z^{d}} $ be a collection of complex-valued i.i.d
random variables, such that
\begin{equation}
 \E \big[| g_{k} |^p\big] < \infty \qquad \text{for all} \quad  p \in [1, \infty) . 
\end{equation}
Let $c : (\Z^{d})^{n} \rightarrow \C$ be a function with finite support. Then, the
random variable
\[
H = \sum_{k_{1},\ldots,k_{n} \in  (\Z^{d})^{n}} c (k_{1},\ldots,k_{n}) g_{k_{1}} \ldots g_{k_{d}}
\]
satisfies the estimate
\begin{equation} \label{eq:mlce}
\Big( \E \big[| H |^p \big]\Big)^{1 / \gamma} \leq_{d,n, \gamma,g} \Big( \E\big[ | H |^2\big] \Big)^{1 / 2}\qquad \textrm{ for all} \quad  \gamma \in [2, \infty)\,,
 \end{equation}
where the implicit constant depends only on \(n\), \(\gamma\), and the
distribution of the variables \( (g_{k})_{k\in\N}\), but not on the
function $c$ nor on the size of its support.
\end{lemma}

The proof Lemma \ref{lem:hypercontractivity} reformulated as
\Cref{thm:hypercontractivity-main}, is provided in
\Cref{sec:multilinear-chaos-appendix}. The multilinear chaos
estimate \eqref{eq:mlce} is well known, see for example \cite[Proposition
2.4]{thomannGibbsMeasurePeriodic2010} if $(g_k)$ are 
Gaussian random variables and its proof relies on estimates for eigenfunctions
of the Ornstein-Uhlenbeck process. Our proof below is 
elementary and self-contained using combinatorial
techniques partly inspired by 
\cite{thomannGibbsMeasurePeriodic2010}

Finally, we reformulate \cite[Lemma
4.5]{tzvetkovConstructionGibbsMeasure2009}. We leave out the prof
proof: it coincides with that of \cite[Lemma
4.5]{tzvetkovConstructionGibbsMeasure2009} and follows from the
Chebyshev inequality and an appropriate optimization.

\begin{lemma}\label{lem:Chebyshev-probab}
Let $H$ be a random variable and suppose that there exist $K > 0$,
$\gamma_{0} \geq 1$, and $k \geq 1$ such that for any $\gamma \geq \gamma_{0}$ we have
\[
\big(\E |H|^{\gamma}\big)^{\sfrac{1}{\gamma}} \leq \gamma^{\frac{k}{2}} K .
\]
Then, there exist $c > 0$ and $C> 0$ depending on $\gamma_{0}$ and $k$, but
independent of $K$ and $\gamma$, such that for every $\lambda > 0$,
\[
\mathbb{P} \Big(\big\{\omega \in \Omega : |H | > \lambda\big\}\Big) \leq C \exp\bigg(-c \frac{ \lambda^{\frac{2}{k}}}{ K^{\frac{2}{k}}}  \bigg) \,.
\]
In particular, we have
\[
\mathbb{P} (\{\omega \in \Omega : |H | < \infty\}) = 1.
\]
\end{lemma}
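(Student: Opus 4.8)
The plan is to turn the moment bound into a tail bound via Markov's inequality applied to $|F|^{\gamma}$, and then to optimise the free exponent $\gamma\ge\gamma_{0}$. First I would fix $\lambda>0$ and an arbitrary $\gamma\ge\gamma_{0}$, and use that $|F|^{\gamma}$ is a nonnegative random variable with $\E|F|^{\gamma}\le(\gamma^{k/2}K)^{\gamma}$ to write
\[
\mathbb{P}\big(|F|>\lambda\big)=\mathbb{P}\big(|F|^{\gamma}>\lambda^{\gamma}\big)\le \lambda^{-\gamma}\,\E|F|^{\gamma}\le\Big(\frac{\gamma^{k/2}K}{\lambda}\Big)^{\gamma}.
\]
This is the only step where the hypothesis enters; the remainder is a deterministic one-parameter minimisation.

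Next I would choose $\gamma$ so that the base $\gamma^{k/2}K/\lambda$ equals a fixed constant strictly below $1$, say $e^{-1}$, which forces $\gamma=\bigl(\lambda/(eK)\bigr)^{2/k}$ and gives
\[
\mathbb{P}\big(|F|>\lambda\big)\le e^{-\gamma}=\exp\!\Big(-e^{-2/k}\big(\lambda/K\big)^{2/k}\Big).
\]
This choice is legitimate exactly when $\bigl(\lambda/(eK)\bigr)^{2/k}\ge\gamma_{0}$, i.e.\ for $\lambda\ge eK\gamma_{0}^{k/2}$, and on that range it already yields the claimed inequality with $c:=e^{-2/k}$. For the complementary range $\lambda<eK\gamma_{0}^{k/2}$ the minimising exponent would drop below $\gamma_{0}$, so there I would instead use the trivial bound $\mathbb{P}(|F|>\lambda)\le1$ and absorb it into the constant: since $(\lambda/K)^{2/k}<e^{2/k}\gamma_{0}$ on this range, one has $\exp\!\bigl(-e^{-2/k}(\lambda/K)^{2/k}\bigr)>e^{-\gamma_{0}}$, hence $\mathbb{P}(|F|>\lambda)\le1\le e^{\gamma_{0}}\exp\!\bigl(-e^{-2/k}(\lambda/K)^{2/k}\bigr)$. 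Taking $c:=e^{-2/k}$ and $C:=e^{\gamma_{0}}$, both depending only on $k$ and $\gamma_{0}$, the stated bound holds for every $\lambda>0$. The final assertion follows immediately, since $\mathbb{P}(|F|=\infty)=\lim_{\lambda\to\infty}\mathbb{P}(|F|>\lambda)\le\lim_{\lambda\to\infty}C\exp\!\bigl(-c(\lambda/K)^{2/k}\bigr)=0$.

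The only point that requires any care — and hence the ``main obstacle'', though a very mild one — is the bookkeeping for small $\lambda$: one cannot simply plug in the minimising $\gamma$ because the moment hypothesis is only available for $\gamma\ge\gamma_{0}$, so the trivial estimate has to be folded into $C$ as above. Apart from that the argument is entirely routine and coincides with the proof of \cite[Lemma 4.5]{tzvetkovConstructionGibbsMeasure2009}.
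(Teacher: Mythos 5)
Your proof is correct and follows exactly the route the paper indicates for this lemma (Chebyshev/Markov applied to $\E|F|^{\gamma}$ followed by optimisation in $\gamma\ge\gamma_{0}$, with the small-$\lambda$ range absorbed into the constant $C$), which is the same argument as in the cited \cite[Lemma 4.5]{tzvetkovConstructionGibbsMeasure2009}. No gaps; the choice $c=e^{-2/k}$, $C=e^{\gamma_{0}}$ indeed depends only on $k$ and $\gamma_{0}$ as required.
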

From the above, we can deduce an estimate on the probability distribution of the Sobolev norms of the randomized initial data.
\begin{corollary}\label{cor:probability-of-wiener-randomization}
For some \(C,c>0\) it holds that
\[\eqnum\label{eq:probability-of-wiener-randomization}
\mathbb{P} \Big(\big\{\omega \in \Omega : \|\rf\|_{H^{S}_{x}(\R^{d})} > \lambda\big\}\Big) \leq C \exp\bigg(-c \frac{ \lambda^{2}}{\|f\|^{2}_{H^{S}_{x}(\R^{d})} }  \bigg) \,.
\]
\end{corollary}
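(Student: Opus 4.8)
\textbf{Plan for the proof of \Cref{cor:probability-of-wiener-randomization}.}
The plan is to bound all moments of the random variable $\|\rf\|_{H^S_x(\R^d)}$ uniformly in the moment exponent and then conclude by the Chebyshev-type \Cref{lem:Chebyshev-probab}. Note that this corollary is morally the base case $\tau=[\bullet]$ of \Cref{prop:tree-bound-probabilistic}, but since it concerns the static norm $H^S_x(\R^d)$ rather than the space-time norm $Y^{\mu(|\tau|,S)}(\R)$, it is cleaner (and avoids the $\epsilon$-loss) to prove it directly. Since $\widehat{\QP_k f}(\xi)=\psi(\xi-k)\FT f(\xi)$ by \eqref{eq:def:unit-scale-projection}, the randomization \eqref{eq:initial-data-randomization} satisfies $\widehat{\rf}(\xi)=\sum_{k\in\Z^d}g_k(\omega)\,\psi(\xi-k)\,\FT f(\xi)$, so by Plancherel's formula and the definition \eqref{eq:Sobolev-spaces},
\[
\|\rf\|_{H^S_x(\R^d)}^2=\int_{\R^d}\langle 4\pi|\xi|^2\rangle^S\Big|\sum_{k\in\Z^d}g_k(\omega)\,\psi(\xi-k)\,\FT f(\xi)\Big|^2\dd\xi,
\]
where for each fixed $\xi$ the sum over $k$ is finite because $\psi\in C^\infty_c(B_1)$.

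First I would fix $\gamma\ge 2$, raise to the power $\gamma/2$, and apply Minkowski's integral inequality — valid since $\gamma/2\ge 1$ and the integrand is non-negative — to move the expectation inside the $\xi$-integral:
\[
\big(\E\|\rf\|_{H^S_x(\R^d)}^\gamma\big)^{2/\gamma}=\big\|\,\|\rf\|_{H^S_x(\R^d)}^2\,\big\|_{L^{\gamma/2}_\omega}\leq\int_{\R^d}\langle 4\pi|\xi|^2\rangle^S\Big\|\sum_{k\in\Z^d}g_k\,\psi(\xi-k)\FT f(\xi)\Big\|_{L^\gamma_\omega}^2\dd\xi.
\]
For each fixed $\xi$ the quantity inside the norm is a (finite) first-order Gaussian chaos, so \Cref{lem:gaussian-hypercontractivity} with $n=1$, together with the fact that the $g_k$ are independent, mean-zero and $L^2$-normalized (so that $\big\|\sum_k g_k\psi(\xi-k)\FT f(\xi)\big\|_{L^2_\omega}^2=|\FT f(\xi)|^2\sum_k|\psi(\xi-k)|^2$), gives
\[
\Big\|\sum_{k\in\Z^d}g_k\,\psi(\xi-k)\FT f(\xi)\Big\|_{L^\gamma_\omega}\lesssim(\gamma-1)^{1/2}\Big(\sum_{k\in\Z^d}|\psi(\xi-k)|^2\Big)^{1/2}|\FT f(\xi)|.
\]
Since $0\leq\psi\leq 1$ and $\sum_{k\in\Z^d}\psi(\xi-k)=1$ by \eqref{eq:def:unit-scale-bump-condition}, we have $\sum_{k}|\psi(\xi-k)|^2\leq\sum_k\psi(\xi-k)=1$, and substituting back yields
\[
\big(\E\|\rf\|_{H^S_x(\R^d)}^\gamma\big)^{2/\gamma}\lesssim\gamma\int_{\R^d}\langle 4\pi|\xi|^2\rangle^S|\FT f(\xi)|^2\dd\xi=\gamma\,\|f\|_{H^S_x(\R^d)}^2,
\]
that is, $\big(\E\|\rf\|_{H^S_x(\R^d)}^\gamma\big)^{1/\gamma}\lesssim\gamma^{1/2}\|f\|_{H^S_x(\R^d)}$ for every $\gamma\geq 2$.

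Finally I would apply \Cref{lem:Chebyshev-probab} to $F=\|\rf\|_{H^S_x(\R^d)}$ with $\gamma_0=2$, $k=1$, and $K$ a fixed multiple of $\|f\|_{H^S_x(\R^d)}$, which produces exactly \eqref{eq:probability-of-wiener-randomization}. When the $(g_k)_k$ are only assumed to have all moments finite rather than Gaussian, the same scheme applies verbatim with \Cref{lem:gaussian-hypercontractivity} replaced by the corresponding Khintchine-type moment bound for i.i.d.\ mean-zero variables. I do not expect any genuine obstacle in this argument: the only points meriting a word of justification are the interchange of the $\omega$- and $\xi$-integrations (legitimate precisely because $\gamma/2\ge 1$) and the bounded-overlap estimate $\sum_k|\psi(\xi-k)|^2\le 1$, both of which are elementary.
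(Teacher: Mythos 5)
Your proof is correct and follows essentially the same scheme as the paper's: bound the $\gamma$-th moments by combining Minkowski's inequality with the first-order Wiener chaos estimate of \Cref{lem:gaussian-hypercontractivity} and the bounded overlap $\sum_{k}|\psi(\xi-k)|^{2}\leq 1$ of the unit-scale bumps, then invoke \Cref{lem:Chebyshev-probab} with $k=1$. The only (harmless) difference is that you apply the chaos estimate pointwise in the frequency variable $\xi$ and treat $H^{S}_{x}(\R^{d})$ directly, whereas the paper applies it pointwise in $x$ and reduces to $L^{2}$ by commuting the randomization with $\langle\Delta\rangle^{S}$.
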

\begin{proof}
For any \(\gamma\geq2\), the Minkowski inequality implies
\[
\big(\E\|\rf\|_{L^{2}_{x}(\R^{d})}^{\gamma}\big)^{\sfrac{1}{\gamma}}\leq\big\|\big(\E|\rf|^{\gamma}\big)^{\sfrac{1}{\gamma}}\big\|_{L^{2}_{x}(\R^{d})}
\]
According to the definition of the unit-scale Wiener randomization \eqref{eq:initial-data-randomization}, the pointwise value \(\rf(x)\) has the form required by \Cref{lem:hypercontractivity}, and 
therefore 
\[
\big(\E|\rf|^{\gamma}\big)^{\sfrac{1}{\gamma}}\lesssim (\gamma-1)^{\sfrac{1}{2}}\big(\E|\rf|^{2}\big)^{\sfrac{1}{2}} \,.
\]
Finally, using independence of the random variables \(g_{k}\) and Plancherel's identity, we obtain that
\[
\big\|\big(\E|\rf|^{2}\big)^{\sfrac{1}{2}}\big\|_{L^{2}_{x}(\R^{d})}^{2}
\begin{aligned}[t]
& =\sum_{k,k'\in\Z^{d}}\E (g_{k}\bar{g_{k'}})\int_{\R^{d}}\QP_{k}f(x)\bar{\QP_{k'}f(x)}\dd x
\\
&\lesssim\sum_{k\in\Z^{d}}\int_{\R^{d}}\big|\QP_{k}f(x)\big|^{2}\dd x
=\sum_{k\in\Z^{d}}\int_{\R^{d}}|\chi_{k}(\xi)|^{2}\big|\FT{f}(\xi)\big|^{2}\dd x
\\
&\lesssim\|f\|_{L^{2}_{x}(\R^{d})}^{2}\,,
\end{aligned}
\]
where 
in the last inequality we used that \(\sum_{k\in\Z^{d}}|\chi_{k}(\xi)|^{2}\lesssim1\), since \(|\chi_{k}(\xi)|\leq1\) and for any \(\xi\in\R^{d}\) there are finitely many \(k\in\Z^{d}\) for which \(\xi\in\spt \chi_{k}\). Thus,
\[
\big(\E\|\rf\|_{L^{2}_{x}(\R^{d})}^{\gamma}\big)^{\sfrac{1}{\gamma}} \lesssim (\gamma-1)^{\sfrac{1}{2}}\|f\|_{L^{2}_{x}(\R^{d})}^{2}
\]
and from \Cref{lem:Chebyshev-probab} follows that
\[
\mathbb{P} \Big(\big\{\omega \in \Omega : \|\rf\|_{L^{2}_{x}(\R^{d})} > \lambda\big\}\Big) \leq C \exp\bigg(-c \frac{ \lambda^{2}}{\|f\|_{L^{2}_{x}(\R^{d})}^{2} }  \bigg) \,.
\]
Since the Wiener randomization commutes with the operator \(\langle\Delta\rangle^{S}\), we can replace \(f\) with \(\langle\Delta\rangle^{S}f\) and the claim follows.
\end{proof}

Next, we prove \Cref{lem:R-estimate}, and finally we show \Cref{prop:tree-bound-probabilistic}.

\begin{proof}[Proof of \Cref{lem:R-estimate} ]
We prove the claim by induction on \(|\tau|\). First, we claim that for each $t \in \R$ and $\tau \in \TT$ we have 
\[\eqnum\label{eq:R-tau-k-fourier-scale}
\diam\Big(\spt \big(\Fourier( R_{\tau}[\mb{f}](t)) \big)\Big)\leq2R_0|\tau|.
\]
For the base step, if $| \tau |=1$, that is, \(\tau=[\bullet]\), then $R_{[\bullet]}[f] = e^{it\Delta} f$. Thus, for every \(t\in\R\), we have \(
\diam\big(\spt\big(\Fourier(R_{[\bullet]}[f](t))\big)\big) \leq 2R_0\), since \(\Fourier(e^{it\Delta}f)(\xi)=e^{-4\pi^{2}i|\xi|^{2}}\FT{f}(\xi)\) and $\diam\big(\spt\big(\hat{f}\big)\big) \leq 2R_0$. 

Next, fix $n > 1$ and assume that \eqref{eq:R-tau-k-fourier-scale}
holds for any $\tau \in \TT$ with $|\tau| < n$. Choose any $\tau \in \TT$ with $|\tau|
= n$ and let $\tau_j \in \TT$, $j \in\{1, 2, 3\}$, be such that $\tau = [\tau_1, \tau_2,
\tau_3]$. For any functions
\(h_{j}\in L^{2}(\R^{d})\), \(j\in\{1,2,3\}\) it holds that
\[
\spt \big(\FT{h_{1} \bar{h_{2}} h_{3}}\big) \subset \spt (\FT{h}_{1}) - \spt (\FT{h}_{2}) +   \spt (\FT{h}_{3})\,,
\]
and thus 
\[
\diam\Big(\spt \big(\FT{h_{1} \bar{h_{2}} h_{3}}\big)\Big)
\leq \diam\big(\spt (\FT{f}_{1})\big)
+ \diam\big(\spt (\FT{f}_{2})\big)
+ \diam\big(\spt (\FT{f}_{3})\big).
\]
Since $n = |\tau| = |\tau_1| + |\tau_2| + |\tau_3|$ and $|\tau_j| \geq 1$, we
obtain that $|\tau_j| < n$, $j \in\{1, 2, 3\}$, and we can use the induction hypothesis to deduce for any $s \in \R$ that
\[
\begin{aligned}
\hspace{10em}&\hspace{-10em}  \diam\Big(\spt \big(\Fourier(R_{\tau_{1}} [\mb{f}_{1}] (s, \cdot)\,\bar{R_{\tau_{2}} [\mb{f}_{2}] (s, \cdot)}\,  R_{\tau_{3}} [\mb{f}_{3}] (s,\cdot))\big)\Big) 
\\
&\leq 
 \sum_{j = 1}^3
 \diam\Big(\spt \big(\Fourier(R_{\tau_{j}} [\mb{f}_{j}] (s, \cdot))\big)\Big)  \leq \sum_{j = 1}^3 2R_0|\tau_j| = 2R_0|\tau|.
\end{aligned}
\]
The induction step follows since the multiplication by $e^{4\pi^{2}
i(t-s)|\xi|^2}$, or integration in time does not change the support in
Fourier space.

Next, let us inductively prove the bound
\[ \eqnum\label{eq:R-estimate-2}
\Big\| R_{\tau} \big[ \mb{f} \big]\Big\|_{Y^{\mu(|\tau|,S)} (\R)} \lesssim_{R_0, |\tau|, S} \prod_{j = 1}^{| \tau |} \| f_{j} \|_{H^{S+2|\tau|\epsilon}(\R^{d})} \,,
\]
from which \eqref{eq:R-estimate} follows, since $\epsilon \lesssim_{|\tau|} 1$ is
small. If $| \tau |=1$, that is, \(\tau=[\bullet]\), then $\mu(|\tau|, S) = S$. Also,
$R_{[\bullet]}[f] = e^{it\Delta} \QP_{k} f$ and \eqref{eq:R-estimate-2} follows
from \eqref{eq:non-homogeneous-bound-unit-scale-2} with $h = 0$.

Fix \(n\in2\N+1\), $n > 1$ and assume that \eqref{eq:R-estimate-2} holds
for all \(\tau\in\TT \) with \(|\tau|<n\). As above, let \(\tau_{j}\in\TT_{n_{j}}\) be such that $\tau = [\tau_1, \tau_2, \tau_3]$ with
$|\tau_j| < n$ for each $j \in \{1, 2, 3\}$.  For the rest of the proof, we
allow all our constants to depend on \(|\tau|, S\), and $R_0$.

Since $R_{\tau}[\mb{f}]$ has bounded support as shown in
\eqref{eq:R-tau-k-fourier-scale}, then from
\eqref{eq:non-homogeneous-bound-unit-scale-2} with $v_0 = 0$ and
\(\mu(|\tau|,S+\epsilon)\geq\mu(|\tau|,S)+\epsilon\) we obtain that
\[
\big\|R_{\tau}[\mb{f}]\big\|_{Y^{\mu(|\tau|,S)+\epsilon}(\R)}
\begin{aligned}[t]
& \lesssim\big\|R_{\tau}[\mb{f}]\big\|_{Y^{\mu(|\tau|,S+\epsilon)}(\R)}
\\
& \lesssim\Big\|R_{\tau_{1}} [\mb{f}_{1}] (s, \cdot)\,\bar{R_{\tau_{2}} [\mb{f}_{2}] (s, \cdot)}\,  R_{\tau_{3}} [\mb{f}_{3}] (s,\cdot) 
\Big\|_{X^{*,\mu(|\tau|,S+2\epsilon)} (\R)} \,.
\end{aligned}
\]
Consequently, 
\eqref{eq:trilinear-Y-mu-bound} yields
\[
\begin{aligned}
\hspace{5em} & \hspace{-5em}
\Big\|R_{\tau_{1}} [\mb{f}_{1}] (s, \cdot)\,\bar{R_{\tau_{2}} [\mb{f}_{2}] (s, \cdot)}\,  R_{\tau_{3}} [\mb{f}_{3}] (s,\cdot)  \Big\|_{X^{*,\mu(|\tau|,S+2\epsilon)} (\R)}
\\
& 
\lesssim \prod_{j=1}^{3} \Big\|R_{\tau_{j}} [\mb{f}_{j}] \Big\|_{Y^{\mu(|\tau_{j}|,S+2\epsilon)+\epsilon}(\R)}
\lesssim \prod_{j=1}^{3} \Big\|R_{\tau_{j}} [\mb{f}_{j}] \Big\|_{Y^{\mu(|\tau_{j}|,S+3\epsilon)}(\R)}.
\end{aligned}
\]
Using the inductive hypothesis with \(S+3\epsilon\) in place of \(S\) we obtain 
\[
\Big\|R_{\tau_{j}} [\mb{f}_{j}] \Big\|_{Y^{\mu(|\tau_{j}|,S+3\epsilon)}(\R)}
\lesssim \prod_{k=1}^{|\tau_{j}|}\|f_{j}\|_{H^{S+(2|\tau_j| + 3)\epsilon}}.
\]
Since $|\tau_j| \leq |\tau| - 2$, then $2|\tau_j| + 3 \leq 2|\tau|$, and therefore 
\[
\Big\|R_{\tau}[\mb{f}]\Big\|_{Y^{\mu(|\tau|,S)}(\R)} \lesssim\prod_{k=1}^{|\tau|}\|f_{k}\|_{H^{S+2|\tau|\epsilon}}\,,
\]
as desired
\end{proof}

\begin{proof}[Proof of \Cref{prop:tree-bound-probabilistic}]
Fix \(\tau\in\TT\).  Since \(\TT_{n}=\emptyset\) if \(n\in2\N\), we only consider \(|\tau|\in2\N+1\). According to \Cref{lem:Chebyshev-probab}, it suffices to prove that
\[\eqnum\label{eq:high-moment-random-bound}
\Big( \E\big\| R_{\tau} [\rf]\big\|_{Y^{\mu(|\tau|, S)} (\R)}^{\gamma } \Big)^{\frac{1}{\gamma}}\lesssim_{|\tau|}
\gamma^{\frac{|\tau|}{2}} \|f\|_{H^{S+\epsilon}(\R^{d})}^{|\tau|}
\]
for all \(\gamma\) large enough. Fix $\gamma > \frac{2}{\epsilon_{0}}$ and recall that  $\epsilon_{0}$ is a small, fixed constant appearing in the definition \eqref{eq:def:Y-norm} of the space \(Y\) depending on $S$, $|\tau|$, $\epsilon$.

Since $\gamma >\frac{2}{\epsilon_{0}}> 2$ and \(\epsilon_{0}<2^{-100}\), from the definition of $Y^S$ and Minkowski's inequality it follows that
\[
\Big( \E\big\| R_{\tau} [\rf]\big\|_{Y^{\mu(|\tau|, S)} (\R)}^{\gamma } \Big)^{\frac{1}{\gamma}}
\begin{aligned}[t]
& =  \Big( \E \big( \sum_{N \in 2^{\N}} N^{2 \mu(|\tau|, S)}  \big\| \LP_{N} R_{\tau} [\rf]\big\|_{Y_{N}(\R)}^{2} \big)^{\frac{\gamma}{2}} \Big)^{\frac{1}{\gamma}}
\\ 
& \leq \Big( \sum_{N \in 2^{\N}} N^{2 \mu(|\tau|,S)} \big( \E \big\| \LP_{N} R_{\tau} [\rf]\big\|^{\gamma}_{Y_{N}(\R)} \big)^{\frac{2}{ \gamma}} \Big)^{\frac{1}{2}} .
\end{aligned}
\]
Minkowski's inequality also gives that
$(\E\|f\|_{L^{p}}^{\gamma})^{\sfrac{1}{\gamma}}\leq\big\|
\big(\E|f|^{\gamma}\big)^{\sfrac{1}{\gamma}}\big\|_{L^{p}}$ whenever $\gamma \geq p$, where the
$L^{p}$ norm is over $t, x_{1}$, or $x'$.  Having assumed that $\gamma$ is
larger than any $L^{p}$ integrability exponent of appearing in the
definition of the norms $Y_{N}(\R)$ (see \eqref{eq:def:Y-norm}), we
obtain that
\[\eqnum\label{eq:multi-Y-minkowski}
\Big( \E \big\| R_{\tau} [\rf]\big\|_{Y^{\mu(|\tau|,S)} (\R)}^{\gamma } \Big)^{\frac{1}{\gamma}}
\lesssim
\Big( \sum_{N \in 2^{\N}} N^{2\mu(|\tau|,S)} \Big\| \Big( \E \big| \LP_{N} R_{\tau} [\rf]\big|^{\gamma} \Big)^{\sfrac{1}{\gamma}} \Big\|_{Y_{N} (\R)}^{2} \Big)^{\frac{1}{2}}.
\]
Henceforth, we fix $N \in 2^{\N}$ and we focus on bounding each term of
the sum on the right-hand side of \eqref{eq:multi-Y-minkowski} individually.

Recall that the function \(\rf\) is obtained via the randomization
procedure \(\rf \eqd\sum_{k\in\Z^{d}}g_{k}\QP_{k} f\). We then use that the map
\(\mb{f}=(f_{1},\ldots,f_{|\tau|})\mapsto R_{\tau} [\mb{f}]\) is linear in the odd entries
and anti-linear in even entries \(f_{j}\) to obtain that
\[\eqnum\label{eq:mleor}
\LP_{N} R_{\tau} [\rf](t,x)
\begin{aligned}[t]
& = \LP_N\Big(\sum_{\mrl{\hspace{-1em}\mb{k}\in(\Z^{d})^{|\tau|}}}  g_{\mb{k}} R_{\tau} \big[ \QP_{k_{1}}f,\ldots,\QP_{k_{|\tau|}}f\big](t,x)\Big)
\\
& = \LP_N\Big(\sum_{\mrl{\hspace{-1em}\mb{k}\in(\Z^{d})^{|\tau|}}}  g_{\mb{k}} R_{\tau}^{\mb{k}} \big[f\big](t,x)\Big) \,,
\end{aligned}
\]
where \(\mb{k}=(k_{1},\ldots,k_{|\tau|})\), \(g_{\mb{k}}\eqd g_{k_{1}}\bar{g_{k_{2}}}\ldots  g_{k_{|\tau|-2}}\bar{g_{k_{|\tau|-1}}}g_{k_{|\tau|}}\), and we use the notation \(R_{\tau}^{\mb{k}}[f]\eqd R_{\tau} \big[ \QP_{k_{1}}f,\ldots,\QP_{k_{|\tau|}}f\big](t,x)\). We claim that the following crucial estimate
\[\eqnum\label{eq:Y-Wiener-bilinearizations}
\Big\| \Big( \E \big| \LP_{N} R_{\tau} [\rf]\big|^{\gamma} \Big)^{\sfrac{1}{\gamma}} \Big\|_{Y_{N} (\R)}^{2}
\lesssim_{|\tau|}
\begin{aligned}[t]
&  (\gamma-1)^{|\tau|} 
\\ 
& \hspace{-7em} \times \sum_{\mrl{\hspace{-1em}\mb{k},\mb{l}\in(\Z^{d})^{|\tau|}}}\;\big|\E\big(g_{\mb{k}}\bar{g_{\mb{l}}}\big)\big|
\Big\|   \LP_{N} R_{\tau}^{\mb{k}} [f]\Big\|_{Y_{N} (\R)}  \Big\| \LP_{N} R_{\tau}^{\mb{l}} [f]\Big\|_{Y_{N} (\R)}
\end{aligned}
\]
holds.

Assuming \eqref{eq:Y-Wiener-bilinearizations} holds, let us prove
\eqref{eq:tree-bound-probabilistic} first. Applying bound
\eqref{eq:Y-Wiener-bilinearizations} to \eqref{eq:multi-Y-minkowski}
we obtain that
\[
\begin{aligned}[t]
\Big( \E \big\| R_{\tau} [\rf]\big\|_{Y^{\mu(|\tau|,S)} (\R)}^{\gamma } \Big)^{\frac{1}{\gamma}}
  \lesssim_{|\tau|} (\gamma-1)^{\frac{|\tau|}{2}} \bigg(&
\sum_{\mrl{\hspace{-0.5em}\mb{k},\mb{l}\in(\Z^{d})^{|\tau|}}}\;\big|\E\big(g_{\mb{k}}\bar{g_{\mb{l}}}\big)\big|
\\ & \hspace{-8em}\times 
\sum_{N\in2^{\N}} N^{2\mu(|\tau|,S)}
\Big\|\LP_{N} R_{\tau}^{\mb{k}} [f]\Big\|_{Y_{N} (\R)}  \Big\| \LP_{N} R_{\tau}^{\mb{l}} [f]\Big\|_{Y_{N} (\R)} \bigg)^{\frac{1}{2}}\,.
\end{aligned}
\]
Using the Cauchy-Schwarz inequality in \(N\) and the definition \eqref{eq:def:Y-norm} of the norm $Y^{\sigma}(\R)$, it follows from the bound \eqref{eq:R-estimate} of \Cref{lem:R-estimate} that 
\[ \begin{aligned}
& \sum_{N\in2^{\N}} N^{2\mu(|\tau|,S)}
\Big\|\LP_{N} R_{\tau}^{\mb{k}} [f]\Big\|_{Y_{N} (\R)}  \Big\| \LP_{N} R_{\tau}^{\mb{l}} [f]\Big\|_{Y_{N} (\R)}
\\& \qquad
\begin{aligned}
 & \leq \Big(\sum_{N\in2^{\N}} N^{2\mu(|\tau|,S)} \Big\|\LP_{N} R_{\tau}^{\mb{k}} [f]\Big\|^2_{Y_{N} (\R)} \Big)^{\frac{1}{2}}  \Big(\sum_{N\in2^{\N}} N^{2\mu(|\tau|,S)} \Big\| \LP_{N} R_{\tau}^{\mb{l}} [f]\Big\|^2_{Y_{N} (\R)} \Big)^{\frac{1}{2}}
\\
& = \big\|R_{\tau}^{\mb{k}} [f]\big\|_{Y^{\mu(|\tau|,S)} (\R)} \big\|R_{\tau}^{\mb{l}} [f]\big\|_{Y^{\mu(|\tau|,S)} (\R)}
\\
&\lesssim \prod_{j = 1}^{| \tau |} \|\QP_{k_{j}} f \|_{H^{S+\epsilon}(\R^{d})} \prod_{j = 1}^{| \tau |} \|\QP_{l_{j}} f \|_{H^{S+\epsilon}(\R^{d})}  \,,
\end{aligned}
\end{aligned} \]
and hence, 
\[\eqnum\label{eq:eonrt}
\begin{aligned}
& \Big( \E \big\| R_{\tau} [\rf]\big\|_{Y^{\mu(|\tau|,S)} (\R)}^{\gamma } \Big)^{\frac{1}{\gamma}}
\lesssim_{|\tau|}  (\gamma-1)^{\frac{|\tau|}{2}} 
\\ &
\qquad\times  \bigg( \sum_{\mrl{\hspace{-0.5em}\mb{k},\mb{l}\in(\Z^{d})^{|\tau|}}} \; \big|\E\big(g_{\mb{k}}\bar{g_{\mb{l}}}\big)\big| \Big(\prod_{j = 1}^{| \tau |} \|\QP_{k_{j}} f \|_{H^{S+\epsilon}(\R^{d})}\Big)
\Big(\prod_{j = 1}^{| \tau |} \|\QP_{l_{j}} f \|_{H^{S+\epsilon}(\R^{d})}\Big)
\bigg)^{\frac{1}{2}}\,.
\end{aligned}
\]
Since \((g_k)_{k\in\Z^{d}}\) satisfy \(\big|\E\big(g_{\mb{k}}\bar{g_{\mb{l}}}\big)\big|\lesssim_{|\tau|} \E \big(|g_{0}|^{2|\tau|}\big)\lesssim_{|\tau|}1\) and \(\big|\E\big(g_{\mb{k}}\bar{g_{\mb{l}}}\big)\big|=0\) if any fixed \(m\in\Z^{d}\) appears an odd number of times in sequence 
$(k_1, \cdots, k_{|\tau|}, l_1, \cdots, l_{|\tau|})$ of elements in \(\Z^{d}\). Then \eqref{eq:eonrt}  becomes 
\[
\Big( \E \big\| R_{\tau} [\rf]\big\|_{Y^{\mu(|\tau|,S)} (\R)}^{\gamma } \Big)^{\frac{1}{\gamma}}
\begin{aligned}[t]
& \lesssim_{|\tau|} (\gamma-1)^{\frac{|\tau|}{2}}  \bigg( \sum_{\mrl{\hspace{-0.7em}\mb{k}\in(\Z^{d})^{|\tau|}}}\hspace{2em} \prod_{j = 1}^{| \tau |} \|\QP_{k_{j}} f \|_{H^{S+\epsilon}(\R^{d})}^{2} \bigg)^{\frac{1}{2}}
\\
& \lesssim_{|\tau|} (\gamma-1)^{\frac{|\tau|}{2}} \bigg( \sum_{k\in\Z^{d}} \|\QP_{k} f \|_{H^{S+\epsilon}(\R^{d})}^{2} \bigg)^{\frac{|\tau|}{2}}\,.
\end{aligned}
\]
We claim that
\[ \eqnum\label{eq:eohsnr}
\sum_{k\in\Z^{d}} \|\QP_{k} f \|_{H^{S+\epsilon}(\R^{d})}^{2} \leq \| f \|_{H^{S+\epsilon}(\R^{d})}^{2} \,.    
\]
Indeed, since $0 \leq \psi(\xi) \leq 1$, then 
$\sum_{k\in\Z^{d}} |\psi(\xi-k)|^{2} \leq \sum_{k\in\Z^{d}} |\psi(\xi-k)|=1$, and therefore
\[
\sum_{k\in\Z^{d}} \|\QP_{k} f \|_{H^{S+\epsilon}(\R^{d})}^{2}
\begin{aligned}[t]
& =
\sum_{k\in\Z^{d}} \int_{\R^{d}}(1+|\xi|)^{2S+2\epsilon}|\psi(\xi-k)|^{2} |\hat{f}(\xi)|^{2}\dd \xi
\\
& \leq \int_{\R^{d}}(1+|\xi|)^{2S+2\epsilon}|\hat{f}(\xi)|^{2} d\xi 
\end{aligned}
\]
and the required bound \eqref{eq:high-moment-random-bound}
follows.

We conclude the proof by showing that \eqref{eq:Y-Wiener-bilinearizations} holds. By \eqref{eq:mleor} and \Cref{lem:hypercontractivity} applied point-wise in $(t,x)$, we have
\[\eqnum\label{eq:smszn}
\begin{aligned}
 \Big( \E \big| \LP_{N} R_{\tau}[\rf] (t, x) \big|^{\gamma} \Big)^{\sfrac{1}{\gamma}}
& \lesssim_{|\tau|} (\gamma - 1)^{\frac{|\tau|}{2}} \Big( \E \big| \LP_{N} R_{\tau}[\rf](t, x) \big|^{2}
\Big)^{\sfrac{1}{2}}
\\
& \hspace{-5em} = (\gamma - 1)^{\frac{|\tau|}{2}} \bigg(\sum_{\mrl{\hspace{-0.5em}\mb{k},\mb{l}\in(\Z^{d})^{|\tau|}}}\; \E\Big( g_{\mb{k}} \bar{g_{\mb{l}} }\Big)\; R_{\tau}^{\mb{k}} \big[ f\big](t,x) \bar{R_{\tau}^{\mb{l}} \big[ f\big](t,x)}\bigg)^{\sfrac{1}{2}}\,.
\end{aligned}
\]
Next, for any \(v\colon\R\times\R^{d}\to\C\) we define the norm \(Y_{N}^{\sfrac{1}{2}}(\R)\) by halving all integrability exponents in the expression \eqref{eq:def:Y-norm} for the norm \(Y_{N}(\R)\):
\begin{multline*}
 \|v\|_{Y_{N}^{\sfrac{1}{2}} (\R)} \eqd
 \|v\|_{L_{t}^{\frac{1}{\epsilon_{0}}} L_{x}^{\frac{1}{1 - \epsilon_{0}}}(\R \times \R^{d})}
+ \|v\|_{L_{t}^{\frac{1}{1 - \epsilon_{0}}} L_{x}^{\frac{d}{d - 2}  \frac{1}{1 - \epsilon_{0}}} (\R \times \R^{d})}
\\
\begin{aligned}
& + \sum_{l = 1}^{d} \Big( N^{- \frac{1}{2}} \|v\|_{L_{e_{l}}^{( \frac{1}{1 -\epsilon_{0}}, \frac{1}{\epsilon_{0}}, \frac{1}{\epsilon_{0}} )} (\R)}
+ N^{- \frac{1}{2}} \|v\|_{L_{e_{l}}^{( \frac{1}{1- \epsilon_{0}}, \frac{1}{\epsilon_{0}}, \frac{\mf{c}_{0} }{2(1-\epsilon_{0})})} (\R )} \Big)
\\
& + \sum_{l = 1}^{d} \Big( N^{\frac{1}{2}} \| \UP_{e_{l}} v\|_{L_{e_{l}}^{(\frac{1}{\epsilon_{0}}, \frac{1}{1 - \epsilon_{0}}, \frac{1}{1 -\epsilon_{0}} )} (\R)} + N^{\frac{1}{2}} \| \UP_{e_{l}} v\|_{L_{e_{l}}^{( \frac{1}{\epsilon_{0}}, \frac{1}{1 - \epsilon_{0}}, \frac{1}{1 - \epsilon_{0}} )} (\R)} \Big).
\end{aligned}
\end{multline*}

Note that \(\big\||v|^{\sfrac{1}{2}}\big\|_{Y_{N}(\R)}^{2}\approx_{\epsilon_0} \|v\|_{Y_{N}^{\sfrac{1}{2}}(\R)}\), and consequently after taking the \(Y_{N}(\R)\) norm on both sides of \eqref{eq:smszn} and using the triangle inequality for \(Y_{N}^{\sfrac{1}{2}}(\R)\) we have that 
\[\begin{aligned}
& \Big\| \Big( \E \big| \LP_{N} R_{\tau} [\rf]\big|^{\gamma} \Big)^{\sfrac{1}{\gamma}} \Big\|_{Y_{N} (\R)}^{2}
\\
& \hspace{5em}\lesssim_{|\tau|, \epsilon_0} (\gamma-1)^{|\tau|}
\sum_{\mrl{\hspace{-1em}\mb{k},\mb{l}\in(\Z^{d})^{|\tau|}}}
\;\big|\E\big(g_{\mb{k}}\bar{g_{\mb{l}}}\big)\big|
\Big\|   \LP_{N} R_{\tau}^{\mb{k}} [f] \LP_{N} R_{\tau}^{\mb{l}} [f]\Big\|_{Y_{N}^{\sfrac{1}{2}} (\R)}.
\end{aligned}
\]
Finally,  by Hölder's inequality one has
\(
\big\| v_{1}v_{2} \big\|_{Y_{N}^{1/2} (\R)}   \lesssim\big\| v_{1}\big\|_{Y_{N} (\R)}  \big\| v_{2}\big\|_{Y_{N} (\R)}  
\)
and \eqref{eq:Y-Wiener-bilinearizations} follows.
\end{proof}

\section{Conclusion - Proof of main theorems}\label{sec:conclusion}

In this section, we use \Cref{thm:intro-deterministic-controlled-well-posedness} and \Cref{thm:random-multilinear-directional-bounds} 
proved respectively in \Cref{sec:deteministic-fixed-point} 
and \Cref{sec:multilinear-probabilistic-estimates} to deduce the
remaining theorems stated in \Cref{sec:introduction}.

\begin{proof}[Proof of \Cref{thm:random-multilinear-classical-bounds}]
Replace $S$ in the assertion of the theorem by $S'$. 
Then the assumption \eqref{eq:asoaz} 
is satisfied almost surely with $S$ replaced by $\tilde{S}$, after an application of \Cref{thm:random-multilinear-directional-bounds} with $S = \tilde{S} + \epsilon$, where $\epsilon > 0$ is so small that $S + \epsilon \leq S'$ and $\tilde{S} + \epsilon < \mu(k, S)$. 

The first assertion follows from \eqref{eq:time-continuity-sca}
and \Cref{thm:random-multilinear-directional-bounds} and the second one follows from 
 the ``Time-continuity and scattering of the multilinear data'' claim of \Cref{thm:intro-deterministic-controlled-well-posedness}. 
\end{proof}

\begin{proof}[Proof of Theorems \ref{thm:random-local-wellposedness},
\ref{thm:random-scattering}, and
\ref{thm:random-remainder-regularity}]
We proved the assertions for $S$ replaced by $S' \in (S_{\min}, \xs_c)$. 
For the proof of \Cref{thm:random-remainder-regularity} we also fix $M$ such that 
\(\mu(M+1,S')>\xs_{c}\) and any $\xs < \mu(M+1,S')$. Next, we choose $S \in (S_{\mrm{min}}, S')$ such that \(\mu(M+1,S)>\xs_{c}\). 
Finally we choose \(0<\epsilon\lesssim_{M,S,S'} 1\) and  \(0<\epsilon_{0}\lesssim_{\epsilon,M,S,S'}1\) such that \(\epsilon<\frac{\min(S,1,S'-S)}{3M}\)  and the conditions of 
\Cref{thm:intro-deterministic-controlled-well-posedness} and \Cref{thm:random-multilinear-directional-bounds} are fulfilled.

According to \Cref{thm:random-multilinear-directional-bounds} with $S$
replaced by \(S+\epsilon\), and because \(\mu(k,S+\epsilon)\geq\mu(k,S)+\epsilon\), we obtain for all
\(k\leq M\) that
\[\eqnum\label{eq:swres}
\mbb{P}\Big(\|\rz_{k}\|_{Y^{\mu(k,S)+\epsilon}(\R)}>\lambda\Big)\leq
C \exp\Bigg(- \frac{ \lambda^{\frac{2}{k}}}{C\|f\|_{H^{S+2\epsilon}_{x} (\R^{d})}^{2}}\Bigg)\,.
\]
Thus, we focus only on a probability set \(\Omega_{0}\) with \(\mbb{P}(\Omega_{0})=1\) for which
\[
\max_{k\leq M}\|\rz_{k}\|_{Y^{\mu(k,S)+\epsilon}(\R)}<+\infty .
\]
Define 
\begin{equation*}
    R := \max\{\|\rf\|_{H^{S+\epsilon}_{x}}, \max_{k\in\{1,\ldots,M\}}  \|\rz_{k}\|_{Y^{\mu(k,S)+\epsilon}(\R)}\}
\end{equation*}
and the random time
\[ \eqnum\label{eq:srmtm}
T=
\begin{dcases}
\frac{1}{C}\big(1+\max_{k\leq M}\|\rz_{k}\|_{Y^{\mu(k,S)+\epsilon}(\R)}\big)^{-\sfrac{3}{c}} &\text{if } R\geq \delta_{0}\,,
\\
+ \infty &\text{if }R < \delta_{0} \,,
\end{dcases}
\]
with \(\delta_{0}\), \(c\), and \(C\) as in
\Cref{thm:intro-deterministic-controlled-well-posedness}. The local
existence of a random solution \(u=\rz_{\leq M}+u^{\#}_{M}\) on \([0,T)\)
with
\[
\|u^{\#}_{M}\|_{X^{\xs}([0,T))}+\|u^{\#}_{M}\|_{C^{0}\big([0,T);H^{\xs}_{x}(\R^{d})\big)}<\infty
\] is guaranteed by the local and global existence claims
\Cref{thm:intro-deterministic-controlled-well-posedness}. Such a
solution is unique among those satisfying \(u^{\#}_{M}\in
C^{0}\big([0,T);H^{\xs}_{x}(\R^{d})\) as claimed by the uniqueness part
of
\Cref{thm:intro-deterministic-controlled-well-posedness}. Furthermore,
since \(\xs<\mu(M+1,S')\) is arbitrary, the uniqueness claim of \Cref{thm:intro-deterministic-controlled-well-posedness} shows that
\[
\|u^{\#}_{M}\|_{C^{0}\big([0,T);H^{\tilde{\xs}}_{x}(\R^{d})\big)}<\infty
\]
for any \(\tilde{\xs}<\mu(M+1,S')\), as required by
\Cref{thm:random-remainder-regularity}.

The measurability of \(u\) follows from the fact that \(u^{\#}_{M}\)
depends continuously on the multilinear data \((\rf,\vec{\rz}_{M})\).

To establish almost-certain continuity of the full solution \(u\) we
recall that \Cref{thm:random-multilinear-classical-bounds} applied
with \(S+\epsilon\) in place of \(\epsilon\) shows that almost surely
\[
\|\rz_{k}\|_{C^{0}\big([0,\infty);H^{\mu(k,S)}_{x}(\R^{d})\big)}<\infty\,,
\]
while for \(k=1\) it holds that \(\|\rz_{1}\|_{C^{0}\big([0,\infty);H^{S'}_{x}(\R^{d})\big)}<\infty\) since, using \Cref{cor:probability-of-wiener-randomization}, we get
\[
\|\rz_{1}(t)\|_{H^{S'}_{x}(\R^{d})}=\|e^{it\Delta}\rf\|_{H^{S'}_{x}(\R^{d})}
=\|\rf\|_{H^{S'}_{x}(\R^{d})} <\infty\,.
\]
Since we chose \(S\) such \(\mu(k,S)>S'\) for \(k\geq2\), the triangle inequality implies
\[
\|u\|_{C^{0}\big([0,\infty);H^{S'}_{x}(\R^{d})\big)}\leq
\begin{aligned}[t]
\|\rz_{1}\|_{C^{0}\big([0,\infty);H^{S'}_{x}(\R^{d})\big)}&+ \sum_{k=2}^{M}\|\rz_{k}\|_{C^{0}\big([0,\infty);H^{S'}_{x}(\R^{d})\big)}
\\
&+\|u^{\#}_{M}\|_{C^{0}\big([0,\infty);H^{S'}_{x}(\R^{d})\big)}<\infty\,,
\end{aligned}
\]
as required. 

Next, since \( T \geq \frac{1}{C}\big(1+\max_{k\leq M}\|\rz_{k}\|_{Y^{\mu(k,S)+\epsilon}(\R)}\big)^{-\sfrac{3}{c}}\), according to the definition \eqref{eq:srmtm}. The probability estimates \eqref{eq:swres} imply for any $\lambda \geq \delta_{0}$ and $k \leq M$ that 
\[
    \mbb{P}\Big(\|\rz_{k}\|_{Y^{\mu(k,S)+\epsilon}(\R)}>\lambda\Big)\leq
C \exp\Bigg(- \frac{ \lambda^{\frac{2}{M}}}{C\|f\|_{H^{S+2\epsilon}_{x} (\R^{d})}^{2}}\Bigg)\,.
\]
Then \eqref{eq:random-time-probability-distribution} follows after standard algebraic manipulations. 

To prove \Cref{thm:random-scattering}, choose $\delta_0$ as in \Cref{thm:intro-deterministic-controlled-well-posedness} and note 
that by choosing appropriate $\epsilon, \epsilon_0, \xs_c$, and $M$ in   \Cref{thm:intro-deterministic-controlled-well-posedness}, then $\delta_0$ depends only on $S$ and $\xs_c$.  
Set
\[
\Omega_{\mrm{glob}} = \{\omega:
\|\rf\|_{H^{S+\epsilon}_{x}}\leq\delta_{0}, 
\|\rz_{k}\|_{Y^{\mu(k,S)+\epsilon}(\R)}\leq\delta_{0}\quad k\in\{1,\ldots,M\}\}
\]
and note that by \eqref{eq:srmtm}, $T = \infty$
on $\Omega_{\mrm{glob}}$. Then \eqref{eq:swres} implies
\[
\mbb{P}\Big(\|\rz_{k}\|_{Y^{\mu(k,S)+\epsilon}(\R)}\leq \delta_0 \Big)\geq
1 - C \exp\Bigg(- \frac{ \delta_0^{\frac{2}{k}}}{C\|f\|_{H^{S+2\epsilon}_{x} (\R^{d})}^{2}}\Bigg)\,,
\]
and therefore 
\[
\mbb{P}\Big(\max_{k = 1, \ldots, M}\|\rz_{k}\|_{Y^{\mu(k,S)+\epsilon}(\R)}\leq \delta_0 \Big)\geq
1 - CM\exp\Bigg(- \frac{ \delta_0^{\frac{2}{k}}}{C\|f\|_{H^{S+2\epsilon}_{x} (\R^{d})}^{2}}\Bigg)\,.
\]
Moreover, by \eqref{eq:probability-of-wiener-randomization} we have 
\[
 \mbb{P}\Big(  \|\rf\|_{H^{S+\epsilon}_{x}}\leq\delta_{0} \Big)
 \geq 1 - C\exp\Bigg(- \frac{\delta_0^{\frac{2}{k}}}{C\|f\|_{H^{S+2\epsilon}_{x} (\R^{d})}^{2}}\Bigg)
\]
and \eqref{eq:probabilistic-global-probability} follows.

Scattering for the random multilinear terms \(\rz_{k}\) follows from \Cref{thm:random-multilinear-classical-bounds} while scattering in \(H^{\xs}_{x}(\R^{d})\) of \(u^{\#}_{M}\) follows from \eqref{eq:remainder-scattering} since \(T=+\infty\) on 
$\Omega_{\mrm{glob}}$. 
\end{proof}

\appendix

\section{Multilinear chaos estimates}\label{sec:multilinear-chaos-appendix}

We prove the following multilinear chaos estimate. Compared to \Cref{lem:hypercontractivity} we index the random variables by \(\Z\) in place of \(\Z^{d}\), however this has no effect on the proof and merely simplifies the notation. 

\begin{theorem}\label{thm:hypercontractivity-main}
Let $( g_{k} )_{k\in\Z} $ be a collection of complex-valued
i.i.d random variables, such that
\begin{equation}\label{eq:g-finite-moments}
 \E \big[| g_{k} |^p\big] < \infty \qquad \text{for all} \quad  p \in [1, \infty) . 
\end{equation}
Let $c :  \Z^{n} \rightarrow \C$ be a function with finite support, and fix
$\alpha_{j} \in \N$ for $j \in \{ 1, \ldots, n \}$. Then, the random variable
\[
H = \sum_{\vec{k} \in \Z^{n}} c (\vec{k}) g_{k_{1}}^{\alpha_{1}} \ldots g_{k_{d}}^{\alpha_{d}}
\]
satisfies the estimate
\[
\Big( \E \big[| H |^p \big]\Big)^{1 / p} \leq C_{n, p, \vec{\alpha}} \Big( \E\big[ | H |^2\big] \Big)^{1 / 2}\qquad \textrm{ for all } \quad  p \in [2, \infty)\,,
\]
where the constant $C_{n,p, \vec{\alpha},g}$ depends only on \(n\),
\(p\), \(\vec{\alpha}\), and the distribution of the variables
\( (g_{k})_{k\in\N}\), but not on the function $c$ nor on the size of its support.
\end{theorem}

Let $\mu_{g}$ be the distribution (push-forward probability measure) on
$\C$ of any of the random variables $g_{n}$, and let
\(L_{g}\in(\N\setminus\{0\})\cup\{+\infty\}\) be the dimension of
\(\linspan\big(\{x^{\alpha}\}_{\alpha\in\N}\big)\) in
\(L^{2}(\C,\mu_{g})\). We use the notation
\(\N_{L} := \{n\in\N\st |n|<L\}\) and
\(\Z_{L} := \{n\in\Z\st |n|<L\}\).  Let
$(P^{\alpha})_{\alpha \in \N_{L_{g}}}$ be the a collection of
\(L^{2}(\C,\mu_{g})\)-orthonormal polynomials such that
\(x^{\alpha}\in\linspan(\{P^{\alpha'}\}_{\alpha'\leq \alpha})\). One can construct
$(P^{\alpha})_{\alpha \in \N_{L_{g}}}$ via the standard Gram-Schmidt algorithm
applied to \((x^{\alpha})_{\alpha\in\N_{L_{g}}}\). Since the random variables
\(g_{k}\) are complex-valued, by convention we set
\(P^{-\alpha}=\bar{P^{\alpha}}\). This way
$(P^{\alpha})_{\alpha \in \N_{L_{g}}}$ are orthonormal, as well as
$(P^{-\alpha})_{\alpha \in \N_{L_{g}}}$, however \(P^{\alpha}\) may fail to be orthogonal
to \(P^{\beta}\) when \(\beta<0<\alpha\). Finally, note tha \(P^{0}\) coincides with a
constant function \(\mu_{g}\)-almost everywhere, so for any
\(\alpha\neq0\), the polynomial \(P^{\alpha}\) is orthogonal to constants.

For any $\vec{\alpha} \in (\Z_{L_{g}}\setminus\{0\})^{n}$, and \(\vec{k}\in\Z^{n}\) we set
\[
P^{\vec{\alpha}}_{n} (g_{\vec{k}}) \eqd \prod_{j = 1}^{n} P^{\alpha_{j}} (g_{k_{j}})\,.
\]
For any \(n\in\N\setminus\{0\}\) let
\[
\Delta^{n}\eqd\{\vec{k}\in\N^{d}\st k_{1}<k_{2}<\ldots<k_{d}\}
\]
denote the strict \(n\)-simplex of vectors with integer coefficients.

We need a multi-orthogonality statement, for which we introduce the
following notation.  Given $q \in \N\setminus\{0\}$ and
$\vec{n} = (n_{1}, \ldots, n_{q}) \in (\N\setminus\{0\})_{+}^{q}$, and
\(\vec{k}_{r}\in\Delta^{n_{r}}\), for each $r\in\{1,\ldots,q\}$ we say that
\((\vec{k}_{1}, \ldots, \vec{k}_{q}) \in\ms{Z}^{\vec{n},q}\) if for some
$r_{*} \in \{1,\ldots,q\}$ and $j_{*} \in \{1,\ldots,n_{r_{*}}\}$ one has
\((\vec{k}_{r})_{j}\neq(\vec{k}_{r_{*}})_{j_{*}}\) for all
\((r,j)\neq(r_{*},j_{*})\), that is, if there exists an integer that
appears only once among the numbers
\( \big\{ (\vec{k}_{r})_{j}\st r\in\{1,\ldots,q\},\, j\in\{1,\ldots,n_{r}\}\big\}\). Note that if
\(q=1\) and \(\vec{k}\in\Delta^{n}\) then, automatically, \(\vec{k}\in\ms{Z}^{n,1}\).

\begin{lemma}\label{lem:P-multi-orthogonality}
Fix \(q\in\N\setminus\{0\}\), \(\vec{n} \in(\N\setminus\{0\})^q\), and
$\vec{\alpha}_{r} \in (\N_{L_{g}}\setminus\{0\})^{n}$ for each $r\in\{1, \ldots, q\}$.

There exists a constant \(C_{\vec{n}, (\vec{\alpha}),q,g}>0\) such that
\[\eqnum\label{eq:P-multi-boundedness}
\E\Big[\Big|\prod_{r=1}^{q}P^{\vec{\alpha}_{r}}_{n_{r}}(g_{\vec{k}_{r}})\Big|\Big]\leq C_{\vec{n}, (\vec{\alpha}),q,g}
\]
for any choice of \(\vec{k}_{r}\in\Delta^{n_{r}}\) for each
\(r\in\{1,\ldots,q\}\). Furthermore, if
\((\vec{k}_{r})_{r\in\{1,\ldots,q\}}\in\ms{Z}^{\vec{n},q}\) then
\[\eqnum\label{eq:P-multi-orthogonality}
\E\Big[\prod_{r=1}^{q}P^{\vec{\alpha}_{r}}_{n_{r}}(g_{\vec{k}_{r}})\Big]=0.
\]

Finally, in the specific case when \(q=2\), if  for any \(n_{1},n_{2}\in\N\setminus\{0\}\), \(\vec{k}\in\Delta^{n_{1}}\), \(\vec{l}\in\Delta^{n_{2}}\), and \(\vec{\alpha}\in(\N\setminus\{0\})^{n}\) and \(\vec{\beta}\in(\N\setminus\{0\})^{n}\), it holds that 
\[\eqnum\label{eq:P-2-generalized-orthonormality}
\E\big[P^{\vec{\alpha}}_{n_1}(X_{\vec{k}}) \bar{P^{\vec{\beta}}_{n_2}(X_{\vec{l}})}\big]=
\begin{dcases}
 1 &\text{if } n_{1}=n_{2},\,\vec{k}=\vec{l},\text{ and }{\vec{\alpha}=\vec{\beta}},
 \\
0 & \text{otherwise.}
\end{dcases}
\]
\end{lemma}

Note that if $q = 2$, then \eqref{eq:P-2-generalized-orthonormality} strengthens \eqref{eq:P-multi-boundedness} by providing a lower bound, and generalizes  \eqref{eq:P-multi-orthogonality} by establishing orthogonality when \(\vec{k}=\vec{l}\), \(n_{1}=n_{2}\) (and thus \((\vec{k},\vec{l}) \not \in \ms{Z}^{(n_{1},n_{2}),2}\)), but  \(\vec{\alpha}\ne\vec{\beta}\).

\begin{proof}[Proof of \Cref{lem:P-multi-orthogonality}]
First, by using the Hölder's inequality and \eqref{eq:g-finite-moments}, we obtain
\begin{align*}
 \Big| \E \big[ \prod_{r = 1}^{q} P^{\vec{\alpha}_{r}}_{n_{r}} (g_{\vec{k}_{r}}) \big] \Big| 
&=
\Big| \E \big[ \prod_{r = 1}^{q}\prod_{j = 1}^d
P^{(\alpha_{r})_{j}} (g_{(\vec{k}_{r})_{j}})
\big] \Big|
\\
&\leq 
 \prod_{r = 1}^{q}\prod_{j = 1}^{n_{r}}  \E \big[| P^{(\alpha_{r})_{j}} (g_{(\vec{n}_{r})_{j}}) |^{n_{+}}\big]^{\frac{1}{n_{+}}} 
\leq C_{d, q,(\vec{\alpha}),g} \,,
\end{align*}
where \(n_{+}\eqd\sum_{r=1}^{q}n_{r}\). In the last inequality, we used the assumption \eqref{eq:g-finite-moments} and that the degrees of the finitely many polynomials $P^{(\alpha_{r})_{j}}$ are bounded from above. 

To show \eqref{eq:P-multi-orthogonality} we note by the 
assumption that there exists some \(r_{*}\in\{1,\ldots,q\}\)
and \(j_{*} \in \{1,\ldots,d_{r_{*}}\}\) such that  \((\vec{n}_{r})_{j}\neq
(\vec{n}_{r_{*}})_{j_{*}}\) unless \((r,j)=(r_{*},j_{*})\). Then, by independence of the variables \((g_{n})_{n\in\Z}\), we have that 
\[
\E \big[ \prod_{r = 1}^{q} P^{\vec{\alpha_{r}}}_{n_{r}} (g_{\vec{k}_{r}}) \big] 
\begin{aligned}[t]
&=
\E \big[ \prod_{r = 1}^{q } \prod_{j=1}^{n_{r}} P^{(\alpha_{r})_{j}} (g_{(\vec{k}_{r})_{j}}) \big]
\\
& =
\E[P^{(\alpha_{r_{*}})_{j_{*}}}(g_{(\vec{k}_{r_{*}})_{j_{*}}})]
\,\E \Big[ \prod_{\nqquad \mathrlap{(r,j)\neq(r_{*},j_{*})}} P^{(\alpha_{r})_{j}} (g_{(\vec{k}_{r})_{j}}) \Big]
=0\,,
\end{aligned}
\]
where the last equality holds since \((\alpha_{r_{*}})_{j_{*}}\neq0\) and thus \(\E[P^{(\alpha_{r_{*}})_{j_{*}}}(g_{(\vec{k}_{r_{*}})_{j_{*}}})]=0\) by the $\mu_{g}$ orthogonality of $P^{\alpha_{j_{*}}}$ to constants. Hence, the proof of \eqref{eq:P-multi-orthogonality} follows.

To show \eqref{eq:P-2-generalized-orthonormality} note that if $n_{1} \neq n_{2}$ or $\vec{k} \neq \vec{l}$, then $(\vec{k}, \vec{l}) \in \ms{Z}^{ (n_{1}, n_{2}),2}$ and thus the orthogonality claim follows from \eqref{eq:P-multi-orthogonality}. On the other hand, if $n_{1} = n_{2}$ and  $\vec{k} = \vec{l}$, then
\[
\E\big[P^{\vec{\alpha}}_{n_{1}}(g_{\vec{k}}) \bar{P^{\vec{\beta}}_{n_{2}}(g_{\vec{l}})}\big]
= \prod_{j = 1}^{n_{1}} \E\big[P^{\alpha_{j}}(g_{k_{j}}) \bar{P^{\beta_{j}} (g_{k_{j}})}\big].
\]
If $\vec{\alpha} \neq \vec{\beta}$, that is if $\alpha_{j} \neq \beta_{j}$ for some $j =\{1, \ldots,
n_{1}\}$, then \(\E\big[P^{\alpha_{j}}(g_{k_{j}}) \bar{P^{\beta_{j}} (g_{k_{j}})}\big]=0\)
by orthogonality, thus showing
\eqref{eq:P-2-generalized-orthonormality}. If \(\vec{\alpha}=\vec{\beta}\), then, by
normalization, one has \(\E\big[P^{\alpha_{j}}(g_{k_{j}}) \bar{P^{\beta_{j}}
(g_{k_{j}})}\big]=1\) for each \(j\in\{1,\ldots,n\}\) and
\eqref{eq:P-2-generalized-orthonormality} follows.
\end{proof}

Next, we prove an Brascamp-Lieb -type deterministic estimate which is
essential for the proof of \Cref{thm:hypercontractivity-main} once independence has been accounted for. 

For $n \in \N\setminus\{0\}$ let
$\mc{E}^{n} := (\hat{e}_{1}, \hat{e}_{2}, \ldots, \hat{e}_{n}) \in \Z^{n}$ be
the standard basis of of $\Z^{n}$, that is, for any
$l \in \{1, \ldots, n\}$ one has
\[
(\hat{e}_{l})_{j} = \delta_{l, j} = \begin{cases}
1 & \text{if } j = l,\\
0 & \text{otherwise} .
\end{cases}
\]
We omit the dependence of $\hat{e}_{l}$ on \(n\) as it is clear from context.

\begin{definition}\label{def:pltk}
Let $n,q \in \N\setminus\{0\}$ and $N \in \{ 1, \ldots, q d \}$. We denote by
$\ms{P}^{q}_{N \rightarrow n}$ the collection of $q$-tuples of maps
$\vec{\pi} = \big( \pi_{r} \big)_{r \in \{ 1, \ldots, q \}}$ such that for each
$r \in \{ 1, \ldots, q \}$ the following conditions hold:
\begin{enumerate}
\item[(i)] The map $\pi_{r} : \Z^{N} \rightarrow \Z^{n}$ is $\Z$-linear.
\item[(ii)] For each $\hat{e} \in \mc{E}^{N}$ either $\pi_{r} (\hat{e}) = 0$ or
$\pi_{r} (\hat{e}) \in \mc{E}^{n}$.
\item[(iii)] For any \(\hat{e},\hat{e}'\in\mc{E}^{N}\) with 
$\pi_{r}(\hat{e}) = \pi_{r}(\hat{e}') \neq 0$ we have $\hat{e} = \hat{e}'$. Equivalently, 
$\hat{e} \mapsto
\pi_{r} (\hat{e})$ is injective on $\left\{ \hat{e} \in \mc{E}^{N} \st \pi_{r}
(\hat{e}) \neq 0 \right\}$.
\end{enumerate}
We say $\vec{\pi} \in \ms{P}^{q}_{N \rightarrow n, 2}$ if $\vec{\pi} \in
\ms{P}^{q}_{N \rightarrow n}$ and
\begin{enumerate}
\item[(iv)] for each $\hat{e} \in \mc{E}^{N}$ there are at least two distinct indices $r \in \{ 1, \ldots, q \}$ with $\pi_{r} (\hat{e}) \neq 0$.
\end{enumerate}
\end{definition}

\begin{theorem}\label{thm:hcdb}
Let $n\in \N\setminus\{0\}$ and \(q\in\N\) with $q \geq 2$. Then for any
$N \in \{ 1, \ldots, q n \}$, any
$\vec{\pi}=(\pi_{1}, \ldots, \pi_{q}) \in \ms{P}^{q}_{N \rightarrow n, 2}$, and any functions
$c_{r} : \Z^{n} \rightarrow \C$, $r \in \{ 1, \ldots, q \}$ it holds that
\[
\sum_{\vec{k} \in \Z^{N}} \Big( \prod_{r= 1}^{q} c_{r} (\pi_{r} (\vec{k})) \Big) \leq \prod_{r = 1}^{q} \| c_{r} \|_{l^2 ( \Z^{n} )} .
\]
\end{theorem}

\begin{proof}
We prove the claim by induction on the dimension \(N\) of the summation index \(\vec{k} \in \Z^{N}\).

For the base step \(N = 1\), let $\mc{S}_{1} \eqd \big\{r: \pi_{r}(\hat{e}_{1}) \neq 0\big\}$ and
denote by $\mc{S}_{1}^c$ the complement of $\mc{S}_{1}$ in $\{1, \ldots, q\}$. Then, $|\mc{S}_{1}| \geq 2$ by assumption (iv) of \Cref{def:pltk} and the
Hölder inequality implies that 
\[
\sum_{k \in \Z} \Big( \prod_{r = 1}^q c_{r} (\pi_{r} (k))\Big) 
\begin{aligned}[t]
& = \sum_{k \in \Z} \Big( \prod_{r \in \mc{S}_{1}} c_{r} (k)\Big) \times \Big( \prod_{r \in \mc{S}^c_{1}} c_{r} (0)\Big) 
\\
& \leq  \prod_{r \in \mc{S}_{1}} \Big(\sum_{k \in \Z} |c_{r} (k)|^{\mc{S}_{1}}\Big)^{\frac{1}{|\mc{S}_{1}|}}
\prod_{r \in \mc{S}^c_{1}} \|c_{r}\|_{l^2( \Z^{d} )}
\\
& \leq  \prod_{r \in \mc{S}_{1}} \Big(\sum_{k \in \Z} |c_{r} (k)|^{2}\Big)^{\frac{1}{2}}
\prod_{r \in \mc{S}^c_{1}} \|c_{r}\|_{l^2( \Z^{d} )}
\\
& =  \prod_{r = 1}^q \| c_{r} \|_{l^2( \Z^{d} )} \,, 
\end{aligned}
\]
as desired. 

Next, we assume that the assertion holds for \(N - 1\) for some \(N \geq 2\)
and let us prove the statement for $N$. Let $\mc{S}_{N} \eqd\big\{r:
\pi_{r}(\hat{e}_{N}) \neq 0\big\}$ and denote by $\mc{S}_{N}^c$ the complement of
$\mc{S}_{N}$ in $\{1, \ldots, q\}$. By assumption (iv) of \Cref{def:pltk}, it
holds that $|\mc{S}_{N}| \geq 2$. Thus, by the linearity of $\pi_{r}$, we have
\[
\sum_{\vec{k} \in \Z^{N}} \Big( \prod_{r = 1}^q c_{r} (\pi_{r} (\vec{k})) \Big)
=
\sum_{\vec{l} \in \Z^{N - 1}} \begin{aligned}[t]
& \Big( \sum_{k \in \Z} \prod_{r \in \mc{S}_{N}} c_{r} \big(\pi_{r} (\vec{l},0) + k \pi_{r}(\hat{e}_{N})\big) \Big)
\\
& \quad\times \Big( \prod_{r \in \mc{S}_{D}^c} c_{r} (\pi_{r} (\vec{l},0))\Big).
\end{aligned}
\]
For each \(r\in\{1,\ldots,q\}\) we define the maps $\tilde{\pi}_{r} : \Z^{N - 1} \rightarrow \Z^{n}$ by setting
\[
\tilde{\pi}_{r} (\vec{l}) = \pi_{r} \big(\vec{l}, 0\big) \,,
\]
and we define the index $j(r) \in \{1, \ldots, n\}$ so that $\pi_{r}(\hat{e}_{N})=\hat{e}_{j(r)} $. Then, for any $\vec{l} \in \Z^{N - 1}$ the Hölder inequality and $|\mc{S}_{N}| \geq 2$ imply that 
\[
\begin{aligned}[t]
& \sum_{k \in \Z} \prod_{r \in \mc{S}_{N}} c_{r} (\pi_{r} (\vec{l}, 0) + k \pi_{r}(\hat{e}_{N}))
= \sum_{k \in \Z} \prod_{r \in \mc{S}_{N}} c_{r} (\tilde{\pi}_{r} (\vec{l}) + k \pi_{r}(\hat{e}_{N}))
\\
& \qquad \leq \prod_{r \in \mc{S}_{N}}
\Big( \sum_{k \in \Z} | c_{r} (\tilde{\pi}_{r} (\vec{l}) + k \pi_{r}(\hat{e}_{N})) |^{|\mc{S}_{N}|} \Big)^{\frac{1}{|\mc{S}_{N}|}}
\leq \prod_{r \in\mc{S}_{N}} g_{r} (\tilde{\pi}_{r} (\vec{l})) \,.
\end{aligned}
\]
The functions $g_{r} : \Z^{n} \rightarrow [0, + \infty)$ are given
\[
g_{r} (\vec{m}) \eqd
\begin{dcases}
\Big( \sum_{k \in \Z} \Big| c_{r} \big(\vec{m}+k\hat{e}_{j(r)}\big) \Big|^2 \Big)^{\frac{1}{2}} & \text{if } (\vec{m})_{j(r)}=0,
\\
0 & \text{otherwise}.
\end{dcases}
\]
Note that $\big(\tilde{\pi}_{r}(\vec{l})\big)_{j(r)} =  \big(\pi_{r}((\vec{l}, 0)\big)_{j(r)} = 0$ by (iii) of \Cref{def:pltk}, and consequently 
\begin{equation*}
g_{r} (\tilde{\pi}_{r} (\vec{l})) = 
\Big( \sum_{k \in \Z} | c_{r} (\tilde{\pi}_{r} (\vec{l}) + k \pi_{r}(\hat{e}_{N})) |^{2}\Big)^{\frac{1}{2}} \,.
\end{equation*}
Therefore, 
\[ 
\sum_{\vec{k} \in \Z^{N}} \Big( \prod_{r = 1}^q c_{r} (\pi_{r} (\vec{k}))  \Big)
\leq \sum_{\vec{l} \in \Z^{N - 1}} \Big( \prod_{r \in \mc{S}_{N}} g_{r} (\tilde{\pi}_{r} (\vec{l})) \Big)
\Big( \prod_{r \in \mc{S}_{N}^c} c_{r} (\tilde{\pi}_{r} (\vec{l})) \Big) \,.
\]
Next, we verify that $(\tilde{\pi}_{1}, \ldots, \tilde{\pi}_{q}) \in \ms{P}^{q}_{N-1 \rightarrow n, 2}$.
First the $\Z$-linearity of $\tilde{\pi}_{r}$ required for (i) follows from the
$\Z$ linearity of $\pi_{r}$. To prove (ii), for any $\hat{e} \in \mc{E}^{N-1}$,
we have $(\hat{e}, 0) \in \mc{E}^{N}$, and therefore $\tilde{\pi}_{r}(\hat{e}) =
\pi_{r}(\hat{e}, 0) \in \mc{E}^{n} \cup \{0\}$, as desired. Also, 
if $\tilde{\pi}_{r}(\hat{e}) = \tilde{\pi}_{r}(\hat{e}') \neq 0$, then $\pi_{r}((\hat{e},0)) = \pi_{r}((\hat{e}',0)) \neq 0$. Since $\pi_{r}$ satisfies (iii) of \Cref{def:pltk}, then $(\hat{e},0) = (\hat{e}',0)$, and therefore $\hat{e} = \hat{e}'$ and (iii) follows. 
Finally, for any $\hat{e} \in \mc{E}^{N-1}$ we have $(\hat{e}, 0) \in \mc{E}^{N}$, and by assumption (iv)
for $(\pi_{1}, \ldots, \pi_{q})$, there are at least two indices $r \in \{1, \ldots, q\}$
such that $0 \neq \pi_{r}(\hat{e}, 0) = \tilde{\pi}_{r}(\hat{e})$, which establishes
(iv).

Since $(\tilde{\pi}_{1}, \ldots, \tilde{\pi}_{q}) \in \ms{P}^{q}_{N-1 \rightarrow n, 2}$, then by the
induction hypothesis we have
 \[
\sum_{\vec{k} \in \Z^{D}} \Big( \prod_{r = 1}^{q} c_{r} (\pi_{r} (\vec{k}))\Big)
\leq \Big( \prod_{r \in \mc{S}_{D}} \| g_{r} \|_{l^2( \Z^{d})} \Big)
\Big( \prod_{r \in \mc{S}_{D}^{c}} \| c_{r} \|_{l^2 ( \Z^{d} )} \Big) .
\]
Then, Fubini's Theorem yields
\[\begin{aligned}[t]
\| g_{r} \|_{l^2( \Z^{d } )}
& =
\Big\| \Big( \sum_{k \in \Z} | c_{r} (k_{1},\ldots, k_{j(r)-1},k, k_{j(r) + 1}, \ldots k_{d}) |^2 \Big)^{\frac{1}{2}} \Big\|_{l^2 ( \Z^{d - 1} )}
\\
&= \| c_{r}\|_{l^2 ( \Z^{d} )}\,,
\end{aligned}
\]
concluding our proof.
\end{proof}

The next lemma is close to the statement of \Cref{thm:hypercontractivity-main}, with the exception that in place of powers of our random variables we use the polynomials \(P^{\alpha}\) and the sum is taken over \(\Delta^{n}\) in place of \(\Z^{n}\)

\begin{lemma} \label{lem:H-strict-simplex-moments} Fix \(n\in\N\setminus\{0\}\) and
\(\vec{\alpha} \in (\N_{L_{g}}\setminus\{0\})^{n}\). Given any function $c : \Delta^{n} \to \C$ with finite support, then the random variable
\[
H = \sum_{\vec{k}\in\Delta^{d}} c (\vec{n}) P^{\vec{\alpha}}_{n} (g_{\vec{k}}) 
\]
satisfies
\begin{equation}\label{eq:H-L2}
\E \big[| H |^2\big] = \sum_{\vec{k}\in\Delta^{n}} | c(\vec{k}) |^2
\end{equation}
and for any $p \in \N$ there exists a constant \(C_{p,\vec{\alpha}}\) such that 
\begin{equation}\label{eq:H-Lp} 
\E \big[| H |^{2 p}\big] \leq C_{2p,d,\vec{\alpha}, g} \,\big( \E \big[| H |^{2}\big] \big)^{p}
= \Big( \sum_{\vec{k}\in\Delta^{n}} | c(\vec{k}) |^{2}\Big)^{p}\,.
\end{equation}
By interpolation, for any $q \in [2, \infty]$ it follows that 
\[
\big( \E\big[  | H |^{q}\big]\big)^{1 /{q}} \lesssim C_{q,d,\vec{\alpha}, g} \big( \E \big[\| H |^{2}\big] \big)^{1 / {2}} \,,
\]
where \(C_{q,n,\vec{\alpha}}\) depends only on \(q\),
\(\vec{\alpha}\), and the distribution of the variables \((g_{k})_{k\in\Z}\), 
but not on $c : \Delta^{n} \to \C$.
\end{lemma}

\begin{proof}
By the independence of $(g_{k})_{k\in\Z}$,  \eqref{eq:P-2-generalized-orthonormality} and the normalization of polynomials $P^{\alpha}$ we have 
\[
\begin{aligned}[t]
\E\big[ | H |^2\big] 
& = \sum_{\vec{k}, \vec{m} \in\Delta^{d}}  c (\vec{k}) \overline{c (\vec{m})}
\E \big[ P^{\vec{\alpha}}_{d} (g_{\vec{k}})  \bar{P^{\vec{\alpha}}_{d} (g_{\vec{m}})}\big]
 = \sum_{\vec{k}\in\Delta^{d}}  |c (\vec{k})|^2   \E [ |P^{\vec{\alpha}}_{d} (g_{\vec{k}})|^2]
\\
&= \sum_{\vec{k}\in\Delta^{d}}  |c (\vec{k})|^2\,,
\end{aligned}
\]
and \eqref{eq:H-L2} follows. To prove \eqref{eq:H-Lp}, fix $p \in \N$, \(p\geq2\) and \eqref{eq:P-multi-orthogonality} implies
\[
\begin{aligned}
\E | H |^{2 p} &= 
\sum_{\vec{k}_{1},  \cdots,  \vec{k}_{2p} \in \Delta^{d}}
\E \Big[ \prod_{r = 1}^p c (\vec{k}_{2 r - 1}) \overline{c(\vec{k}_{2 r})}
P^{\vec{\alpha}}_{n} (g_{\vec{k}_{2r-1}}) \bar{P^{\vec{\alpha}}_{n} (g_{\vec{k}_{2r}})}\Big]
\\
& \leq  \sum_{\vec{k}_{1},  \cdots,  \vec{k}_{2p} \in \Delta^{d}} \Big( \prod_{r = 1}^{2p} |c (\vec{k}_{r})| \Big)
\,\Big| \E \Big[ \prod_{r = 1}^{2 p} P^{(-1)^{r+1}\vec{\alpha}}_{n} (g_{\vec{k}_{r}})  \Big| 
\\
& =  \sum_{(\vec{k}_{1},  \cdots,  \vec{k}_{2p}) \not \in \ms{Z}^{2p,n}}
\Big( \prod_{r = 1}^{2p} |c (\vec{k}_{r})| \Big)  \E \big[ \prod_{r = 1}^{2 p} \big| P^{\vec{\alpha}}_{n} (g_{\vec{k}_{r}})\big|   
\big] \,.
\end{aligned}
\]
Next, we claim that
\[\eqnum\label{eq:bound-by-proj-summation}
\begin{aligned}
\E | H |^{2 p} &
\leq
\sum_{N=n}^{2p n}\sum_{\vec{\pi}\in\ms{P}^{2p}_{N\to n,2}}\sum_{\vec{m}\in\Delta^{N}}
\Big( \prod_{r = 1}^{2p} |c (\pi_{r}(\vec{m}))| \Big)\Big| \E \prod_{r = 1}^{2 p} P^{\vec{\alpha}}_{n} (g_{\pi_{r}(\vec{m})})  \Big|       \,,
\end{aligned}
\]
where $\ms{P}^{2p}_{N\to n,2}$ was introduced in \Cref{def:pltk}. Observe that \eqref{eq:bound-by-proj-summation} follows once we prove that  
for any fixed \((\vec{k}_{r})_{r\in\{1,\ldots,2p\}} \not \in \ms{Z}^{2p,n}\), there exists \(N\in\{1,\ldots,2 p n\}\), \(\vec{\pi}\in\ms{P}_{N\to n,2}^{2p}\), and 
\(\vec{m}\in\Delta^{N}\) such that
\[\eqnum\label{eq:pi-map-counting}
\vec{k}_{r}=\pi_{r}(\vec{m}),\quad r\in\{1,\ldots,2p\}.
\]
Let us provide details. Let \(N\eqd \Big|\big\{(\vec{k}_{r})_{j}\st r\in\{1,\ldots,2p\},\,j\in\{1,\ldots,n\}\big\}\Big| \leq 2pn
\) be the number of distinct integers appearing among the components of
the vectors \((\vec{k}_{r})_{r\in\{1,\ldots,2p\}}\). Next, define \(\vec{m}\in\Delta^{N}\) to be the vector obtained by arranging the elements of \(\big\{(\vec{k}_{r})_{j}\st r\in\{1,\ldots,2p\},\,j\in\{1,\ldots,n\}\big\}\) in strictly increasing order, that is, by
choosing the unique \(\vec{m}=(m_{1},\ldots,m_{N})\in\Delta^{N}\) such that
\[\eqnum\label{eq:x-vector-rearrangement}
\{m_{1},\ldots,m_{N}\}=\Big\{(\vec{k}_{r})_{j}\st r\in\{1,\ldots,2p\},\,j\in\{1,\ldots,n\}\Big\}.
\]
Finally, we choose \(\vec{\pi}=(\pi_{r})_{r\in\{1,\ldots,2p\}}\in\ms{P}_{N \to  n}^{2p}\) to satisfy \eqref{eq:pi-map-counting}. We define each \(\pi_{r}\), \(r\in\{1,\ldots,2p\}\), on the basis \(\hat{e}_{1},\ldots,\hat{e}_{N}\) and extend it by linearity. We set
\[\eqnum\label{eq:pi-map-construction}
\pi_{r}(\hat{e}_{\nu}) =
\begin{dcases}
\hat{e}_{j} & \text{if } m_{\nu}= (\vec{n}_{r})_{j}
\\
0 & \text{otherwise}.
\end{dcases}
\]
This definition is valid since for \(j\neq j'\) it holds that
\((\vec{k}_{r})_{j}\neq(\vec{k}_{r})_{j'}\), due to the fact that
\(\vec{k}_{r}\in\Delta^{n}\).

To prove \eqref{eq:pi-map-counting}, fix \(r\in\{1,\ldots,2p\}\)
and note that by \eqref{eq:x-vector-rearrangement},
for every \(j\in\{1,\ldots,n\}\)
there exists a unique \(\nu\in\{1,\ldots,N\}\) such that
\(m_{\nu}=(\vec{k}_{r})_{j}\). It follows from \eqref{eq:pi-map-construction}
that \(\big(\pi_{r}(m_{\nu}\hat{e}_{\nu}) \big)_{j}=m_{\nu} = (\vec{k}_{r})_{j}\), while for any \(\nu'\neq\nu\)
it holds that \(\big(\pi_{r}(m_{\nu'}\hat{e}_{\nu'}) \big)_{j}=0\), and \eqref{eq:pi-map-counting} follows. 

Let us check
that \(\vec{\pi}\in\ms{P}_{N\to n,2}^{2p}\): properties (i) and (ii) of
\Cref{def:pltk} follow from the construction of \(\pi_{r}\). To show \((iii)\), fix \(r\in\{1,\ldots,2p\}\) and suppose that 
\(\pi_{r}(\hat{e}_{\nu})=\pi_{r}(\hat{e}_{\nu'}) = \hat{e}_{j} \neq 0\) for some $j$ and \(\nu, \nu'\). Then, by \eqref{eq:pi-map-construction}, $m_\nu = (\vec{k}_{r})_{j} = m_{\nu'}$. Since $\vec{m} \in \Delta^{N}$, then $m_{\nu} = m_{\nu'}$ implies that $\nu = \nu'$ and (iii) follows. Finally, fix any $\hat{e}_\nu \in \mathcal{E}^{N}$. Since, by definition, $m_\nu$ appears among the coefficients of $(\vec{k}_{r})$ and $(\vec{k}_{1},  \cdots,  \vec{k}_{2p}) \not\in \ms{Z}^{2p,n}$, then there are at least two pairs $(r, j)$ and $(r',j')$ such that $m_\nu = (\vec{k}_{r})_{j} = (\vec{k}_{r'})_{j'}$. Then, $\pi_{r}(\hat{e}_\nu) = e_{j} \neq 0$ and $\pi_{r'}(\hat{e}_\nu) = e_{j'} \neq 0$ and (iv) is proved. 

Overall, we proved that \(\vec{\pi}\in\ms{P}_{N\to n,2}^{2p}\), and therefore \eqref{eq:bound-by-proj-summation} is established.

Then \eqref{eq:P-multi-boundedness} and \eqref{eq:bound-by-proj-summation} yield 
\[
\E \big[| H |^{2 p}\big]\leq
C_{2p,n,\vec{\alpha}} \sum_{N=n}^{2p n}\sum_{\vec{\pi}\in\ms{P}^{2p}_{N\to n,2}}\sum_{\vec{m}\in\Delta^{N}}
\Big( \prod_{r = 1}^{2p} |c (\pi_{r}(\vec{m}))| \Big)     \,.
\]

Finally, using \Cref{thm:hcdb} on the inner sum we obtain that
\[
\E \big[| H |^{2 p}\big]\leq C_{2p,n,\vec{\alpha}} \sum_{N=n}^{2p n}\sum_{\vec{\pi}\in\ms{P}^{2p}_{N\to n,2}}\|c\|_{l^{2}(\Z^{n})}^{2p}.
\]
Since, for fixed \(p,N,n\) the number of maps in \(\ms{P}^{2p}_{N\to n,2}\) is finite, we have
\[
\E\big[  | H |^{2 p}\big]\leq C_{2p,n,\vec{\alpha}}  |\ms{P}^{2p}_{N\to n,2}| \|c\|_{l^{2}(\Z^{n})}^{2p} \,,
\]
as desired. 
\end{proof}

Let us deduce \Cref{thm:hypercontractivity-main} from
\Cref{lem:H-strict-simplex-moments}.

\begin{proof}[Proof of \Cref{thm:hypercontractivity-main}]
Let \(L_{\alpha}\eqd \alpha_{1} + \ldots + \alpha_{d}\) and 
\(L_{\beta}\eqd \min\{L_{\alpha},  L_{g}\}\), where we recall that $L_{g}$ is the dimension of the space of polynomials in \(L^{2}(\C,\mu_{g})\). First, we provide the proof assuming that there exists a constant \(f_{0}\in\C\) and finitely supported functions \(f_{n',\vec{\beta}}:\Delta^{n'}\to\C\) with \(n'\in\{1,\ldots,n\}\) and \(\vec{\beta}\in(\N_{L_{\beta}}\setminus\{0\})^{n'}\), such that
\[\eqnum\label{eq:H-representation}
H=f_{0}+\sum_{n'=1}^{n}\sum_{\vec{\beta}\in(\N_{L_{\beta}}\setminus\{0\})^{n'}}\sum_{\vec{k}\in\Delta^{n'}} f_{n',\vec{\beta}}(\vec{k}) P^{\vec{\beta}}_{n'}(g_{\vec{k}}).
\]
Indeed, if \eqref{eq:H-representation} holds,  then \eqref{eq:P-2-generalized-orthonormality}
implies
\[
\E\big[H^{2}\big]=|f_{0}|^{2}+\sum_{n'=1}^{n}\sum_{\vec{\beta}\in(\N_{L_{\beta}}\setminus\{0\})^{n'}}
\sum_{\vec{k}\in\Delta^{n'}} |f_{n',\vec{\beta}}(\vec{k})|^{2} \,,
\]
where we used \eqref{eq:P-2-generalized-orthonormality} used that the random variables \(P^{\vec{\beta}}_{k}(g_{\vec{n}})\). Next, for any \(p\in[2,\infty)\)  the Minkowski inequality yields
\[
\E\big[H^{p}\big]^{1/p}\leq |f_{0}|+ \sum_{n'=1}^{n}\sum_{\vec{\beta}\in(\N_{L_{\beta}}\setminus\{0\})^{n'}}
\E\Big[\Big|\sum_{\vec{k}\in\Delta^{n'}} f_{n',\vec{\beta}}(\vec{k}) P^{\vec{\beta}}_{n'}(\vec{k})\Big|^{p}\Big]^{1/p}
\]
and from \Cref{lem:H-strict-simplex-moments} it follows that 
\[
\E\big[H^{p}\big]^{1/p}\lesssim |f_{0}|+ \sum_{n'=1}^{n}\sum_{\vec{\beta}\in(\N_{L_{\beta}}\setminus\{0\})^{n'}}
\Big(\sum_{\vec{k}\in\Delta^{n'}} |f_{n',\vec{\beta}}(\vec{k})|^{2}\Big)^{1/2},
\]
with the implicit constant dependents on $n$, \(p\), and \(L_{\beta}\), and
the distribution of $g$. Since the sums over \(k\) and  \(\vec{\beta}\) are finite, it follows from the Cauchy inequality that 
\(\E\big[H^{p}\big]^{1/p}\lesssim \E\big[H^{2}\big]^{1/2} \), as desired.

To prove that any \(H\) as in the statement of \Cref{thm:hypercontractivity-main} can be represented as \eqref{eq:H-representation} we first focus on 
the special case \(H=\prod_{j=1}^{n}g_{k_{j}}^{\alpha_{j}} \) for some $(k_{j})_{j\in\{1,\ldots,n\}}\in\Z$. 

By grouping together multiple occurrences of the same random variables $g_{k}$, for some $k$, we obtain 
\[
H=\prod_{j=1}^{\tilde{n}}g_{m_{j}}^{\gamma_{j}}=\prod_{j=1}^{n}g_{k_{j}}^{\alpha_{j}} ,
\]
for some $\tilde{n} \leq n$, some distinct $(m_{j})_{j \in\{1,\ldots,\tilde{n}\}}$ with
$\{m_{1}, \ldots m_{\tilde{n}}\} = \{k_{1}, \ldots, k_{n}\}$, and $\gamma_{1}+ \ldots + \gamma_{\tilde{n}} = \alpha_{1} + \ldots + \alpha_{n}$. In particular, $\gamma_{j} \leq L_{\alpha}$. Since $m_{1}, \ldots, m_{\tilde{n}}$ are distinct, by permuting the random variables, we can without loss of generality assume $m_{1} < m_{2} < \ldots < m_{\tilde{n}}$, and therefore $\vec{m} \in \Delta^{\tilde{n}}$. 
In addition, since the polynomials \((P^{\beta})_{\beta\in\{0,\ldots,L_{\beta}\}}\), span the
space of polynomials of degree at most $L_\alpha$ (as \(L^{2}(\C;\mu_{g})\)
functions), then for each \(j\in\{1,\ldots,\tilde{n}\}\) there exist complex
coefficients \((\lambda_{j,\beta}\in\C)_{\beta = 0}^{L_\beta}\) such that
\[
g^{\gamma_{j}}_{m}=\lambda_{j,0}+\sum_{\beta=1}^{L_{\beta}}\lambda_{j,\beta} P^{\beta}(g_{m}).
\]
for any \(m\in\Z\). Thus,
\[
H = \prod_{j=1}^{\tilde{n}}g_{m_{j}}^{\gamma_{j}}
\begin{aligned}[t]
& = \prod_{j=1}^{\tilde{n}}\Big(\lambda_{j,0}+\sum_{\beta_{j}=1}^{L_{\beta}}\lambda_{j,\beta_{j}} P^{\beta_{j}}(g_{m_{j}})\Big) 
\end{aligned}
\]
and \eqref{eq:H-representation} follows  for our special $H$ after expanding the product. The general case follows from linearity and the proof is completed.
\end{proof}

\newpage
{\sloppy \printbibliography}
\end{document}